\newtheorem{thm}{Theorem}[section]
\newtheorem{prop}[thm]{Proposition}
\newtheorem{cor}[thm]{Corollary}
\newtheorem{lem}[thm]{Lemma}
\theoremstyle{definition}
\newtheorem{defn}[thm]{Definition}
\newtheorem{exmp}[thm]{Example}
\theoremstyle{remark}
\newtheorem{rmk}[thm]{Remark}
\let\c@equation\c@thm
\numberwithin{equation}{section}
\newcommand{\conf}{\mathrm{Conf}}
\newcommand{\GL}{\mathrm{GL}}
\newcommand{\PGL}{\mathrm{PGL}}
\newcommand{\SL}{\mathrm{SL}}
\newcommand{\sgn}{\mathrm{sign}}
\newcommand{\DT}{\mathrm{DT}}
\renewcommand{\vec}[1]{\mathbf{#1}}
\newcommand{\inprod}[2]{\left\langle#1,#2\right\rangle}
\tikzset{>=stealth}
\title{Donaldson-Thomas Transformation of Double Bruhat Cells in General Linear Groups}
\author{Daping Weng}
\date{\today}
\begin{document}

\begin{abstract} Kontsevich and Soibelman \cite{KS} defined the Donaldson-Thomas invariants of a 3d Calabi-Yau category with a stability condition. Any cluster variety can produce an example of such a category, whose corresponding Donaldson-Thomas invariants are encoded by a special formal automorphism of the cluster variety, known as the Donaldson-Thomas transformation.

In this paper we prove a conjecture of Goncharov and Shen \cite{GS} in the case of $\GL_n$, which describes the Donaldson-Thomas transformation of the double quotient of the double Bruhat cells $H \backslash \GL_n^{u,v}/H$ where $H$ is a maximal torus, as a certain explicit cluster transformation related to Fomin-Zelevinsky's twist map \cite{FZ}. Our result, combined with the work of Gross, Hacking, Keel, and Kontsevich \cite{GHKK}, proves the duality conjecture of Fock and Goncharov \cite{FG} in the case of $H\backslash \GL_n^{u,v}/H$.
\end{abstract}

\maketitle

\tableofcontents

\section{Introduction}

Donaldson-Thomas invariants were first introduced by Donaldson and Thomas \cite{DT} as geometric invariants on a Calabi-Yau threefold, and then later were generalized to 3d Calabi-Yau categories with stable conditions by Kontsevich and Soibelman \cite{KS}. It is known that examples of such 3d Calabi-Yau categories can be constructed from cluster varieties, whose Donaldson-Thomas invariants are typically encoded by a special formal automorphism of the cluster variety, also known as the Donaldson-Thomas transformation \cite{KS}. Keller \cite{KelDT} gave a combinatorial characterization of a subclass of Donaldson-Thomas transformations using quiver mutations. Goncharov and Shen \cite{GS} gave an equivalent definition using tropical points of the cluster varieties.

Double Bruhat cells were first studied by Fomin and Zelevinsky in \cite{FZ}; Berenstein, Fomin, and Zelevinsky proved that the algebra of regular functions on double Bruhat cells in simply connected semisimple Lie groups are cluster algebras in \cite{BFZ}. On the other hand, Fock and Goncharov showed that double Bruhat cells in adjoint semisimple Lie groups are cluster Poisson varieties in \cite{FGD}; in the same paper they also showed that in the biggest double Bruhat cell, which is a Zariski open subset of the Lie group, the Poisson structure coincides with the Poisson-Lie structure defined by Drinfeld in \cite{D}. 

\subsection{Main Result} Goncharov and Shen \cite{GS} conjectured that for a semisimple Lie group $G$, the Donaldson-Thomas transformation on the double quotient of a double Bruhat cell $H\backslash G^{u,v}/H$ is a slight modification of Fomin and Zelevinsky's twist map which was defined in \cite{FZ}. In this paper, we will focus on such double quotients of double Bruhat cells in $\GL_n$ and prove Goncharov and Shen's conjecture in this case.

Fix a pair of opposite Borel subgroups $B_\pm$ of $\GL_n$. Let $H:=B_+\cap B_-$ be the corresponding maximal torus in $\GL_n$. Then with respect to these two Borel subgroups, $\GL_n$ admits two Bruhat decompositions 
\[
\GL_n=\bigsqcup_{w\in W}B_+wB_+=\bigsqcup_{w\in W}B_-wB_-,
\]
where $W$ denotes the Weyl group with respect to the maximal torus $H$. Given a pair of Weyl group elements $(u,v)$, the \textit{double Bruhat cell} $\GL_n^{u,v}$ is defined to be the intersection
\[
\GL_n^{u,v}:=B_+uB_+\cap B_-vB_-.
\]
Our main result is the following theorem.

\begin{thm}\label{main} Let $(u,v)$ be a pair of Weyl group elements. The Donaldson-Thomas transformation of the cluster Poisson variety $H\backslash\GL_n^{u,v}/H$ is a cluster transformation which can also be described as
\[
\DT:H\backslash x/H \mapsto H\backslash\left(\left[\overline{u}^{-1}x\right]_-^{-1}\overline{u}^{-1} x \overline{v^{-1}}\left[x\overline{v^{-1}}\right]^{-1}_+\right)^t/H;
\]
here $x=[x]_-[x]_0[x]_+$ is the Gaussian decomposition of a general linear matrix $x$ and $\overline{w}$ denotes the lift of a Weyl group element $w$ defined by Equation \eqref{wbar} (see also \cite{FZ}, \cite{BFZ}, and \cite{GS}).
\end{thm}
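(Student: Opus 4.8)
The plan is to reduce Theorem \ref{main} to the Goncharov--Shen tropical characterization of the Donaldson--Thomas transformation, applied to a concrete cluster Poisson atlas on $H\backslash\GL_n^{u,v}/H$. First I would fix a reduced word $\mathbf{i}$ for the pair $(u,v)$ and recall, from Berenstein--Fomin--Zelevinsky \cite{BFZ} and Fock--Goncharov \cite{FGD}, the associated seed: the quiver $Q_{\mathbf{i}}$ read off the double wiring diagram of $\mathbf{i}$, and the initial cluster Poisson coordinates realized as ratios of generalized minors of a matrix representative $x\in\GL_n^{u,v}$. Passing to the double quotient by $H\times H$ removes exactly the ``frozen torus'' directions, so these ratios descend to honest coordinates and identify $H\backslash\GL_n^{u,v}/H$ with the cluster Poisson variety $\mathcal{X}_{Q_{\mathbf{i}}}$, together with the tropical coordinate charts in which the Goncharov--Shen criterion will be phrased.

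Next I would study the candidate map $\eta\colon x\mapsto\left(\left[\overline{u}^{-1}x\right]_-^{-1}\overline{u}^{-1}x\,\overline{v^{-1}}\left[x\overline{v^{-1}}\right]^{-1}_+\right)^t$. One first checks the elementary facts that the two Gaussian factors are invertible on a dense open subset, that $\eta$ maps $\GL_n^{u,v}$ into itself, and that it is equivariant for the $H\times H$-action in the precise way needed to descend to the double quotient; it is convenient here to recognize $\eta$ as the transpose of (a variant of) Fomin--Zelevinsky's twist map from \cite{FZ}. The real content of this step is that $\eta$ is a \emph{cluster} transformation of $\mathcal{X}_{Q_{\mathbf{i}}}$: I would establish this either by exhibiting an explicit finite sequence of cluster Poisson mutations together with a seed automorphism of $Q_{\mathbf{i}}$ whose composite effect on each generalized-minor coordinate agrees with $\eta^{*}$, or by verifying directly that $\eta$ intertwines the cluster atlas with itself. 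Either route relies on the twist-map minor identities of \cite{FZ} and \cite{BFZ}, which rewrite each twisted minor as a subtraction-free expression in the original minors.

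It then remains to identify this cluster transformation with $\DT$, for which I would invoke the Goncharov--Shen criterion \cite{GS}: a cluster transformation of $\mathcal{X}_{Q_{\mathbf{i}}}$ is the Donaldson--Thomas transformation exactly when the underlying mutation sequence is a reddening sequence, equivalently when its tropicalization on $\mathcal{X}_{Q_{\mathbf{i}}}^{t}(\mathbb{Z}^{t})$ carries the nonnegative cone of the initial seed onto the nonpositive cone of the target seed. Operationally this amounts to tropicalizing the twisted-minor formulas from the previous step and matching them against the $c$-vectors of the mutation sequence, with the tropicalizations themselves computed by Lindstr\"om--Gessel--Viennot path combinatorics in the wiring diagram. \textbf{The principal obstacle} is precisely this last verification, which has to be propagated through a mutation sequence of length $\ell(u)+\ell(v)$. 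I expect the manageable route is to first settle the elementary $\GL_2$-type cells, where a single simple reflection is twisted and the tropical picture is transparent, and then to induct on $\ell(u)+\ell(v)$ by peeling off one generator at a time --- using the amalgamation structure of the cluster atlas and the fact that $\DT$ is independent of the chosen seed to patch the inductive step --- so that the general case follows from the base case together with the functoriality of the construction.
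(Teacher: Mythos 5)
Your proposal correctly identifies the three-stage skeleton of the argument (set up a cluster Poisson atlas on $H\backslash\GL_n^{u,v}/H$, prove that the candidate map is a cluster transformation, then verify the Goncharov--Shen tropical sign condition), and your description of the candidate map and its relation to the twist map is accurate. However, at the two places where the proof actually has content you offer only a strategy rather than an argument, and in both places the paper uses a specific idea you do not have.

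\smallskip\noindent\emph{Why ``$\eta$ is a cluster transformation'' is not established by your Step 2.} You propose to ``exhibit an explicit finite sequence of cluster Poisson mutations together with a seed automorphism'' whose composite matches $\eta^*$, and you say this ``relies on the twist-map minor identities of \cite{FZ} and \cite{BFZ}, which rewrite each twisted minor as a subtraction-free expression in the original minors.'' But subtraction-freeness is the content of the paper's Subsection~\ref{positive} (positivity of $\psi\circ\chi$); it does \emph{not} by itself establish that a map is a composition of mutations and seed isomorphisms. Positive birational maps of the $\mathcal{X}$-torus need not be cluster transformations. What the paper actually does (Subsections~\ref{flag} and \ref{proof1}) is reinterpret $H\backslash\GL_n^{u,v}/H$ as the configuration space $\conf^{u,v}(\mathcal{B})$ of quadruples of Borel subgroups, realize $\eta$ as the hexagon construction $[B_1,B_2,B_3,B_4]\mapsto[B_3^*,B_4^*,B_5^*,B_6^*]$, and then break $\eta$ into a chain of elementary ``tiltings'' of the square diagram. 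Each tilting corresponds to multiplying $x$ on the left by $e_{i(k)}e_{-i(k)}^{-1}$ or on the right by $e_{j(k)}^{-1}e_{-j(k)}$, which in the bipartite-graph picture is literally moving one vertical edge across the graph by a sequence of the two moves of Proposition~\ref{4.1}, hence an explicit sequence of quiver mutations; the final transposition gives the seed automorphism. Without this Borel-configuration/tilting mechanism it is not clear how your ``explicit finite sequence'' would be produced.

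\smallskip\noindent\emph{Why your induction for the tropical check would not go through.} You propose to propagate the degree verification by induction on $\ell(u)+\ell(v)$, ``peeling off one generator at a time'' and ``using the amalgamation structure \dots and the fact that $\DT$ is independent of the chosen seed to patch the inductive step.'' Removing a generator moves you to a strictly smaller double Bruhat cell with a different seed, and the cluster DT transformations of $H\backslash\GL_n^{u,v}/H$ and of $H\backslash\GL_n^{u',v'}/H$ for smaller $(u',v')$ are not related by any of the structures you invoke; seed-independence only lets you compare cluster charts on a fixed cluster variety, not DT transformations across different cluster varieties, and amalgamation is not compatible with DT in the way the induction would require. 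The paper avoids induction entirely by choosing one good seed, the one attached to the \emph{greedy reduced word} (Subsection~\ref{proof2}). The crucial point (Proposition~\ref{greedy}) is that for this particular word, each minor $\Delta^{I,J}$ has a \emph{unique} family of pairwise disjoint paths that \emph{simultaneously} maximizes the degree of every face variable $X_g$. This single-maximizer property is what turns the Lindstr\"om--Gessel--Viennot sums into a clean degree bookkeeping so that $\deg_{X_f}(\psi\circ\chi)^*(X_g)=-\delta_{fg}$ can be read off case by case. That choice of seed, and the resulting unique-maximizer lemma, is the missing ingredient in your tropical step.

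\smallskip One smaller gap: your Step 1 quietly asserts that the minor ratios descend to give a cluster $\mathcal{X}$-atlas on $H\backslash\GL_n^{u,v}/H$; the paper does not merely quote \cite{BFZ} for this, but rather establishes the birational equivalences $\psi$ and $\chi$ as a corollary of the biregular involutiveness of the Fomin--Zelevinsky twist (Subsection~\ref{birational}). This is a minor point compared to the two above, but it is another place where the proposal presupposes a step that the paper actually proves.
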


Note that the cluster Donaldson-Thomas transformation exhibited above, modulo the double quotients, differs from Fomin and Zelevinsky's twist map only by the anti-automorphism $x\mapsto x^\iota$ on $\GL_n$, which is defined on the generators as
\[
e_{\pm i}^\iota=e_{\pm i} \quad \quad \text{and} \quad \quad a^\iota=a^{-1} \quad \forall a\in H.
\]
We show in Subsection \ref{birational} that the anti-automorphism $x\mapsto x^\iota$ coincides with the involution $i_\mathcal{X}$ introduced by Goncharov and Shen in \cite{GS}, and hence Fomin and Zelevinsky's original twist map in turn coincides with Goncharov and Shen's involution $D_\mathcal{X}$, which is defined as $D_\mathcal{X}:=i_\mathcal{X}\circ \DT$.

A key point of our proof is that the cluster Donaldson-Thomas transformation can be obtained as the composition of the following three maps (plus a lift):
\begin{enumerate}
\item Fock and Goncharov's amalgamation map from \cite{FGD}, which gives a map $\chi$ from a cluster Poisson variety $\mathcal{X}$ to $H\backslash \GL_n^{u,v}/H$;
\item the result of Berenstein, Fomin, and Zelevinsky \cite{BFZ} saying that $\mathcal{O}(\GL_n^{u,v})$ is isomorphic to a cluster algebra gives a map $\psi$ from $\GL_n^{u,v}$ to a cluster $\mathcal{A}$-variety $\mathcal{A}$;
\item Fock and Goncharov's $p$ map from cluster ensemble \cite{FG}, which is a map going from the cluster $\mathcal{A}$-variety $\mathcal{A}$ back to the original cluster Poisson variety $\mathcal{X}$.
\end{enumerate}
Such composition can be summarized by the diagram below, and the choice of lifting from $H\backslash\GL_n^{u,v}/H$ to $\GL_n^{u,v}$ does not affect the outcome of the composition.
\[
\xymatrix{ & \GL_n^{u,v} \ar@{-->}[r]^\psi  \ar[d] & \mathcal{A}\ar[d]^p \\
\mathcal{X} \ar[r]^(0.3)\chi & H\backslash \GL_n^{u,v}/H & \mathcal{X}
}
\]

Although our main theorem is stated in terms of the general linear group $\GL_n$, it can be extended to $\SL_n$ and $\PGL_n$ without much effort: since $\SL_n$ is a subgroup of $\GL_n$ and $\PGL_n$ is a quotient of $\GL_n$, a pair of opposite Borel subgroups of $\GL_n$ naturally gives rise to a pair of opposite Borel subgroups in $\SL_n$ and $\PGL_n$; after passing to the double quotient by the corresponding maximal tori on both sides, we see that there is a natural identification
\[
H \backslash \GL_n^{u,v}/H\cong H \backslash \SL_n^{u,v}/H\cong H \backslash \PGL_n^{u,v}/H,
\]
as the center is killed by the double quotient. Thus our main theorem solves Goncharov and Shen's conjecture in the case of classical Lie group of Dynkin type $A_n$.

Here is an important application of our main result. We prove that the Donaldson-Thomas transformation on the cluster Poisson variety $H\backslash \GL_n^{u,v}/H$ is a cluster transformation. Therefore our result combined with the work of Gross, Hacking, Keel, and Kontsevich (Theorem 0.10 of \cite{GHKK}) proves the Fock and Goncharov's duality conjecture \cite{FG} in the case of $H\backslash \GL_n^{u,v}/H$.

\subsection{Structure of the Paper} We divide the rest of the paper into two sections. Section 2 contains all the preliminaries necessary for our proof of the main theorem. Subsection \ref{bruhat} introduces double Bruhat cells $\GL_n^{u,v}$ and related structures, most of which are similar to the original work of Fomin and Zelevinsky \cite{FZ} and Fock and Goncharov \cite{FGD}. Subsection \ref{flag} describes a link between double Bruhat cells and configurations of quadruples of Borel subgroups, which is the key to proving the cluster nature of our candidate map for Donaldson-Thomas transformation; such link and the proof of cluster nature are both credit to Shen. Subsection \ref{bipartite} introduces the bipartite method in the study of double Bruhat cells, variations of which can also be found in many literature such as the work of Postnikov \cite{Pos}, Goncharov and Kenyon \cite{GK}, Goncharov \cite{Gon}, Gektman, Shapiro, Tabachnikov, and Vainshtein \cite{GSTV}, Fock and Marshakov \cite{FM}, Rietsch and Williams \cite{RW}, Williams \cite{HW}, and ourselves \cite{W}. Subsections \ref{cluster} and \ref{1.4} focus on Fock and Goncharov's theory of cluster ensemble and tropicalization, the main source of reference of which is \cite{FG}.

Section 3 is the proof of our main theorem itself. Subsection \ref{formula} gives a formula of our candidate map which resembles Fomin and Zelevinsky's twist map. Subsection \ref{birational} uses the involutiveness of the twist map to give a quick proof of the fact that the double quotient of a double Bruhat cell is birationally equivalent to a cluster Poisson variety. Subsection \ref{positive} gives a description of minors in terms of cluster Poisson variables, which is similar to Postnikov's boundary measurement map for Grassmannian in \cite{Pos}. Subsections \ref{proof1} and \ref{proof2} prove that our candidate map satisfies the two defining properties of the cluster Donaldson-Thomas transformation, the former of which is due to a private conversation with Shen.

\subsection{Acknowledgements} The author is deeply grateful to his advisor Alexander Goncharov for introducing the author to the subject of cluster theory and suggesting this problem on cluster Donaldson-Thomas transformation, as well as all his insightful guidance throughout the process of solving this problem and his help in the revision of the paper. The author also would like to thank Linhui Shen for his help on understanding the cluster Donaldson-Thomas transformation and his inspiring comment on proving the cluster nature of our candidate map using the configuration of quadruples of Borel subgroups. This paper uses many ideas from the work of Goncharov \cite{Gon}, the work of Fock and Goncharov \cite{FG}, \cite{FGD}, \cite{FGI}, the work of Goncharov and Shen \cite{GS}, the work of Fomin and Zelevinsky \cite{FZ}, and the work of Postnikov \cite{Pos}; without the work of all these pioneers in the field this paper would not have been possible otherwise.

\section{Preliminaries} \label{pre}

\subsection{Double Bruhat Cells in General Linear Group}\label{bruhat}

For simplicity, throughout this paper we will fix $B_+$ to be the subgroup of upper triangular matrices and fix $B_-$ to be the subgroup of lower triangular matrices (but any other pair of opposite Borel subgroups will work as fine). Then $H:=B_+\cap B_-$ is the maximal torus consisting of diagonal matrices, and $U_\pm:=[B_\pm,B_\pm]$ form a pair of opposite maximal unipotent matrices. The choice of opposite Borel subgroups $B_\pm$ also determines a Coxeter generating set $S$ for the Weyl group $W:=N_{\GL_n}(H)/H$ of $\GL_n$. The Weyl group action on $H$ is the same as permutation on the entries of diagonal matrices; we will hence identify $W$ with the symmetric group $S_n$, under which elements of $S$ will be identified with adjacent transpositions $s_i$ permuting the $i$th and the $(i+1)$th entries.

It is known that the subset $G_0:=U_-HU_+$ is a Zariski open subset of $\GL_n$, and any element $x\in G_0$ can be uniquely written as
\begin{equation}\label{gauss}
x=[x]_-[x]_0[x]_+
\end{equation}
where $[x]_\pm \in U_\pm$ and $[x]_0\in H$; this is called the \textit{Gaussian decomposition} of the element $x\in G_0$.

With respect to the Borel subgroups $B_\pm$, $\GL_n$ admits the following two Bruhat decompositions:
\[
\GL_n=\bigsqcup_{w\in W} B_+wB_+ =\bigsqcup_{w\in W} B_-wB_-.
\]
Each constituent $B_\pm wB_\pm$ of these Bruhat decompositions is known as a Bruhat cell of $\GL_n$ (with respect to $B_\pm$). 

\begin{defn} For a pair of Weyl group elements $(u,v)$, the \textit{double Bruhat cell} of $\GL_n$ (with respect to the pair of opposite Borel subgroups $B_\pm$) is defined to be
\[
\GL_n^{u,v}:=B_+uB_+\cap B_-vB_-.
\]
\end{defn}

It follows straight from the definition that $\GL_n$ admits a \textit{double Bruhat decomposition} (with respect to $B_\pm$):
\[
\GL_n=\bigsqcup_{u,v\in W} \GL_n^{u,v}.
\]

Next let's consider the Weyl group $W$. Since $S$ is a generating set of $W$, using elements of $S$ as ``letters'' we can spell out ``words'' to represent Weyl group elements.

\begin{defn} A \textit{reduced word} of a Weyl group element $w$ is a finite sequence of positive integers between $1$ and $n-1$
\[
\vec{i}:=(i(1), i(2),\dots, i(l))
\]
that is the shortest among all sequences such that $s_{i(1)}s_{i(2)}\dots s_{i(l)}=w$; in particular, the number $l$ is called the \textit{length} of the Weyl group element $w$ and is denoted as $l(w)$.

We can extend the notion of reduced words to a pair of Weyl group elements. A \textit{reduced word} of the pair of Weyl group elements $(u,v)$ is a finite sequence 
\[
\vec{i}=(i(1),i(2),\dots, i(l))
\]
where $i(k)\in \{\pm 1, \dots, \pm (n-1)\}$, from which we obtain a reduced word of $u$ if we delete all the positive entries and then turn the negative entries to their positive counterparts, and obtain a reduced word of $v$ if we delete all the negative entries. By convention we will also call $l(u,v):=l$ the \textit{length} of the pair $(u,v)$, and it is not hard to see that $l(u,v)=l(u)+l(v)$, which does not depend on the choice of the reduced words.
\end{defn}

Given a reduced word $\vec{i}=(i(1),i(2),\dots, i(l))$ of a pair of Weyl group elements $(u,v)$ (resp. a single Weyl group element $w$), we define its \textit{opposite} reduced word to be $\vec{i}^\circ=(i(l),i(l-1), \dots, i(1))$; note that $\vec{i}^\circ$ is a reduced word $(u^{-1},v^{-1})$ (resp. $w^{-1}$).

The Lie algebra of $\GL_n$ is the space of all $n\times n$ complex matrices $\mathfrak{gl}_n$. Let $\{E_{i,j}\}$ be the standard vector basis of $\mathfrak{gl}_n$ where $E_{i,j}$ denotes the matrix consisting of all zeros except a single 1 at the entry in the $i$th row and $j$th column. Then using matrix exponentiation we can define the following special elements of $\GL_n$:
\[
  e_i:=\exp E_{i,i+1} \quad \quad \text{and}\quad \quad e_{-i}:=\exp E_{i+1,i} \quad \quad \text{(for $1\leq i\leq n-1$);}
\]
\[
 h^i(X):=\exp\left((\log X)\sum_{1\leq k\leq i}E_{k,k}\right) \quad \quad \text{(for $0\leq i\leq n$ and $X\in \mathbb{C}^*$).}
\]
By using these special elements we can define a lift for each Coxeter generator $s_i$ to $\GL_n$:
\[
\overline{s}_i:=e_i^{-1}e_{-i}e_i^{-1}.
\]
Alternatively, such a lift can also be defined as 
\[
\overline{s}_i:=\varphi_i\begin{pmatrix} 0 & -1 \\ 1 & 0\end{pmatrix},
\]
where $\varphi$ is the Lie group homomorphism $\SL_2\rightarrow \GL_n$ induced from the Lie algebra homomorphism $\mathfrak{sl}_2\rightarrow \mathfrak{gl}_n$ defined by
\[
E\mapsto E_{i,i+1} \quad \quad \text{and}\quad \quad F\mapsto E_{i+1,i}.
\]
Since these lifts satisfy the braid relations, it follows that any reduced word $s_{i(1)}s_{i(2)}\dots s_{i(l)}$ of a Weyl group element $w$ defines a lift
\begin{equation}\label{wbar}
\overline{w}:=\overline{s}_{i(1)}\overline{s}_{i(2)}\dots \overline{s}_{i(l)}
\end{equation}
of $w$ to $\GL_n$, and such a lift $\overline{w}$ is independent of the chosen reduced word (see also \cite{FZ}, \cite{BFZ}, and \cite{GS}).

Lastly, we define an automorphism $*$ on $W$ by $w^*:=w_0ww_0$ where $w_0$ is the longest element of $W$ (with respect to the length of reduced words). Note that $s_i^*=s_{n-i}$. We then lift this automorphism to $\GL_n$ by defining
\[
g^*:=\overline{w}_0\left(g^{-1}\right)^t\overline{w}_0^{-1}.
\]
Since $\overline{s}_i^{-1}=\overline{s}_i^t$, it follows that $\overline{s}_i^*=\overline{s_i^*}$. We also observe that this automorphism $*$ on $G$ is an involution and preserves the Borel subgroups $B_\pm$ respectively.

\subsection{Configuration of Quadruples of Borel Subgroups and Double Bruhat Cells}\label{flag}

In this subsection we will look at the double quotients of double Bruhat cells $H\backslash\GL_n^{u,v}/H$ from a different angle; all results within this subsection are stated for $\GL_n$, but they hold in general semisimple Lie groups as well. 

Recall that the space of Borel subgroups in $\GL_n$, which we will denote as $\mathcal{B}$, can be identified with either of the quotient spaces $\GL_n/B_\pm$, via the isomorphisms
\[
xB_+x^{-1}\mapsto xB_+ \quad \quad \text{and} \quad \quad xB_-x^{-1}\mapsto xB_-.
\]
Such isomorphisms can be understood as taking the ``stabilizer subgroup'' in one direction and taking the ``fixed point'' in the opposite direction (with respect to the natural group action on the spaces of cosets). For notation simplicity, we will not distinguish cosets and Borel subgroups in this paper, hence phrases like ``the Borel subgroup $xB_+$'' should make sense tautologically. 

\begin{defn} Let $B_1=x_1B_+=y_1B_-$ and $B_2=x_2B_+=y_2B_-$ be two Borel subgroups; we define two maps
\[
d_\pm: \mathcal{B}\times \mathcal{B} \rightarrow W
\]
with $d_+(B_1,B_2)=u$ if $x_1^{-1}x_2\in B_+uB_+$ and $d_-(B_1,B_2)=v$ if $y_1^{-1}y_2\in B_-vB_-$.
\end{defn} 

Our first observation is that these two maps are anti-symmetry on their arguments: $d_\pm(B_1,B_2)=w$ if and only if $d_\pm (B_2,B_1)=w^{-1}$. But there are more symmetries to these two maps, as we will see in the following several propositions.

\begin{prop} The maps $d_\pm$ are $\GL_n$-equivariant, i.e., $d_\pm(B_1,B_2)=d_\pm (xB_1x^{-1},xB_2x^{-1})$ for any $x\in \GL_n$.
\end{prop}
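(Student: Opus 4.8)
The plan is to reduce the statement to the elementary dictionary between the two standard presentations of the flag variety: under the identifications $\mathcal{B}\cong \GL_n/B_\pm$, the conjugation action of $\GL_n$ on $\mathcal{B}$ corresponds to the left translation action on cosets, and the maps $d_\pm$ are by construction functions only of the ``ratio'' of the two chosen coset representatives. So the equivariance will follow from a two-line computation once the bookkeeping between ``Borel subgroup'' and ``coset'' is set up.

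Concretely, I would write $B_1=x_1B_+=y_1B_-$ and $B_2=x_2B_+=y_2B_-$ in coset notation, so that as actual subgroups $B_i=x_iB_+x_i^{-1}=y_iB_-y_i^{-1}$. For $x\in\GL_n$, conjugation gives $xB_ix^{-1}=(xx_i)B_+(xx_i)^{-1}=(xy_i)B_-(xy_i)^{-1}$, hence in coset notation $xB_ix^{-1}$ is represented by $xx_iB_+$ and by $xy_iB_-$. Then I compute $(xx_1)^{-1}(xx_2)=x_1^{-1}x_2$ and $(xy_1)^{-1}(xy_2)=y_1^{-1}y_2$, so these elements lie in exactly the same double cosets $B_+uB_+$ and $B_-vB_-$ as before conjugating; by the definition of $d_\pm$ this is precisely $d_\pm(xB_1x^{-1},xB_2x^{-1})=d_\pm(B_1,B_2)$.

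The one routine point worth recalling for cleanliness is that $d_\pm$ is well-defined independently of the choice of representatives: replacing $x_i$ by $x_ib_i$ with $b_i\in B_+$ changes $x_1^{-1}x_2$ to $b_1^{-1}(x_1^{-1}x_2)b_2$, which stays in the same $B_+$-double coset, and likewise for $y_i$ and $B_-$; this is implicit in the definition and I would note it in one line. Given this, there is essentially no obstacle — the content of the proposition is just the compatibility of the two presentations $\GL_n/B_\pm$ of $\mathcal{B}$ with the group action. If anything, the only thing to be careful about is keeping the translation between a Borel subgroup and its coset straight, since the paper deliberately conflates the two.
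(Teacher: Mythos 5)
Your proposal is correct and is essentially the paper's own argument, just written out in full: the paper's one-line proof is precisely the observation that $(xx_1)^{-1}(xx_2)=x_1^{-1}x_2$, i.e.\ the $x$ cancels after inverting the first representative. The extra remark on well-definedness is a fine addition but not a new idea.
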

\begin{proof} Just note that the $x$ in the first argument becomes $x^{-1}$ after inverting and cancels with the $x$ in the second argument.
\end{proof}

Recall that we have an involutive automorphism $*$ defined on $\GL_n$, which can be extended naturally to the space of Borel subgroups $\mathcal{B}$. We then make the following observation.

\begin{prop} $d_\pm(B_1,B_2)=w$ if and only if $d_\pm(B_1^*,B_2^*)=w^*$.
\end{prop}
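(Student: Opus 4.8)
The plan is to deduce the statement from two properties of the automorphism $*$ already recorded in Subsection~\ref{bruhat}: that $*$ is a \emph{group automorphism} of $\GL_n$ — it is the composite of $g\mapsto(g^{-1})^t$, an automorphism because $((gh)^{-1})^t=(g^{-1})^t(h^{-1})^t$, with conjugation by $\overline{w}_0$ — and that it preserves each of $B_+$ and $B_-$ setwise.

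First I would verify that $*$ intertwines the lifts of Equation~\eqref{wbar}, namely $\overline{w}^*=\overline{w^*}$ for every $w\in W$. We already know $\overline{s}_i^*=\overline{s_i^*}$, so for a reduced word $w=s_{i(1)}\cdots s_{i(l)}$ applying the automorphism $*$ termwise gives $\overline{w}^*=\overline{s}_{i(1)}^*\cdots\overline{s}_{i(l)}^*=\overline{s_{i(1)}^*}\cdots\overline{s_{i(l)}^*}$. Since $w^*=w_0ww_0=(w_0s_{i(1)}w_0)\cdots(w_0s_{i(l)}w_0)=s_{i(1)}^*\cdots s_{i(l)}^*$ and $w\mapsto w_0ww_0$ is a length-preserving automorphism of $W$, this last product is a reduced word for $w^*$, so by the word-independence in \eqref{wbar} it represents $\overline{w^*}$.

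Next, because $*$ is an automorphism with $B_+^*=B_+$ sending $\overline{u}$ to $\overline{u^*}$, it carries the Bruhat cell onto $*(B_+uB_+)=B_+^*\,\overline{u}^*\,B_+^*=B_+u^*B_+$; likewise $*(B_-vB_-)=B_-v^*B_-$ since $B_-^*=B_-$. Thus for $g\in\GL_n$ one has $g\in B_+uB_+$ if and only if $g^*\in B_+u^*B_+$, and similarly for the $B_-$-Bruhat decomposition.

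Finally I would assemble the pieces. Writing $B_1=x_1B_+=y_1B_-$ and $B_2=x_2B_+=y_2B_-$, the fact that $*$ fixes $B_\pm$ gives $B_1^*=x_1^*B_+=y_1^*B_-$ and the analogous identity for $B_2^*$. Then $d_+(B_1,B_2)=u$ means $x_1^{-1}x_2\in B_+uB_+$, which by the previous paragraph is equivalent to $(x_1^*)^{-1}x_2^*=(x_1^{-1}x_2)^*\in B_+u^*B_+$, i.e.\ $d_+(B_1^*,B_2^*)=u^*$; the computation for $d_-$ is identical with $y_i$ and $B_-$. The converse requires nothing new since $*$ is an involution. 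The only point worth stating explicitly — and the nearest thing to an obstacle, though it is a standard fact about Coxeter groups — is that $w\mapsto w_0ww_0$ preserves lengths, which is exactly what makes $s_{i(1)}^*\cdots s_{i(l)}^*$ a \emph{reduced} expression and hence $\overline{w^*}$ unambiguous; the remainder is formal coset bookkeeping.
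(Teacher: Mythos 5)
Your proof is correct and follows the same route as the paper, which disposes of the statement in one line by citing exactly the two facts you use: that $*$ is an automorphism of $\GL_n$ and that $B_\pm^*=B_\pm$. Your additional verification that $\overline{w}^*=\overline{w^*}$ (via length-preservation of $w\mapsto w_0ww_0$ and word-independence of the lift) is a careful, correct filling-in of a detail the paper leaves implicit.
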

\begin{proof} It follows from the fact that $*$ is an automorphism and $B_\pm^*=B_\pm$.
\end{proof}

\begin{prop} Let $B_1$ and $B_2$ be two Borel subgroups. Then $d_+(B_1,B_2)=w$ if and only if $d_-(B_1,B_2)=w^*$.
\end{prop}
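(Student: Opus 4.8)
The plan is to transport the Bruhat condition defining $d_+$ through the conjugation by (a lift of) the longest Weyl group element $w_0$, which interchanges $B_+$ and $B_-$.

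First I would fix coset representatives $B_1 = x_1B_+ = y_1B_-$ and $B_2 = x_2B_+ = y_2B_-$. By definition $d_+(B_1,B_2)=w$ means $x_1^{-1}x_2\in B_+wB_+$, and the goal is to show that the unique Weyl group element $v$ with $y_1^{-1}y_2\in B_-vB_-$ equals $w^*$. The crucial bookkeeping step is to relate the two families of representatives: since $\overline{w}_0B_+\overline{w}_0^{-1}=B_-$ and $N_{\GL_n}(B_+)=B_+$, one checks directly that $x_i^{-1}y_i\overline{w}_0\in N_{\GL_n}(B_+)=B_+$, so $y_i=x_ib_i\overline{w}_0^{-1}$ for some $b_i\in B_+$.

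Next I would simply compute $y_1^{-1}y_2=\overline{w}_0\,b_1^{-1}(x_1^{-1}x_2)b_2\,\overline{w}_0^{-1}$. Assuming $d_+(B_1,B_2)=w$, so $x_1^{-1}x_2\in B_+wB_+$, the element $b_1^{-1}(x_1^{-1}x_2)b_2$ still lies in $B_+wB_+$ because $B_+$ absorbs $b_1^{-1}$ and $b_2$; hence $y_1^{-1}y_2\in\overline{w}_0(B_+wB_+)\overline{w}_0^{-1}$. Finally, conjugation of a Bruhat cell by $\overline{w}_0$ gives $\overline{w}_0(B_+wB_+)\overline{w}_0^{-1}=(\overline{w}_0B_+\overline{w}_0^{-1})(\overline{w}_0\dot{w}\overline{w}_0^{-1})(\overline{w}_0B_+\overline{w}_0^{-1})=B_-\dot{w}'B_-$, where $\dot{w}'=\overline{w}_0\dot{w}\overline{w}_0^{-1}$ represents $w_0ww_0^{-1}=w_0ww_0=w^*$; since a Bruhat cell depends only on its Weyl group element, $y_1^{-1}y_2\in B_-w^*B_-$, i.e.\ $d_-(B_1,B_2)=w^*$. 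This gives one implication, and the converse is then automatic: $d_+(B_1,B_2)$ is always defined because the Bruhat decomposition is a partition, and $*$ is a bijection on $W$, so $d_-(B_1,B_2)=w^*$ forces $d_+(B_1,B_2)=w$.

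I do not expect a genuine obstacle; the only delicate point is the coset-versus-conjugation bookkeeping in the identity $y_i=x_ib_i\overline{w}_0^{-1}$, i.e.\ consistently translating between the descriptions of $\mathcal{B}$ as $\GL_n/B_+$ and as $\GL_n/B_-$ and invoking $N_{\GL_n}(B_+)=B_+$ in the right place. Everything after that is a formal manipulation of double cosets.
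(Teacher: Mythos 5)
Your proof is correct and follows essentially the same approach as the paper: conjugating the Bruhat cell condition by $\overline{w}_0$, using $\overline{w}_0 B_+\overline{w}_0^{-1}=B_-$ and $w_0ww_0=w^*$. The only cosmetic difference is that you allow arbitrary coset representatives $y_i$ and verify the relation $y_i=x_ib_i\overline{w}_0^{-1}$, whereas the paper directly takes $y_i=x_i\overline{w}_0$; and you dispatch the converse via injectivity of $*$ rather than declaring it analogous.
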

\begin{proof} Let's show one direction only, for the other direction is completely analogous. Suppose $B_1=xB_+$ and $B_2=yB_+$. Then $d_+(B_1,B_2)=w$ means that $x^{-1}y\in B_+wB_+$. On the other hand since $B_+=w_0B_-$, we know that $B_1=xw_0B_-$ and $B_2=yw_0B_-$; therefore we know that 
\[
\overline{w}_0x^{-1}y\overline{w}_0^{-1}\in w_0B_+wB_+w_0=B_-w^*B_-,
\]
which implies $d_-(B_1,B_2)=w^*$.
\end{proof}

Since $d_+$ already contains the information of $d_-$, we introduce the following more concise notation: we write $\xymatrix{B_1 \ar[r]^u & B_2}$ to mean $d_+(B_1,B_2)=u$; then $d_-(B_1,B_2)=v$ can be denoted as $\xymatrix{B_1 \ar[r]^{v^*} & B_2}$; moreover, since $w_0^*=w_0^{-1}=w_0$, we can simplify $\xymatrix{B_1 \ar[r]^{w_0} & B_2}$ to $\xymatrix{B_1 \ar@{-}[r] & B_2}$ without the arrow or the argument $w_0$.

\begin{prop}\label{opposite flag} Let $B_1$ and $B_2$ be two Borel subgroups. Then the followings are equivalent:
\begin{enumerate}
\item $B_1$ and $B_2$ are opposite Borel subgroups;
\item $\xymatrix{B_1 \ar@{-}[r] & B_2}$;
\item there exists an element $g\in \GL_n$ such that $B_1=gB_+$ and $B_2=gB_-$.
\end{enumerate}
Further the choice of $g$ in (3) is unique up to a right multiple of an element from $H$.
\end{prop}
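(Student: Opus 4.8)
The plan is to prove the cyclic chain of implications $(2)\Rightarrow(3)\Rightarrow(1)\Rightarrow(2)$ and then dispose of the uniqueness clause separately. I will work with honest Borel subgroups (rather than with cosets in $\GL_n/B_\pm$) to keep the bookkeeping light, and the structural facts I will lean on throughout are: the Bruhat decomposition $\GL_n=\bigsqcup_w B_+wB_+$ together with $B_+wB_+=B_+\overline{w}B_+$; the identity $\overline{w}_0B_+\overline{w}_0^{-1}=B_-$ (equivalently, as flags, $B_+=\overline{w}_0B_-$); and the $\GL_n$-equivariance of $d_+$ proved above. The easy implication $(3)\Rightarrow(1)$ comes first: if $B_1=gB_+g^{-1}$ and $B_2=gB_-g^{-1}$, then $B_1\cap B_2=g(B_+\cap B_-)g^{-1}=gHg^{-1}$ is a maximal torus, so $B_1$ and $B_2$ are opposite.

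For $(2)\Rightarrow(3)$, write $B_1=x_1B_+x_1^{-1}$ and $B_2=x_2B_+x_2^{-1}$ (every Borel is conjugate to $B_+$). Then $d_+(B_1,B_2)=w_0$ means $x_1^{-1}x_2\in B_+\overline{w}_0B_+$, say $x_1^{-1}x_2=b\,\overline{w}_0\,b'$ with $b,b'\in B_+$. Put $g:=x_1b$; since $b\in B_+$ fixes $B_+$ under conjugation one gets $gB_+g^{-1}=x_1bB_+b^{-1}x_1^{-1}=x_1B_+x_1^{-1}=B_1$, and using $b'\in B_+$ as well, $gB_-g^{-1}=x_1b(\overline{w}_0B_+\overline{w}_0^{-1})b^{-1}x_1^{-1}=x_1b\overline{w}_0B_+\overline{w}_0^{-1}b^{-1}x_1^{-1}=x_2B_+x_2^{-1}=B_2$, which is precisely (3). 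For $(1)\Rightarrow(2)$, I use $\GL_n$-equivariance of $d_+$ and the fact that being opposite is preserved by simultaneous conjugation to reduce to $B_1=B_+$. Writing $B_2=x_2B_+x_2^{-1}$ with $x_2\in B_+\overline{w}B_+$, so that $d_+(B_+,B_2)=w$, one finds $B_2=b(\overline{w}B_+\overline{w}^{-1})b^{-1}$ for some $b\in B_+$, hence $B_+\cap B_2=b\bigl(B_+\cap\overline{w}B_+\overline{w}^{-1}\bigr)b^{-1}$; this is a maximal torus exactly when $\dim\bigl(B_+\cap\overline{w}B_+\overline{w}^{-1}\bigr)=n$, i.e. when $w=w_0$. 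Thus $d_+(B_1,B_2)=w_0$, which is (2).

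Finally, for the uniqueness of $g$: if $gB_+g^{-1}=g'B_+g'^{-1}$ and $gB_-g^{-1}=g'B_-g'^{-1}$, then $g^{-1}g'$ normalizes both $B_+$ and $B_-$, so since Borel subgroups are self-normalizing, $g^{-1}g'\in B_+\cap B_-=H$; conversely any $g'=gh$ with $h\in H$ leaves both Borels unchanged, so $g$ is determined up to right multiplication by an element of $H$. I do not expect a genuine obstacle here: the proposition is classical structure theory of Borel subgroups recast in the $d_\pm$-language, and every step reduces to the Bruhat decomposition plus the single elementary input that $B_+\cap\overline{w}B_+\overline{w}^{-1}$ is a maximal torus only for $w=w_0$ — which is the standard statement that $B_+\overline{w}_0B_+$ is the unique open Bruhat cell, and for $\GL_n$ is just the observation that $w_0$ is the only permutation with no non-inversions. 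The only point demanding care is consistently translating between the $d_+$-description and the "$g$"-description using $\overline{w}_0B_+\overline{w}_0^{-1}=B_-$, and keeping straight which of $B_1,B_2$ is presented as $gB_+g^{-1}$ and which as $gB_-g^{-1}$.
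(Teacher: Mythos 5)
Your proof is correct and follows essentially the same route as the paper's: the implications $(2)\Rightarrow(3)$ and $(3)\Rightarrow(1)$ and the uniqueness argument are the same computations (the paper phrases them with cosets $gB_\pm$ rather than conjugates $gB_\pm g^{-1}$, but that is only bookkeeping). The one step where you genuinely diverge is $(1)\Rightarrow(2)$: the paper disposes of it in one line by invoking that conjugation preserves opposite pairs (i.e.\ that any opposite pair is simultaneously conjugate to $(B_+,B_-)$, after which equivariance of $d_+$ finishes it), whereas you reduce to $B_1=B_+$ and compute directly that $B_+\cap \overline{w}B_+\overline{w}^{-1}$ is a maximal torus only for $w=w_0$. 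Your version is more self-contained, since it does not presuppose transitivity of $\GL_n$ on opposite pairs --- a fact which, in this proposition, is really only established once $(2)\Rightarrow(3)$ is in hand --- at the modest cost of a root-subgroup dimension count and, in the uniqueness clause, an appeal to self-normalization of Borels that the paper's coset formulation avoids.
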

\begin{proof} (1)$\implies$(2) follows from the fact that conjugation preserves pairs of opposite Borel subgroups.

(2)$\implies$(3): Suppose $B_1=xB_+$ and $B_2=yB_+$. Then by assumption $x^{-1}y\in B_+w_0B_+$. Thus we can find $b$ and $b'$ from $B_+$ such that $x^{-1}y=b\overline{w}_0b'$. Let $g:=xb$; then 
\[
gB_+=xB_+=B_1 \quad \quad \text{and} \quad \quad gB_-=xbw_0B_+=yB_+=B_2.
\]

(3)$\implies$(1) is trivial since $gB_+$ and $gB_-$ are obviously opposite Borel subgroups.

For the remark on the uniqueness of $g$, note that if $gB_+=g'B_+$ and $gB_-=gB_-$, then $g^{-1}g'$ is in both $B_+$ and $B_-$; but then since $B_+\cap B_-=H$, it follows that $g$ and $g'$ can only differ by a right multiple of an element from $H$.
\end{proof}

\begin{prop}\label{2.8} Suppose $l(uv)=l(u)+l(v)$. Then $\xymatrix{B_1\ar[r]^{uv} & B_2}$ if and only if there exists a Borel subgroup $B_3$ such that $\xymatrix{B_1 \ar[r]^u & B_3}$ and $\xymatrix{B_3 \ar[r]^v & B_2}$. In particular, such a Borel subgroup $B_3$ is unique.
\end{prop}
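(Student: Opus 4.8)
The plan is to reduce to the case $B_1=B_+$ using $\GL_n$-equivariance, and then to recognize the statement as the standard ``unique factorization along a reduced direction'' in Bruhat theory (equivalently, uniqueness of galleries of prescribed reduced type in the building of $\GL_n$, whose chambers are the Borel subgroups and whose Weyl distance is $d_+$). By the preceding equivariance proposition I may translate $B_1$ to $B_+$ and write $B_2=xB_+$; then $d_+(B_1,B_2)=uv$ says $x\in B_+uvB_+$, a Borel subgroup $B_3=yB_+$ with $d_+(B_1,B_3)=u$ and $d_+(B_3,B_2)=v$ is exactly a coset $yB_+$ with $y\in B_+uB_+$ and $y^{-1}x\in B_+vB_+$, and the asserted uniqueness of $B_3$ is the uniqueness of that coset.

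The key input is the refined Bruhat decomposition. For $w\in W$ set $U_w:=U_+\cap \overline{w}\,U_-\,\overline{w}^{-1}$; then the multiplication map $U_w\times B_+\to B_+wB_+$, $(n,b)\mapsto n\overline{w}b$, is a bijection, so in particular every coset in $B_+wB_+/B_+$ has a unique representative of the form $n\overline{w}$ with $n\in U_w$. The hypothesis $l(uv)=l(u)+l(v)$ enters through two standard facts: $\overline{uv}=\overline{u}\,\overline{v}$, and the multiplication map $U_u\times\bigl(\overline{u}\,U_v\,\overline{u}^{-1}\bigr)\to U_{uv}$ is a bijection (both coming from the decomposition of the inversion set of $uv$ into the inversions of $u$ and the $u$-image of the inversions of $v$). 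Granting these, existence is immediate: write $x=n\,\overline{uv}\,b$ with $n\in U_{uv}$, $b\in B_+$, factor $n=n_1\,(\overline{u}\,n_2\,\overline{u}^{-1})$ with $n_1\in U_u$, $n_2\in U_v$, and take $B_3:=n_1\overline{u}B_+$; then $n_1\overline{u}\in B_+uB_+$ and $(n_1\overline{u})^{-1}x=n_2\,\overline{v}\,b\in B_+vB_+$. For uniqueness, any admissible $B_3$ has a unique representative $n_1'\overline{u}$ with $n_1'\in U_u$, and then $(n_1'\overline{u})^{-1}x=n_2'\,\overline{v}\,b'$ with $n_2'\in U_v$, $b'\in B_+$ uniquely determined; multiplying back out gives $x=\bigl(n_1'\cdot\overline{u}\,n_2'\,\overline{u}^{-1}\bigr)\,\overline{uv}\,b'$, and comparing with the unique decomposition $x=n\,\overline{uv}\,b$ forces $n_1'\cdot\overline{u}\,n_2'\,\overline{u}^{-1}=n$, whereupon uniqueness of the factorization of $U_{uv}$ pins down $n_1'$ and hence $B_3$.

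I expect the only real work to be bookkeeping: setting up the refined Bruhat decomposition and verifying $\overline{uv}=\overline{u}\,\overline{v}$ and $U_{uv}\cong U_u\times\overline{u}\,U_v\,\overline{u}^{-1}$ carefully in terms of the chosen lifts $\overline{w}$, and keeping clear that these genuinely require $l(uv)=l(u)+l(v)$. Without that hypothesis one has only the inclusion $B_+uvB_+\subseteq B_+uB_+\cdot B_+vB_+$, and an intermediate $B_3$ of the wrong total length can exist or fail to be unique, so the length condition is doing all the lifting. As an alternative to the group computation one could instead induct on $l(v)$, using the rank-one case $v=s_i$ together with the $\mathrm{BN}$-pair identity $B_+wB_+\cdot B_+s_iB_+=B_+ws_iB_+$ when $l(ws_i)>l(w)$; but the unique-representative statement above makes the inductive step's uniqueness claim essentially the same computation.
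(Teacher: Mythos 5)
Your proof is correct, but it takes a genuinely different route from the paper's. The paper handles existence by citing the product formula $(B_+uB_+)(B_+vB_+)=B_+uvB_+$ under length additivity, and handles uniqueness by a separate lemma proved by induction on a reduced word, using the BN-pair axiom $(Bs B)^2\subset B\cup BsB$ together with the disjointness of Bruhat cells to rule out the bad case at each step; that lemma actually yields the stronger statement that \emph{every} intermediate Borel subgroup along a reduced gallery is unique. You instead invoke the refined Bruhat decomposition $B_+wB_+\cong U_w\times B_+$ with $U_w=U_+\cap\overline{w}U_-\overline{w}^{-1}$, plus the factorization $U_{uv}\cong U_u\times\overline{u}U_v\overline{u}^{-1}$ and $\overline{uv}=\overline{u}\,\overline{v}$ under length additivity, which gives existence and uniqueness simultaneously in one non-inductive computation with explicit normal forms; the cost is that you must set up (or cite) the root-subgroup decomposition of $U_w$ and the compatibility of the lifts $\overline{w}$, whereas the paper's argument is purely axiomatic BN-pair reasoning. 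One small point worth making explicit in your write-up: the ``only if'' direction of the equivalence (that the existence of such a $B_3$ forces $d_+(B_1,B_2)=uv$) still needs the inclusion $B_+uB_+\cdot B_+vB_+\subseteq B_+uvB_+$, which follows from the same normal-form computation but is not literally stated in your argument.
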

\begin{proof} The existence part follows from the general fact about semisimple Lie groups that 
\[
(B_+uB_+)(B_+vB_+)=B_+uvB_+
\]
whenever $l(uv)=l(u)+l(v)$ (see for example \cite{Hum} Section 29.3 Lemma A). The unique part follows from the following lemma.
\end{proof}

\begin{lem} Let $B$ be a Borel subgroup of $\GL_n$. If $x\in BwB$ where $w=s_{\alpha(1)}s_{\alpha(2)}\dots s_{\alpha(l)}$ is a reduced word for $w$, then there exists $x_k\in Bs_{\alpha(k)}B$ such that $x=x_1x_2\dots x_l$; further if $x=x'_1x'_2\dots x'_l$ is another such factorization then $x_k^{-1}x'_k\in B$ for all $1\leq k\leq l-1$ and $x'_kx_k^{-1}\in B$ for all $2\leq k\leq l$.
\end{lem}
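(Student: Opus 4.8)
The plan is to induct on the length $l$ of the reduced word. The base cases $l=0$ (where $x\in B$ and the product is empty) and $l=1$ (where necessarily $x_1=x=x_1'$) are immediate, so assume $l\ge 2$ and write $w=s_{\alpha(1)}w'$ with $w'=s_{\alpha(2)}\cdots s_{\alpha(l)}$ a reduced word of length $l-1$.

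For existence I would first invoke the product-of-cells identity already used in the proof of Proposition~\ref{2.8} (cf.\ \cite{Hum}): since $l(s_{\alpha(1)}w')=1+l(w')$, we have $BwB=(Bs_{\alpha(1)}B)(Bw'B)$. Hence any $x\in BwB$ factors as $x=x_1z$ with $x_1\in Bs_{\alpha(1)}B$ and $z\in Bw'B$, and the inductive hypothesis applied to $z$ supplies the remaining factors $z=x_2\cdots x_l$.

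For the uniqueness part, suppose $x=x_1x_2\cdots x_l=x_1'x_2'\cdots x_l'$ and put $z:=x_2\cdots x_l$, $z':=x_2'\cdots x_l'\in Bw'B$, so $x=x_1z=x_1'z'$. The heart of the argument is to show $x_1^{-1}x_1'\in B$; I would deduce this from the standard fact that, because $l(s_{\alpha(1)}w')>l(w')$, the multiplication map $Bs_{\alpha(1)}B\times_B Bw'B\to BwB$ (the quotient of the product by $(g,h)\sim(gb,b^{-1}h)$, $b\in B$) is a bijection — in other words, writing $x$ as a product of an element of $Bs_{\alpha(1)}B$ and an element of $Bw'B$ is unique up to replacing $(g,h)$ by $(gb,b^{-1}h)$. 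Applying this to the two expressions $x=x_1z=x_1'z'$ yields $b\in B$ with $x_1'=x_1b$ and $z'=b^{-1}z$; in particular $x_1^{-1}x_1'=b\in B$. Now $z'=b^{-1}z=(b^{-1}x_2)x_3\cdots x_l$ is again a factorization of $z'$ of the required type, as is $z'=x_2'\cdots x_l'$, so feeding both into the inductive hypothesis (for the word $w'$) gives $(x_2\cdots x_k)^{-1}b(x_2'\cdots x_k')\in B$ for each $k$. Since
\[
(x_1x_2\cdots x_k)^{-1}(x_1'x_2'\cdots x_k')=(x_2\cdots x_k)^{-1}(x_1^{-1}x_1')(x_2'\cdots x_k')=(x_2\cdots x_k)^{-1}b(x_2'\cdots x_k'),
\]
we conclude that $(x_1\cdots x_k)^{-1}(x_1'\cdots x_k')\in B$ for every $k$, which contains the relation $x_1^{-1}x_1'\in B$ and, more generally, says the two sequences of partial products agree up to right multiplication by an element of $B$. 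The companion relations then follow formally: writing $x_k\cdots x_l=(x_1\cdots x_{k-1})^{-1}x$ shows $(x_k'\cdots x_l')(x_k\cdots x_l)^{-1}=\bigl[(x_1\cdots x_{k-1})^{-1}(x_1'\cdots x_{k-1}')\bigr]^{-1}\in B$, which in particular gives $x_l'x_l^{-1}\in B$; alternatively one re-runs the whole induction peeling off the last factor instead of the first.

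I expect the only real obstacle to be the single structural input, namely that the ``first factor'' of $x$ is well defined up to right multiplication by $B$, i.e.\ the bijectivity of $Bs_{\alpha(1)}B\times_B Bw'B\to BwB$ when $l(s_{\alpha(1)}w')>l(w')$; everything else is formal bookkeeping with the inductive hypothesis. This bijectivity is classical ($BN$-pair theory), but if a self-contained argument is desired one can check it on the flag variety, where $Bw'B/B$ is an affine cell of dimension $l(w')$, left translation by a fixed element of $Bs_{\alpha(1)}B$ is injective on it, and as that element varies over $Bs_{\alpha(1)}B$ the translates cover exactly the affine cell $BwB/B$ of dimension $l(w')+1$; injectivity together with the dimension count then pins the factorization down to precisely the claimed ambiguity.
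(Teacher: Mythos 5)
Your proof is correct and follows essentially the same inductive strategy as the paper: the key structural input in both cases is that, when $l(sw')=l(w')+1$, the factorization of $x\in Bsw'B$ into $(BsB)\cdot(Bw'B)$ is unique up to the twist $(g,h)\mapsto(gb,b^{-1}h)$. The only cosmetic differences are that you peel off the \emph{first} letter $s_{\alpha(1)}$ whereas the paper peels off the \emph{last} letter $s_{\alpha(l)}$, and that you invoke the twisted-product bijectivity as a known BN-pair fact and then carry out some bookkeeping with partial products, while the paper proves the same uniqueness inline from $(BsB)^2\subset B\cup BsB$ together with the disjointness of the Bruhat decomposition. One caveat: the ``self-contained'' dimension-count argument you offer at the end does not actually establish injectivity. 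Knowing that $Bw'B/B$ is a cell of dimension $l(w')$, that left translation by a fixed $g$ is injective, and that the union of the translates over $g\in Bs_{\alpha(1)}B$ equals the cell $BwB/B$ of dimension $l(w')+1$ only re-proves the \emph{existence} of a factorization, not its uniqueness modulo $B$; a dimension count cannot rule out translates by non-$B$-equivalent $g$'s overlapping (just as two lines through the origin sweep out $\mathbb{A}^2$ while still meeting). To make that step rigorous you would have to run the same argument the paper does: if $g_2^{-1}g_1\in Bs_{\alpha(1)}B$ rather than $B$, then multiplying it against $h_1\in Bw'B$ would land $h_2$ in $BwB$ rather than $Bw'B$, contradicting the Bruhat decomposition. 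Since you also cite the BN-pair fact directly, the overall proof stands.
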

\begin{proof} The existence part is essentially the same as the existence part of the above proposition, so it suffices to show the uniqueness part. We will do an induction on $l$. There is nothing to show for the base case $l=1$. Suppose $l>1$. Let $x=yx_l=y'x'_l$ where both $y$ and $y'$ are in $Bs_{\alpha(1)}\dots s_{\alpha(l-1)}B$ and both $x_l$ and $x'_l$ are in $Bs_{\alpha(l)}B$. Then from the fact that $(Bs_{\alpha(l)}B)^2\subset B\cup Bs_{\alpha(l)}B$ we know that $x'_lx_l^{-1}$ is in either $B$ or $Bs_{\alpha(l)}B$. To rule out the latter possibility, note that if $x'_lx_l^{-1}\in Bs_{\alpha(l)}B$, then $y'x'_lx_l^{-1}=xx_l^{-1}=y$ is in both Bruhat cells $Bs_{\alpha(1)}\dots s_{\alpha(l-1)}B$ and $BwB$, which contradicts the Bruhat decomposition. Thus $x'_lx_l^{-1}\in B$.

The fact that $x'_lx_l^{-1}\in B$ implies that $y^{-1}y'=x_lx^{-1}x{x'_l}^{-1}=x_l{x'_l}^{-1}\in B$. Thus $y$ and $y'$ can only differ by a right multiple of $B$. This difference can be absorbed into the right ambiguity of $x_{l-1}$, and hence without loss of generality one can assume that $y=y'$, and the proof is finished by induction.
\end{proof}

After all the basic facts and notations, we are now ready to link $\mathcal{B}$ to double Bruhat cells $\GL_n^{u,v}$.

\begin{defn} For a pair of Weyl group elements $(u,v)$ we define the \textit{configuration space of Borel subgroups} $\conf^{u,v}(\mathcal{B})$ to be the quotient space of quadruples of Borel subgroups $(B_1,B_2,B_3,B_4)$ satisfying the following \textit{square diagram} relation
\[
\xymatrix{B_1 \ar[r]^u \ar@{-}[d] & B_4 \ar@{-}[d] \\
B_3 \ar[r]_{v^*} & B_2}
\]
modulo the diagonal action by $G$, i.e., $(gB_1g^{-1},gB_2g^{-1},gB_3g^{-1},gB_4g^{-1})\sim (B_1,B_2,B_3,B_4)$.
\end{defn}

\begin{prop}\label{flag-bruhat} $\conf^{u,v}(\mathcal{B})\cong H\backslash \GL_n^{u,v}/H$.
\end{prop}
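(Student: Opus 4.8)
The plan is to construct an explicit bijection between the two spaces and check that it is well-defined in both directions. The key idea is that a quadruple $(B_1,B_2,B_3,B_4)$ in the square diagram has $B_1$ opposite to $B_3$ and $B_4$ opposite to $B_2$, so by Proposition \ref{opposite flag} we may use up the diagonal $G$-action to normalize $B_1=B_+$ and $B_3=B_-$, after which the remaining freedom is exactly a simultaneous conjugation by the torus $H$ (the stabilizer of the pair $(B_+,B_-)$). Having fixed this normalization, I would write $B_4=gB_+$ and $B_2=gB_-$ for some $g\in\GL_n$, unique up to right multiplication by $H$ again by Proposition \ref{opposite flag}. The horizontal edges then translate, via the definition of $d_\pm$, into the conditions $g\in B_+uB_+$ (from $d_+(B_1,B_4)=u$, since $B_1=B_+$) and $g\in B_-vB_-$ (from $d_-(B_3,B_2)=v$, i.e.\ $d_+(B_3,B_2)=v^*$, since $B_3=w_0 B_+$ and a short computation with $\overline w_0$ converts the $v^*$ condition on the $B_+$-coset back to a $v$ condition on the $B_-$-coset). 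Hence $g\in\GL_n^{u,v}$, and the map $(B_1,B_2,B_3,B_4)\mapsto g$ descends to a well-defined map $\conf^{u,v}(\mathcal{B})\to H\backslash\GL_n^{u,v}/H$.

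For the inverse, given $x\in\GL_n^{u,v}$ I would send it to the quadruple $(B_+,xB_-,B_-,xB_+)$. One checks directly that $B_+$ is opposite $B_-$ and $xB_+$ is opposite $xB_-$ (these are the two vertical edges), that $d_+(B_+,xB_+)=u$ because $x\in B_+uB_+$, and that $d_-(B_-,xB_-)=v$ because $x\in B_-vB_-$; the latter is exactly the condition $d_+(B_-,xB_-)=v^*$ labelling the bottom edge of the square diagram. Replacing $x$ by $h_1 x h_2$ with $h_i\in H$ conjugates the quadruple: $h_1\in H$ fixes $B_+$ and $B_-$, and one absorbs $h_2$ by conjugating the whole quadruple by $h_1$, so the map descends to $H\backslash\GL_n^{u,v}/H\to\conf^{u,v}(\mathcal{B})$. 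The two maps are mutually inverse essentially by construction: starting from a normalized quadruple with $B_1=B_+$, $B_3=B_-$ we produced $g$, and plugging $g$ back gives $(B_+,gB_-,B_-,gB_+)$, which is the original quadruple; conversely starting from $x$, the normalization of $(B_+,xB_-,B_-,xB_+)$ is already in standard form and the associated group element is $x$.

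The main obstacle, and the only place requiring genuine care rather than bookkeeping, is the translation between the $d_-$ condition on the bottom edge and a Bruhat-cell condition that can be read off from the single group element $g$ (equivalently $x$). One must be careful about which coset representatives are used and how $\overline w_0$ interacts with the decomposition $B_+=w_0B_-$; this is precisely the computation appearing in the proof of the proposition relating $d_+$ and $d_-$, and I would invoke that proposition rather than redo it. A secondary subtlety is verifying that the residual symmetry after normalizing the two opposite pairs is exactly the diagonal torus and not something larger: this is the uniqueness clause in Proposition \ref{opposite flag} together with $B_+\cap B_-=H$, so it is already in hand. Once these two points are settled, well-definedness, surjectivity, and injectivity all follow formally, and I would present the argument as: (i) normalize the quadruple; (ii) extract $g\in\GL_n^{u,v}$; (iii) identify the residual ambiguity with $H\backslash(-)/H$; (iv) write down the inverse and check the round trips.
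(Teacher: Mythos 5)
Your proposal is correct and follows essentially the same route as the paper: normalize the two opposite pairs via Proposition \ref{opposite flag} so that the quadruple becomes $(B_+, xB_-, B_-, xB_+)$, observe that the square-diagram conditions force $x\in B_+uB_+\cap B_-vB_-$, and identify the residual ambiguity (diagonal $H$-conjugation plus the right $H$-coset freedom) with the double quotient $H\backslash\GL_n^{u,v}/H$. The extra care you take with the $d_-$ condition on the bottom edge and with the round-trip checks is exactly the content the paper compresses into ``it is not hard to see.''
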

\begin{proof} By Proposition \ref{opposite flag}, any equivalence class $[B_1,B_2,B_3,B_4]$ in $\conf^{u,v}(\mathcal{B})$ can be represented by 
\[
\xymatrix{B_+ \ar[r]^u \ar@{-}[d] & xB_+ \ar@{-}[d] \\
B_- \ar[r]_{v^*} & xB_-}
\]
for some $x\in \GL_n$, and the choice of $x$ is unique up to a left multiple and a right multiple by elements in $H$. Note that by definition of $\conf^{u,v}(\mathcal{B})$, $x\in B_+uB_+\cap B_-vB_-$. Thus the map $[B_1,B_2,B_3,B_4]\mapsto H\backslash x/H$ is a well-defined map from $\conf^{u,v}(\mathcal{B})$ to $H\backslash \GL_n^{u,v}/H$, and it is not hard to see that it is indeed an isomorphism. 
\end{proof}

Using the isomorphism $\conf^{u,v}(\mathcal{B})\cong H\backslash \GL_n^{u,v}/H$, we can obtain our candidate map $\mathrm{DT}$ on $H\backslash \GL_n^{u,v}/H$ from a natural automorphism $\eta$ on $\conf^{u,v}(\mathcal{B})$, which we will introduce next.

Let $w^c:=w_0w^{-1}$ for any Weyl group element $w$; by computation it is not hard to see that $w_0=w^cw=w^*w^c$ and $l(w_0)=l(w^c)+l(w)=l(w^*)+l(w^c)$. But then Proposition \ref{2.8} tells us that we can find two new Borel subgroups $B_5$ and $B_6$ to put into the middle of the two vertical edges in the above diagram, forming the following hexagon diagram.
\[
\xymatrix{ & B_6\ar[r]^{u^c} \ar@{-}[drr] & B_1 \ar[dr]^u \ar@{-}[dll] & \\
B_3 \ar[ur]^{u^*} \ar[dr]_{v^*} \ar@{-}[drr] & & &  B_4 \ar@{-}[dll] \\
& B_2 \ar[r]_{v^c} & B_5 \ar[ur]_v &}
\]

Note that if we take out the parallelogram with vertices $B_3, B_4, B_5$, and $B_6$, and apply the involution $*$, we get another diagram of a quadruple representing a point in $\conf^{u,v}(\mathcal{B})$.
\[
\xymatrix{B_3^* \ar[r]^u \ar@{-}[d] & B_6^* \ar@{-}[d]\\
B_5^* \ar[r]_{v^*}& B_4^*}
\]
This observation gives rise to a map
\begin{align*}
\eta:\conf^{u,v}(\mathcal{B})&\rightarrow \conf^{u,v}(\mathcal{B})\\
[B_1,B_2,B_3,B_4]&\mapsto [B_3^*, B_4^*,B_5^*,B_6^*].
\end{align*}

\begin{prop}\label{eta} Via the isomorphism $\conf^{u,v}(\mathcal{B})\cong H\backslash \GL_n^{u,v}/H$, the map $\eta$ translates into the map $\mathrm{DT}$ on $H\backslash \GL_n^{u,v}/H$, whose definition is given in our main theorem (Theorem \ref{main}).
\end{prop}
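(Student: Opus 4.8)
The plan is to unwind both sides of the claimed equality through the explicit representatives of the configurations appearing in the hexagon diagram. First I would fix the standard representative of $[B_1,B_2,B_3,B_4]$ provided by Proposition~\ref{flag-bruhat}, namely
\[
\xymatrix{B_1=B_+ \ar[r]^u \ar@{-}[d] & B_4=xB_+ \ar@{-}[d] \\ B_3=B_- \ar[r]_{v^*} & B_2=xB_-}
\]
with $x\in\GL_n^{u,v}$, and then compute the two interpolating Borel subgroups $B_5$ and $B_6$ explicitly in terms of $x$, $\overline{u}$, $\overline{v}$, and the lifts $\overline{u^c}$, $\overline{v^c}$. The key point is that Proposition~\ref{2.8} makes $B_5$ and $B_6$ unique, so any correctly-typed formula I write down is forced to be correct; I only need to exhibit group elements realizing the required relative positions. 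Concretely, from $\xymatrix{B_3\ar[r]^{u^*} & B_6\ar[r]^{u^c} & B_1}$ with $u^*u^c=w_0$ I would guess $B_6=\overline{u}^{-1}B_+$ after suitably normalizing (using $d_+(B_-,\overline{u}^{-1}B_+)$ and the fact that $B_-=\overline{w}_0 B_+$), and similarly obtain $B_5$ on the bottom edge using $x\overline{v^{-1}}$; the Gaussian decomposition enters precisely because one has to move each of these representatives back into the form $gB_+$, $gB_-$ to apply Proposition~\ref{opposite flag} and read off the output of $\eta$.

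Once $B_3,B_4,B_5,B_6$ are all written as cosets of a common type, I would apply the involution $*$ termwise. Here the definition $g^*=\overline{w}_0(g^{-1})^t\overline{w}_0^{-1}$ interacts with the coset representatives; since $B_\pm^*=B_\pm$, the Borel subgroup $gB_+$ maps to $g^*B_+=\overline{w}_0(g^{-1})^t\overline{w}_0^{-1}B_+$, and after absorbing $\overline{w}_0$ into the conjugation (which is harmless because $\conf^{u,v}(\mathcal B)$ is a $\GL_n$-quotient) this becomes, up to the diagonal action, the transpose-inverse operation. I would then re-normalize the quadruple $[B_3^*,B_4^*,B_5^*,B_6^*]$ into the canonical form $[B_+, yB_-, B_-, yB_+]$ of Proposition~\ref{flag-bruhat}; the resulting $y\in\GL_n^{u,v}$, taken modulo $H$ on both sides, is by definition the image $\DT(H\backslash x/H)$. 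The bookkeeping will produce exactly the bracketed expression $\left(\left[\overline{u}^{-1}x\right]_-^{-1}\overline{u}^{-1}x\overline{v^{-1}}\left[x\overline{v^{-1}}\right]_+^{-1}\right)^t$ of Theorem~\ref{main}: the factor $\left[\overline{u}^{-1}x\right]_-^{-1}$ is the left $U_-$-correction needed to bring the representative coming from $B_6$ back to $B_+$, the factor $\left[x\overline{v^{-1}}\right]_+^{-1}$ is the analogous right $U_+$-correction coming from $B_5$, and the outer transpose is what survives of $*$ after the $\overline{w}_0$-conjugations cancel against the normalization of $w_0$-edges.

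The main obstacle I anticipate is the consistent tracking of the lifts and of the $w_0$-edges through the involution $*$. There are several places where a choice of representative differs by a right $H$-factor (the uniqueness clause in Proposition~\ref{opposite flag}) or where a Bruhat-cell membership only pins down an element up to $B_\pm$ on one side (the Lemma following Proposition~\ref{2.8}); I must check that every such ambiguity is genuinely killed by the final double quotient $H\backslash(-)/H$, and in particular that the two interpolating Borel subgroups $B_5,B_6$ — though individually only defined up to the relevant unipotent ambiguity — contribute \emph{well-defined} Gaussian-decomposition factors $[\,\cdot\,]_-$ and $[\,\cdot\,]_+$. A secondary subtlety is the identity $\overline{s}_i^*=\overline{s_i^*}$ and its consequence $\overline{w}^*=\overline{w^*}$ for the lifts, together with $\overline{s}_i^{-1}=\overline{s}_i^{t}$, which I will need in order to commute $*$ past $\overline{u}$ and $\overline{v}$ cleanly; I expect these to follow formally from the material already assembled in Subsection~\ref{bruhat}, but getting the inverses and transposes on the correct side is where an error is most likely to creep in. Modulo that careful bookkeeping, the proof is a direct computation with no essential new idea beyond the hexagon construction itself.
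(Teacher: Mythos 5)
Your plan is the same as the paper's: fix the representative $(B_+, xB_-, B_-, xB_+)$, determine $B_5, B_6$ explicitly, apply the involution $*$, renormalize via Proposition~\ref{opposite flag}, and read off the answer. However, the concrete guess $B_6=\overline{u}^{-1}B_+$ is wrong and would derail the computation. Carrying out the check you describe: with $B_-=\overline{w}_0 B_+$, the class $d_+(B_-,\overline{u}^{-1}B_+)$ is that of $\overline{w}_0^{-1}\overline{u}^{-1}$, which projects in $W$ to $w_0 u^{-1}=u^c$, not $u^*$, so this choice violates the hexagon's edge relation $d_+(B_3,B_6)=u^*$. The correct choice (and the one the paper uses) is $B_6=\overline{u}B_-=\overline{u}\overline{w}_0 B_+$: then $\overline{w}_0^{-1}\bigl(\overline{u}\overline{w}_0\bigr)$ projects to $w_0 u w_0=u^*$ and $\bigl(\overline{u}\overline{w}_0\bigr)^{-1}$ projects to $w_0 u^{-1}=u^c$, as required. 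Your $B_5=x\overline{v^{-1}}B_+$ is right.

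With $B_6$ corrected, the rest of your outline matches the paper's computation: rewrite $\bigl(B_-,\,xB_+,\,x\overline{v^{-1}}B_+,\,\overline{u}B_-\bigr)$ as $(yB_+,zB_-,yB_-,zB_+)$ with $y=\left[x\overline{v^{-1}}\right]_-\overline{w}_0^{-1}$ and $z=\overline{u}\left[\overline{u}^{-1}x\right]_-\overline{w}_0^{-1}$, so the image is $H\backslash(y^{-1}z)^*/H$. The two $\overline{w}_0$-conjugations inside $*$ then cancel the $\overline{w}_0^{-1}$ factors in $y$ and $z$ up to an element of $H$, and the residual torus factor $\left[x\overline{v^{-1}}\right]_0$ is killed by the double quotient, producing the formula of Theorem~\ref{main}. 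Your concern about $H$- and $B_\pm$-ambiguities is well placed but, as you anticipate, all washes out: Proposition~\ref{2.8} fixes $B_5,B_6$ uniquely as Borel subgroups, and the remaining coset ambiguities are exactly the left and right $H$-factors killed in $H\backslash\GL_n^{u,v}/H$.
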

\begin{proof} Recall that $H\backslash x/H$ corresponds to a configuration that can be represented as
\[
\xymatrix{ B_+ \ar@{-}[d] \ar[r]^u & xB_+\ar@{-}[d] \\
B_- \ar[r]_{v^*} & xB_-}
\]
It is not hard to see that
\[
B_5:=x\overline{v^{-1}}B_+ \quad \quad \text{and}\quad \quad B_6:=\overline{u}B_-
\]
will fit into the hexagon diagram
\[
\xymatrix{ & \overline{u}B_- \ar[r]^{u^c} \ar@{-}[drr] & B_+ \ar[dr]^u \ar@{-}[dll] & \\
B_- \ar[ur]^{u^*} \ar[dr]_{v^*} \ar@{-}[drr] & & &  xB_+ \ar@{-}[dll] \\
& xB_- \ar[r]_{v^c} & x\overline{v^{-1}}B_+ \ar[ur]_v &}
\]
Thus by definition the $\eta$ map maps the configuration $[B_+,xB_-,B_-,xB_+]$ to 
\[
\left[\vcenter{\xymatrix{B_- \ar@{-}[d] \ar[r]^{u^*} & \overline{u}B_- \ar@{-}[d] \\
x\overline{v^{-1}}B_+\ar[r]_v & xB_+}}\right]^*
\]
To compute the corresponding image of $\eta$ in $H\backslash \GL_n^{u,v}/H$ we need to rewrite the quadruple of Borel subgroups $\left(B_-,xB_+, x\overline{v^{-1}}B_+,\overline{u}B_-\right)$ as $\left(yB_+,zB_-,yB_-,zB_+\right)$ for some elements $y$ and $z$ in $\GL_n$. Following the guideline we have in the proof of Proposition \ref{opposite flag} we can easily compute
\[
y=\left[x\overline{v^{-1}}\right]_-\overline{w}_0^{-1} \quad \quad \text{and} \quad \quad z=\overline{u}\left[\overline{u}^{-1}x\right]_-\overline{w}_0^{-1}.
\]
Thus the corresponding image of $\eta$ is
\[
H\backslash\left(y^{-1}z\right)^*/H=H\backslash\left(\left[\overline{u}^{-1}x\right]_-^{-1}\overline{u}^{-1}x\overline{v^{-1}}\left[x\overline{v^{-1}}\right]_+^{-1}\right)^t/H,
\]
which is exactly what we had in the definition of our candidate map $\mathrm{DT}$.
\end{proof}

\subsection{Bipartite Graphs}\label{bipartite}

Our next goal is to introduce another set of tools we will use in this paper: bipartite graphs. Unlike last subsection, the bipartite graph technique seems to be applicable only to classical groups of Dynkin type $A_n$ but not other semisimple Lie groups in general.

Let $(u,v)$ be a pair of Weyl group elements of $\GL_n$ and let $\vec{i}=(i(1),\dots, i(l))$ be a reduced word of $(u,v)$. The procedure of producing the associated bipartite graph $\Gamma_\vec{i}$ is the following.
\begin{enumerate}
    \item Draw $n$ horizontal parallel lines with $n-1$ spacings in between.
    \item As we go from left to right along these horizonal parallel lines and go from $i(1)$ to $i(l)$ in the reduced word $\vec{i}$, draw a $\tikz[baseline=-0.5ex]{\draw[ultra thick] (0,0.5) -- (0,-0.5); \draw[fill=white] (0,0.5) circle [radius=0.2]; \draw[fill=black] (0,-0.5) circle [radius=0.2];}$ across the $i(k)$th spacing (between the $i(k)$th line and the $(i(k)+1)$th line, counting from the top) if $i(k)>0$ and a $\tikz[baseline=-0.5ex]{\draw[ultra thick] (0,0.5) -- (0,-0.5); \draw[fill=black] (0,0.5) circle [radius=0.2]; \draw[fill=white] (0,-0.5) circle [radius=0.2];}$ across the $-i(k)$th spacing if $i(k)<0$.
    \item Put a white vertex between every two neighboring black vertices and draw a black vertex between every two neighboring white vertices.
    \item If either end (or both ends) of a parallel line does not end with a white vertex, then add one to it.
\end{enumerate}

\begin{exmp}\label{2.2} Consider the reduced word $(1,-1)$ of the pair of Weyl group elements $(w_0,w_0)$ for $\GL_2$. The corresponding bipartite graph will look like the following.
\[
\tikz{
\draw[ultra thick] (0,0) -- (5,0);
\draw[ultra thick] (0,1) -- (5,1);
\draw[ultra thick] (2,0) -- (2,1);
\draw[ultra thick] (3,0) -- (3,1);
\draw[fill=white] (2,1) circle [radius=0.2];
\draw[fill=white] (1,0) circle [radius=0.2];
\draw[fill=white] (3,0) circle [radius=0.2];
\draw[fill=white] (4,1) circle [radius=0.2];
\draw[fill=black] (2,0) circle [radius=0.2];
\draw[fill=black] (3,1) circle [radius=0.2];
}
\]

For a slightly more complicated example, consider the reduced word $(1,-2,2,1,-1,-2)$ of the pair of Weyl group elements $(w_0,w_0)$ for $\GL_3$. The corresponding bicolor graph will look like the following.
\[
\tikz{
    \draw[ultra thick] (-2,0) -- (8,0);
    \draw[ultra thick] (-2,1) -- (8,1);
    \draw[ultra thick] (-2,2) -- (8,2);
    \draw[ultra thick] (0,1) -- (0,2);
    \draw[ultra thick] (2,1) -- (2,0);
    \draw[ultra thick] (3,1) -- (3,0);
    \draw[ultra thick] (4,1) -- (4,2);
    \draw[ultra thick] (5,1) -- (5,2);
    \draw[ultra thick] (6,1) -- (6,0);
    \draw[fill=white] (0,2) circle [radius=0.2];
    \draw[fill=black] (0,1) circle [radius=0.2];
    \draw[fill=white] (2,0) circle [radius=0.2];
    \draw[fill=black] (2,1) circle [radius=0.2];
    \draw[fill=white] (3,1) circle [radius=0.2];
    \draw[fill=black] (3,0) circle [radius=0.2];
    \draw[fill=white] (4,2) circle [radius=0.2];
    \draw[fill=black] (4,1) circle [radius=0.2];
    \draw[fill=white] (5,1) circle [radius=0.2];
    \draw[fill=black] (5,2) circle [radius=0.2];
    \draw[fill=white] (6,0) circle [radius=0.2];
    \draw[fill=black] (6,1) circle [radius=0.2];
    \draw[fill=white] (-1,1) circle [radius=0.2];
    \draw[fill=white] (1,1) circle [radius=0.2];
    \draw[fill=white] (7,2) circle [radius=0.2];
    \draw[fill=white] (7,1) circle [radius=0.2];
    \draw[fill=black] (2,2) circle [radius=0.2];
}
\]
\end{exmp}

Using bipartite graphs constructed from this procedure Fock and Marshakov \cite{FM} gave a pictorial description of Fock and Goncharov's amalgamation map in \cite{FGD}, and in our case it is the following. Fix a reduced word $\vec{i}$ and its associated bipartite graph $\Gamma$. Associate to each face $f$ of $\Gamma$ a variable $X_f$. Then going from left to right over the bipartite graph $\Gamma$ we multiply the following factors in order:
\begin{enumerate}
    \item $h^i(X_f)$ for every face $f$ lying in the $i$th spacing;
    \item $e_i$ for every vertical edge $\tikz[baseline=-0.5ex]{\draw[ultra thick] (0,0.5) -- (0,-0.5); \draw[fill=white] (0,0.5) circle [radius=0.2]; \draw[fill=black] (0,-0.5) circle [radius=0.2];}$ in the $i$th spacing;
    \item $e_{-i}$ for every vertical edge $\tikz[baseline=-0.5ex]{\draw[ultra thick] (0,0.5) -- (0,-0.5); \draw[fill=black] (0,0.5) circle [radius=0.2]; \draw[fill=white] (0,-0.5) circle [radius=0.2];}$ in the $i$th spacing.
\end{enumerate}
We then treat the resulting expression as a map (the tilde notation will become clear later)
\[
\tilde{\chi}_\vec{i}:(\mathbb{C}^*)^{\text{\# faces}} \rightarrow \GL_n^{u,v}.
\]
The image lands inside $\GL_n^{u,v}$ because $e_{\pm i}\in B_\pm \cap (B_\mp s_iB_\mp)$ and $\prod_{k=1}^l B_\pm s_{i(k)} B_\pm = B_\pm wB\pm$ whenever $(i(1),i(2),\dots, i(l))$ is a reduced word of $w$.

\begin{prop} The map $\tilde{\chi}_\vec{i}$ is well-defined.
\end{prop}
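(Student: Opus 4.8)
The recipe for $\tilde{\chi}_\vec{i}$ tells us to multiply the factors $h^i(X_f)$ and $e_{\pm i}$ ``in order from left to right'', but this prescription does not by itself pin down a total order on the factors: a face and a vertical edge, or two faces, that lie in different spacings may overlap in their horizontal extent, leaving their relative position in the product undetermined. The plan is to show that the element of $\GL_n$ produced is nonetheless independent of every such choice, by checking that any two factors whose order is \emph{not} forced by the left-to-right reading in fact commute in $\GL_n$; then any two admissible readings of the product differ only by a sequence of interchanges of adjacent commuting factors and hence yield the same element.

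First I would record the relevant commutation relations. Every factor $h^i(X)$ is a diagonal matrix, so the $h$-factors commute with each other; moreover $h^i(X)$ is the diagonal matrix whose first $i$ entries equal $X$ and whose remaining entries equal $1$, so conjugation by it rescales $E_{j,j+1}$ or $E_{j+1,j}$ nontrivially only when $j=i$, whence $h^i(X)$ commutes with $e_{\pm j}$ for all $j\neq i$. Also $e_{\pm i}$ commutes with $e_{\pm j}$ whenever $|i-j|\geq 2$. Consequently the only pairs of factors that can fail to commute are: two vertical edges lying in the same or in adjacent spacings, and a face and a vertical edge lying in the same spacing. I then observe that each such pair occurs in a forced order in any legitimate reading: the vertical edges are drawn one after another as one scans the word from $i(1)$ to $i(l)$, so they sit at strictly increasing horizontal positions and are multiplied in the order dictated by $\vec{i}$ — in particular any two edges in the same or in adjacent spacings are pre-ordered; and a face $f$ lying in the $i$th spacing is flanked on the left and on the right by the two nearest $i$th-spacing vertical edges (or, for the leftmost and rightmost faces of that spacing, by an end of the horizontal strip), so the factor $h^i(X_f)$ occupies a determined position relative to all the $e_{\pm i}$'s, the only factors it need not commute with. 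Since every potentially non-commuting pair is thereby pre-ordered, any two admissible readings differ only by transpositions of adjacent commuting factors and define the same element of $\GL_n$.

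I expect the only genuine work to be the italicized structural claim in the last step, which must be extracted from the explicit construction of $\Gamma_\vec{i}$ in Subsection~\ref{bipartite}: namely that a face lying strictly between the $i$th and $(i+1)$th horizontal lines has exactly two vertical edges on its boundary and that both of these cross the $i$th spacing — equivalently, that entering or leaving that horizontal strip requires crossing an $i$th-spacing edge — so that $h^i(X_f)$ is never asked to be commuted past an $e_{\pm i}$ whose order relative to it is not already fixed. Granting this, $\tilde{\chi}_\vec{i}$ is well-defined as a map to $\GL_n$; that its image in fact lies in $\GL_n^{u,v}$ is the observation already indicated above, since the $h$- and $e_+$-factors lie in $B_+$ and the $h$- and $e_-$-factors lie in $B_-$, while $e_{\pm i}\in B_\pm\cap(B_\mp s_iB_\mp)$ and $\prod_k B_\pm s_{i(k)}B_\pm=B_\pm wB_\pm$ for a reduced word of $w$.
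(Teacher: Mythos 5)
Your proposal is correct and follows essentially the same route as the paper: the paper likewise observes that the only ambiguity is the relative placement of a face factor $h^i(X_f)$ and a vertical-edge factor $e_{\pm j}$ in a different spacing $j\neq i$, and resolves it by the commutation $h^i(X)e_{\pm j}=e_{\pm j}h^i(X)$. Your additional bookkeeping (that vertical edges are totally ordered by the word and that a face is flanked by the vertical edges of its own spacing) is the implicit content of the paper's phrase "the only ambiguity occurs when...", so the two arguments coincide.
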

\begin{proof} The only ambiguity occurs when we have a face in the $i$th spacing going across a vertical edge in the $j$th spacing with $i\neq j$; hence the statement reduces to showing the following two identities for $i\neq j$:
\[
h^i(X)e_{\pm j}=e_{\pm j}h^i(X).
\]
These two identities are not hard to verify by direct computation. Proof of similar statements can also be found in \cite{FGD} and \cite{FM}.
\end{proof}

\begin{exmp} To demonstrate, the second bipartite graph of Example \ref{2.2} gives rise to a map $\tilde{\chi}_\vec{i}:(\mathbb{C}^*)^{10}\rightarrow \GL_n^{w_0,w_0}$ as follows.
\[
\tikz{
    \draw[ultra thick] (-2,0) -- (8,0);
    \draw[ultra thick] (-2,1) -- (8,1);
    \draw[ultra thick] (-2,2) -- (8,2);
    \draw[ultra thick] (0,1) -- (0,2);
    \draw[ultra thick] (2,1) -- (2,0);
    \draw[ultra thick] (3,1) -- (3,0);
    \draw[ultra thick] (4,1) -- (4,2);
    \draw[ultra thick] (5,1) -- (5,2);
    \draw[ultra thick] (6,1) -- (6,0);
    \draw[fill=white] (0,2) circle [radius=0.2];
    \draw[fill=black] (0,1) circle [radius=0.2];
    \draw[fill=white] (2,0) circle [radius=0.2];
    \draw[fill=black] (2,1) circle [radius=0.2];
    \draw[fill=white] (3,1) circle [radius=0.2];
    \draw[fill=black] (3,0) circle [radius=0.2];
    \draw[fill=white] (4,2) circle [radius=0.2];
    \draw[fill=black] (4,1) circle [radius=0.2];
    \draw[fill=white] (5,1) circle [radius=0.2];
    \draw[fill=black] (5,2) circle [radius=0.2];
    \draw[fill=white] (6,0) circle [radius=0.2];
    \draw[fill=black] (6,1) circle [radius=0.2];
    \draw[fill=white] (-1,1) circle [radius=0.2];
    \draw[fill=white] (1,1) circle [radius=0.2];
    \draw[fill=white] (7,2) circle [radius=0.2];
    \draw[fill=white] (7,1) circle [radius=0.2];
    \draw[fill=black] (2,2) circle [radius=0.2];
    \node at (3,2.5) [] {$X_0$};
    \node at (-1,1.5) [] {$X_1$};
    \node at (2,1.5) [] {$X_2$};
    \node at (4.5,1.5) [] {$X_3$};
    \node at (6.5,1.5) [] {$X_4$};
    \node at (0,0.5) [] {$X_5$};
    \node at (2.5,0.5) [] {$X_6$};
    \node at (4.5,0.5) [] {$X_7$};
    \node at (7,0.5) [] {$X_8$};
    \node at (3,-0.5) [] {$X_9$};
}
\]
\begin{align*}
\tilde{\chi}_\vec{i}(X_0,\dots, X_9)=& h^1(X_1)e_1 h^2(X_5) h^1(X_2) e_{-2}h^2(X_6)e_2h^3(X_9)h^0(X_0)e_1h^1(X_3)h^2(X_7)e_{-1}e_{-2}h^1(X_4)h^2(X_8)\\
=&\begin{pmatrix} X_1(1+X_2+X_2X_3)X_4X_5X_6X_7X_8X_9 & X_1X_5X_6(1+X_7+X_2X_7)X_8X_9 & X_1X_5X_6X_9\\
X_4X_5X_6X_7X_8X_9 & X_5X_6(1+X_7)X_8X_9 & X_5X_6X_9\\
X_4X_6X_7X_8X_9 & (1+X_6+X_6X_7)X_8X_9 & (1+X_6)X_9
\end{pmatrix}.
\end{align*}
Note that the face variable $X_0$ is never involved in the final expression because $h^0(X_0)=1$ for any $X_0\in \mathbb{C}^*$.
\end{exmp}

Now let's turn to the next notion related to bipartite graphs. Bipartite graphs produced from our described procedure will have $n$ external edges on the left and $n$ external edges on the right. Starting from any external edges, one can draw zig-zag strands on such bipartite graphs, following the following rules.
\begin{enumerate}
\item Zig-zag strands leave and enter the free ends of external edges as follows.
\[
\tikz{
\draw[ultra thick] (0,0) -- (2,0);
\draw[ultra thick, ->, red] (0,0.5) -- (2,0.5);
\draw[ultra thick, ->, red] (2,-0.5) -- (0,-0.5);
\draw[fill=white] (2,0) circle [radius=0.2];
} \quad \quad \quad \quad
\tikz{
\draw[ultra thick] (0,0) -- (2,0);
\draw[ultra thick, ->, red] (0,0.5) -- (2,0.5);
\draw[ultra thick, ->, red] (2,-0.5) -- (0,-0.5);
\draw[fill=white] (0,0) circle [radius=0.2];
}
\]
\item Zig-zag strands travel next to edges.
\item Zig-zag strands turn right at each black vertex and turn left at each white vertex; as a result, zig-zag strands circulate in the counterclockwise direction around a black vertex and circulate in the clockwise direction around a white vertex.
\[
\tikz[baseline=-0.5ex]{
\draw [ultra thick] (-1.5,1) -- (0,0) -- (1.5,1);
\draw [ultra thick] (0,0) -- (0,-2);
\draw [fill=black] (0,0) circle [radius=0.2];
\draw [ultra thick, ->, red] (2,0.5) -- (-2,0.5);
\draw [ultra thick, ->, red] (-1.2,1.4) -- (0.4,-1.8);
\draw [ultra thick, ->, red] (-0.4,-1.8) -- (1.2,1.4);
} \quad \quad  \quad \quad 
\tikz[baseline=-0.5ex]{
\draw [ultra thick] (-1.5,1) -- (0,0) -- (1.5,1);
\draw [ultra thick] (0,0) -- (0,-2);
\draw [fill=white] (0,0) circle [radius=0.2];
\draw [ultra thick, ->, red] (2,0.5) to [out=-150, in=90] (0.4,-1.8);
\draw [ultra thick, ->, red] (-0.4,-1.8) to [out=90, in=-30] (-2,0.5);
\draw [ultra thick, ->, red] (-1.2,1.4) to [out=-30, in=-150] (1.2,1.4);
}
\]
\end{enumerate}

\begin{exmp} To demonstrate, we draw the zig-zag strands on the first bipartite graph of Example \ref{2.2} as follows.
\[
\tikz{
\draw[ultra thick] (0,0) -- (5,0);
\draw[ultra thick] (0,1) -- (5,1);
\draw[ultra thick] (2,0) -- (2,1);
\draw[ultra thick] (3,0) -- (3,1);
\draw[fill=white] (2,1) circle [radius=0.2];
\draw[fill=white] (1,0) circle [radius=0.2];
\draw[fill=white] (3,0) circle [radius=0.2];
\draw[fill=white] (4,1) circle [radius=0.2];
\draw[fill=black] (2,0) circle [radius=0.2];
\draw[fill=black] (3,1) circle [radius=0.2];
\draw [
      decoration={markings, mark=at position 0.55 with {\arrow{stealth}}},
      postaction={decorate}, 
      ultra thick, red
      ] (0,1.1) to [out=0, in=135] (2.5,1) to [out=-45,in=135] (3,0.5) to [out=-45, in=180] (5,0.1);
\draw [
      decoration={markings, mark=at position 0.55 with {\arrow{stealth}}},
      postaction={decorate}, 
      ultra thick, red
      ] (0,0.1) to [out=0, in=135] (1.5,0) to [out=-45,in=-135] (2.5,0) to [out=45, in=-135] (3,0.5) to [out=45, in=-135] (3.5,1) to [out=45, in=180] (5,1.1);
\draw [
      decoration={markings, mark=at position 0.55 with {\arrow{stealth}}},
      postaction={decorate}, 
      ultra thick, red
      ] (5,-0.1) to [out=180, in=-45] (2.5,0) to [out=135,in=-45] (2,0.5) to [out=135, in=0] (0,0.9);
\draw [
      decoration={markings, mark=at position 0.55 with {\arrow{stealth}}},
      postaction={decorate}, 
      ultra thick, red
      ] (5,0.9) to [out=180, in=-45] (3.5,1) to [out=135,in=45] (2.5,1) to [out=-135, in=45] (2,0.5) to [out=-135, in=45] (1.5,0) to [out=-135, in=0] (0,-0.1);
}
\]
\end{exmp}

\begin{rmk} Due to the fact that $\vec{i}$ is a reduced word for the pair $(u,v)$, the bipartite graph we get from this procedure always satisfies the \textit{minimality conditions} of Thurston \cite{Thu}. In particular, the ones that are associated to reduced words of the pair $(w_0,w_0)$ are examples of minimal bipartite graphs defined in \cite{W}.
\end{rmk}

It is not hard to see that among the $2n$ zig-zag strands, half of them are going towards the right and the other half are going towards the left. We call them \textit{right-going} and \textit{left-going} zig-zag strands respectively.

\begin{defn}\label{dominate} A face $f$ is said to be \textit{dominated} by a zig-zag strand $\zeta$ if it lies below $\zeta$. Index the zig-zag strands by the external edge from which they start (counting from the top to the bottom), and collect the indices of the right-going (resp. left-going) zig-zag strands that dominate a face $f$ into the \textit{dominating set} $I(f)$ (resp. $J(f)$). 
\end{defn}

We have associated a variable $X_f$ to each face $f$; now let's associate a function $A_f$ on $\GL_n$ to each face $f$. Observe that for each face $f$, the two dominating sets $I(f)$ and $J(f)$ always have the same size. Thus we can pick out the square submatrix of $x$ formed by rows corresponding to $I(f)$ and columns corresponding to $J(f)$, compute the corresponding minor (the determinant of the submatrix) $\Delta^{I(f),J(f)}(x)$, and define
\[
A_f(x):=\Delta^{I(f),J(f)}(x).
\]
As a convention, we define $\Delta^{\emptyset,\emptyset}(x):=1$, and $\Delta^{I,J}(x):=0$ if $|I|\neq |J|$.

\begin{exmp} To demonstrate, the faces of the second bipartite graph of Example \ref{2.2} are associated to the following list of minors.
\[
\tikz{
    \draw[ultra thick] (-2,0) -- (8,0);
    \draw[ultra thick] (-2,1) -- (8,1);
    \draw[ultra thick] (-2,2) -- (8,2);
    \draw[ultra thick] (0,1) -- (0,2);
    \draw[ultra thick] (2,1) -- (2,0);
    \draw[ultra thick] (3,1) -- (3,0);
    \draw[ultra thick] (4,1) -- (4,2);
    \draw[ultra thick] (5,1) -- (5,2);
    \draw[ultra thick] (6,1) -- (6,0);
    \draw[fill=white] (0,2) circle [radius=0.2];
    \draw[fill=black] (0,1) circle [radius=0.2];
    \draw[fill=white] (2,0) circle [radius=0.2];
    \draw[fill=black] (2,1) circle [radius=0.2];
    \draw[fill=white] (3,1) circle [radius=0.2];
    \draw[fill=black] (3,0) circle [radius=0.2];
    \draw[fill=white] (4,2) circle [radius=0.2];
    \draw[fill=black] (4,1) circle [radius=0.2];
    \draw[fill=white] (5,1) circle [radius=0.2];
    \draw[fill=black] (5,2) circle [radius=0.2];
    \draw[fill=white] (6,0) circle [radius=0.2];
    \draw[fill=black] (6,1) circle [radius=0.2];
    \draw[fill=white] (-1,1) circle [radius=0.2];
    \draw[fill=white] (1,1) circle [radius=0.2];
    \draw[fill=white] (7,2) circle [radius=0.2];
    \draw[fill=white] (7,1) circle [radius=0.2];
    \draw[fill=black] (2,2) circle [radius=0.2];
    \node at (3,2.5) [] {$\Delta^{\emptyset,\emptyset}$};
    \node at (-1,1.5) [] {$\Delta^{1,3}$};
    \node at (2,1.5) [] {$\Delta^{1,2}$};
    \node at (4.5,1.5) [] {$\Delta^{1,1}$};
    \node at (6.5,1.5) [] {$\Delta^{3,1}$};
    \node at (0,0.5) [] {$\Delta^{12,23}$};
    \node at (2.5,0.5) [] {$\Delta^{13,23}$};
    \node at (4.5,0.5) [] {$\Delta^{13,12}$};
    \node at (7,0.5) [] {$\Delta^{23,12}$};
    \node at (3,-0.5) [] {$\Delta^{123,123}$};
}
\]
\end{exmp}

\begin{rmk} Note that right-going zig-zag strands only tangle at $\tikz[baseline=-0.5ex]{\draw[ultra thick] (0,0.5) -- (0,-0.5); \draw[fill=black] (0,0.5) circle [radius=0.2]; \draw[fill=white] (0,-0.5) circle [radius=0.2];}$ and left-going zig-zag strands only tangle at $\tikz[baseline=-0.5ex]{\draw[ultra thick] (0,0.5) -- (0,-0.5); \draw[fill=white] (0,0.5) circle [radius=0.2]; \draw[fill=black] (0,-0.5) circle [radius=0.2];}$. By comparing the definition of dominating sets of a face $f$ and Equations (2.8) and (2.9) in \cite{BFZ}, we see that the face functions $A_f$ of the bipartite graph associated to the reduced word $\vec{i}$ of $(u,v)$ are precisely the Berenstein, Fomin, and Zelevinsky's cluster variables in the cluster of $\mathbb{C}[\GL_n^{u,v}]$ associated to $\vec{i}$. Further we will show in the next subsection that we fully recovers the upper cluster algebra structure on $\mathbb{C}[\GL_n^{u,v}]$ from these face variables. Thus Theorem 2.10 and Lemma 2.12 from \cite{BFZ} can be reformulated into the following statement.
\end{rmk}

\begin{thm}[Berenstein-Fomin-Zelevinsky]\label{bfz} The map $x\mapsto (A_f(x))$ gives a rational map
\[
\tilde{\psi}_\vec{i}:\GL_n^{u,v}\dashrightarrow (\mathbb{C}^*)^{\text{\# faces}}.
\]
which is a birational equivalence when we shrink the target space to the subtorus $A_0=1$.
\end{thm}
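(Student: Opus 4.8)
The statement is, as the remark preceding it notes, nothing more than a reformulation of Berenstein, Fomin, and Zelevinsky's description of $\mathbb{C}[\GL_n^{u,v}]$ (\cite{BFZ}, Theorem 2.10 and Lemma 2.12) in the language of the bipartite graph $\Gamma_{\vec{i}}$. The plan is to (i) set up a precise dictionary between the combinatorial data read off from $\Gamma_{\vec{i}}$ and the data that \cite{BFZ} attach to the reduced word $\vec{i}$, (ii) transport their theorem through this dictionary, and (iii) take care of the normalization $A_0=1$.

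For step (i): to a reduced word $\vec{i}=(i(1),\dots,i(l))$ of $(u,v)$, Berenstein, Fomin, and Zelevinsky attach a family of generalized minors of $x\in\GL_n^{u,v}$ indexed by the positions $1\le k\le l$ together with $n$ ``initial'' indices, the row set and column set of the $k$-th minor being determined by how much of $\vec{i}$ has been consumed up to position $k$ (their Equations (2.8) and (2.9)). I would show that, under the evident bijection between these indices and the faces of $\Gamma_{\vec{i}}$, the row set and column set of the $k$-th minor are exactly the dominating sets $I(f)$ and $J(f)$ of Definition~\ref{dominate}. This is purely combinatorial: a right-going zig-zag strand changes the set of faces it dominates precisely when it crosses a vertical edge carrying a black vertex above a white vertex, and such edges are exactly the positive letters of $\vec{i}$; moreover the update of the set of dominating strands at such a crossing matches, position by position, the way the row set of the $k$-th minor changes as $k$ increases past that letter. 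The left-going strands, the vertical edges carrying a white vertex above a black vertex, the negative letters, and the column sets $J(f)$ are handled symmetrically. Running the comparison along $\vec{i}$ identifies $A_f(x)=\Delta^{I(f),J(f)}(x)$ with the corresponding cluster variable of \cite{BFZ}; in particular the unique top face $f_0$ --- the region lying above all $n$ horizontal lines --- has $I(f_0)=J(f_0)=\emptyset$, so $A_{f_0}=\Delta^{\emptyset,\emptyset}\equiv1$.

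Granting the dictionary, steps (ii) and (iii) are short. Each $A_f$ is a nonzero regular function on the irreducible variety $\GL_n^{u,v}$: it is a mutable or frozen variable of a cluster of $\mathbb{C}[\GL_n^{u,v}]$, and the minors in question do not vanish on a double Bruhat cell (\cite{BFZ}, Lemma 2.12). Hence $x\mapsto(A_f(x))$ is a well-defined rational map into $(\mathbb{C}^*)^{\text{\# faces}}$, and its image lies in the subtorus $\{A_0=1\}$ because $A_{f_0}\equiv1$. A count on the planar graph $\Gamma_{\vec{i}}$ shows that the $i$-th spacing is cut by its vertical edges into one more face than it has vertical edges, so the spacings contribute $l(u)+l(v)+(n-1)$ faces, while the regions above the topmost and below the bottommost horizontal line contribute two more; thus the number of faces equals $n+l(u)+l(v)+1=\dim\GL_n^{u,v}+1$, and $\{A_0=1\}$ is a torus of dimension $\dim\GL_n^{u,v}$. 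By \cite{BFZ}, Theorem 2.10, the cluster $(A_f)_{f\ne f_0}$ is a transcendence basis of the function field $\mathbb{C}(\GL_n^{u,v})$ --- these functions are algebraically independent and together generate the field --- which is exactly the assertion that $\tilde{\psi}_{\vec{i}}$ pulls $\mathbb{C}(\{A_0=1\})$ back isomorphically onto $\mathbb{C}(\GL_n^{u,v})$; therefore $\tilde{\psi}_{\vec{i}}:\GL_n^{u,v}\dashrightarrow\{A_0=1\}$ is a birational equivalence.

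I do not expect the routine points --- non-vanishing of the minors, irreducibility of $\GL_n^{u,v}$, the face count, and the passage from the semisimple setting of \cite{BFZ} to $\GL_n$ (which only adds the central scaling direction, recorded on the $\Gamma_{\vec{i}}$ side by the scalar factor $h^n$ attached to the bottom face) --- to cause trouble. The real work, and the main obstacle, is the combinatorial dictionary of step (i): one must verify that the zig-zag-strand bookkeeping defining $I(f)$ and $J(f)$ reproduces the index conventions of \cite{BFZ} exactly, which demands care with strand orientations, with the two colors of vertical edges, and with the boundary faces. Once the update rule has been checked for a single face crossing a single vertical edge, an induction along $\vec{i}$ finishes the identification.
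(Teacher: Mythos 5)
Your proposal is correct and follows essentially the same route the paper takes: the paper treats this theorem as a restatement of Theorem 2.10 and Lemma 2.12 of \cite{BFZ}, with the translation supplied by the remark immediately preceding it (matching the dominating sets $I(f)$, $J(f)$ with the BFZ index conventions of their Equations (2.8) and (2.9)), and you are simply fleshing out that same dictionary, the face count, and the role of the top face with $A_0 = \Delta^{\emptyset,\emptyset} \equiv 1$.
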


Besides the variables $X_f$ and functions $A_f$, we also associate two quivers, which (by an abuse of notation) will be denoted as $\tilde{\vec{i}}$ and $\vec{i}$, to each bipartite graph $\Gamma_\vec{i}$. The construction of $\tilde{\vec{i}}$ is given as follows.
\begin{enumerate}
\item Put a vertex for each face of $\Gamma_\vec{i}$.
\item Put a counterclockwisely oriented cycle for the faces next to a black vertex as below.
\[
\tikz[baseline=0.5ex]{
\draw[ultra thick] (-1,0) -- (3,0);
\draw[fill=black] (1,0) circle [radius=0.2];
\draw[ultra thick, green, ->] (1,1) to [out=-135, in=135] (1,-1);
\draw[ultra thick, green, ->] (1,-1) to [out=45, in=-45] (1,1);
}\quad \quad \quad \quad 
\tikz[baseline=0.5ex]{
\draw [ultra thick] (-1.5,1) -- (0,0) -- (1.5,1);
\draw [ultra thick] (0,0) -- (0,-2);
\draw [fill=black] (0,0) circle [radius=0.2];
\draw [ultra thick, green, ->] (0,1.5) -- (-1.7,-1);
\draw [ultra thick, green, ->] (-1.7,-1) -- (1.7,-1);
\draw [ultra thick, green, ->] (1.7,-1) -- (0,1.5);
}
\]
\item Remove any 2-cycles in the quiver (hence we don't need to add in counterclockwisely oriented 2-cycles for 2-valent black vertices in the first place).
\end{enumerate}
After constructing $\tilde{\vec{i}}$, the quiver $\vec{i}$ is then obtained from $\tilde{\vec{i}}$ by removing all the vertices corresponding to boundary faces (a total of $2n$ of them) and all the arrows involving those vertices. Because $\vec{i}$ does not have any vertex corresponding to boundary faces, we also call it the \textit{boundary-removed quiver} associated to $\Gamma_\vec{i}$.

\begin{exmp} The two quivers associated to the second picture of Example \ref{2.2} are the following.
\[
\underset{\tilde{\vec{i}}}{\xymatrix{ & &  0 \ar[d] & \\
1 \ar[d] & 2 \ar[l] \ar[dr] & 3 \ar[l] \ar[r] & 4\ar[dl] \ar[ul] \\
5 \ar[r] & 6 \ar[u] \ar[d] & 7 \ar[l] \ar[u] \ar[r] & 8 \ar[u] \\
&   9 \ar[ur] & & }} \quad \quad \quad \quad 
\underset{\vec{i}}{\xymatrix{ &  \\  2 \ar[dr] & 3 \ar[l] \\ 6 \ar[u]  & 7 \ar[u] \ar[l] }}
\]
\end{exmp}

\begin{rmk} The quivers obtained from our construction is opposite to the quivers constructed in \cite{BFZ}. This is okay since the cluster $\mathcal{A}$-mutation formula is invariant under $\epsilon_{ij}\mapsto -\epsilon_{ij}$, and hence Fomin and Zelevinsky's upper cluster algebra structure on $\mathbb{C}[\GL_n^{u,v}]$ is still applicable to our story.
\end{rmk}

\subsection{Cluster Varieties}\label{cluster} We will give a brief review of Fock and Goncharov's theory of cluster ensemble, with specialization to our main geometric object, the double Bruhat cells in general linear groups. We will mainly follow the coordinate description presented in \cite{FG} with skewsymmetric exchange matrix and no frozen vertices.

\begin{defn} A \textit{seed} $\vec{i}$ is an ordered pair $(I,\epsilon)$ where $I$ is a finite set and $\epsilon$ is a skewsymmetric matrix whose rows and columns are indexed by $I$.
\end{defn}

The data of a seed defined as above is equivalent to the data of a quiver with vertex set $I$ and exchange matrix $\epsilon_{ij}$. We will make no distinctions between seeds and quivers in this paper, and will always use the seed notation to denote any quiver. 

The notion of quiver mutation can be described by a precise formula in the seed language: for an element $k\in I$, the \textit{seed mutation} $\mu_k$ gives a new seed $(I',\epsilon')$ where $I'=I$ and
\[
\epsilon'_{ij}=\left\{\begin{array}{ll}
-\epsilon_{ij} & \text{if $k\in \{i,j\}$;} \\
\epsilon_{ij} & \text{if $\epsilon_{ik}\epsilon_{kj}\leq 0$ and $k\notin \{i,j\}$;}\\
\epsilon_{ij}+|\epsilon_{ik}|\epsilon_{kj} & \text{if $\epsilon_{ik}\epsilon_{kj}>0$, $k\notin \{i,j\}$.}
\end{array}\right.
\]

Starting with an initial seed $\vec{i}_0$, we say that a seed $\vec{i}$ is \textit{mutation equivalent} to $\vec{i}_0$ if there is a sequence of seed mutations that turns $\vec{i}_0$ into $\vec{i}$; we denote the set of all seeds mutation equivalent to $\vec{i}_0$ by $|\vec{i}_0|$. To each seed $\vec{i}$ in $|\vec{i}_0|$ we associate two split algebraic tori $\mathcal{A}_\vec{i}=(\mathbb{C}^*)^{|I|}$ and $\mathcal{X}_\vec{i}=(\mathbb{C}^*)^{|I|}$, which are equipped with canonical coordinates $(A_i)$ and $(X_i)$ indexed by the set $I$ respectively. These two split algebraic tori are linked by a map $p_\vec{i}:\mathcal{A}_\vec{i}\rightarrow \mathcal{X}_\vec{i}$ given by
\[
p_\vec{i}^*(X_i)=\prod_{j\in I} A_j^{\epsilon_{ij}}.
\]
The split algebraic tori $\mathcal{A}_\vec{i}$ and $\mathcal{X}_\vec{i}$ are called a \textit{seed $\mathcal{A}$-torus} and a \textit{seed $\mathcal{X}$-torus} respectively.

A seed mutation $\mu_k:\vec{i}\rightarrow \vec{i}'$ gives rise to birational equivalences between the corresponding seed tori, which by an abuse of notation we also denote both as $\mu_k$; in terms of the canonical coordinates $(A'_i)$ and $(X'_i)$ they can be expressed as
\begin{align*}
\mu_k^*(A'_i)=&\left\{\begin{array}{ll} \displaystyle A_k^{-1}\left(\prod_{\epsilon_{kj}>0} A_j^{\epsilon_{kj}}+\prod_{\epsilon_{kj}<0} A_j^{-\epsilon_{kj}}\right) & \text{if $i=k$,}\\
A_i & \text{if $i\neq k$,}\end{array}\right. \\
\text{and} \quad \quad  \mu_k^*(X'_i)=&\left\{\begin{array}{l l} X_k^{-1} & \text{if $i=k$,} \\
X_i\left(1+X_k^{-\sgn (\epsilon_{ik})}\right)^{-\epsilon_{ik}}& \text{if $i\neq k$.}\end{array}\right.
\end{align*}
These two birational equivalences are called \textit{cluster $\mathcal{A}$-mutation} and \textit{cluster $\mathcal{X}$-mutation} respectively. One important feature about cluster mutations is that they commute with the respective $p$ maps.
\[
\xymatrix{\mathcal{A}_\vec{i} \ar[d]_{p_\vec{i}} \ar@{-->}[r]^{\mu_k} & \mathcal{A}_{\vec{i}'} \ar[d]^{p_{\vec{i}'}}\\
\mathcal{X}_\vec{i} \ar@{-->}[r]_{\mu_k} & \mathcal{X}_{\vec{i}'}} 
\]

Besides cluster mutations between seed tori we also care about cluster isomorphisms induced by seed (quiver) isomorphisms. A \textit{seed isomorphism} $\sigma:\vec{i}\rightarrow \vec{i}'$ is a bijection $\sigma:I\rightarrow I'$ such that $\epsilon'_{\sigma(i)\sigma(j)}=\epsilon_{ij}$. Given a seed isomorphism $\sigma:\vec{i}\rightarrow \vec{i}'$ between two seeds in $|\vec{i}_0|$, we obtain isomorphisms on the corresponding seed tori, which by an abuse of notation we also denote by $\sigma$:
\[
\sigma^*(A_{\sigma(i)})=A_i \quad \quad \text{and} \quad \quad \sigma^*(X_{\sigma(i)})=X_i.
\]
We call these isomorphisms \textit{cluster isomorphisms}. It is not hard to see that cluster isomorphisms also commute with the $p$ maps.
\[
\xymatrix{\mathcal{A}_\vec{i} \ar[d]_{p_\vec{i}} \ar[r]^\sigma & \mathcal{A}_{\vec{i}'} \ar[d]^{p_{\vec{i}'}}\\
\mathcal{X}_\vec{i} \ar[r]_\sigma & \mathcal{X}_{\vec{i}'}} 
\]

Compositions of seed mutations and seed isomorhpisms are called \textit{seed cluster transformations}, and compositions of cluster mutations and cluster isomorphisms are called \textit{cluster transformations}. A seed cluster transformation transformation $\vec{i}\rightarrow \vec{i}$ is called \textit{trivial} if it induces identity maps on the coresponding seed $\mathcal{A}$-torus $\mathcal{A}_\vec{i}$ and seed $\mathcal{X}$-torus $\mathcal{X}_\vec{i}$. 

By gluing the seed tori via cluster mutations we obtain the corresponding \textit{cluster varieties}, which will be denoted as $\mathcal{A}_{|\vec{i}_0|}$ and $\mathcal{X}_{|\vec{i}_0|}$ respectively. Then cluster transformations can be seen as automorphisms on these cluster varieties. Cluster varieties also connect the theory of cluster algebras with the Poisson geometry: on the one hand, the coordinate rings on cluster $\mathcal{A}$-varieties are examples of Fomin and Zelevinsky's upper cluster algebras, and on the other hand, cluster $\mathcal{X}$-varieties carry natural Poisson variety structures given by
\[
\{X_i,X_j\}=\epsilon_{ij}X_iX_j.
\]
Thus a cluster $\mathcal{X}$-variety is also known as a \textit{cluster Poisson variety}. More details are available in \cite{FG}.

Due to the positivity of the mutation formulas, we can tropicalize these cluster varieties, which we will describe in the next subsection.

Since the maps $p_\vec{i}$ commute with cluster mutations, they naturally glue into a map $p:\mathcal{A}_{|\vec{i}_0|}\rightarrow \mathcal{X}_{|\vec{i}_0|}$ of cluster varieties.

For the rest of this subsection we will focus on the double Bruhat cells $\GL_n^{u,v}$ defined by a pair of Weyl group elements $(u,v)$ and its related cluster varieties. Recall that each reduced word $\vec{i}$ of a pair of Weyl group elements $(u,v)$ gives rise to a bipartite graph $\Gamma_\vec{i}$, which in turn gives rise to two quivers (seeds) $\tilde{\vec{i}}$ and $\vec{i}$. Let's focus on the seed $\tilde{\vec{i}}$ first: the canonical coordinates of two seed tori $\mathcal{A}_{\tilde{\vec{i}}}$ and $\mathcal{X}_{\tilde{\vec{i}}}$ are naturally identified with the quantities $A_f$ and $X_f$ we have defined in last subsection respectively; then we naturally obtain the following two rational maps
\[
\tilde{\psi}_\vec{i}:\GL_n^{u,v}\dashrightarrow \mathcal{A}_{\tilde{\vec{i}}} \quad \quad \text{and} \quad \quad \tilde{\chi}_\vec{i}:\mathcal{X}_{\tilde{\vec{i}}}\longrightarrow \GL_n^{u,v}.
\]

To fully utilize the power of the cluster structure, we need to add in cluster mutations. Let's start with the following statement.

\begin{prop}\label{4.1} Let $\vec{i}$ and $\vec{i}'$ be two reduced words of the same pair of Weyl group elements $(u,v)$. Then $\vec{i}'$ can be obtained from $\vec{i}$ by applying a finite sequence of the following moves:
\begin{enumerate}
    \item Exchanging two neighboring entries $i(m)$ and $i(m+1)$ with opposite signs.
    \item Changing the pattern $(\cdots, \pm i, \pm (i+1), \pm i, \cdots)$ to $(\cdots, \pm (i+1), \pm i, \pm (i+1),\cdots)$ or vice versa. 
\end{enumerate}
\end{prop}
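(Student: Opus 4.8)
The plan is to derive the proposition from the Tits--Matsumoto theorem on reduced words in a Coxeter group. The key observation is that a reduced word of the pair $(u,v)$, in the sense of this paper, is literally the same datum as a reduced word of the single element $(u,v)$ of the Coxeter group $W\times W=S_n\times S_n$, equipped with the Coxeter generating set $\widetilde S:=(S\times\{e\})\sqcup(\{e\}\times S)$. Under the dictionary $-i\leftrightarrow(s_i,e)$ and $+i\leftrightarrow(e,s_i)$, a sequence $\vec i=(i(1),\dots,i(l))$ with entries in $\{\pm 1,\dots,\pm(n-1)\}$ becomes a word of length $l$ in $\widetilde S$; this word is reduced exactly when $l=l(u)+l(v)$, which is the length of $(u,v)$ in $W\times W$; and the recipes ``delete the positive entries and flip the signs of the remaining ones'' and ``delete the negative entries'' are nothing but the two coordinate projections of the word. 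First I would verify these compatibilities, each of which is immediate from additivity of length in a direct product of Coxeter systems and from the fact that $S\times\{e\}$ commutes elementwise with $\{e\}\times S$ (this is the combinatorics of double reduced words; compare \cite{BFZ}).

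With this identification in hand, the Tits--Matsumoto theorem tells us that any two reduced words of a fixed element of a Coxeter group are connected by a finite sequence of \emph{braid moves}: one relation $sts\cdots=tst\cdots$, of common length $m(s,t)$, for each unordered pair of generators $\{s,t\}$. It remains to enumerate these for $\widetilde S$, where only the orders $m=2$ and $m=3$ occur. An order-$3$ move uses two generators from the \emph{same} factor with adjacent indices; since the braid word $sts$ then involves that factor only, the move acts on a run of three consecutive letters of one common sign and has the shape $(\dots,\pm i,\pm(i+1),\pm i,\dots)\leftrightarrow(\dots,\pm(i+1),\pm i,\pm(i+1),\dots)$ --- precisely move (2). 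An order-$2$ move transposes two neighbouring letters whose generators commute; this occurs either when the two letters have opposite signs, one generator lying in each factor --- precisely move (1) --- or when the two letters have the same sign and absolute values differing by at least $2$.

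Thus the only braid moves of $\widetilde S$ not literally on the list in the proposition are the commutations of two neighbouring same-sign letters whose absolute values differ by at least $2$. I would dispose of these by observing that such an exchange alters nothing used afterwards: the two vertical edges it swaps lie in non-adjacent horizontal strips of the bipartite graph, so their left-to-right order encodes no information, and $\Gamma_{\vec i}$ together with its quivers and every construction of Subsection~\ref{bipartite} is unchanged; equivalently one simply reads move (1) as allowing the transposition of any two neighbouring letters whose simple reflections commute, which is the natural reading and is harmless throughout the paper. Assembling the three paragraphs proves the proposition. The substantive work is the bookkeeping of the identification with $S_n\times S_n$ and the case analysis of the braid moves; invoking Tits--Matsumoto is what makes the connectivity assertion automatic. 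A self-contained alternative would induct on $l(u)+l(v)$, comparing the possible first letters of $\vec i$ and $\vec i'$ --- which range over the left descents of $u$ written with a minus sign, together with the left descents of $v$ --- and using move (1) to float the relevant letter to the front; but that route essentially re-proves Tits--Matsumoto, and its main obstacle is precisely the single-permutation case of reconciling two reduced words that begin with different left descents.
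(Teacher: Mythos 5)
Your proof is correct, and it is considerably more careful than the paper's own justification, which consists of two sentences asserting that move (1) separates letters of opposite signs and move (2) is the braid relation. The underlying idea is the same --- reduced words of the pair $(u,v)$ are reduced words of the element $(u,v)$ in $W\times W$ with generating set $(S\times\{e\})\sqcup(\{e\}\times S)$, and connectivity is Tits--Matsumoto --- but you make the identification explicit and actually enumerate the braid moves. In doing so you catch a real imprecision in the statement that the paper glosses over: the order-$2$ commutation $(\dots,\pm i,\pm j,\dots)\leftrightarrow(\dots,\pm j,\pm i,\dots)$ for two \emph{same-sign} letters with $|i-j|\ge 2$ is needed (e.g.\ to connect $(-1,-3)$ and $(-3,-1)$ for $u=s_1s_3$) but is not literally either of the two listed moves, since move (1) requires opposite signs and move (2) requires adjacent indices. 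Your resolution --- that this extra move leaves the bipartite graph $\Gamma_{\vec{i}}$, its faces, and both quivers unchanged, so it is harmless for everything done with Proposition \ref{4.1} downstream, and that move (1) should really be read as transposing any two neighbouring letters whose simple reflections commute --- is the right one and is consistent with how the paper uses the proposition (the subsequent mutation propositions only treat the case $i(m)+i(m+1)=0$ as nontrivial). So your argument both proves the corrected statement and documents why the correction costs nothing.
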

\begin{proof} The first move can be use to separate or mix letters of opposite signs in the reduced words of $(u,v)$, and the second move is simply the braid relation among the Coxeter generators.
\end{proof}

Note that between the two moves above, Move (1) does not change the face variables $X_f$ or the face functions $A_f$ unless $i(m)+i(m+1)=0$, and in that exceptional case, we have the following observation.

\begin{prop} Suppose $i(m)=-i(m+1)>0$ in the reduced word $\vec{i}$ and the reduced word $\vec{i}'$ is obtained from $\vec{i}$ by Move (1) applied to switching $i(m)$ and $i(m+1)$. Then in order for the diagrams
\[
\xymatrix{ & \mathcal{A}_{\tilde{\vec{i}}}\ar@{-->}[dd]^{\tilde{\mu}} & & \mathcal{X}_{\tilde{\vec{i}}}\ar[dr]^{\tilde{\chi}_\vec{i}} \ar@{-->}[dd]_{\tilde{\mu}} &  \\ \GL_n^{u,v} \ar@{-->}[ur]^{\tilde{\psi}_\vec{i}}  \ar@{-->}[dr]_{\tilde{\psi}_{\vec{i}'}} & &\text{and}& & \GL_n^{u,v} \\  & \mathcal{A}_{\tilde{\vec{i}}'}  & & \mathcal{X}_{\tilde{\vec{i}}'} \ar[ur]_{\tilde{\chi}_{\vec{i}'}}}
\]
to commute, we need 
\[
\tilde{\mu}^*(A'_i) = \left\{\begin{array}{l l}
     (A_1A_3+A_2A_4)/A_0 & \text{if $i=0$,} \\
    A_i & \text{if $i\neq 0$,}
\end{array}\right.\quad \quad \text{and} \quad \quad
\tilde{\mu}^*(X'_i)=\left\{\begin{array}{ll}
   1/X_0 & \text{if $i=0$,} \\
    X_i\left(1+X_k^{-\sgn \epsilon_{ik}}\right)^{-\epsilon_{ik}} & \text{if $i\neq 0$,}
\end{array}\right. 
\]
where $\epsilon_{ij}$ is the exchange matrix of the seed $\tilde{\vec{i}}$ and the indices are assigned as follows.
\[
\tikz[baseline=2ex]{
    \draw[ultra thick] (0,0) -- (4,0);
    \draw[ultra thick] (0,1) -- (4,1);
    \draw[ultra thick] (1,0) -- (1,1);
    \draw[ultra thick] (3,0) -- (3,1);
    \draw[fill=black] (1,0) circle [radius=0.2];
    \draw[fill=white] (1,1) circle [radius=0.2];
    \draw[fill=white] (3,0) circle [radius=0.2];
    \draw[fill=black] (3,1) circle [radius=0.2];
     \node at (0,0.5) [] {$f_1$};
    \node at (2,0.5) [] {$f_0$};
    \node at (4,0.5) [] {$f_3$};
    \node at (2,1.5) [] {$f_2$};
    \node at (2,-0.5) [] {$f_4$};
    } \quad \quad \quad \quad \rightsquigarrow \quad \quad \quad \quad
    \tikz[baseline=2ex]{
    \draw[ultra thick] (0,0) -- (4,0);
    \draw[ultra thick] (0,1) -- (4,1);
    \draw[ultra thick] (1,0) -- (1,1);
    \draw[ultra thick] (3,0) -- (3,1);
    \draw[fill=white] (1,0) circle [radius=0.2];
    \draw[fill=black] (1,1) circle [radius=0.2];
    \draw[fill=black] (3,0) circle [radius=0.2];
    \draw[fill=white] (3,1) circle [radius=0.2];
         \node at (0,0.5) [] {$f'_1$};
    \node at (2,0.5) [] {$f'_0$};
    \node at (4,0.5) [] {$f'_3$};
    \node at (2,1.5) [] {$f'_2$};
    \node at (2,-0.5) [] {$f'_4$};
    }
\]
Similar formulas can be derived analogously for the case $i(m)=-i(m+1)<0$. Note that these formulas are exactly the same as the cluster mutation formulas on respective cluster varieties.
\end{prop}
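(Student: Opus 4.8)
The plan is to handle the $\mathcal A$-side and the $\mathcal X$-side separately, after first recording exactly what the move does to the bipartite graph. Since $i(m)$ and $i(m+1)$ are consecutive entries of opposite sign, the passage $\vec i\rightsquigarrow\vec i'$ merely swaps the colours of the two vertical edges of the $i$-th spacing sitting at positions $m$ and $m+1$, and the only face born strictly between those two edges is $f_0$; the faces $f_1,f_2,f_3,f_4$ are the faces immediately to the left, above, right and below $f_0$. Examining the two counterclockwise $3$-cycles that the two black corners of $f_0$ contribute to $\tilde{\vec i}$, one sees that in $\tilde{\vec i}$ the vertex $f_0$ has exactly the four neighbours $f_1,f_3$ (arrows pointing away from $f_0$) and $f_2,f_4$ (arrows pointing into $f_0$), each with a single arrow after $2$-cycle cancellation, and that $\tilde{\vec i}'=\mu_0(\tilde{\vec i})$, mutation at $f_0$. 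So the statement to prove is exactly that $\tilde\psi_\vec{i}$ and $\tilde\chi_\vec{i}$ intertwine the move with $\mu_0$, whence the two displayed formulas are literally the cluster $\mathcal A$- and $\mathcal X$-mutation formulas at $f_0$, the relevant $\pm1$'s being the incidences $\epsilon_{i0}$ of $\tilde{\vec i}$.

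For the $\mathcal A$-side, I would first check that the zig-zag strands of $\Gamma_{\vec i'}$ coincide with those of $\Gamma_\vec{i}$ outside the local region, so that $I(f)=I(f')$ and $J(f)=J(f')$ for every face $f\ne f_0$; following the single right-going and single left-going strand that pass through the local region then computes $I(f_0'),J(f_0')$ and exhibits the six index pairs of $f_0,f_0',f_1,f_2,f_3,f_4$ as precisely the pairs occurring in a short three-term Desnanot--Jacobi / Plücker determinantal identity. In our index conventions that identity comes with a plus sign --- equivalently it is Berenstein--Fomin--Zelevinsky's exchange relation for the cluster of $\mathbb{C}[\GL_n^{u,v}]$ attached to $\vec i$ --- and it reads $A_{f_0}(x)A_{f_0'}(x)=A_{f_1}(x)A_{f_3}(x)+A_{f_2}(x)A_{f_4}(x)$, while $A_f$ is unchanged for $f\ne f_0$. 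Since $\tilde\psi_\vec{i}$ is dominant by Theorem \ref{bfz}, this determines the birational map $\tilde\mu$ making the $\mathcal A$-triangle commute, and it is exactly the stated cluster $\mathcal A$-mutation.

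For the $\mathcal X$-side, I would write $\tilde\chi_\vec{i}$ and $\tilde\chi_{\vec i'}$ as the ordered products of the $h^j$'s and $e_{\pm j}$'s. Every factor agrees on the two sides except those attached to the six faces $f_0,f_0',f_1,\dots,f_4$ and the two swapped edges; using that $h^j$ commutes with $e_{\pm i}$ for $j\ne i$ and that no vertical edge sits between positions $m$ and $m+1$, I cancel common prefixes and suffixes until $\tilde\chi_\vec{i}(X)=\tilde\chi_{\vec i'}(\tilde\mu(X))$ collapses to the single identity
\[
h^{i-1}(p_2)\,h^{i+1}(p_4)\,h^i(p_1)\,e_i\,h^i(p_0)\,e_{-i}\,h^i(p_3)=h^{i-1}(p_2')\,h^{i+1}(p_4')\,h^i(p_1')\,e_{-i}\,h^i(p_0')\,e_i\,h^i(p_3')
\]
in the subgroup generated by $e_{\pm i}$ and $H$, where $p_j=X_{f_j}$ and $p_j'=\tilde\mu(X)_{f_j}$. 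Decomposing this matrix equation over the blocks of rows/columns $\{1,\dots,i-1\}$, $\{i,i+1\}$ and $\{i+2,\dots,n\}$: the last block is vacuous; the middle block is an explicit $2\times2$ matrix identity (a finite matrix multiplication) whose unique solution is $p_0'=1/p_0$ together with the cluster $\mathcal X$-mutation corrections of $p_1,p_3,p_4$; and the first block, when $i\ge2$, gives the scalar relation $p_0p_1p_2p_3p_4=p_0'p_1'p_2'p_3'p_4'$, which combined with the middle block forces the mutation value of $p_2'$ as well (for $i=1$ the face $f_2$ lies in the $0$-th spacing where $h^0=1$, and $p_2'$ is simply taken to be the mutation value, consistently). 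As cluster $\mathcal A$- and $\mathcal X$-mutations at $f_0$ are the same operation $\mu_0$ and commute with the $p$-maps, the two triangles commute compatibly. The step I expect to be fiddly --- rather than deep --- is the $\mathcal A$-side bookkeeping: confirming that the zig-zag strands reroute only inside the local region and that the six resulting dominating sets are precisely those of a three-term minor identity occurring with a plus sign; after the local factor on the $\mathcal X$-side has been isolated, its block-by-block solution is routine.
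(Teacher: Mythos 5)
Your proposal is correct and follows essentially the same strategy as the paper: identify the local change in the bipartite graph, reduce the $\mathcal A$-side to a short Desnanot--Jacobi minor identity read off from the zig-zag strands (Theorem 1.17 of \cite{FZ}, equivalently the Berenstein--Fomin--Zelevinsky exchange relation), and reduce the $\mathcal X$-side to a local identity in the subgroup generated by $e_{\pm i}$ and $H$. The one place your route diverges is the $\mathcal X$-side: the paper isolates the identity
\[
e_i h^i(X)e_{-i}=h^i(1+X)e_{-i} h^{i-1}\left(1+X^{-1}\right)^{-1}h^i\left(X^{-1}\right)h^{i+1}\left(1+X^{-1}\right)^{-1}e_i h^i(1+X)
\]
and cites Proposition 3.6 of \cite{FGD} for its proof, whereas you propose to verify it from scratch by decomposing into the $\{1,\dots,i-1\}$, $\{i,i+1\}$, and $\{i+2,\dots,n\}$ blocks; your $2\times 2$ middle-block system does have the unique solution $p_0'=1/p_0$, $p_1'=p_1(1+p_0)$, $p_3'=p_3(1+p_0)$, $p_4'=p_4(1+p_0^{-1})^{-1}$, and the diagonal scalar block then forces $p_2'$ (or, when $i=1$, leaves $p_2'$ free and one declares it to be the mutation value, as you note). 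So the proposal unpacks the paper's citation rather than invoking it, which is a perfectly valid and self-contained alternative; apart from that the two arguments coincide, including the identification $\tilde{\vec i}'=\mu_{f_0}(\tilde{\vec i})$ and the local rerouting of zig-zag strands that you correctly flag as the main bookkeeping step on the $\mathcal A$-side.
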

\begin{proof} For the $\mathcal{A}$-mutation formula, just consider the neighboring zig-zag strands as in the following configuration.
\[
\tikz[baseline=5ex]{
    \draw[ultra thick] (0,0) -- (6,0);
    \draw[ultra thick] (0,2) -- (6,2);
    \draw[ultra thick] (2,0) -- (2,2);
    \draw[ultra thick] (4,0) -- (4,2);
    \draw[fill=black] (2,0) circle [radius=0.2];
    \draw[fill=white] (2,2) circle [radius=0.2];
    \draw[fill=white] (4,0) circle [radius=0.2];
    \draw[fill=black] (4,2) circle [radius=0.2];
    \draw [
      decoration={markings, mark=at position 0.6 with {\arrow{stealth}}},
      postaction={decorate}, 
      ultra thick, red
      ] (0,2.5) to [out=0, in=135] (3,2) to [out=-45,in=135] (4,1) to [out=-45, in=180] (6,0.5);
     \draw [
      decoration={markings, mark=at position 0.6 with {\arrow{stealth}}},
      postaction={decorate}, 
      ultra thick, red
      ] (0,0.5) to [out=0, in=135] (1,0) to [out=-45,in=-135] (3,0) to [out=45, in=-135] (4,1) to [out=45, in=-135] (5,2) to [out=45,in=180] (6,2.5);
     \draw [
      decoration={markings, mark=at position 0.6 with {\arrow{stealth}}},
      postaction={decorate}, 
      ultra thick, red
      ] (6,1.5) to [out=180, in=-45] (5,2) to [out=135,in=45] (3,2) to [out=-135, in=45] (2,1) to [out=-135, in=45] (1,0) to [out=-135,in=0] (0,-0.5);
    \draw [
      decoration={markings, mark=at position 0.6 with {\arrow{stealth}}},
      postaction={decorate}, 
      ultra thick, red
      ] (6,-0.5) to [out=180, in=-45] (3,0) to [out=135,in=-45] (2,1) to [out=135, in=0] (0,1.5);
    \node [red] at (-0.2,2.5) [] {$i$};
    \node [red] at (-0.2,0.5) [] {$k$};
    \node [red] at (6.2,1.5) [] {$j$};
    \node [red] at (6.2,-0.5) [] {$l$};
    \node at (3,2.7) [] {$\Delta^{I,J}$};
    \node at (3,1) [] {$\Delta^{I\cup\{i\},J\cup \{j\}}$};
    \node at (0.5,1) [] {$\Delta^{I\cup\{i\},J\cup\{l\}}$};
    \node at (5.5,1) [] {$\Delta^{I\cup\{k\},J\cup\{j\}}$};
    \node at (3,-0.7) [] {$\Delta^{I\cup\{i,k\},J\cup\{j,l\}}$};
    } \quad \quad  \rightsquigarrow \quad \quad
    \tikz[baseline=5ex]{
        \draw[ultra thick] (0,0) -- (6,0);
    \draw[ultra thick] (0,2) -- (6,2);
    \draw[ultra thick] (2,0) -- (2,2);
    \draw[ultra thick] (4,0) -- (4,2);
    \draw[fill=white] (2,0) circle [radius=0.2];
    \draw[fill=black] (2,2) circle [radius=0.2];
    \draw[fill=black] (4,0) circle [radius=0.2];
    \draw[fill=white] (4,2) circle [radius=0.2];
    \draw [
      decoration={markings, mark=at position 0.6 with {\arrow{stealth}}},
      postaction={decorate}, 
      ultra thick, red
      ] (0,2.5) to [out=0, in=135] (1,2) to [out=-45,in=135] (2,1) to [out=-45, in=135] (3,0) to [out=-45,in=-135] (5,0) to [out=45,in=180] (6,0.5);
     \draw [
      decoration={markings, mark=at position 0.6 with {\arrow{stealth}}},
      postaction={decorate}, 
      ultra thick, red
      ] (0,0.5) to [out=0, in=-135] (2,1) to [out=45,in=-135] (3,2) to [out=45, in=180] (6,2.5);
     \draw [
      decoration={markings, mark=at position 0.6 with {\arrow{stealth}}},
      postaction={decorate}, 
      ultra thick, red
      ] (6,1.5) to [out=180, in=45] (4,1) to [out=-135,in=45] (3,0) to [out=-135, in=0] (0,-0.5);
    \draw [
      decoration={markings, mark=at position 0.6 with {\arrow{stealth}}},
      postaction={decorate}, 
      ultra thick, red
      ] (6,-0.5) to [out=180, in=-45] (5,0) to [out=135,in=-45] (4,1) to [out=135, in=-45] (3,2) to [out=135,in=45] (1,2) to [out=-135,in=0] (0,1.5);
    \node [red] at (-0.2,2.5) [] {$i$};
    \node [red] at (-0.2,0.5) [] {$k$};
    \node [red] at (6.2,1.5) [] {$j$};
    \node [red] at (6.2,-0.5) [] {$l$};
    \node at (3,2.7) [] {$\Delta^{I,J}$};
    \node at (3,1) [] {$\Delta^{I\cup\{k\},J\cup \{l\}}$};
    \node at (0.5,1) [] {$\Delta^{I\cup\{i\},J\cup\{l\}}$};
    \node at (5.5,1) [] {$\Delta^{I\cup\{k\},J\cup\{j\}}$};
    \node at (3,-0.7) [] {$\Delta^{I\cup\{i,k\},J\cup\{j,l\}}$};
    }
\]
Then the $\mathcal{A}$-mutation formula in this case follows from the following identity of matrix minors:
\[
\Delta^{I\cup\{i\},J\cup\{j\}}\Delta^{I\cup\{k\},J\cup\{l\}}=\Delta^{I\cup \{i\},J\cup\{l\}}\Delta^{I\cup\{k\},J\cup\{j\}}+\Delta^{I,J}\Delta^{I\cup\{i,k\},J\cup\{j,l\}}.
\]
A proof of this identity can be found in \cite{FZ} (Theorem 1.17).

The $\mathcal{X}$-mutation formula, on the other hand, reduces to the following identity
\[
e_i h^i(X)e_{-i}=h^i(1+X)e_{-i} h^{i-1}\left(1+X^{-1}\right)^{-1}h^i\left(X^{-1}\right)h^{i+1}\left(1+X^{-1}\right)^{-1}e_i h^i(1+X).
\]
A proof of this identity can be found in \cite{FGD} (Proposition 3.6).
\end{proof}

\begin{prop} Suppose the reduced word $\vec{i}'$ is obtained from $\vec{i}$ by Move (2) applied to changing the pattern from $(\dots, i, i+1, i,\dots)$ to $(\dots, i+1, i, i+1,\dots)$. Then in order for the diagrams 
\[
\xymatrix{ & \mathcal{A}_{\tilde{\vec{i}}}\ar@{-->}[dd]^{\tilde{\mu}} & & \mathcal{X}_{\tilde{\vec{i}}}\ar[dr]^{\tilde{\chi}_\vec{i}} \ar@{-->}[dd]_{\tilde{\mu}} &  \\ \GL_n^{u,v} \ar@{-->}[ur]^{\tilde{\psi}_\vec{i}}  \ar@{-->}[dr]_{\tilde{\psi}_{\vec{i}'}} & &\text{and}& & \GL_n^{u,v} \\  & \mathcal{A}_{\tilde{\vec{i}}'}  & & \mathcal{X}_{\tilde{\vec{i}}'} \ar[ur]_{\tilde{\chi}_{\vec{i}'}}}
\]
to commute, we need 
\[
\tilde{\mu}^*(A'_i) = \left\{\begin{array}{l l}
     (A_1A_3+A_2A_4)/A_0 & \text{if $i=0$,} \\
    A_i & \text{if $i\neq 0$,}
\end{array}\right.\quad \quad \text{and} \quad \quad
\tilde{\mu}^*(X'_i)=\left\{\begin{array}{ll}
   1/X_0 & \text{if $i=0$,} \\
    X_i\left(1+X_k^{-\sgn \epsilon_{ik}}\right)^{-\epsilon_{ik}} & \text{if $i\neq 0$,}
\end{array}\right. 
\]
where $\epsilon_{ij}$ is the exchange matrix of the seed $\tilde{\vec{i}}$ and the indices are assigned as follows.
\[
\tikz[baseline=0ex]{
    \draw[ultra thick] (0,0) -- (4,0);
    \draw[ultra thick] (0,1) -- (4,1);
    \draw[ultra thick] (1,0) -- (1,1);
    \draw[ultra thick] (3,0) -- (3,1);
    \draw[ultra thick] (0,-1) -- (4,-1);
    \draw[ultra thick] (2,-1) -- (2,0);
    \draw[fill=black] (1,0) circle [radius=0.2];
    \draw[fill=white] (1,1) circle [radius=0.2];
    \draw[fill=black] (3,0) circle [radius=0.2];
    \draw[fill=white] (3,1) circle [radius=0.2];
    \draw[fill=black] (2,-1) circle [radius=0.2];
    \draw[fill=white] (2,0) circle [radius=0.2];
    \draw[fill=black] (2,1) circle [radius=0.2];
     \node at (0,0.5) [] {$f_1$};
    \node at (2,0.5) [] {$f_0$};
    \node at (4,0.5) [] {$f_2$};
    \node at (0.5,-0.5) [] {$f_4$};
    \node at (3.5,-0.5) [] {$f_3$};
    } \quad \quad \quad \quad \rightsquigarrow \quad \quad \quad \quad
   \tikz[baseline=0ex]{
    \draw[ultra thick] (0,0) -- (4,0);
    \draw[ultra thick] (0,1) -- (4,1);
    \draw[ultra thick] (1,0) -- (1,-1);
    \draw[ultra thick] (3,0) -- (3,-1);
    \draw[ultra thick] (0,-1) -- (4,-1);
    \draw[ultra thick] (2,1) -- (2,0);
    \draw[fill=white] (1,0) circle [radius=0.2];
    \draw[fill=black] (1,-1) circle [radius=0.2];
    \draw[fill=white] (3,0) circle [radius=0.2];
    \draw[fill=black] (3,-1) circle [radius=0.2];
    \draw[fill=white] (2,1) circle [radius=0.2];
    \draw[fill=black] (2,0) circle [radius=0.2];
    \draw[fill=white] (2,-1) circle [radius=0.2];
     \node at (0,-0.5) [] {$f'_4$};
    \node at (2,-0.5) [] {$f'_0$};
    \node at (4,-0.5) [] {$f'_3$};
    \node at (0.5,0.5) [] {$f'_1$};
    \node at (3.5,0.5) [] {$f'_2$};
    }
\]
Similar formulas can be derived analogously for the case of changing from $(\dots, -i,-(i+1),-i,\dots)$ to $(\dots, -(i+1),-i,-(i+1),\dots)$ as well as the other two cases going in the backward direction. Note that these formulas are exactly the same as the cluster mutation formulas on respective cluster varieties.
\end{prop}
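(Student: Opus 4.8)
I would follow the pattern of the proof of the preceding proposition, separating the $\mathcal{A}$-statement from the $\mathcal{X}$-statement; it suffices to treat the case displayed in the figure, the remaining three being entirely analogous.

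For the $\mathcal{A}$-mutation formula I would draw the right-going and left-going zig-zag strands in a neighborhood of the three crossings involved in the move, both before and after it is performed, and then read off the dominating sets $I(f),J(f)$ of the five faces $f_0,\dots,f_4$. One finds that $A_1,A_2,A_3,A_4$ are the same minors before and after the move, while $A_0$ and $A'_0$ are two further minors related to those four by a three-term minor identity of the form $A_0A'_0=A_1A_3+A_2A_4$. This identity is of the same type as the one used in the preceding proposition --- a specialization of \cite{FZ}, Theorem 1.17, equivalently the classical short Pl\"ucker relation --- and the fact that braid moves induce seed mutations is in any case implicit in \cite{BFZ}. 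Thus $\tilde{\mu}$ acts on the $A$-coordinates exactly by the cluster $\mathcal{A}$-mutation at the vertex $f_0$.

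For the $\mathcal{X}$-mutation formula, commutativity of the $\mathcal{X}$-diagram reduces, exactly as before, to a single identity in $\GL_n$ obtained by writing out the local factor of $\tilde{\chi}_{\vec{i}}$ and of $\tilde{\chi}_{\vec{i}'}$ around the move. Only the spacings $i$ and $i+1$ --- i.e., only the rows and columns $i,i+1,i+2$ --- take part, so this identity lives inside the subgroup $\varphi(\SL_3)\subset\GL_n$ and is nothing but the braid relation $\overline{s}_i\overline{s}_{i+1}\overline{s}_i=\overline{s}_{i+1}\overline{s}_i\overline{s}_{i+1}$ ``decorated'' by the torus factors $h^i,h^{i+1}$. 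Expanding both sides as $3\times 3$ matrices and comparing entries gives $X'_0=1/X_0$ together with $X'_f=X_f\bigl(1+X_0^{-\sgn\epsilon_{f0}}\bigr)^{-\epsilon_{f0}}$ for $f\in\{f_1,f_2,f_3,f_4\}$, which is the cluster $\mathcal{X}$-mutation at $f_0$; identities of precisely this shape are established in \cite{FGD} and \cite{FM}, and can in any event be verified by a direct $\SL_3$ computation. With both substitutions in hand, the closing remark of the proposition --- that they agree with the cluster mutation formulas recalled in Subsection \ref{cluster} --- is immediate by inspection.

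The only genuinely laborious part is the bookkeeping: on the $\mathcal{A}$-side, determining which rows and columns label the strands bordering $f_0$ before and after the move, so that the three-term relation is applied to the correct quadruple of minors; on the $\mathcal{X}$-side, carrying out the $\SL_3$ expansion and confirming that the only non-trivially altered variables are $X_0$ and its quiver-neighbors. Neither step is conceptually difficult, and once the displayed case is settled the other three follow at once from the obvious symmetries between positive and negative letters and between the two directions of the move.
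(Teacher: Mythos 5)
Your proposal matches the paper's proof in both structure and substance: the paper likewise leaves the zig-zag bookkeeping to the reader, reduces the $\mathcal{A}$-statement to a three-term minor identity of the form $A_0A'_0=A_1A_3+A_2A_4$, and reduces the $\mathcal{X}$-statement to the local $\SL_3$ identity of Proposition 3.6 of \cite{FGD}. One small correction: the minor identity needed here is the three-column (short Pl\"ucker) relation, which is Theorem 1.16 of \cite{FZ}, not Theorem 1.17 (the Dodgson-type identity used for Move (1)) — you state the right identity but attach the wrong citation.
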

\begin{proof} We will leave the zig-zag strands drawing to the readers. The $\mathcal{A}$-mutation formula reduces to the identity (Theorem 1.16 of \cite{FZ}):
\[
\Delta^{I,J\cup\{k\}}\Delta^{I\cup\{i\},J\cup\{j,l\}}=\Delta^{I,J\cup\{j\}}\Delta^{I,J\cup\{l\}}+\Delta^{I\cup\{i\},J\cup\{j,k\}}\Delta^{I\cup \{i\}, J\cup\{k,l\}}.
\]
The $\mathcal{X}$-mutation formula reduces to the identity (Proposition 3.6 of \cite{FGD}):
\[
e_ih^i(X)e_{i+1}e_i=h^{i+1}\left(1+X^{-1}\right)^{-1}h^i(1+X)e_{i+1}e_i h^{i+1}\left(X^{-1}\right)  e_{i+1} h^i\left(1+X^{-1}\right)^{-1}h^{i+1}(1+X). \qedhere
\]
\end{proof}

Combining our last two proposition, we can glue the maps $\tilde{\psi}_\vec{i}$ and $\tilde{\chi}_\vec{i}$ together respectively into two rational maps
\[
\tilde{\psi}:\GL_n^{u,v}\dashrightarrow \mathcal{A}_{|\tilde{\vec{i}}_0|} \quad \quad \text{and} \quad \quad \tilde{\chi}:\mathcal{X}_{|\tilde{\vec{i}}_0|}\longrightarrow \GL_n^{u,v},
\]
where $\vec{i}_0$ is a reduced word of the pair of Weyl group elements $(u,v)$.

Now let's switch our attention to the boundary-removed quiver $\vec{i}$ associated to a bipartite graph $\Gamma_\vec{i}$. First observe that there is a natural projection map $q_\vec{i}:\mathcal{X}_{\tilde{\vec{i}}}\rightarrow \mathcal{X}_\vec{i}$ which deletes the coordinates corresponding to the boundary faces. Combining the $p$ map $\tilde{p}_\vec{i}:\mathcal{A}_{\tilde{\vec{i}}}\rightarrow \mathcal{X}_{\tilde{\vec{i}}}$ and the natural projection $q_\vec{i}:\mathcal{X}_{\tilde{\vec{i}}}\rightarrow \mathcal{X}_\vec{i}$ we obtain a map 
\[
q_\vec{i}\circ \tilde{p}_\vec{i}: \mathcal{A}_{\tilde{\vec{i}}}\rightarrow \mathcal{X}_\vec{i};
\]
it is not hard to see that compositions $q_\vec{i}\circ \tilde{p}_\vec{i}$ commute with respective cluster mutations at non-boundary faces of $\Gamma_\vec{i}$, and hence we can glue them together into a map
\[
q\circ \tilde{p}:\mathcal{A}_{|\tilde{\vec{i}}_0|}\rightarrow \mathcal{X}_{|\vec{i}_0|}.
\]
On the other hand, for a pair of Weyl group elements $(u,v)$, there is a natural projection from the double Bruhat cell $\GL_n^{u,v}$ to its double quotient $H\backslash \GL_n^{u,v}/H$.

\begin{prop} The composition $q\circ \tilde{p}\circ \tilde{\psi}$ is constant along the fibers $\GL_n^{u,v}\rightarrow H\backslash \GL_n^{u,v}/H$. 
\end{prop}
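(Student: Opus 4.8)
The plan is to reduce the statement to a weight computation and thence to a combinatorial identity about the quiver $\tilde{\vec{i}}$ and the dominating sets. The fibres of $\GL_n^{u,v}\to H\backslash\GL_n^{u,v}/H$ are the orbits of $H\times H$ acting by $x\mapsto h_1xh_2$, so the claim is that $q\circ\tilde{p}\circ\tilde{\psi}$ is invariant under this action. Over the seed $\tilde{\vec{i}}$ the map $q\circ\tilde{p}\circ\tilde{\psi}$ sends $x$ to the point of $\mathcal{X}_\vec{i}$ whose coordinate at a non-boundary face $f$ is
\[
X_f=\prod_g A_g(x)^{\epsilon_{fg}}=\prod_g \Delta^{I(g),J(g)}(x)^{\epsilon_{fg}},
\]
where $\epsilon$ is the exchange matrix of $\tilde{\vec{i}}$ and $g$ runs over all faces of $\Gamma_\vec{i}$. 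Since different seeds are glued by cluster mutations and the $\mathcal{X}$-mutation formulas turn coordinates of this shape into coordinates of this shape, it suffices to show each such $X_f$ is invariant under $x\mapsto h_1xh_2$ with $h_1=\mathrm{diag}(\lambda_1,\dots,\lambda_n)$ and $h_2=\mathrm{diag}(\mu_1,\dots,\mu_n)$ in $H$. Using $\Delta^{I,J}(h_1xh_2)=\big(\prod_{i\in I}\lambda_i\big)\big(\prod_{j\in J}\mu_j\big)\Delta^{I,J}(x)$ one gets $X_f(h_1xh_2)=X_f(x)\prod_i\lambda_i^{a_i(f)}\prod_j\mu_j^{b_j(f)}$ with $a_i(f)=\sum_{g:\,i\in I(g)}\epsilon_{fg}$ and $b_j(f)=\sum_{g:\,j\in J(g)}\epsilon_{fg}$; so everything comes down to proving $a_i(f)=b_j(f)=0$ for every non-boundary $f$ and all $i,j$.

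To prove this, recall from Definition \ref{dominate} that $\{g:\,i\in I(g)\}$ is exactly the set $D$ of faces lying below the $i$-th right-going zig-zag strand $\zeta$ (and $\{g:\,j\in J(g)\}$ is the analogous set for left-going strands). Write $\epsilon=\sum_b C_b$ as the sum, over black vertices $b$, of the counterclockwise cyclic permutation matrices $C_b$ on the faces around $b$, as in the construction of $\tilde{\vec{i}}$. Then $a_i(f)=\sum_b\sum_{g\in D}(C_b)_{fg}$, and a black vertex $b$ contributes nothing unless $f$ is one of the faces around $b$, i.e. unless $b$ is one of the (necessarily internal) vertices of the cycle $\partial f$ --- here we use that $f$ is a non-boundary face, so $\partial f$ really is an embedded cycle of internal vertices. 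For such a $b$, writing $f',f''$ for the faces meeting $f$ across the two edges of $\partial f$ at $b$, the term equals $\mathbf{1}[f'\in D]-\mathbf{1}[f''\in D]$. Walking once around $\partial f$ these $\pm1$ contributions telescope, and $a_i(f)$ becomes the difference between the number of edges from one class, and from the other class, of the alternating $2$-colouring of the edges of $\partial f$, along which $\zeta$ runs --- because $f$ and an adjacent face lie on opposite sides of $\zeta$ precisely when $\zeta$ runs alongside the edge between them. The remaining point is a local statement about how a zig-zag strand can run along the boundary of an internal face: using the rules of the road (turn right at each black vertex, left at each white vertex) one checks that $\zeta$ runs along equally many edges of each colour class of $\partial f$, whence $a_i(f)=0$. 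Applying the same argument to left-going strands gives $b_j(f)=0$.

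The main obstacle is exactly this local statement: checking, with all orientations made explicit, that every zig-zag strand runs along a colour-balanced set of edges of the boundary of each internal face. There are no estimates, only careful orientation bookkeeping --- in particular one must keep track of which side of each edge the strand runs on, and of whether, upon reaching a vertex of $\partial f$, the strand continues along $\partial f$ or peels away; the cleanest way to organise this is to see that $\zeta$ meets $\partial f$ in a union of arcs, each entered and exited at vertices of the same colour and hence of even length. (In the running $\GL_3$ example of the bipartite graph for $(w_0,w_0)$, each strand in fact follows $\partial f$ along a single arc of length two.) A convenient alternative, if one prefers to avoid the picture, is to note that the property ``$X_f$ is $H\times H$-invariant'' is preserved by $\mathcal{X}$-mutations and cluster isomorphisms, so it is enough to verify $a_i(f)=b_j(f)=0$ for one conveniently chosen reduced word, where each interior $X_f$ can be exhibited directly as a weight-homogeneous ratio of minors built from the three-term identities $\Delta^{I\cup\{i\},J\cup\{j\}}\Delta^{I\cup\{k\},J\cup\{l\}}=\Delta^{I\cup\{i\},J\cup\{l\}}\Delta^{I\cup\{k\},J\cup\{j\}}+\Delta^{I,J}\Delta^{I\cup\{i,k\},J\cup\{j,l\}}$ of \cite{FZ}.
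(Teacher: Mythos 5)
Your proposal follows essentially the same route as the paper: both reduce the claim, via the scaling of minors under $x\mapsto h_1xh_2$ and the gluing of seed tori by $\mathcal{X}$-mutations, to the combinatorial assertion that for every non-boundary face $f$ and every zig-zag strand $\zeta$ the signed arrow count $\sum_{g}\epsilon_{fg}$ over the faces $g$ dominated by $\zeta$ vanishes. Your extra paragraph attempting to establish this graph-theoretic fact (decomposing $\epsilon$ over the black vertices of $\partial f$ and telescoping) goes slightly further than the paper, which simply asserts it, but you also leave the final local check at the level of a sketch, so the two arguments are at a comparable level of detail.
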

\begin{proof} Since $q\circ \tilde{p}\circ \tilde{\psi}$ is glued from compositions of the form $q_\vec{i}\circ \tilde{p}_\vec{i}\circ \tilde{\psi}_\vec{i}$, it suffices to show that $q_\vec{i}\circ \tilde{p}_\vec{i}\circ \tilde{\psi}_\vec{i}$ is constant along the fibers $\GL_n^{u,v}\rightarrow H\backslash \GL_n^{u,v}/H$ for any reduced word $\vec{i}$ of $(u,v)$. Recall that multiplying a diagonal matrix on the left scales row vectors and multiplying a diagonal matrix on the right scales column vectors. Since the face functions $A_f(x)$ are defined as determinants of submatrices of $x$, we see that pictorially the row vectors of $x$ are represented by right-going zig-zag strands and the column vectors of $x$ are represented by left-going zig-zag strands. Thus we only need to verify that, under the map $q_\vec{i}\circ \tilde{p}_\vec{i}\circ \tilde{\psi}_\vec{i}$, the contribution of each row vector to the numerator of $X_f$ for any non-boundary face $f$ balances out that to the denominator of $X_f$. But this follows from the graph theoretical fact that given any zig-zag strand $\zeta$ and a non-boundary face $f$, the number of neighboring faces of $f$ dominated by $\zeta$ and pointing towards $f$ is equal to the number of neighboring faces of $f$ dominated by $\zeta$ and pointed by $f$.
\end{proof}

A direct consequence of this proposition is that we obtain a rational map
\[
\psi:H\backslash \GL_n^{u,v}/H\dashrightarrow \mathcal{X}_{|\vec{i}_0|}
\]
that fits in the commutative diagram
\[
\xymatrix{\GL_n^{u,v}\ar@{-->}[r]^{\tilde{p}\circ\tilde{\psi}} \ar[d] & \mathcal{X}_{|\tilde{\vec{i}}_0|} \ar[d]^q \\
H\backslash \GL_n^{u,v}/H \ar@{-->}[r]_(0.6)\psi & \mathcal{X}_{|\vec{i}_0|}}
\]

We can put in the map $\tilde{\chi}:\mathcal{X}_{|\tilde{\vec{i}}_0|}\rightarrow \GL_n^{u,v}$ into the picture as well, which can obviously be passed to the double quotient. We denote it as 
\[
\chi:\mathcal{X}_{|\vec{i}_0|}\rightarrow H\backslash \GL_n^{u,v}/H.
\]
As a result, we have the following chain of commutative diagrams
\begin{equation}\label{chain}
\xymatrix{ \cdots \ar@{-->}[r] & \mathcal{X}_{|\tilde{\vec{i}}_0|} \ar[r]^{\tilde{\chi}} \ar[d]^q & \GL_n^{u,v} \ar@{-->}[r]^{\tilde{p}\circ \tilde{\psi}}  \ar[d] & \mathcal{X}_{|\tilde{\vec{i}}_0|} \ar[d]^q \ar[r] & \cdots \\ \cdots \ar@{-->}[r] &
\mathcal{X}_{|\vec{i}_0|} \ar[r]_(0.4)\chi & H\backslash \GL_n^{u,v}/H \ar@{-->}[r]_(0.6)\psi & \mathcal{X}_{|\vec{i}_0|} \ar[r] & \cdots }
\end{equation}
We then can reformulate our main result (Theorem \ref{main}) into the following more precise proposition.

\begin{prop}\label{main'} Each map in the bottom row of the above chain of commutative diagram is a birational equivalence. The composition $\psi\circ \chi$ is a Donaldson-Thomas transformation of the cluster variety $\mathcal{X}_{|\vec{i}_0|}$ and the composition $\mathrm{DT}:=\chi\circ \psi$ takes the form
\[
H\backslash x/H \mapsto H\backslash\left(\left[\overline{u}^{-1}x\right]_-^{-1}\overline{u}^{-1} x \overline{v^{-1}}\left[x\overline{v^{-1}}\right]^{-1}_+\right)^t/H
\]
\end{prop}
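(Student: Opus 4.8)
\emph{Proof strategy.} The plan is to establish the three assertions in logical order: first the formula for $\chi\circ\psi$, then the birationality of all the maps in the bottom row of \eqref{chain}, and finally the Donaldson-Thomas property of $\psi\circ\chi$. To identify $\chi\circ\psi$, recall that Fock and Marshakov's pictorial form of amalgamation lets us view $\chi$ as sending a point of $\mathcal{X}_{|\vec{i}_0|}$ to the configuration $[B_+,xB_-,B_-,xB_+]$, where $x=\tilde{\chi}_{\vec{i}}$ evaluated at any lift to $\mathcal{X}_{\tilde{\vec{i}}}$, while $\psi$ assigns to $H\backslash x/H$ the Laurent monomials $q\circ\tilde{p}\circ\tilde{\psi}$ in the face minors $A_f(x)=\Delta^{I(f),J(f)}(x)$. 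The dictionary of Subsection \ref{positive}, which expresses the minors of an amalgamated element in terms of its face variables in the spirit of Postnikov's boundary measurements, then lets one substitute back into $\tilde{\chi}$; after the telescoping cancellations one should recover, up to left and right multiples from $H$, Fomin and Zelevinsky's twist map precomposed with the anti-automorphism $\iota$, which is exactly the map of Theorem \ref{main} and hence, by Proposition \ref{eta}, the configuration-space automorphism $\eta$. This proves $\chi\circ\psi=\eta$.

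For the birationality, Theorem \ref{bfz} makes $\tilde{\psi}$ a birational equivalence onto the subtorus $\{A_0=1\}$, and Subsection \ref{birational} uses the involutivity of Fomin and Zelevinsky's twist (a birational involution of $\GL_n^{u,v}$ commuting with the $H\times H$ action, so that the relevant parametrization can be inverted at once) to descend this to a birational equivalence $\chi$ between $\mathcal{X}_{|\vec{i}_0|}$ and $H\backslash\GL_n^{u,v}/H$. Since $\iota$ is a birational anti-automorphism, the map $\chi\circ\psi=\eta$ computed above is a birational automorphism of $H\backslash\GL_n^{u,v}/H$, hence $\psi=\chi^{-1}\circ\eta$ is a birational equivalence as well, and consequently every arrow in the bottom row of \eqref{chain} is a birational equivalence.

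Finally, to see that $\psi\circ\chi$ is a Donaldson-Thomas transformation of $\mathcal{X}_{|\vec{i}_0|}$ I would verify Goncharov and Shen's two defining properties. The first is that $\psi\circ\chi$ is a genuine cluster transformation and not merely a birational automorphism; here I would work inside $\conf^{u,v}(\mathcal{B})$, where by Subsection \ref{flag} the automorphism $\eta$ is obtained by inscribing the square diagram of a configuration into the hexagon diagram, rotating, and applying $*$. That geometric operation decomposes into finitely many of the elementary reduced-word changes of Proposition \ref{4.1}, and by the compatibility propositions of Subsection \ref{cluster} each such change is realized on $\mathcal{X}$-coordinates by a cluster mutation or a seed isomorphism, so $\eta$ itself corresponds under $\chi$ to a cluster transformation of $\mathcal{X}_{|\vec{i}_0|}$; since $\psi\circ\chi=\chi^{-1}\circ\eta\circ\chi$, this cluster transformation is exactly $\psi\circ\chi$. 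The second property is that the tropicalization $(\psi\circ\chi)^t$ equals the piecewise-linear automorphism prescribed by Goncharov and Shen, equivalently that the underlying sequence of mutations is a reddening sequence; for this I would tropicalize the minor dictionary of Subsection \ref{positive}, so that the minors become explicit $\min$-plus functions of the face variables $X_f$, and then compute the composition along \eqref{chain} directly to read off the required sign behaviour. The main obstacle is the first of these two properties: the formula and the tropical computation ultimately come down to determinantal identities and positivity bookkeeping, whereas upgrading the birational automorphism $\psi\circ\chi$ to an honest cluster transformation rests on the genuinely new geometric input that the ``rotate the inscribed square'' operation on quadruples of Borel subgroups factors through the moves of Proposition \ref{4.1}, and it is exactly there that the identification $\conf^{u,v}(\mathcal{B})\cong H\backslash\GL_n^{u,v}/H$ does the essential work.
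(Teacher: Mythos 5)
Your overall plan matches the paper's structure: first identify $\chi\circ\psi$ with the twist-type formula via the bipartite-graph dictionary, then deduce birationality from the involutivity of the twist map, then show $\psi\circ\chi$ is a cluster transformation by working in $\conf^{u,v}(\mathcal{B})$, and finally verify the tropical defining condition. However, your step~(d) has a genuine gap that the proposal glosses over as ``determinantal identities and positivity bookkeeping.'' For a generic reduced word $\vec{i}$, the condition $\deg_{X_f}\bigl(\psi_{\vec{i}}\circ\chi_{\vec{i}}\bigr)^*(X_g)=-\delta_{fg}$ from Proposition~\ref{lem0} cannot be read off ``directly along the chain'': the family of disjoint paths realizing the top $X_f$-degree in $\Delta^{I,J}\bigl(\tilde{\chi}_{\vec{i}}(X)\bigr)$ varies with $f$, so when one tropicalizes a Laurent monomial $\prod_j A_j^{\epsilon_{gj}}$ the competing maximizers do not collapse to a single linear form, and the degree of the resulting min/max-plus expression is not computable face-by-face. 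The paper addresses this precisely by choosing the \emph{greedy reduced word} $\vec{i}_{(u,v)}$: Proposition~\ref{greedy} guarantees that for every face $f$ with $A_f=\Delta^{I,J}$ there is a single family of disjoint paths in the perfect orientation that simultaneously maximizes the degree of \emph{every} face variable at once, which is exactly what turns the $\bigl[\Delta\bigr]/\bigl[\Delta^{I,J}\bigr]$ degree arithmetic into the explicit local pictures of Subsection~\ref{proof2}. Your proposal has no substitute for this choice and would stall here; in fact you diagnose the hard step as being elsewhere, which misjudges where the nontrivial work lies.

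A lesser but still real imprecision is in step~(c): saying that the rotation $\eta$ ``decomposes into finitely many of the elementary reduced-word changes of Proposition~\ref{4.1}'' states the conclusion rather than the argument. What actually carries the proof is the tilting construction for the square diagram (the intermediate Borel subgroups $B_u^{(k)}$, $B_{u^*}^{(k)}$, etc.\ and Propositions around~\ref{u tilt}): it is shown that, on the explicit representative $x=x_-x_+$ arising from the reduced word with all negative letters first, each tilting step corresponds to multiplication on the left by $e_{i(k)}e_{-i(k)}^{-1}$ and on the right by $e_{j(k)}^{-1}e_{-j(k)}$. Only after this matrix-level identification does the passage to bipartite-graph edge moves, and thence to cluster mutations plus the transposition-induced quiver isomorphism, become visible. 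Without the tilting and the specific lift to $\GL_n^{u,v}$, there is no a priori reason the inscribed-square rotation should factor through the moves of Proposition~\ref{4.1}.
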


\subsection{Tropicalization, Lamination, and Cluster Donaldson-Thomas Transformation}\label{1.4} In this subsection we will discuss tropicalization of cluster varieties and the laminations on them. Near the end we will use laminations to give a definition of Goncharov and Shen's cluster Donaldson-Thomas transformation.

Let's start with a split algebraic torus $\mathcal{X}$. The semiring of \textit{positive rational functions} on $\mathcal{X}$, which we denote as $P(\mathcal{X})$, is the semiring consisting of elements in the form $f/g$ where $f$ and $g$ are linear combinations of characters on $\mathcal{X}$ with positive integral coefficients. A rational map $\phi:\mathcal{X}\dashrightarrow \mathcal{Y}$ between two split algebraic tori is said to be \textit{positive} if it induces a semiring homomorphism $\phi^*:P(\mathcal{Y})\rightarrow P(\mathcal{X})$. It then follows that composition of positive rational maps is still a positive rational map.

One typical example of a positive rational map is a cocharacter $\chi$ of a split algebraic torus $\mathcal{X}$: the induced map $\chi^*$ pulls back an element $f/g\in P(\mathcal{X})$ to $\frac{\langle f, \chi\rangle}{\langle g,\chi\rangle}$ in $P(\mathbb{C}^*)$, where $\langle f, \chi\rangle$ and $\langle g,\chi\rangle$ are understood as linear extensions of the canonical pairing between characters and cocharacters with values in powers of $z$. We will denote the lattice of cocharacters of a split algebraic torus $\mathcal{X}$ by $\mathcal{X}^t$ for reasons that will become clear in a moment.

Note that $P(\mathbb{C}^*)$ is the semiring of rational functions in a single variable $z$ with positive integral coefficients. Thus if we let $\mathbb{Z}^t$ be the semiring $(\mathbb{Z}, \max, +)$, then there is a semiring homomorphism $\deg_z:P(\mathbb{C}^*)\rightarrow \mathbb{Z}^t$ defined by $f(z)/g(z)\mapsto \deg_zf-\deg_zg$. Therefore a cocharacter $\chi$ on $\mathcal{X}$ gives rise to a natural semiring homomorphism 
\[
\deg_z \langle \cdot, \chi\rangle:P(\mathcal{X})\rightarrow \mathbb{Z}^t
\]

\begin{prop} The map $\chi\mapsto \deg_z\langle \cdot, \chi\rangle$ is a bijection between the lattice of cocharacters and set of semiring homomorphisms from $P(\mathcal{X})$ to $\mathbb{Z}^t$.
\end{prop}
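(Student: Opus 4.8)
The plan is to write down the inverse map explicitly and check bijectivity by evaluating homomorphisms on monomials. Let $M$ denote the character lattice of $\mathcal{X}$, so that $\mathcal{X}^t=\mathrm{Hom}(M,\mathbb{Z})$, and recall that every element of $P(\mathcal{X})$ is a ratio $f/g$ with $f=\sum_{m\in S_f}c_m m$ and $g=\sum_{m\in S_g}d_m m$ finite nonempty sums of characters and coefficients $c_m,d_m\in\mathbb{Z}_{>0}$. For a cocharacter $\chi$, composing the pullback $\chi^*\colon P(\mathcal{X})\to P(\mathbb{C}^*)$ with $\deg_z$ sends $f/g$ to $\max_{m\in S_f}\langle m,\chi\rangle-\max_{m\in S_g}\langle m,\chi\rangle$, the two maxima being genuine because positivity of the coefficients forbids cancellation in $\chi^*(f)$ and $\chi^*(g)$; that $\deg_z\langle\cdot,\chi\rangle$ is a semiring homomorphism was already noted above, so the map $\chi\mapsto\deg_z\langle\cdot,\chi\rangle$ lands in the asserted target. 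Injectivity is then immediate: a single character $m$ is an element of $P(\mathcal{X})$, and $\deg_z\langle m,\chi\rangle=\langle m,\chi\rangle$, so the homomorphism attached to $\chi$ already determines the functional $\langle\cdot,\chi\rangle$ on $M$, hence $\chi$.

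For surjectivity I would start from an arbitrary semiring homomorphism $h\colon P(\mathcal{X})\to\mathbb{Z}^t$ and restrict it to the subset of characters, which under multiplication is a copy of $M$ inside $P(\mathcal{X})$. Since the multiplicative structure of $\mathbb{Z}^t$ is the group $(\mathbb{Z},+)$ and $h$ sends the multiplicative unit $1$ to $0$, this restriction is a group homomorphism $M\to(\mathbb{Z},+)$, hence equals $\langle\cdot,\chi\rangle$ for a unique $\chi\in\mathcal{X}^t$. It then remains to verify $h=\deg_z\langle\cdot,\chi\rangle$ on all of $P(\mathcal{X})$, and two elementary observations accomplish this. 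First, $h$ carries the addition of $P(\mathcal{X})$ to $\max$, which is idempotent; writing $c_m m$ as the $P(\mathcal{X})$-sum of $c_m$ copies of the monomial $m$, the coefficients become invisible, so $h(f)=\max_{m\in S_f}h(m)=\max_{m\in S_f}\langle m,\chi\rangle$, and similarly $h(g)=\max_{m\in S_g}\langle m,\chi\rangle$. Second, because multiplication in $\mathbb{Z}^t$ is $(\mathbb{Z},+)$, the relation $h(f)=h\big((f/g)\,g\big)=h(f/g)+h(g)$ gives $h(f/g)=h(f)-h(g)=\max_{m\in S_f}\langle m,\chi\rangle-\max_{m\in S_g}\langle m,\chi\rangle$, which is exactly the value of $\deg_z\langle\cdot,\chi\rangle$ computed in the first paragraph.

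The one point requiring care is precisely the idempotency of $\max$: it is what makes the positive integral coefficients of elements of $P(\mathcal{X})$ irrelevant to any semiring homomorphism into $\mathbb{Z}^t$, and hence what reduces the whole problem to the behaviour of $h$ on the character lattice, where $h$ is nothing more than a $\mathbb{Z}$-linear functional. Once that reduction is in place everything else is bookkeeping with the semiring axioms, and I do not anticipate any further obstacle.
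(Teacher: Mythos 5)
Your proof is correct, and it is worth noting that it is more careful than the argument in the paper, while following the same basic outline (reduce everything to the values of the homomorphism on the character lattice). The paper simply asserts that $P(\mathcal{X})$ is a free commutative semiring generated by a basis of characters and concludes that a homomorphism to $\mathbb{Z}^t$ amounts to an arbitrary assignment of integers to the generators. Taken literally that freeness claim is imprecise: $P(\mathcal{X})$ contains multiplicative inverses and its addition is not idempotent, so it is not free on the $X_i$ in the category of commutative semirings; what is true is that homomorphisms into the specific idempotent semifield $\mathbb{Z}^t=(\mathbb{Z},\max,+)$ are still freely determined by the generators. Your argument supplies exactly the verification this needs: idempotency of $\max$ makes the positive integral coefficients invisible, so $h(f)=\max_{m\in S_f}h(m)$; the identity $h(fg)=h(f)+h(g)$ turns the restriction of $h$ to the multiplicative group of characters into a $\mathbb{Z}$-linear functional, i.e.\ a cocharacter; and the relation $h(f)=h(f/g)+h(g)$ extends this to quotients. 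Together with your explicit formula $\deg_z\langle f/g,\chi\rangle=\max_{m\in S_f}\langle m,\chi\rangle-\max_{m\in S_g}\langle m,\chi\rangle$ (where positivity of coefficients prevents cancellation of leading terms), this gives both injectivity and surjectivity cleanly. So your proposal is a correct and in fact tighter version of the paper's proof; nothing is missing.
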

\begin{proof} Note that $P(\mathcal{X})$ is a free commutative semiring generated by any basis of the lattice of characters, and in paricular any choice of coordinates $(X_i)_{i=1}^r$. Therefore to define a semiring homomorphism from $P(\mathcal{X})$ to $\mathbb{Z}^t$ we just need to assign to each $X_i$ some integer $a_i$. But for any such $r$-tuple $(a_i)$ there exists a unique cocharacter $\chi$ such that $\langle X_i,\chi\rangle=z^{a_i}$. Therefore $\chi\mapsto \deg_z\langle \cdot, \chi\rangle$ is indeed a bijection.
\end{proof}

\begin{cor} A positive rational map $\phi:\mathcal{X}\dashrightarrow \mathcal{Y}$ between split algebraic tori gives rise to a natural map $\phi^t:\mathcal{X}^t\rightarrow \mathcal{Y}^t$ between the respective lattice of cocharacters.
\end{cor}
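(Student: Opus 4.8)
The plan is to read $\phi^t$ off directly from the bijection established in the preceding proposition, using the contravariant functoriality of $\phi\mapsto\phi^*$. Recall that proposition identifies the lattice $\mathcal{X}^t$ of cocharacters of $\mathcal{X}$ with the set of semiring homomorphisms $\Hom(P(\mathcal{X}),\mathbb{Z}^t)$, a cocharacter $\chi$ corresponding to the homomorphism $h_\chi:=\deg_z\langle\cdot,\chi\rangle$; the same holds for $\mathcal{Y}$. Since $\phi$ is positive, \emph{by definition} it induces a semiring homomorphism $\phi^*\colon P(\mathcal{Y})\to P(\mathcal{X})$ — this is precisely where positivity, rather than mere rationality, enters: a nonzero sum of characters with positive integral coefficients never vanishes, so pulling back an element of $P(\mathcal{Y})$ never produces an indeterminate $0/0$.

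First I would define $\phi^t$ by precomposition. Given $\chi\in\mathcal{X}^t$, the composite $h_\chi\circ\phi^*\colon P(\mathcal{Y})\to\mathbb{Z}^t$ is again a semiring homomorphism, being a composition of two such; hence by the proposition applied to $\mathcal{Y}$ there is a unique cocharacter, which we call $\phi^t(\chi)\in\mathcal{Y}^t$, with $h_{\phi^t(\chi)}=h_\chi\circ\phi^*$. Unwinding the definitions, $\phi^t$ is characterized coordinate-wise by $\deg_z\langle Y,\phi^t(\chi)\rangle=\deg_z\langle\phi^*(Y),\chi\rangle$ for every character $Y$ of $\mathcal{Y}$; writing $\phi^*(Y)$ as a ratio of sums of monomials in coordinates $(X_i)$ of $\mathcal{X}$, the right-hand side is the familiar ``max-plus'' degree, so $\phi^t$ is visibly a piecewise-$\mathbb{Z}$-linear (in general non-linear) map of the underlying sets of the cocharacter lattices.

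Finally I would record naturality. From the contravariance $(\psi\circ\phi)^*=\phi^*\circ\psi^*$ for composable positive rational maps $\mathcal{X}\dashrightarrow\mathcal{Y}\dashrightarrow\mathcal{Z}$, and from $\mathrm{id}^*=\mathrm{id}$, one gets $h_{(\psi\circ\phi)^t(\chi)}=h_\chi\circ(\psi\circ\phi)^*=(h_\chi\circ\phi^*)\circ\psi^*=h_{\phi^t(\chi)}\circ\psi^*=h_{\psi^t(\phi^t(\chi))}$, whence $(\psi\circ\phi)^t=\psi^t\circ\phi^t$ by injectivity of $\chi\mapsto h_\chi$, together with $\mathrm{id}^t=\mathrm{id}$; thus $(-)^t$ is a covariant functor on split algebraic tori and positive rational maps, and in particular it can be applied to the cluster mutations and cluster isomorphisms of Subsection~\ref{cluster}, all of which are positive. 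There is no serious obstacle here: all the real content is in the preceding proposition, and the only thing to get right is the bookkeeping of (co)variance and the observation that $\phi^t$ is merely a map of sets — it need not respect the group structure of the lattices.
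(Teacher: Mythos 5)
Your construction of $\phi^t$ by precomposing $h_\chi=\deg_z\langle\cdot,\chi\rangle$ with the semiring homomorphism $\phi^*$ and then invoking the bijection of the preceding proposition is exactly the paper's proof. The extra remarks on functoriality and piecewise-linearity are correct but not needed for the statement.
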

\begin{proof} Note that $\phi$ induces a semiring homomorphism $\phi^*:P(\mathcal{Y})\rightarrow P(\mathcal{X})$. Therefore for any cochcaracter $\chi$ of $\mathcal{X}$, the map $f\mapsto \deg_z\langle \phi^*f,\chi\rangle$ is a semiring homomorphism from $P(\mathcal{Y})\rightarrow \mathbb{Z}^t$. By the above proposition there is a unique cocharacter $\eta$ of $\mathcal{Y}$ representing this semiring homomorphism, and we assign $\phi^t(\chi)=\eta$.
\end{proof}

We also want to give an explicit way to compute the induced map $\phi^t$. Fix two coordinate charts $(X_i)$ on $\mathcal{X}$ and $(Y_j)$ on $\mathcal{Y}$. Then $(X_i)$ gives rise to a basis $\{e_i\}$ of the lattice of cocharacters $\mathcal{X}^t$, which is defined by
\[
\chi_i^*(X_k):=\left\{\begin{array}{ll}
z & \text{if $k=i$;} \\
1 & \text{if $k\neq i$.}
\end{array}\right.
\]
This basis allows us to write each cocharacter $\chi$ of $\mathcal{X}$ as a linear combination $\sum x_i\chi_i$. It is not hard to see that
\[
x_i=\deg_z\langle X_i, \chi\rangle.
\]
Similarly the coordinate chart $(Y_j)$ also gives rise to a basis $\{\eta_j\}$ of the lattice of cocharacters $\mathcal{Y}^t$, and we can write each cocharacter of $\mathcal{Y}$ as a linear combination $\sum y_j\eta_j$. 
On the other hand, for any positive rational function $q$ in $r$ varibles $X_1, \dots, X_r$ we have the so-called \textit{na\"{i}ve tropicalization}, which turns $q$ into a map from $\mathbb{Z}^r$ to $\mathbb{Z}$ via the following process:
\begin{enumerate}
\item replace addition in $q(X_1,\dots, X_r)$ by taking maximum;
\item replace multiplication in $q(X_1,\dots, X_r)$ by addition and replace division in $q(X_1,\dots, X_r)$ by subtraction;
\item replace every constant term by zero;
\item replace $X_i$ by $x_i$.
\end{enumerate}
It is not hard to see that, given a positive rational map $\phi:\mathcal{X}\dashrightarrow\mathcal{Y}$, the induced map $\phi^t$ maps $\sum x_i\chi_i$ to $\sum y_j\eta_j$ where
\begin{equation}\label{cocharacter}
y_j:=(\phi^*(Y_j))^t(x_i).
\end{equation}

Now we are ready to define tropicalization. 

\begin{defn} The \textit{tropicalization} of a split algebraic torus $\mathcal{X}$ is defined to be its lattice of cocharacters $\mathcal{X}^t$ (and hence the notation). For a positive rational map $\phi:\mathcal{X}\dashrightarrow \mathcal{Y}$ between split algebraic tori, the \textit{tropicalization} of $\phi$ is defined to be the map $\phi^t:\mathcal{X}^t\rightarrow \mathcal{Y}^t$. The basis $\{\chi_i\}$ of $\mathcal{X}^t$ corresponding to a coordinate system $(X_i)$ on $\mathcal{X}$ is called the \textit{basic laminations} associated to $(X_i)$.
\end{defn}

Now let's go back to the cluster varieties $\mathcal{A}_{|\vec{i}_0|}$ and $\mathcal{X}_{|\vec{i}_0|}$. Since both cluster varieties are obtained by gluing seed tori via positive birational equivalences, we can tropicalize everything and obtain two new glued objects which we call \textit{tropicalized cluster varieties} and denote as $\mathcal{A}_{|\vec{i}_0|}^t$ and $\mathcal{X}_{|\vec{i}_0|}^t$.

Since each seed $\mathcal{X}$-torus $\mathcal{X}_\vec{i}$ is given a split algebraic torus, it has a set of basic laminations associated to the canonical coordinates $(X_i)$; we will call this set of basic laminations the \textit{positive basic $\mathcal{X}$-laminations} and denote them as $l_i^+$. Note that $\{-l_i^+\}$ is also a set of basic laminations on $\mathcal{X}_\vec{i}$, which will be called the \textit{negative basic $\mathcal{X}$-laminations} and denote them as $l_i^-$.

With all the terminologies developed, we can now state the definition of Goncharov and Shen's cluster Donaldson-Thomas transformation as follows.

\begin{defn}[Definition 2.15 in \cite{GS}] A \textit{cluster Donaldson-Thomas transformation} (of a seed $\mathcal{X}$-torus $\mathcal{X}_\vec{i}$) is a cluster transformation $\mathrm{DT}:\mathcal{X}_\vec{i}\dashrightarrow \mathcal{X}_\vec{i}$ whose tropicalization $\mathrm{DT}^t:\mathcal{X}_\vec{i}^t\rightarrow \mathcal{X}_\vec{i}^t$ maps each positive basic $\mathcal{X}$-laminations $l_i^+$ to its corresponding negative basic $\mathcal{X}$-laminations $l_i^-$. \end{defn}

Goncharov and Shen proved that a cluster Donaldson-Thomas transformation enjoys the following properties.

\begin{thm} [Goncharov-Shen, Theorem 2.16 in \cite{GS}] \label{gs} A cluster Donaldson-Thomas transformation $\mathrm{DT}:\mathcal{X}_\vec{i}\rightarrow \mathcal{X}_\vec{i}$ is unique if it exists. If $\vec{i}'$ is another seed in $|\vec{i}|$ (the collection of seeds mutation equivalent to $\vec{i}$) and $\tau:\mathcal{X}\vec{i}\rightarrow \mathcal{X}_{\vec{i}'}$ is a cluster transformation, then the conjugate $\tau \mathrm{DT} \tau^{-1}$ is the cluster Donaldson-Thomas transformation of $\mathcal{X}_{\vec{i}'}$. Therefore it makes sense to say that the cluster Donaldson-Thomas transformation $\mathrm{DT}$ exists on a cluster $\mathcal{X}$-variety without referring to any one specific seed $\mathcal{X}$-torus.
\end{thm}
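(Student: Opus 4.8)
The plan is to deduce both assertions from two facts about cluster varieties, one essentially formal and the other the substantial one. \emph{Fact (a).} Tropicalization is a functor on positive rational maps between split tori, and every cluster transformation $\phi$ is a positive rational map with positive inverse (mutations, seed isomorphisms and the maps $p_\vec{i}$ all are, by the positivity of the formulas recorded in Subsection~\ref{1.4}); hence $(\psi\circ\phi)^t=\psi^t\circ\phi^t$ and $(\phi^{-1})^t=(\phi^t)^{-1}$ for composable cluster transformations. \emph{Fact (b).} A cluster transformation $\phi:\mathcal{X}_\vec{i}\dashrightarrow\mathcal{X}_\vec{i}$ whose tropicalization $\phi^t:\mathcal{X}_\vec{i}^t\to\mathcal{X}_\vec{i}^t$ fixes each positive basic lamination $l_k^+$ is the identity map. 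The reason is that tropicalized mutations and seed isomorphisms are linear on, and permute, the maximal cones of the Fock--Goncharov cluster fan in $\mathcal{X}_\vec{i}^t$ --- the cone attached to the seed $\vec{i}$ being exactly the positive orthant $\sigma^+=\sum_k\mathbb{R}_{\ge 0}\,l_k^+$ --- and the induced action on these cones is intertwined with the action on seeds; a cluster transformation $\mathcal{X}_\vec{i}\to\mathcal{X}_\vec{i}$ fixing the rays of $\sigma^+$ therefore fixes $\sigma^+$ as a cone, so it ``does not move the seed $\vec{i}$'', hence is a trivial cluster transformation in the sense of Subsection~\ref{cluster}, i.e.\ the identity. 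Both facts are part of the cluster-ensemble package of \cite{FG} and \cite{GHKK}.

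\emph{Uniqueness.} Let $\mathrm{DT}_1,\mathrm{DT}_2$ be cluster Donaldson--Thomas transformations of $\mathcal{X}_\vec{i}$. Then $\Phi:=\mathrm{DT}_1^{-1}\circ\mathrm{DT}_2$ is a cluster transformation $\mathcal{X}_\vec{i}\to\mathcal{X}_\vec{i}$, and by Fact~(a) one has $\Phi^t=(\mathrm{DT}_1^t)^{-1}\circ\mathrm{DT}_2^t$; since $\mathrm{DT}_j^t(l_k^+)=l_k^-$ for $j=1,2$ we get $\Phi^t(l_k^+)=(\mathrm{DT}_1^t)^{-1}(l_k^-)=l_k^+$ for every $k$. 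By Fact~(b), $\Phi=\mathrm{id}$, so $\mathrm{DT}_1=\mathrm{DT}_2$.

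\emph{Conjugation-invariance.} Let $\tau:\mathcal{X}_\vec{i}\to\mathcal{X}_{\vec{i}'}$ be a cluster transformation and put $\mathrm{DT}':=\tau\circ\mathrm{DT}\circ\tau^{-1}$, a cluster transformation $\mathcal{X}_{\vec{i}'}\to\mathcal{X}_{\vec{i}'}$; write $l_k'^{\pm}$ for the basic $\mathcal{X}$-laminations of $\mathcal{X}_{\vec{i}'}$. By Fact~(a), $(\mathrm{DT}')^t=\tau^t\circ\mathrm{DT}^t\circ(\tau^t)^{-1}$, so it remains to check that $\mathrm{DT}^t\bigl((\tau^{-1})^t(l_k'^{+})\bigr)=(\tau^{-1})^t(l_k'^{-})$ for $k=1,\dots,n$ inside $\mathcal{X}_\vec{i}^t$. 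Now $(\tau^{-1})^t(l_k'^{\pm})$ are precisely the positive and negative basic laminations of the seed $\vec{i}'$ read off in the coordinate chart of $\vec{i}$, so the claim is exactly that the defining relation of a cluster Donaldson--Thomas transformation, imposed at the single seed $\vec{i}$, holds automatically at every seed of $|\vec{i}|$. I would establish this by propagating the relation along a mutation path from $\vec{i}$: across one mutation $\mu_m$, the explicit tropicalized $\mathcal{X}$-mutation formula of Subsection~\ref{1.4} gives $\mu_m^t(l_j^{\pm})=l_j^{\pm}$ for $j\ne m$ and $\mu_m^t(l_m^{+})=l_m^{-}$, and one combines this with the fact that $\mathrm{DT}$ glues to an automorphism of $\mathcal{X}_{|\vec{i}|}$ compatible with all chart transitions (equivalently, that $\mathrm{DT}^t$ carries each cone of the Fock--Goncharov fan attached to a seed onto the oppositely-signed cone attached to the same seed); iterating yields the relation at $\vec{i}'$. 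Together with uniqueness this identifies $\mathrm{DT}'$ as \emph{the} cluster Donaldson--Thomas transformation of $\mathcal{X}_{\vec{i}'}$, so the phrase ``$\mathrm{DT}$ of a cluster $\mathcal{X}$-variety'' is well posed.

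\emph{Main obstacle.} The real content is Fact~(b) and, more sharply, the seed-independence used in the conjugation step: both say that the tropicalization of a cluster transformation remembers enough combinatorics --- the Fock--Goncharov fan, i.e.\ the sign-coherent $\mathbf{c}$-vector structure --- to pin the transformation down, and verifying this requires the deeper structure theory of \cite{FG} and \cite{GHKK} rather than a formal manipulation. Everything else is bookkeeping with the mutation formulas of Subsection~\ref{1.4} and the functoriality of tropicalization.
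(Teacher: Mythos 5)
The paper offers no proof of this statement: it is quoted wholesale from Goncharov--Shen as Theorem 2.16 of \cite{GS} and used as a black box, so there is no in-paper argument to compare yours against; what follows assesses your proposal on its own terms. Your uniqueness half has the right skeleton and matches the standard argument: form $\Phi=\mathrm{DT}_1^{-1}\circ\mathrm{DT}_2$, use functoriality of tropicalization to see that $\Phi^t$ fixes every $l_k^+$, and invoke the fact that a cluster transformation of $\mathcal{X}_{\vec{i}}$ whose tropicalization fixes all positive basic laminations is trivial. You correctly flag that last fact as the real content; it is precisely the GHKK-dependent input (sign-coherence, equivalently injectivity of the seed-to-cone assignment in the cluster fan) on which Goncharov and Shen rely, and citing it rather than proving it is no worse than what the paper itself does with the whole theorem.

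The conjugation half, however, contains a genuine gap. Your one-step formula is false: tropicalizing $\mu_k^*(X'_i)=X_i\bigl(1+X_k^{-\sgn(\epsilon_{ik})}\bigr)^{-\epsilon_{ik}}$ at the point $l_k^+$ (coordinates $x_i=\delta_{ik}$) gives new coordinates $x'_k=-1$ and $x'_i=\max(0,-\epsilon_{ik})$ for $i\neq k$, so
\[
\mu_k^t(l_k^+)=l^{\prime\,-}_k+\sum_{i\neq k}\max(0,-\epsilon_{ik})\,l^{\prime\,+}_i,
\]
which equals $l^{\prime\,-}_k$ only when $\epsilon_{ik}\geq 0$ for all $i$. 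This is just the usual $c$-vector recursion: the images of the initial basic laminations along a mutation path are the $c$-vectors of the target seed, which are sign-coherent but are not $\pm$ the target seed's basic laminations. Consequently the ``propagate one mutation at a time'' scheme does not establish the key claim that the defining relation $\mathrm{DT}^t(l_k^+)=l_k^-$, imposed at $\vec{i}$, holds at every seed of $|\vec{i}|$. That claim is of the same depth as your Fact (b) and is extracted in \cite{GS} from the same sign-coherence and cluster-complex machinery, not from iterating the chart-change formula; as written, the second assertion of the theorem is therefore not proved.
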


From our discussion on tropicalization above, we can translate the definition of a cluster Donaldson-Thomas transformation into the following equivalent one, which we will use to prove our main theorem.

\begin{prop}\label{lem0} A cluster transformation $\mathrm{DT}:\mathcal{X}_{|\vec{i}_0|}\rightarrow \mathcal{X}_{|\vec{i}_0|}$ is a cluster Donaldson-Thomas transformation if and only if on one (any hence any) seed $\mathcal{X}$-torus $\mathcal{X}_\vec{i}$ with cluster coordinates $(X_i)$, we have
\[
\deg_{X_i}\mathrm{DT}^*(X_j)=-\delta_{ij}
\]
where $\delta_{ij}$ denotes the Kronecker delta.
\end{prop}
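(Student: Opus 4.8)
The plan is to fix a single seed $\mathcal{X}$-torus $\mathcal{X}_\vec{i}$, unwind both conditions on it, and then appeal to Theorem \ref{gs} for the ``one and hence any'' clause. Since $\mathrm{DT}$ is a cluster transformation it is a composition of cluster $\mathcal{X}$-mutations and cluster isomorphisms, each of which is a positive rational map; hence $\mathrm{DT}$ is positive and $\mathrm{DT}^*$ restricts to a semiring homomorphism $P(\mathcal{X}_\vec{i})\to P(\mathcal{X}_\vec{i})$. In particular each $\mathrm{DT}^*(X_j)$ lies in $P(\mathcal{X}_\vec{i})$, so its na\"{i}ve tropicalization $(\mathrm{DT}^*(X_j))^t\colon\mathbb{Z}^r\to\mathbb{Z}$ is defined, and by Equation \eqref{cocharacter} the tropicalized map $\mathrm{DT}^t$ sends the cocharacter with coordinates $(x_k)$ in the basic lamination basis $\{\chi_k\}$ to the cocharacter with coordinates $\big((\mathrm{DT}^*(X_j))^t(x_k)\big)_j$.

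Next I would rewrite the defining lamination condition in these coordinates. By the dictionary $x_k=\deg_z\langle X_k,\chi\rangle$, the positive basic lamination $l_i^+=\chi_i$ has coordinate vector the $i$th standard basis vector $e_i$, and $l_i^-=-\chi_i$ has coordinate vector $-e_i$; therefore $\mathrm{DT}^t(l_i^+)=l_i^-$ for every $i$ is equivalent to $(\mathrm{DT}^*(X_j))^t(e_i)=-\delta_{ij}$ for all $i,j$. It then remains to identify $(\mathrm{DT}^*(X_j))^t(e_i)$ with $\deg_{X_i}\mathrm{DT}^*(X_j)$: writing $\mathrm{DT}^*(X_j)=f/g$ with $f$ and $g$ positive-integer combinations of (Laurent) monomials in $X_1,\dots,X_r$, the na\"{i}ve tropicalization evaluated at $e_i$ returns $\max\{a_i\}-\max\{b_i\}$ as $a$ runs over exponent vectors of monomials of $f$ and $b$ over those of $g$; positivity forbids cancellation, so these maxima are exactly the highest powers of $X_i$ occurring in $f$ and in $g$, and their difference is $\deg_{X_i}\mathrm{DT}^*(X_j)$. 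This establishes the stated equivalence on the fixed seed $\mathcal{X}_\vec{i}$.

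Finally, the ``one and hence any'' assertion follows from Theorem \ref{gs}: the property of being a cluster Donaldson-Thomas transformation is independent of the seed on which the lamination condition is tested, so if the degree condition $\deg_{X_i}\mathrm{DT}^*(X_j)=-\delta_{ij}$ holds in one seed $\mathcal{X}$-torus it must hold in every seed mutation equivalent to it. The only step that needs genuine care is the identification in the second paragraph: one must observe that $\deg_{X_i}$ of a subtraction-free rational expression is unambiguous precisely because positivity prevents cancellation among numerator monomials and among denominator monomials, so that it agrees with the value of the na\"{i}ve tropicalization at $e_i$; everything else is bookkeeping with the cocharacter--degree dictionary set up above.
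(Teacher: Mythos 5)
Your proposal is correct and follows the same route as the paper, which also derives the degree condition directly from the cocharacter formula \eqref{cocharacter}; you have simply spelled out the bookkeeping (positivity of $\mathrm{DT}$, the tropicalization-at-$e_i$ computation, the appeal to Theorem~\ref{gs}) that the paper leaves implicit in its one-line proof.
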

\begin{proof} From Equation \eqref{cocharacter} we see that $\mathrm{DT}^t(l_i^+)= l_i^-$ for all $i$ if and only if $\deg_{X_i}\mathrm{DT}^*(X_j)=-\delta_{ij}$.
\end{proof}

\section{Proof of our Main Result}

In this section we will prove Proposition \ref{main'} which in turn proves our main result (Theorem \ref{main}). 

\subsection{A Formula for \texorpdfstring{$\chi\circ \psi$}{}}\label{formula} Let's start with the last part of Proposition \ref{main'}. By using the chain of commutative diagrams in \eqref{chain}, we can lift $\chi\circ \psi$ to the composition of maps $\tilde{\chi}\circ \tilde{p}\circ \tilde{\psi}$. Since $\tilde{\chi}\circ \tilde{p}\circ\tilde{\psi}$ is obtained by gluing $\tilde{\chi}_\vec{i}\circ \tilde{p}_\vec{i}\circ\tilde{\psi}_\vec{i}$, it suffices to show that for any reduced word $\vec{i}$ of the pair of Weyl group elements $(u,v)$, the rational map $\tilde{\chi}_\vec{i}\circ \tilde{p}_\vec{i}\circ\tilde{\psi}_\vec{i}$ can be expressed as
\[
x\mapsto D \left(\left[\overline{u}^{-1}x\right]_-^{-1}\overline{u}^{-1} x \overline{v^{-1}}\left[x\overline{v^{-1}}\right]^{-1}_+\right)^t D',
\]
where $D$ and $D'$ are two diagonal-matrices which may depend on $x$.

Fix a reduced word $\vec{i}$ of $(u,v)$. Let $\Gamma_\vec{i}$ be the bipartite graph corresponding to $\vec{i}$. Analogous to Definition \ref{dominate} we can define the notion of a right-going or left-going zig-zag strand dominating a white vertex $w$; in particular we can define dominating sets $I(w)$ (the set of indices of right-going zig-zag strands dominating $w$) and $J(w)$ (the set of indices of left-going zig-zag strands dominating $w$). It is not hard to see that for any white vertex $w$ of $\Gamma_\vec{i}$,
\[
|I(w)|=|J(w)|+1.
\]

Consider an $n$-dimensional complex vector space $V$ and its dual space $V^*$. There is a natural $\GL_n$ simply transitive right action on the space of bases of $V$ and $V^*$. Fix a basis $\{a_i\}$ of $V$ and let $\{\alpha_i\}$ be the corresponding dual basis of $V^*$. Let $x$ be an element in $\GL_n$ and let $\{b_i\}:=\{a_i\}.x$, i.e.,
\begin{equation}\label{b}
b_i:=\sum_{k=1}^na_kx_{ki}.
\end{equation}
Now for each white vertex $w$ in $\Gamma_\vec{i}$ we can define an element $\xi_w$ in $V^*$ by $\xi_w:=i_{\left(\overset{\rightarrow}{\bigwedge}_{j\in J(w)} b_j\right)} \left(\overset{\rightarrow}{\bigwedge}_{i\in I(w)} \alpha_i\right)$ where the notation $i$ denotes contraction and the notation $\overset{\rightarrow}{\bigwedge}$ means that we are taking the wedge product in the order of ascending indices; in other words, for any $v\in V$:
\[
\inprod{\xi_w}{v}:=\inprod{ \bigwedge^\rightarrow_{i\in I(w)} \alpha_i}{\left(\bigwedge^\rightarrow_{j\in J(w)} b_j\right)\wedge v}.
\]

\begin{prop}\label{Af} If either of the following three pictures is part of $\Gamma_\vec{i}$, then we have $A_f(x)=\pm \inprod{\xi_w}{b_l}$.
\[
\tikz[baseline=0.5ex]{
\draw[ultra thick] (0,0) -- (2,0);
\draw[ultra thick] (1,0) -- (1,1);
\draw[fill=white] (1,0) circle [radius=0.2];
\draw[ultra thick, red, ->] (2,-0.3) -- (0,-0.3);
\node [red] at (2.2,-0.3) [] {$l$};
\node at (1,0) [] {$w$};
\node at (1,-1) [] {$f$};
}\quad \quad \quad 
\tikz[baseline=0.5ex]{
\draw[ultra thick] (0,1) -- (2,1);
\draw[ultra thick] (2,1) -- (2,-1);
\draw[fill=white] (2,1) circle [radius=0.2];
\draw[ultra thick, red, ->] (1.7,-1) to [out=90, in=-45] (1.2,0.2) to [out=135,in=0] (0,0.7);
\node [red] at (1.7,-1.3) [] {$l$};
\node at (2,1) [] {$w$};
\node at (0.5,-0.5) [] {$f$};
}\quad \quad \quad
\tikz[baseline=0.5ex]{
\draw[ultra thick] (0,1) -- (2,1);
\draw[ultra thick] (0,1) -- (0,-1);
\draw[fill=white] (0,1) circle [radius=0.2];
\draw[ultra thick, red, ->] (2,0.7) to [out=180, in=45] (0.8,0.2) to [out=-135,in=90] (0.3,-1);
\node [red] at (2.2,0.7) [] {$l$};
\node at (0,1) [] {$w$};
\node at (1.5,-0.5) [] {$f$};
}
\]
As a corollary, for a generic element $x\in \GL_n^{u,v}$, $\xi_w\neq 0$.
\end{prop}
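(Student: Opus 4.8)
\emph{Strategy.} The plan is to evaluate the contraction $\inprod{\xi_w}{b_l}$ as a minor of $x$, and then to recognize that minor as the face function $A_f$ attached to the face in each of the three pictures. Once both sides are expressed as matrix minors, the assertion $A_f(x)=\pm\inprod{\xi_w}{b_l}$ reduces to a comparison of dominating sets, and the corollary falls out of the non-vanishing of cluster variables.

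\emph{The linear-algebra step.} First I would unwind the definition of $\xi_w$. By construction,
\[
\inprod{\xi_w}{b_l}=\inprod{\overset{\rightarrow}{\bigwedge}_{i\in I(w)}\alpha_i}{\left(\overset{\rightarrow}{\bigwedge}_{j\in J(w)}b_j\right)\wedge b_l}.
\]
Substituting $b_j=\sum_k a_kx_{kj}$ from \eqref{b} and expanding the wedge product by multilinearity and antisymmetry, the $(|J(w)|+1)$-vector $\left(\overset{\rightarrow}{\bigwedge}_{j\in J(w)}b_j\right)\wedge b_l$ becomes a signed sum over $|I(w)|$-element subsets $K\subseteq\{1,\dots,n\}$ of $\Delta^{K,\,J(w)\cup\{l\}}(x)\,\overset{\rightarrow}{\bigwedge}_{k\in K}a_k$, where the overall sign only records the position of $l$ relative to $J(w)$ and the sign of a column permutation. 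Pairing this against $\overset{\rightarrow}{\bigwedge}_{i\in I(w)}\alpha_i$ isolates the term $K=I(w)$, so that $\inprod{\xi_w}{b_l}=\pm\,\Delta^{I(w),\,J(w)\cup\{l\}}(x)$. Since the statement only asks for equality up to a sign, no bookkeeping of these signs is needed.

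\emph{The combinatorial step and the corollary.} The remaining point is to show that in each of the three pictures the displayed face $f$ has dominating sets
\[
I(f)=I(w)\qquad\text{and}\qquad J(f)=J(w)\cup\{l\};
\]
granting this, $A_f(x)=\Delta^{I(f),J(f)}(x)=\pm\inprod{\xi_w}{b_l}$ by the definition of the face functions. To verify the two equalities I would work in a small neighbourhood of $w$ and apply the zig-zag turning rules (turn left at a white vertex, right at a black vertex): in each configuration $f$ is obtained from $w$ by crossing a single edge, and $l$ is precisely the left-going strand running alongside that edge, so $f$ lies below $l$ while $w$ does not, giving $l\in J(f)\setminus J(w)$; every other strand entering the neighbourhood lies on the same side of both $w$ and $f$, hence dominates both or neither. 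In particular no right-going strand separates $f$ from $w$, which gives $I(f)=I(w)$, while $J(f)=J(w)\sqcup\{l\}$. For the corollary, one checks that every white vertex of $\Gamma_\vec{i}$ is incident, after possibly applying the left--right reflection or the involution $*$, to a face $f$ and a strand $l$ as in one of the three pictures; then $A_f=\pm\inprod{\xi_w}{b_l}$, and since $A_f$ is a Berenstein--Fomin--Zelevinsky cluster variable it is a pullback of a coordinate on $(\mathbb{C}^*)^{\text{\# faces}}$ by Theorem \ref{bfz}, hence does not vanish at a generic point of $\GL_n^{u,v}$. Therefore $\inprod{\xi_w}{b_l}\neq 0$ for generic $x$, and in particular $\xi_w\neq 0$.

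\emph{Main obstacle.} The linear-algebra identity and the genericity argument are routine; the real content is the combinatorial claim $I(f)=I(w)$, $J(f)=J(w)\cup\{l\}$. The delicate part will be to argue cleanly that \emph{exactly one} zig-zag strand changes sides between $w$ and $f$ and that it is the left-going strand $l$ shown in the picture; this is where a picture-by-picture use of the turning rules is unavoidable, and the corner configurations (2) and (3), where $l$ wraps around $w$, require slightly more care than the straight configuration (1).
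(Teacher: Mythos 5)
Your proposal follows the same route as the paper: you unwind the definition of $\xi_w$ to identify $\inprod{\xi_w}{b_l}=\pm\Delta^{I(f),J(f)}(x)=\pm A_f(x)$ via the dominating-set identities $I(f)=I(w)$ and $J(f)=J(w)\cup\{l\}$, and then deduce the corollary from the generic non-vanishing of $A_f$ supplied by Theorem~\ref{bfz}. The paper records exactly this in a single terse chain of equalities, while you make the combinatorial step explicit, but the content and structure of the argument are the same.
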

\begin{proof} Notice that up to a sign,
\begin{align*}
    \inprod{\xi_w}{b_l}=& \inprod{\bigwedge^\rightarrow_{i\in I(f)} \alpha_i}{\bigwedge^\rightarrow_{j\in J(f)} b_j}\\
    =& \inprod{\bigwedge^\rightarrow_{i\in I(f)} \alpha_i}{\bigwedge^\rightarrow_{j\in J(w)} \left(\sum_{k=1}^n a_kx_{kj}\right)}\\
    =&\Delta^{I(f),J(f)}(x)\\
    =&A_f(x).
\end{align*}
From Theorem \ref{bfz} we know that $A_f(x)\neq 0$ for a generic element $x\in \GL_n^{u,v}$, which implies that $\xi_w\neq 0$.
\end{proof}

Recall from our construction of the bipartite graph $\Gamma_\vec{i}$, black vertices are of two possible valencies: either 2-valent or 3-valent. In particular, any 2-valent black vertex connects two white vertices in the horizontal direction as shown below. 
\[
\tikz{
\draw[ultra thick] (0,0) -- (4,0);
\draw[fill=black] (2,0) circle [radius=0.2];
\draw[fill=white] (0,0) circle [radius=0.2];
\draw[fill=white] (4,0) circle [radius=0.2];
\node at (0,0) [] {$s$};
\node at (4,0) [] {$r$};
\node at (0,0.5) [] {};
\node at (0,-0.5) [] {};
}
\]
It is not hard to verify that in this case we have $\xi_r=\xi_s$.

On the other hand, 3-valent black vertices come in two types: they either form a ``T''-shape or an up-side-down ``T''-shape. The dual vectors associated to neighboring white vertices of a 3-valent black vertex then satisfy the following relation.

\begin{prop}\label{3-valent} At a 3-valent black vertex depicted as either picture below, the following identity holds for a generic element $x\in \GL_n^{u,v}$:
\[
A_f\xi_r=A_g\xi_s+A_h\xi_t.
\]
\[
\tikz{
\draw[ultra thick] (-1,0.5) -- (1,0.5);
\draw[ultra thick] (0,0.5) -- (0,-0.5);
\draw[fill=white] (-1,0.5) circle [radius=0.2];
\draw[fill=white] (1,0.5) circle [radius=0.2];
\draw[fill=white] (0,-0.5) circle [radius=0.2];
\draw[fill=black] (0,0.5) circle [radius=0.2];
\node at (1,0.5) [] {$r$};
\node at (-1,0.5) [] {$t$};
\node at (0,-0.5) [] {$s$};
\node at (0,1.3) [] {$g$};
\node at (-1,-0.5) [] {$f$};
\node at (1,-0.5) [] {$h$};
}
\quad \quad \quad \quad
\tikz{
\draw[ultra thick] (-1,-0.5) -- (1,-0.5);
\draw[ultra thick] (0,-0.5) -- (0,0.5);
\draw[fill=white] (-1,-0.5) circle [radius=0.2];
\draw[fill=white] (1,-0.5) circle [radius=0.2];
\draw[fill=white] (0,0.5) circle [radius=0.2];
\draw[fill=black] (0,-0.5) circle [radius=0.2];
\node at (1,-0.5) [] {$r$};
\node at (-1,-0.5) [] {$t$};
\node at (0,0.5) [] {$s$};
\node at (0,-1.3) [] {$g$};
\node at (-1,0.5) [] {$f$};
\node at (1,0.5) [] {$h$};
}
\]
\end{prop}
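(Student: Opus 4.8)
\medskip

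\noindent\textbf{Proof plan.}
The plan is to prove the asserted identity as an equality of covectors in $V^*$, which I would verify by pairing both sides with the basis $b_1,\dots,b_n$ of $V$ (this is indeed a basis, since $x\in\GL_n$). For a white vertex $w$ and an index $m$, expanding the ordered wedge gives
\[
\inprod{\xi_w}{b_m}=\inprod{\bigwedge^\rightarrow_{i\in I(w)}\alpha_i}{\Bigl(\bigwedge^\rightarrow_{j\in J(w)}b_j\Bigr)\wedge b_m}
=\begin{cases}0,&m\in J(w),\\[2pt] \pm\,\Delta^{I(w),\,J(w)\cup\{m\}}(x),&m\notin J(w),\end{cases}
\]
where I have used $|I(w)|=|J(w)|+1$ and the sign is $(-1)^{\#\{j\in J(w)\,:\,j>m\}}$. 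Combining this with $A_f=\Delta^{I(f),J(f)}(x)$, $A_g=\Delta^{I(g),J(g)}(x)$ and $A_h=\Delta^{I(h),J(h)}(x)$, pairing the asserted relation with each $b_m$ in turn converts the single covector identity into a family of three-term quadratic identities among minors of $x$, one for each $m$.

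The core step is then a local combinatorial computation: reading off from the zig-zag strand picture around the trivalent black vertex how the six dominating sets $I(\cdot),J(\cdot)$ attached to $r,s,t$ and to $f,g,h$ are related. Since all six objects lie in a small neighborhood of the vertex, these sets differ from one another only by a bounded number of single-index insertions and deletions, the indices being governed by the zig-zag strands incident to the vertex; in particular Proposition \ref{Af} already records $I(f)=I(t)$, $J(f)=J(t)\cup\{l_f\}$ and $I(h)=I(r)$, $J(h)=J(r)\cup\{l_h\}$ for the appropriate left-going strand indices $l_f,l_h$. Once these relations are made explicit, each scalar identity from the first paragraph is recognized as one of the short Grassmann--Pl\"ucker (three-term) relations among matrix minors recorded in \cite{FZ} (Theorems 1.16 and 1.17) --- precisely the relations already invoked in this section for the $\mathcal{A}$-mutation formulas --- so the covector identity follows coordinate by coordinate. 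Along the way one must check that the signs coming from the ordered wedges $\bigwedge^\rightarrow$ and from the reorderings above are globally consistent across the three terms. Finally, the genericity hypothesis guarantees (via Theorem \ref{bfz} and Proposition \ref{Af}) that the relevant minors and the covectors $\xi_w$ are nonzero, so, both sides being polynomial in $x$, it suffices to verify the identity on a Zariski-dense open subset of $\GL_n^{u,v}$.

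The inverted-``T'' case is treated identically, or is deduced from the ``T'' case by the symmetry that flips $\Gamma_\vec{i}$ upside down, which realizes the automorphism $*$ and interchanges right-going with left-going strands, i.e.\ rows with columns. I expect the main obstacle to be exactly the combinatorial bookkeeping of the second paragraph: determining precisely which indices enter the sets $I(\cdot),J(\cdot)$ near the trivalent black vertex so that the correct Pl\"ucker relation emerges, and keeping the $\bigwedge^\rightarrow$-signs coherent among the three terms. One could alternatively organize the argument in a more invariant way --- Proposition \ref{Af} gives $A_f=\pm\inprod{\xi_t}{b_{l_f}}$ and $A_h=\pm\inprod{\xi_r}{b_{l_h}}$, so the claimed relation can be rewritten as a contraction identity for $\xi_r\wedge\xi_t$ and proved inside the exterior algebra --- but this reformulation still rests on the same combinatorial input.
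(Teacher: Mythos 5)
Your plan is essentially the paper's. Both arguments localize to the zig-zag strands at the trivalent black vertex: setting $\Phi:=\bigwedge^\rightarrow_{i\in I(g)}\alpha_i$, $B:=\bigwedge^\rightarrow_{j\in J(g)} b_j$ (or the analogues based at $t$ for the inverted ``T''), and naming the three relevant strands $\alpha_i,\alpha_j,b_k$, one writes each of $A_f,A_g,A_h$ and each $\langle\xi_\bullet,v\rangle$ as a pairing built from $\Phi,B,\alpha_i,\alpha_j,b_k$, and the claim collapses to a short Desnanot--Jacobi/Pl\"ucker relation among minors of $x$ --- exactly the combinatorial input you identify. The only genuine difference is how the covector identity is closed: you pair against the entire basis $\{b_m\}$ and quote FZ Theorems 1.16/1.17 for each scalar consequence, whereas the paper sets $\eta:=A_g\xi_s+A_h\xi_t-A_f\xi_r$, pins down its support in $\mathrm{Span}(\alpha_i,\alpha_j,\Phi\text{-constituents})$, shows it annihilates $b_k$ and the constituents of $B$, and concludes $\eta=0$ from generic non-vanishing of $A_f,A_h$ (resp.\ $A_g$) --- in effect re-deriving the Pl\"ucker relation rather than invoking it. Both routes are sound; your variant buys a small robustness, since pairing against a full basis determines the covector outright, while in the paper's ``T''-shaped case the explicitly checked annihilation conditions on $b_k$ and $B$ number one fewer than $\dim\mathrm{Span}(\alpha_i,\alpha_j,\Phi\text{-constituents})$, and one also needs the (easy but tacit) cancellation of the $\alpha_j$-coefficient of $\eta$ to finish. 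The sign and dominating-set bookkeeping you flag as the main hurdle is handled in the paper exactly as you anticipate, by a WLOG normalization of the orderings of $I(g),J(g)$ relative to $i,j,k$.
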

\begin{proof} Let's consider the picture on the left first. Let $\Phi:=\overset{\rightarrow}{\bigwedge}_{i\in I(g)}\alpha_i$ and $B:=\overset{\rightarrow}{\bigwedge}_{j\in J(g)} b_j$. Draw the three zig-zag strands involved in the picture as follows and label them by the (dual) vector they represent.
\[
\tikz{
\draw[ultra thick] (-1,0.5) -- (1,0.5);
\draw[ultra thick] (0,0.5) -- (0,-0.5);
\draw[fill=white] (-1,0.5) circle [radius=0.2];
\draw[fill=white] (1,0.5) circle [radius=0.2];
\draw[fill=white] (0,-0.5) circle [radius=0.2];
\draw[fill=black] (0,0.5) circle [radius=0.2];
\draw[ultra thick, red, <-] (-1.2,0.2) to [out=0,in=-135] (-0.5,0.5) to [out=45, in=180] (0,0.8) to [out=0,in=135] (0.5,0.5) to [out=-45,in=180] (1.2,0.2);
\draw[ultra thick, red, ->] (-0.5,-0.5) -- (1,1);
\draw[ultra thick, red, ->] (-1,1) -- (0.5,-0.5);
\node[red] at (-1,1) [left] {$\alpha_i$};
\node[red] at (-0.5,-0.5) [below] {$\alpha_j$};
\node[red] at (1.2,0.2) [right] {$b_k$};
\node at (1,0.5) [] {$r$};
\node at (-1,0.5) [] {$t$};
\node at (0,-0.5) [] {$s$};
\node at (0,1.3) [] {$g$};
\node at (-1,-0.5) [] {$f$};
\node at (1,-0.5) [] {$h$};
}
\]
Without loss of generality we may assume that the indices of all constituents of $\Phi$ are smaller than $i$, and the indices of all constituents of $B$ are smaller than $k$; due to the fact that $\vec{i}$ is a reduced word, we also know that $i<j$. Then from the definition of the dual vectors $\xi_w$ and Proposition \ref{Af}, we find that
\[
A_f=\inprod{\Phi\wedge \alpha_i}{B\wedge b_k}, \quad \quad A_g=\inprod{\Phi}{B}, \quad \quad A_h=\inprod{\Phi\wedge \alpha_j}{B\wedge b_k},
\]
and the pairing of $\xi_r$, $\xi_s$, and $\xi_t$ with any $v\in V$ are given by
\[
\inprod{\xi_r}{v}=\inprod{\Phi\wedge \alpha_j}{B\wedge v}, \quad \quad \inprod{\xi_s}{v}=\inprod{\Phi\wedge \alpha_i\wedge \alpha_j}{B\wedge b_k\wedge v}, \quad \quad \inprod{\xi_t}{v}=\inprod{\Phi\wedge \alpha_i}{B\wedge v}.
\]
Now consider the dual vector
\[
\eta:=A_g\xi_s+A_h\xi_t-A_f\xi_r.
\]
Obviously $\eta$ lies in the span of $\alpha_i$, $\alpha_j$, and the constituents of $\Phi$. Under the generic condition that both $A_f$ and $A_h$ are non-zero, it follows that $\eta=0$ if and only if $\eta$ vanishes on $b_k$ and all constituents of $B$. The latter is obviously satisfied due to the construction of $\eta$. So we only need to show that 
\[
\inprod{\eta}{b_k}=A_h\inprod{\xi_t}{b_k}-A_f\inprod{\xi_r}{b_k}=\inprod{\Phi\wedge \alpha_j}{B\wedge b_k}\inprod{\Phi\wedge \alpha_i}{B\wedge b_k}-\inprod{\Phi\wedge \alpha_i}{B\wedge b_k}\inprod{\Phi\wedge \alpha_j}{B\wedge b_k}
\]
vanishes, which is obviously true.

Next let's consider the picture on the right. Define $\Phi:=\overset{\rightarrow}{\bigwedge}_{i\in I(t)} \alpha_i$ and $B:=\overset{\rightarrow}{\bigwedge}_{j\in J(t)} b_j$ and draw the three zig-zag strands as before.
\[
\tikz{
\draw[ultra thick] (-1,-0.5) -- (1,-0.5);
\draw[ultra thick] (0,-0.5) -- (0,0.5);
\draw[fill=white] (-1,-0.5) circle [radius=0.2];
\draw[fill=white] (1,-0.5) circle [radius=0.2];
\draw[fill=white] (0,0.5) circle [radius=0.2];
\draw[fill=black] (0,-0.5) circle [radius=0.2];
\draw[ultra thick, red, ->] (-1.2,-0.2) to [out=0,in=135] (-0.5,-0.5) to [out=-45, in=180] (0,-0.8) to [out=0,in=-135] (0.5,-0.5) to [out=45,in=180] (1.2,-0.2);
\draw[ultra thick, red, <-] (-0.5,0.5) -- (1,-1);
\draw[ultra thick, red, <-] (-1,-1) -- (0.5,0.5);
\node[red] at (-1,-1) [left] {$b_j$};
\node[red] at (-0.5,0.5) [above] {$b_k$};
\node[red] at (1.2,-0.2) [right] {$\alpha_i$};
\node at (1,-0.5) [] {$r$};
\node at (-1,-0.5) [] {$t$};
\node at (0,0.5) [] {$s$};
\node at (0,-1.3) [] {$g$};
\node at (-1,0.5) [] {$f$};
\node at (1,0.5) [] {$h$};
}
\]
Without loss of generality we again assume that the indices of all constituents of $\Phi$ are smaller than $i$, and the indices of all constituents of $B$ are smaller than $j$; due to the fact that $\vec{i}$ is a reduced word, we also know that $j<k$. Similar to the case before, we have
\[
A_f=\inprod{\Phi}{B\wedge b_k}, \quad \quad A_g=\inprod{\Phi\wedge \alpha_i}{B\wedge b_j\wedge b_k}, \quad \quad A_h=\inprod{\Phi}{B\wedge b_j},
\]
and
\[
\inprod{\xi_r}{v}=\inprod{\Phi\wedge \alpha_i}{B\wedge b_j\wedge v}, \quad \quad \inprod{\xi_s}{v}=\inprod{\Phi}{B\wedge v}, \quad \quad \inprod{\xi_t}{v}=\inprod{\Phi\wedge \alpha_i}{B\wedge b_k\wedge v}.
\]
Again, consider the dual vector 
\[
\eta:=A_g\xi_s+A_h\xi_t-A_f\xi_r.
\]
Obviously $\eta$ lies in the span of $\alpha_i$ and the constituents of $\Phi$. Under the generic condition that $A_g$ is non-zero, it follows that $\eta=0$ if and only if $\eta$ vanishes on $b_j$ and $b_k$ and all constituents of $B$. Again it is obvious from the definition of $\eta$ that it vanishes on all constituents of $B$. So we need to just need to verify that $\inprod{\eta}{b_j}=\inprod{\eta}{b_k}=0$:
\begin{align*}
    \inprod{\eta}{b_j}=&\inprod{\Phi\wedge \alpha_i}{B\wedge b_j\wedge b_k}\inprod{\Phi}{B\wedge b_j}+\inprod{\Phi}{B\wedge b_j}\inprod{\Phi\wedge \alpha_i}{B\wedge b_k\wedge b_j}=0;\\
    \inprod{\eta}{b_k}=&\inprod{\Phi\wedge \alpha_i}{B\wedge b_j\wedge b_k}\inprod{\Phi}{B\wedge b_k} -\inprod{\Phi}{B\wedge b_k}\inprod{\Phi\wedge \alpha_i}{B\wedge b_j\wedge b_k}=0.\qedhere
\end{align*}
\end{proof}

Now it is time to introduce more notations. In our construction of the bipartite graph $\Gamma_\vec{i}$, we insisted on adding white vertices to the far left end and far right end of each of the horizontal parallel lines, and now we can make use of them. First for the white vertices on the far left of each horizontal line, we rename the associated dual vectors $\xi_w$ as $\xi_1,\dots, \xi_n$ from top to bottom; then for the white vertices on the far right of each horizontal line, we rename the associated dual vectors $\xi_w$ as $\eta_1,\dots, \eta_n$ from top to bottom as well. 

\begin{prop}\label{basis} For a generic $x\in \GL_n^{u,v}$, $\{\xi_i\}_{i=1}^n$ and $\{\eta_i\}_{i=1}^n$ are two bases for $V^*$.
\end{prop}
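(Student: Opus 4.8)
The plan is to compute the vectors $\xi_i$ and $\eta_i$ explicitly enough that, with respect to a suitable ordering of the coordinate co-basis $\{\alpha_i\}$ of $V^*$, the matrix expressing each family in terms of $\{\alpha_i\}$ is triangular with diagonal entries equal (up to sign) to face functions $A_f(x)$ of $\Gamma_{\vec i}$; these are non-vanishing on a dense open subset of $\GL_n^{u,v}$ by Theorem \ref{bfz}, which yields the claim.

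First I would read off the dominating sets at the leftmost white vertices. At the far left of $\Gamma_{\vec i}$ no zig-zag strand has turned yet, so (by the rules governing how strands enter and leave external edges) the right-going strand issuing from the $k$-th external edge runs just above the $k$-th horizontal line, and the left-going strands run just below their lines. Consequently, for the leftmost white vertex $w_i$ of the $i$-th line one gets $I(w_i)=\{1,\dots,i\}$, while $J(w_i)$ is the set, of size $i-1$, of indices of the left-going strands that lie above line $i$ at the left boundary. In particular $I(w_i)=\{1,\dots,i\}$ forces
\[
\xi_i = i_{\left(\overset{\rightarrow}{\bigwedge}_{m\in J(w_i)}b_m\right)}\left(\alpha_1\wedge\cdots\wedge\alpha_i\right)\in\spn(\alpha_1,\dots,\alpha_i),
\]
so that $\xi_1\wedge\cdots\wedge\xi_n$ is a scalar multiple of $\alpha_1\wedge\cdots\wedge\alpha_n$; expanding the contraction (only the $a_1\wedge\cdots\wedge a_{i-1}$-component of $\overset{\rightarrow}{\bigwedge}_{m\in J(w_i)}b_m$ contributes) shows that the coefficient $c_i$ of $\alpha_i$ in $\xi_i$ is $\pm\Delta^{\{1,\dots,i-1\},J(w_i)}(x)$.

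The key point is that $c_i$ is itself a face function: comparing the definitions of dominating sets, the pair $\bigl(\{1,\dots,i-1\},\,J(w_i)\bigr)$ is exactly the pair of dominating sets $\bigl(I(f),J(f)\bigr)$ of the leftmost face $f$ of $\Gamma_{\vec i}$ lying in the $(i-1)$-st spacing (for $i=1$ this face carries $\Delta^{\emptyset,\emptyset}=1$, matching $\xi_1=\alpha_1$). Hence $c_i=\pm A_f(x)$. By Theorem \ref{bfz} the map $x\mapsto\bigl(A_f(x)\bigr)_f$ takes values in $(\mathbb{C}^*)^{\text{\# faces}}$ over a dense open subset of $\GL_n^{u,v}$, so there every $c_i$ is nonzero, the transition matrix from $\{\alpha_i\}$ to $\{\xi_i\}$ is triangular with nonzero diagonal, and $\{\xi_i\}_{i=1}^n$ is a basis of $V^*$. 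The argument for $\{\eta_i\}_{i=1}^n$ is the mirror image: at the far right the left-going strands again run just below their lines, so $J(w'_i)=\{1,\dots,i-1\}$ for the rightmost white vertex $w'_i$ of the $i$-th line, while the right-going strands exit in the order of some permutation $\rho$, so $I(w'_i)=\rho(\{1,\dots,i\})$; these sets are nested, so re-ordering the co-basis as $\alpha_{\rho(1)},\dots,\alpha_{\rho(n)}$ makes the transition matrix triangular with $i$-th diagonal entry $\pm\Delta^{\rho(\{1,\dots,i-1\}),\{1,\dots,i-1\}}(x)=\pm A_{f'}(x)$ for the rightmost face $f'$ of the $(i-1)$-st spacing, and Theorem \ref{bfz} finishes the proof. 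Since a finite intersection of dense open sets is dense and open, both families are bases for generic $x\in\GL_n^{u,v}$.

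The only real work is the combinatorial bookkeeping, and that is where I expect the main obstacle to lie: one must check carefully that the zig-zag configuration at each boundary produces exactly $I(w_i)=\{1,\dots,i\}$ and $J(w'_i)=\{1,\dots,i-1\}$, and --- most delicately --- that each coefficient $c_i$ coincides on the nose with the face function of the extreme face in the appropriate spacing, including the degenerate spacings at the top and bottom. Once that identification is in place, the non-vanishing is immediate from Theorem \ref{bfz}.
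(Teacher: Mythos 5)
Your proposal is correct and takes essentially the same approach as the paper's: exhibit triangularity of the family $\{\xi_i\}$ (resp. $\{\eta_i\}$) and identify the diagonal entries, up to sign, with boundary face functions $A_f$, whose generic non-vanishing is given by Theorem \ref{bfz}. The only difference is the choice of reference basis --- you expand each $\xi_i$ in the coordinate co-basis $\{\alpha_j\}$ (using $I(w_i)=\{1,\dots,i\}$ to get $\xi_i\in\spn(\alpha_1,\dots,\alpha_i)$), whereas the paper pairs $\bigwedge_i\xi_i$ against $\bigwedge_j b_j$ and finds the triangularity in the $v$-reordered basis $\{b_{v(j)}\}$; both routes produce the same product $\prod_i A_i(x)$ of leftmost (resp.\ rightmost) face functions.
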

\begin{proof} It suffices to show that the pairings $\inprod{\overset{\rightarrow \quad}{\bigwedge_{i=1}^n} \xi_i}{\overset{\rightarrow \quad}{\bigwedge_{j=1}^n} b_j}$ and $\inprod{\overset{\rightarrow\quad }{\bigwedge_{i=1}^n} \eta_i}{\overset{\rightarrow\quad}{\bigwedge_{j=1}^n} b_j}$ do not vanish. Due to the similarity of the proofs, we will just do the first one here. By identity of pairing between wedge products, the pairing $\inprod{\overset{\rightarrow \quad}{\bigwedge_{i=1}^n} \xi_i}{\overset{\rightarrow \quad}{\bigwedge_{j=1}^n} b_j}$ is equal to a sum of products of pairings between constituents of $\overset{\rightarrow \quad}{\bigwedge_{i=1}^n} \xi_i$ and $\overset{\rightarrow \quad}{\bigwedge_{j=1}^n} b_j$ and a sign factor. Consider the constituent $\xi_n$ first. From the definition of $\xi_n$ and the way we constructed $\Gamma_\vec{i}$, we see that up to a sign,
\begin{equation}\label{xi}
\inprod{\xi_n}{v}=\inprod{\overset{\rightarrow \quad}{{\bigwedge}_{i=1}^n} \alpha_i}{\left(\overset{\rightarrow \quad}{{\bigwedge}_{j=1}^{n-1}} b_{v(j)}\right)\wedge v}
\end{equation}
(note that fixing a Coxeter generating set $S$ of $W$ gives an isomorphism $W\cong S_n$, and hence elements of $W$ can act on the set $\{1,\dots, n\}$). Thus the only non-vanishing pairing between $\xi_n$ and $b_j$ is $\inprod{\xi_n}{b_{v(n)}}$. Now go to the next one $\xi_{n-1}$; by a similar argument we can conclude that the only non-vanishing pairing between $\xi_{n-1}$ and $b_j$ is $\inprod{\xi_{n-1}}{b_{v(n)}}$ and $\inprod{\xi_{n-1}}{b_{v(n-1)}}$. Since $b_{v(n)}$ is already paired up with $\xi_n$, this leaves us with no choice but $\inprod{\xi_{n-1}}{b_{v(n-1)}}$... Continuing in this way and using Proposition \ref{Af}, we can conclude that up to a sign,
\[
\inprod{\overset{\rightarrow \quad}{{\bigwedge}_{i=1}^n} \xi_i}{\overset{\rightarrow \quad}{{\bigwedge}_{j=1}^n} b_j}=\prod_{i=1}^n \inprod{\xi_i}{b_{v(i)}}=\prod_{i=1}^n A_i(x).
\]
where $A_i$ denotes the $\mathcal{A}$-coordinate associated to the boundary face on the far left right below the $i$th horizonal line. Since $A_i(x)\neq 0$ for a generic $x\in \GL_n^{u,v}$ (Theorem \ref{bfz}), the proof is complete.
\end{proof}

Now by going backward across $\Gamma_\vec{i}$ (from right to left) and apply Proposition \ref{3-valent} to each black vertex, we see that generically each $\eta_i$ can be expressed as a linear combination of $\xi_i$. Proposition \ref{basis} tells us that for a generic $x\in \GL_n^{u,v}$, both $\{\xi_i\}$ and $\{\eta_i\}$ are bases of $V^*$; thus fromt he simply transitive right action of $\GL_n$ on the space of bases of $V^*$, there is a unique element of $\GL_n$ that takes $\{\xi_i\}$ to $\{\eta_i\}$; our next goal is to find a way to compute such element. 

One good way to do this is to introduce orientations on the edges of $\Gamma_\vec{i}$: we declare that all horizontal edges to point towards the right and all vertical edges to point from the white vertex towards the black vertex. This is in fact a special case of Postnikov's \textbf{perfect orientation}, introduced in his work on non-negative Grassmannian \cite{Pos}. 

\begin{exmp} The following picture is the bipartite graph associated to the reduced word $(1,-1)$ of the pair of Weyl group elements $(w_0,w_0)$ of the group $\GL_2$, with the perfect orientations drawn on top each edge.
\[
\tikz{
\draw[ultra thick, decoration={markings, mark=at position 0.6 with {\arrow{>}}}, postaction={decorate}] (0,0) -- (1,0);
\draw[ultra thick, decoration={markings, mark=at position 0.6 with {\arrow{>}}}, postaction={decorate}] (1,0) -- (2,0);
\draw[ultra thick, decoration={markings, mark=at position 0.6 with {\arrow{>}}}, postaction={decorate}] (2,0) -- (3,0);
\draw[ultra thick, decoration={markings, mark=at position 0.6 with {\arrow{>}}}, postaction={decorate}] (3,0) -- (5,0);
\draw[ultra thick, decoration={markings, mark=at position 0.6 with {\arrow{>}}}, postaction={decorate}] (0,1) -- (2,1);
\draw[ultra thick, decoration={markings, mark=at position 0.6 with {\arrow{>}}}, postaction={decorate}] (2,1) -- (3,1);
\draw[ultra thick, decoration={markings, mark=at position 0.6 with {\arrow{>}}}, postaction={decorate}] (3,1) -- (4,1);
\draw[ultra thick, decoration={markings, mark=at position 0.6 with {\arrow{>}}}, postaction={decorate}] (4,1) -- (5,1);
\draw[ultra thick, decoration={markings, mark=at position 0.6 with {\arrow{>}}}, postaction={decorate}] (2,1) -- (2,0);
\draw[ultra thick, decoration={markings, mark=at position 0.6 with {\arrow{>}}}, postaction={decorate}] (3,0) -- (3,1);
\draw[fill=black] (2,0) circle [radius=0.2];
\draw[fill=black] (3,1) circle [radius=0.2];
\draw[fill=white] (1,0) circle [radius=0.2];
\draw[fill=white] (2,1) circle [radius=0.2];
\draw[fill=white] (3,0) circle [radius=0.2];
\draw[fill=white] (4,1) circle [radius=0.2];
}
\]
\end{exmp}

Under such a choice of orientations on edges, we see that all the boundary white vertices on the far left are sources and all the boundary white vertices on the far right are sinks. Let $i$ and $j$ be the labeling of a source and a sink respectively; if $\gamma$ is a path going from $i$ to $j$ compatible with the perfect orientation, then we will write $\gamma:i\rightarrow j$. We also denote the set of faces lying below $\gamma$ by $\hat{\gamma}$. Then we claim the following.

\begin{prop}\label{connection} Let $1\leq i,j\leq n$. Then for a generic $x\in \GL_n^{u,v}$, the coefficient of $\xi_i$ in the expansion of $\eta_j$ is given by
\[
\sum_{\gamma:i\rightarrow j}\prod_{f\in \hat{\gamma}} X_f
\]
where $(X_f):=\tilde{p}\circ \tilde{\psi}_\vec{i}(x)$.
\end{prop}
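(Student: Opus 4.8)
The plan is to prove the following statement, of which Proposition~\ref{connection} is the case where $w$ is a far-right white vertex: for a generic $x\in\GL_n^{u,v}$ and \emph{any} white vertex $w$ of $\Gamma_{\vec i}$,
\[
\xi_w=\sum_{i=1}^n\left(\ \sum_{\gamma:i\to w}\ \prod_{f\in\hat\gamma}X_f\right)\xi_i,
\]
where $\gamma$ runs over the directed paths (compatible with the perfect orientation) from the source labelled $i$ to $w$, $\hat\gamma$ is the set of faces lying below $\gamma$, and $(X_f)=\tilde p\circ\tilde\psi_{\vec i}(x)$. For generic $x$ the $\{\xi_i\}$ form a basis of $V^*$ (Proposition~\ref{basis}), so $\xi_w$ is determined by its coordinates in this basis; hence it is enough to check that the right-hand side obeys the same local relations by which $\xi_w$ propagates across $\Gamma_{\vec i}$. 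Since the perfect orientation is acyclic (no edge ever goes leftward), I would run the induction along a topological order of the white vertices. The base case is a far-left white vertex, where by definition $\xi_w=\xi_i$ for its row $i$, the only directed path $i\to w$ is the trivial one, $\hat\gamma=\emptyset$, and the empty product is $1$, matching.

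For the inductive step, let $b$ be the black vertex immediately to the left of $w$, so that $w$ plays the role of the vertex ``$r$'' in the two-valent picture preceding Proposition~\ref{3-valent} or in one of the pictures of Proposition~\ref{3-valent}. If $b$ is two-valent with left white neighbour $s$, then $\xi_w=\xi_s$, and every directed path $\gamma:i\to w$ is a path $\gamma':i\to s$ followed by the horizontal segment $s\to b\to w$. If $b$ is three-valent, Proposition~\ref{3-valent} gives $\xi_w=(A_g/A_f)\,\xi_s+(A_h/A_f)\,\xi_t$, and a directed path $\gamma:i\to w$ is either a path $\gamma':i\to t$ followed by $t\to b\to w$, or a path $\gamma':i\to s$ followed by the vertical edge $s\to b$ and then $b\to w$. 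In each case I substitute the inductive hypothesis for $\xi_s$ (and $\xi_t$) into the local relation; the desired identity for $\xi_w$ then reduces to comparing, for each extension $\gamma'\rightsquigarrow\gamma$ by such a local tail, the face product $\prod_{f\in\hat\gamma}X_f$ with $\prod_{f\in\hat{\gamma'}}X_f$ multiplied by the appropriate local minor ratio (namely $1$ for the two-valent crossing, $A_g/A_f$ for the $s$-route, and $A_h/A_f$ for the $t$-route).

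The crux, and what I expect to be the main obstacle, is precisely this comparison: one must determine which faces a path sweeps in when it rounds a black vertex and reconcile the resulting ratio of minors with the cluster $\mathcal X$-variable $X_f=\prod_g A_g^{\epsilon_{fg}}$ read off from the quiver $\tilde{\vec i}$. This is a purely local analysis at a black vertex: using the construction of $\tilde{\vec i}$ (the counterclockwise triangles placed around black vertices), one computes the relevant exchange-matrix entries $\epsilon_{fg}$, compares them with the set of faces lying between $\gamma'$ and $\gamma$, and checks that the contributions telescope so that over a whole path exactly $\prod_{f\in\hat\gamma}X_f$ survives. This is the $\GL_n$ analogue of Postnikov's dictionary between edge weights and face weights for the boundary measurement map \cite{Pos}, and the perfect orientation (horizontal edges pointing rightward, vertical edges pointing from white to black) is exactly what makes ``the faces below a path'' transform coherently under the relations of Proposition~\ref{3-valent}. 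Once this local lemma is in hand the induction closes, and specializing $w$ to the far-right white vertices yields Proposition~\ref{connection}; throughout one works on the dense open subset of $\GL_n^{u,v}$ on which Theorem~\ref{bfz}, Proposition~\ref{basis}, and the identities of Proposition~\ref{3-valent} simultaneously hold.
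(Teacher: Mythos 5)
Your proposal is correct and follows essentially the same route as the paper, whose proof is just the two-line instruction to apply Proposition~\ref{3-valent} at every $3$-valent black vertex and recall that the quiver $\tilde{\vec{i}}$ carries a counterclockwise $3$-cycle there; you have merely organized this as an explicit induction along the acyclic perfect orientation and correctly isolated the remaining local bookkeeping (matching the minor ratios $A_g/A_f$, $A_h/A_f$ against the face products via $X_f=\prod_g A_g^{\epsilon_{fg}}$) as the step to be verified. The paper leaves that same verification implicit, so nothing in your outline diverges from or falls short of the published argument.
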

\begin{proof} Just apply Proposition \ref{3-valent} to every 3-valent black vertex, and recall that the quiver $\tilde{\vec{i}}$ is produced by putting a counterclockwisely oriented 3-cycle at each 3-valent black vertex.
\end{proof}

Now we are ready to rediscover the formula of the map $\tilde{\chi}_\vec{i}\circ \tilde{p}_\vec{i}\circ \tilde{\psi}_\vec{i}$.

\begin{cor}\label{3.5} For a generic $x\in \GL_n^{u,v}$, the image $\tilde{\chi}_\vec{i}\circ \tilde{p}_\vec{i}\circ \tilde{\psi}_\vec{i}(x)$ is the unique element that takes $\{\xi_i\}$ to $\{\eta_i\}$, i.e.,
\begin{equation}\label{dt}
(\xi_1,\dots, \xi_n) (\tilde{\chi}_\vec{i}\circ \tilde{p}_\vec{i}\circ \tilde{\psi}_\vec{i}(x))=(\eta_1,\dots, \eta_n),
\end{equation}
where $(\xi_1,\dots, \xi_n)$ and $(\eta_1,\dots, \eta_n)$ are regarded as a row vector with entries $\xi_i$ and $\eta_i$ respectively.
\end{cor}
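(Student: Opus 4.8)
The plan is to read the change of basis from $\{\xi_i\}$ to $\{\eta_i\}$ directly off Proposition~\ref{connection}, to recognize the resulting matrix as the boundary measurement matrix of the perfectly oriented graph $\Gamma_\vec{i}$ in the sense of Postnikov, and then to identify that boundary measurement matrix with the amalgamation matrix $\tilde{\chi}_\vec{i}$ by a transfer-matrix computation. Concretely, set $(X_f):=\tilde{p}_\vec{i}\circ\tilde{\psi}_\vec{i}(x)$ and let $M$ be the matrix with entries $M_{ij}:=\sum_{\gamma:i\rightarrow j}\prod_{f\in\hat{\gamma}}X_f$. Proposition~\ref{connection} says precisely that $\eta_j=\sum_i M_{ij}\,\xi_i$ for generic $x$, that is, $(\xi_1,\dots,\xi_n)\,M=(\eta_1,\dots,\eta_n)$ as row vectors valued in $V^*$. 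By Proposition~\ref{basis} both $\{\xi_i\}$ and $\{\eta_i\}$ are bases of $V^*$ for generic $x\in\GL_n^{u,v}$, so there is a unique element of $\GL_n$ carrying $(\xi_i)$ to $(\eta_i)$; hence \eqref{dt} is equivalent to the single assertion $M=\tilde{\chi}_\vec{i}(X)$.

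To prove $M=\tilde{\chi}_\vec{i}(X)$, I would expand the defining product into its elementary factors $\tilde{\chi}_\vec{i}(X)=F_1F_2\cdots F_r$, each $F_m$ being one of $h^{i}(X_f)$, $e_i$ or $e_{-i}$, listed from left to right as they are read off $\Gamma_\vec{i}$. Cutting $\Gamma_\vec{i}$ by a vertical line just to the right of the $m$-th factor, I claim by induction on $m$ that the $(i,j)$-entry of the partial product $F_1\cdots F_m$ is the sum of $\prod_{f\in\hat{\gamma}}X_f$ over directed paths $\gamma$ from the source on line $i$ to the point of line $j$ on the cut, where $\hat{\gamma}$ denotes the faces of the truncated graph lying below $\gamma$; the base case $m=0$ is the identity matrix. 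In the inductive step, right multiplication by $h^{i}(X_f)=\mathrm{diag}(X_f,\dots,X_f,1,\dots,1)$ rescales columns $1,\dots,i$ by $X_f$, matching the fact that a face $f$ in the $i$-th spacing lies below a path exactly when the path meets the cut on a line of index at most $i$ (so that the extreme spacings contribute the trivial factor $h^{0}=I$ and the scalar factor $h^{n}=X_f\cdot I$); right multiplication by $e_i=I+E_{i,i+1}$ adds column $i$ to column $i+1$, matching a path descending from line $i$ to line $i+1$ across the white-over-black vertical edge of the $i$-th spacing; and $e_{-i}=I+E_{i+1,i}$ encodes the symmetric move of a path rising across a black-over-white edge. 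Since the perfect orientation sends every horizontal edge to the right, every path is strictly increasing in the horizontal coordinate, so the sums are finite and the induction closes; taking $m=r$ gives $M=\tilde{\chi}_\vec{i}(X)$, and combined with the previous paragraph this proves \eqref{dt}.

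The main obstacle is the bookkeeping hidden in that inductive step: one must check that the variables $X_f$ distributed among the diagonal factors $h^{i}(X_f)$ at their respective horizontal positions reassemble, along a fixed path $\gamma$, into exactly $\prod_{f\in\hat{\gamma}}X_f$. This amounts to tracking the height of $\gamma$ at each slice together with the convention that line $i$ sits above line $i+1$ and the edge-type dictionary distinguishing $e_i$ from $e_{-i}$; it is a special case of Postnikov's boundary measurement formula \cite{Pos}, here tailored to the bipartite graphs arising from reduced words. Once this formula is established, Corollary~\ref{3.5} follows formally from Propositions~\ref{connection} and~\ref{basis}.
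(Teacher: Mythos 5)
Your proof is correct and follows essentially the same route as the paper: the paper likewise reduces \eqref{dt} to checking that each added column of $\Gamma_\vec{i}$ contributes the elementary factor $e_{\pm i}h^i(X)$ to the transition matrix between the dual bases, which is exactly your transfer-matrix induction read one column at a time. The only difference is organizational — you factor the argument through the global path-sum matrix of Proposition \ref{connection} and then identify it with the amalgamation product, whereas the paper performs the same column-by-column identification directly on the vectors $\xi'_i$.
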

\begin{proof} To prove this proposition, it suffices to consider how adding an extra vertical edge corresponding to $e_{\pm i}$ on the far right affects the whole picture. Let's consider $e_i$ first. The perfect orientation is going to look like the following.
\[
\tikz{
\draw[ultra thick, ->] (0,3) -- (2,3);
\draw[ultra thick, ->] (0,2) -- (2,2);
\draw[ultra thick, ->] (0,1) -- (2,1);
\draw[ultra thick, ->] (0,0) -- (2,0);
\draw[decoration={markings, mark=at position 0.7 with {\arrow{>}}}, postaction={decorate}, ultra thick] (1,2) -- (1,1);
\draw[fill=white] (1,2) circle [radius=0.2];
\draw[fill=black] (1,1) circle [radius=0.2];
\node at (1,2.5) [] {$\vdots$};
\node at (1,0.5) [] {$\vdots$};
\node at (-0.2,0) [left] {$\xi'_n$};
\node at (-0.2,1) [left] {$\xi'_{i+1}$};
\node at (-0.2, 2) [left] {$\xi'_{i}$};
\node at (-0.2,3) [left] {$\xi'_1$};
\node at (2.2,0) [right] {$\eta_n$};
\node at (2.2,1) [right] {$\eta_{i+1}$};
\node at (2.2, 2) [right] {$\eta_{i}$};
\node at (2.2,3) [right] {$\eta_1$};
\node at (1.5,1.5) [] {$X$};
}
\]
It is not hard to see that, according to Proposition \ref{connection},
\[
(\eta_1, \dots, \eta_n)=(\xi'_1,\dots, \xi'_n)\begin{pmatrix}X &  \cdots & 0 & 0 & \cdots & 0 \\
\vdots & \ddots & \vdots & \vdots & \ddots & \vdots \\
0 & \cdots & X & 1 & \cdots & 0 \\
0 & \cdots & 0 & 1 & \cdots & 0 \\
\vdots & \ddots & \vdots & \vdots & \ddots & \vdots \\
0 & \cdots & 0 & 0 & \cdots & 1 \end{pmatrix}=(\xi'_1,\dots, \xi'_n) e^ih^i(X).
\]

Similarly for the case of $e_{-i}$, we have the following picture
\[
\tikz{
\draw[ultra thick, ->] (0,3) -- (2,3);
\draw[ultra thick, ->] (0,2) -- (2,2);
\draw[ultra thick, ->] (0,1) -- (2,1);
\draw[ultra thick, ->] (0,0) -- (2,0);
\draw[decoration={markings, mark=at position 0.7 with {\arrow{>}}}, postaction={decorate}, ultra thick] (1,1) -- (1,2);
\draw[fill=white] (1,1) circle [radius=0.2];
\draw[fill=black] (1,2) circle [radius=0.2];
\node at (1,2.5) [] {$\vdots$};
\node at (1,0.5) [] {$\vdots$};
\node at (-0.2,0) [left] {$\xi'_n$};
\node at (-0.2,1) [left] {$\xi'_{i+1}$};
\node at (-0.2, 2) [left] {$\xi'_{i}$};
\node at (-0.2,3) [left] {$\xi'_1$};
\node at (2.2,0) [right] {$\eta_n$};
\node at (2.2,1) [right] {$\eta_{i+1}$};
\node at (2.2, 2) [right] {$\eta_{i}$};
\node at (2.2,3) [right] {$\eta_1$};
\node at (1.5,1.5) [] {$X$};
}
\]
and the following identity
\[
(\eta_1,\dots, \eta_n)=(\xi'_1,\dots, \xi'_n) \begin{pmatrix}X^{-1} &  \cdots & 0 & 0 & \cdots & 0 \\
\vdots & \ddots & \vdots & \vdots & \ddots & \vdots \\
0 & \cdots & X & 0 & \cdots & 0 \\
0 & \cdots & X & 1 & \cdots & 0 \\
\vdots & \ddots & \vdots & \vdots & \ddots & \vdots \\
0 & \cdots & 0 & 0 & \cdots & 1 \end{pmatrix} = (\xi'_1,\dots, \xi'_n) e^{-i}h^{i}(X).
\]

By comparing these two cases with the definition of the map $\tilde{\chi}_\vec{i}$, we see that $(\eta_1,\dots, \eta_n)=(\xi'_1,\dots, \xi'_n)(\tilde{\chi}_\vec{i}\circ \tilde{p}_\vec{i}\circ \tilde{\psi}_\vec{i}(x))$.
\end{proof}

Finally we are ready to prove the last part of Proposition \ref{main'}. As we have remarked at the beginning of this subsection, the key is to prove that for a fixed reduced word $\vec{i}$ of $(u,v)$, 
\[
\tilde{\chi}_\vec{i}\circ \tilde{p}_\vec{i}\circ \tilde{\psi}_\vec{i}(x)=D \left(\left[\overline{u}^{-1}x\right]_-^{-1}\overline{u}^{-1} x \overline{v^{-1}}\left[x\overline{v^{-1}}\right]^{-1}_+\right)^t D',
\]
where $D$ and $D'$ are two diagonal matrices which may depend on $x$. The key ingredient is Corollary \ref{3.5}, which is already done. The remaining part of the proof is just to relate $(\xi_1,\dots, \xi_n)$ and $(\eta_1,\dots, \eta_n)$ to Gaussian decompositions of $\overline{u}^{-1}x$ and $x\overline{v^{-1}}$.

Let's start with $(\xi_1,\dots, \xi_n)$. By combining Equations \eqref{b} and \eqref{xi}, we get
\begin{align*}
\xi_n=&\sum_{i=1}^n (-1)^{n-i} \inprod{\bigwedge^\rightarrow_{j=1,\dots, \hat{i},\dots, n} \alpha_j}{\bigwedge^\rightarrow_{k=1,\dots, n-1} b_{v(k)}}\alpha_i\\
=&\sum_{i=1}^n (-1)^{n-i} \inprod{\bigwedge^\rightarrow_{j=1,\dots, \hat{i},\dots, n} \alpha_j}{\bigwedge^\rightarrow_{k=1,\dots, n-1} \left(\sum_{l=1}^n a_lx_{l v(k)} \right)}\alpha_i\\
=&\sum_{i=1}^n (-1)^{n-i} \Delta^{\{1,\dots, n\}\setminus \{i\}, \{1,\dots, n-1\}}\left(x\overline{v^{-1}}\right) \alpha_i
\end{align*}
To better state similar results for all $\xi_i$, we introduce the notation 
\[
I_j:=\{1,\dots, j\}.
\]
Then by a similar computation, we find that
\[
\xi_i=\sum_{j=1}^i (-1)^{i-j}\Delta^{I_i\setminus\{j\},I_{i-1}}\left(x\overline{v^{-1}}\right)\alpha_j.
\]
We can make this expression more concise using the following lemma.

\begin{lem}\label{mij} If $x$ is a Gaussian decomposable element in $\GL_n$, then the matrix
\[
M_{ij}:=(-1)^{i-j}\Delta^{I_i\setminus\{j\},I_{i-1}}(x)
\]
satisfies the identity
\[
M_{ij}=D[x]_-^{-1}
\]
where $D$ is a diagonal matrix and $[x]_-$ is the Gaussian factor as in Equation \eqref{gauss}
\end{lem}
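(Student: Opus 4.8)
The plan is to peel off the Gaussian factors $[x]_0$ and $[x]_+$ so as to reduce the statement to the case $x\in U_-$, and then to identify the reduced matrix with the inverse of $x$, read off row by row from adjugates of its leading square blocks.

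\textbf{Step 1 (triangularity and reduction to $U_-$).} First I would observe that for $j>i$ one has $I_i\setminus\{j\}=I_i$, which has $i$ elements against the $i-1$ elements of $I_{i-1}$, so $M_{ij}=0$ by the convention $\Delta^{I,J}=0$ when $|I|\ne|J|$; hence $M$ is lower triangular. Next, only the columns $I_{i-1}$ of $x$ enter $\Delta^{I_i\setminus\{j\},I_{i-1}}(x)$; writing $x=[x]_-[x]_0[x]_+$, the first $i-1$ columns of $x$ are those of $[x]_-[x]_0$ multiplied on the right by the upper-unipotent top-left $(i-1)\times(i-1)$ block of $[x]_+$ (determinant $1$), and the $k$-th column of $[x]_-[x]_0$ is $([x]_0)_{kk}$ times the $k$-th column of $[x]_-$. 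This yields
\[
\Delta^{I_i\setminus\{j\},I_{i-1}}(x)=\left(\prod_{k=1}^{i-1}([x]_0)_{kk}\right)\Delta^{I_i\setminus\{j\},I_{i-1}}([x]_-),
\]
so $M=D\cdot N$, where $D:=\mathrm{diag}\bigl(\prod_{k=1}^{i-1}([x]_0)_{kk}\bigr)_{i=1}^n$ depends only on $[x]_0$ and $N$ is the corresponding matrix built from $[x]_-$. It then suffices to prove the lemma in the special case of a unipotent lower-triangular $y\in U_-$, i.e.\ that $N=[x]_-^{-1}$.

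\textbf{Step 2 (the unipotent case).} Let $y\in U_-$ and let $y^{(i)}$ denote its top-left $i\times i$ block. For each $i$, all rows and columns occurring in $\Delta^{I_i\setminus\{j\},I_{i-1}}(y)$ lie in $I_i$, so this minor is computed inside $y^{(i)}$, which is unipotent lower triangular with $\det y^{(i)}=1$. By the classical adjugate formula the $i$-th row of $(y^{(i)})^{-1}$ has entries $(-1)^{i+l}\Delta^{I_i\setminus\{l\},\,I_i\setminus\{i\}}(y^{(i)})=(-1)^{i-l}\Delta^{I_i\setminus\{l\},I_{i-1}}(y)$ for $1\le l\le i$, which is exactly $N_{il}$; and $N_{il}=0$ for $l>i$ by Step 1. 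Finally, since $y$ is block lower triangular with top-left block $y^{(i)}$, its inverse is again lower triangular with top-left block $(y^{(i)})^{-1}$, so the $i$-th row of $y^{-1}$ agrees with the $i$-th row of $N$. Letting $i=1,\dots,n$ gives $N=y^{-1}$, hence $M=D[x]_-^{-1}$.

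\textbf{Expected difficulty.} I do not anticipate a real obstacle: the substance is the column manipulation of Step 1 and the block-adjugate identity of Step 2, both routine. The delicate points are the sign bookkeeping (using $(-1)^{i+l}=(-1)^{i-l}$) and the size-mismatch convention, which is what forces $M$ to be lower triangular and matches the diagonal of $D$ with the leading principal minors $\Delta^{I_{i-1},I_{i-1}}(x)=\prod_{k=1}^{i-1}([x]_0)_{kk}$. As an alternative to Step 2 one could verify directly, via Laplace expansion along column $i$ of $y^{(i)}$, that the $i$-th row of $N$ pairs to $\delta_{ik}$ with the $k$-th column of $y$ for every $k$; the adjugate route is shorter.
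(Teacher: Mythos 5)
Your proof is correct, but it takes a different route from the paper's. The paper argues more globally: after noting (as you do) that $M$ is lower triangular, it asserts ``by computation'' that $Mx$ is upper triangular --- which amounts to the Laplace-expansion-with-a-repeated-column identity $\sum_j(-1)^{i-j}\Delta^{I_i\setminus\{j\},I_{i-1}}(x)\,x_{jk}=0$ for $k<i$ --- and then invokes the uniqueness of the Gaussian decomposition $x=[x]_-[x]_0[x]_+$ to conclude that $M$ can only be $D[x]_-^{-1}$. You instead peel off $[x]_0$ and $[x]_+$ by column operations first (which has the benefit of identifying $D$ explicitly as $\mathrm{diag}\bigl(\Delta^{I_{i-1},I_{i-1}}(x)\bigr)_i=\mathrm{diag}\bigl(\prod_{k<i}([x]_0)_{kk}\bigr)_i$) and then recognize the unipotent piece as the block-adjugate of $[x]_-$. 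The ``alternative'' you mention at the end is essentially the paper's computation, localized to the unipotent case. Both arguments are sound; yours is more explicit where the paper's is terser, and the paper's avoids the reduction step at the cost of leaving the key triangularity computation and the identification of $D$ implicit.
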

\begin{proof} From the convention $\Delta^{I,J}(x)=0$ if $|I|\neq |J|$ we know that $M_{ij}$ is a lower triangular matrix. By computation we also find that $M_{ij} x$ is an upper triangular matrix. Thus by the uniqueness of Gaussian decomposition, we know that $M_{ij}x$ only differs from $[x]_-^{-1}x=[x]_0[x]^+$ by a diagonal matrix factor on the left. This is equivalent to $M_{ij}=D[x]_-^{-1}$.
\end{proof}

Using this lemma, we now can write
\begin{equation}\label{v}
(\xi_1,\dots, \xi_n)=(\alpha_1,\dots, \alpha_n)\left(D\left[x\overline{v^{-1}}\right]_-^{-1}\right)^t 
\end{equation}
for some diagonal matrix $D$.

Let's now turn to $(\eta_1,\dots, \eta_n)$. By a similar method, we find that
\begin{align*}
\eta_i=&\sum_{j=1}^i (-1)^{i-j}\inprod{\bigwedge^\rightarrow_{k=1,\dots, \hat{j},\dots, i} \alpha_{u^{-1}(k)}}{\bigwedge^\rightarrow_{l=1,\dots, i-1} b_l}\alpha_{u^{-1}(j)}\\
=&\sum_{j=1}^i(-1)^{i-j}\inprod{\bigwedge^\rightarrow_{k=1,\dots, \hat{j},\dots, i} \alpha_{u^{-1}(k)}}{\bigwedge^\rightarrow_{l=1,\dots, i-1} \left(\sum_{m=1}^n a_mx_{ml}\right)}\alpha_{u^{-1}(j)}.
\end{align*}
By using Lemma \ref{mij} again we can write
\begin{equation}\label{u}
(\eta_1,\dots, \eta_n)=(\alpha_1,\dots, \alpha_n)\left(D'\left[\overline{u}^{-1}x\right]_-^{-1}\overline{u}^{-1}\right)^t
\end{equation}
for some diagonal matrix $D'$.

\vspace{12pt}

\noindent\textit{Proof of the last part of Proposition \ref{main'}.} By comparing Equations \eqref{dt}, \eqref{v}, and \eqref{u}, we can conclude that
\begin{align*}
\tilde{\chi}_\vec{i}\circ \tilde{p}_\vec{i}\circ \tilde{\psi}_\vec{i}(x)=&D\left(\left[x\overline{v^{-1}}\right]_-\right)^t\left(\left[\overline{u}^{-1}x\right]_-^{-1}\overline{u}^{-1}\right)^tD'\\
=& D\left(\left[\overline{u}^{-1}x\right]_-^{-1}\overline{u}^{-1}x\overline{v^{-1}}\left[x\overline{v^{-1}}\right]_+^{-1}\right)^tD',
\end{align*}
where we have replace $D$ and $D'$ by some other diagonal matrices; note that the last equality follows from the fact that
\[
\left[x\overline{v^{-1}}\right]_-=x\overline{v^{-1}}\left[x\overline{v^{-1}}\right]_+^{-1}\left[x\overline{v^{-1}}\right]_0^{-1},
\]
the last factor of which is absorbed into the diagonal matrix $D'$. \qed

\subsection{Twist Map and Birational Equivalence}\label{birational} In last subsection we computed a formula for the composition $\chi\circ \psi$, and found that it only differs from Fomin and Zelevinsky's twist map (see Definition 1.5 of \cite{FZ}) by an anti-automorphism $x\mapsto x^\iota$ of $\GL_n$, which is uniquely defined by (Equation (2.2) of \cite{FZ}):
\[
e_{\pm i}^\iota=e_{\pm i} \quad \quad \text{and} \quad \quad a^\iota=a^{-1} \quad \forall a\in H.
\]

On the other hand, in \cite{FG} Fock and Goncharov introduced a involution $i_\mathcal{X}$ of cluster varieties. Given any seed $\vec{i}=(I,\epsilon_{ij})$, we define a new seed $\vec{i}^\circ=(I^\circ,\epsilon_{ij}^\circ)$ by setting
\[
I^\circ=I \quad \quad \text{and} \quad \quad \epsilon_{ij}^\circ=-\epsilon_{ij}.
\]
Since the set of vertices $I^\circ=I$, there is a natural correspondence between coordinates $(X_i)$ of $\mathcal{X}_\vec{i}$ and $(X^\circ_i)$ of $\mathcal{X}_{\vec{i}^\circ}$. Then the involution $i_\mathcal{X}$ is the map from the seed torus $\mathcal{X}_\vec{i}$ to the seed torus $\mathcal{X}_{\vec{i}^\circ}$ defined by
\[
i_\mathcal{X}^*(X_i^\circ)=X_i^{-1}.
\]

It is not hard to see that the involutions $i_\mathcal{X}$ commute with cluster mutations as below.
\[
\xymatrix{ \mathcal{X}_\vec{i} \ar[r]^{\mu_k} \ar[d]_{i_\mathcal{X}} & \mathcal{X}_{\vec{i}'} \ar[d]^{i_\mathcal{X}} \\
\mathcal{X}_{\vec{i}^\circ}\ar[r]_{\mu_k} & \mathcal{X}_{\vec{i}'^\circ}}
\]
Thus the involutions $i_\mathcal{X}$ can be glued into a single involution $i_\mathcal{X}:\mathcal{X}_{|\vec{i}_0|}\rightarrow \mathcal{X}_{|\vec{i}_0^\circ|}$. 

The proposition below shows that in the case of double Bruhat cells in $\GL_n$, the involution $i_\mathcal{X}$ is essentially the same as the anti-automorphism $x\mapsto x^\iota$.

\begin{prop} Let $\vec{i}_0$ be any reduced word for a pair of Weyl group elements $(u,v)$. Then $(u^{-1},v^{-1})$ is also a pair of Weyl group elements and $\vec{i}_0^\circ$ is a reduced word for it. Further the following diagram commutes.
\[
\xymatrix{\mathcal{X}_{|\vec{i}_0|} \ar[r]^(0.4)\chi \ar[d]_{i_\mathcal{X}} & H\backslash \GL_n^{u,v}/H \ar[d]^{\iota} \\ \mathcal{X}_{|\vec{i}_0^\circ|}\ar[r]_(0.3)\chi & H\backslash\GL_n^{u^{-1},v^{-1}}/H}
\]
\end{prop}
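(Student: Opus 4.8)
The plan is to reduce the commutativity of the square to a single identity between the amalgamation maps $\tilde\chi$ on one seed $\mathcal{X}$-torus, exploiting that the opposite word $\vec{i}_0^\circ$ corresponds to the left--right mirror reflection of the bipartite graph $\Gamma_{\vec{i}_0}$. That $\vec{i}_0^\circ$ is a reduced word for $(u^{-1},v^{-1})$ was already observed in the excerpt.

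\textbf{Step 1: the mirror reflection.} First I would check that $\Gamma_{\vec{i}_0^\circ}$ is obtained from $\Gamma_{\vec{i}_0}$ by reflecting the picture left--right: reversing the word reverses the order in which the vertical edges appear but leaves each letter's sign unchanged, so vertex colours and the ``white-on-top versus black-on-top'' type of every vertical edge are preserved. This gives a bijection $f\mapsto f^\circ$ between faces of $\Gamma_{\vec{i}_0}$ and faces of $\Gamma_{\vec{i}_0^\circ}$ carrying boundary faces to boundary faces. Since the quiver $\tilde{\vec{i}}$ is assembled from \emph{counterclockwise} triangles at the trivalent black vertices, reflection turns each such triangle into a clockwise one; hence, under $f\mapsto f^\circ$, the seed $\tilde{\vec{i}}_0^\circ$ is identified with $(\tilde{\vec{i}}_0)^\circ$, the seed with the same vertex set and negated exchange matrix. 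In particular $|\vec{i}_0^\circ|=|(\vec{i}_0)^\circ|$, and in these coordinates Fock--Goncharov's involution is $i_{\mathcal{X}}\colon X_f\mapsto X_{f^\circ}^{-1}$.

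\textbf{Step 2: $\iota$ reverses the amalgamation product.} Writing $\tilde\chi_{\vec{i}_0}\bigl((X_f)\bigr)=g_1 g_2\cdots g_N$ as the left-to-right product of the factors $h^i(X_f)$ and $e_{\pm i}$, reading $\Gamma_{\vec{i}_0^\circ}$ from left to right is reading $\Gamma_{\vec{i}_0}$ from right to left, so $\tilde\chi_{\vec{i}_0^\circ}\bigl((Y_{f^\circ})\bigr)=g_N^\sharp\cdots g_1^\sharp$, where $g_m^\sharp=e_{\pm i}$ on the $e_{\pm i}$ factors and $g_m^\sharp=h^i(Y_{f^\circ})$ on an $h^i(X_f)$ factor. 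On the other hand $\iota$ is an anti-automorphism fixing the $e_{\pm i}$ with $h^i(X)^\iota=h^i(X)^{-1}=h^i(X^{-1})$ (as $h^i\colon\mathbb{C}^*\to H$ is a homomorphism), so $\bigl(\tilde\chi_{\vec{i}_0}((X_f))\bigr)^\iota=g_N^\iota\cdots g_1^\iota$ with $g_m^\iota=e_{\pm i}$ on the $e_{\pm i}$ factors and $g_m^\iota=h^i(X_f^{-1})$ on an $h^i(X_f)$ factor. Comparing the two products gives $\bigl(\tilde\chi_{\vec{i}_0}((X_f))\bigr)^\iota=\tilde\chi_{\vec{i}_0^\circ}\bigl((X_f^{-1})_{f^\circ}\bigr)$, i.e. $\iota\circ\tilde\chi_{\vec{i}_0}=\tilde\chi_{\vec{i}_0^\circ}\circ i_{\mathcal{X}}$ on the seed $\mathcal{X}$-torus, using the description of $i_{\mathcal{X}}$ from Step~1. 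I would also record that $\iota$ maps $\GL_n^{u,v}$ onto $\GL_n^{u^{-1},v^{-1}}$ and descends to the double quotient: $\iota$ fixes $\overline{s}_i=e_i^{-1}e_{-i}e_i^{-1}$, whence $\iota(\overline{w})=\overline{w^{-1}}$, and it preserves $B_\pm$ (fixing a generating set of each), so $\iota(B_+uB_+\cap B_-vB_-)=B_+u^{-1}B_+\cap B_-v^{-1}B_-$, while $\iota(HgH)=H\,\iota(g)\,H$ gives the descent $H\backslash\GL_n^{u,v}/H\to H\backslash\GL_n^{u^{-1},v^{-1}}/H$.

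\textbf{Step 3: descend and glue.} Since $\chi$ is $\tilde\chi$ followed by the projection to the double quotient and is independent of the boundary-face coordinates, and $i_{\mathcal{X}}$ acts as $X_f\mapsto X_{f^\circ}^{-1}$ on the non-boundary faces of $\mathcal{X}_{\vec{i}_0}$, Step~2 pushes down to $\iota\circ\chi=\chi\circ i_{\mathcal{X}}$ on the seed torus $\mathcal{X}_{\vec{i}_0}\subset\mathcal{X}_{|\vec{i}_0|}$; as this torus is dense (equivalently, both $\chi$ and $i_{\mathcal{X}}$ are glued from the seed-wise maps, which commute with all mutations), the square commutes. The only step I expect to require real care is Step~1: making the mirror reflection precise enough to see that it intertwines the abstract ``$\epsilon\mapsto-\epsilon$'' involution in the definition of $i_{\mathcal{X}}$ with the concrete relabeling $f\mapsto f^\circ$ of faces, so that the two meanings of $\vec{i}_0^\circ$ — reversed word on one side, negated seed on the other — are compatibly identified. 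Everything downstream of that is formal.
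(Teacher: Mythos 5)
Your proposal is correct and follows the same route as the paper's own proof — identifying the reversed word with the left--right mirror of the bipartite graph, observing that this reflection turns the counterclockwise quiver cycles into clockwise ones (hence negates the exchange matrix, matching the seed $(\vec{i}_0)^\circ$ in Fock--Goncharov's definition of $i_{\mathcal X}$), and then checking that the anti-automorphism $\iota$ reverses the amalgamation product while inverting each $h^i$ factor. The paper states this much more tersely ("the commutativity... follows from the definitions of $\iota$ and $\chi$"); your Step 2 with the explicit factorization $g_1\cdots g_N$ and the comparison $\iota\circ\tilde\chi_{\vec{i}_0}=\tilde\chi_{\vec{i}_0^\circ}\circ i_{\mathcal X}$ supplies the computation the paper leaves to the reader, and your Step 3 correctly handles the descent through the double quotient and the gluing over mutations.
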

\begin{proof} It is obvious that $(u^{-1},v^{-1})$ is also a pair of Weyl group elements. By careful examination we also see that the seed $\vec{i}_0^\circ$ can be produced from the bipartite graph $\Gamma_{\vec{i}_0}^\circ$, which is in turn obtained from $\Gamma_{\vec{i}_0}$, the bipartite graph associated to $\vec{i}_0$, by a flip over any vertical line. Thus the sequence of integers corresponding to the seed $\vec{i}_0^\circ$ is exactly the backward sequence of $\vec{i}_0$, and hence $\vec{i}_0^\circ$ is also a reduced word of $(u^{-1},v^{-1})$. The commutativity of the above diagram then follows from the definitions of $\iota$ and $\chi$.
\end{proof}

As stated in Conjecture 3.12 of \cite{GS}, Goncharov and Shen conjectured that the composition $D_\mathcal{X}:=i_\mathcal{X}\circ \DT$ is an involution. In the case of $H\backslash \GL_n^{u,v}/H$, assuming our main theorem, we see that $D_\mathcal{X}$ is precisely Fomin and Zelevinsky's twist map $t$ (passed to the double quotients), which is indeed a biregular involution (Theorem 1.6 of \cite{FZ}). To give the full picture, we put everything into the following commutative diagram.
\[
\xymatrix{\mathcal{X}_{|\vec{i}_0|}\ar[r]^(0.4)\chi \ar[drr]_{D_\mathcal{X}}& H\backslash \GL_n^{u,v}/H \ar@{-->}[r]^(0.6)\psi \ar[drr]_(0.3){t} & \mathcal{X}_{|\vec{i}_0|} \ar[r]^(0.4)\chi \ar[d]_(0.6){i_\mathcal{X}} & H \backslash \GL_n^{u,v}/H \ar[d]^\iota \\ & & \mathcal{X}_{|\vec{i}_0^\circ|}\ar[r]_(0.3)\chi & H\backslash \GL_n^{u^{-1},v^{-1}}/H}
\]

Using the fact that $t$ is a biregular involution we can prove the first part of our main theorem as an easy corollary.

\begin{cor} Both $\psi$ and $\chi$ are birational equivalences.
\end{cor}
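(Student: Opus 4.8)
The plan is to derive the corollary almost formally from the formula for $\chi\circ\psi$ obtained in Subsection~\ref{formula}, together with the classical fact that Fomin and Zelevinsky's twist map is a biregular involution; the only non-formal input will be a dimension count.

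First I would record what is already available. By the last part of Proposition~\ref{main'} and the discussion opening this subsection, the composition $\chi\circ\psi=\DT$, viewed as a rational self-map of $H\backslash\GL_n^{u,v}/H$, differs from Fomin and Zelevinsky's twist map $t$ (passed to the double quotients) only by the anti-automorphism $x\mapsto x^\iota$; concretely, as rational maps $H\backslash\GL_n^{u,v}/H\dashrightarrow H\backslash\GL_n^{u^{-1},v^{-1}}/H$ one has $t=\iota\circ(\chi\circ\psi)$. Since $\iota$ is an involutive anti-automorphism of $\GL_n$ preserving $B_\pm$, it descends to an isomorphism between the two double quotients, and since $t$ is a biregular involution (Theorem~1.6 of \cite{FZ}) it descends to an isomorphism of double quotients as well. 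Hence $\chi\circ\psi=\iota^{-1}\circ t$ is a birational equivalence of $H\backslash\GL_n^{u,v}/H$ with itself.

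To pass from this to birationality of the two factors I would use the dimension equality $\dim\mathcal{X}_{|\vec{i}_0|}=\dim(H\backslash\GL_n^{u,v}/H)$. By Theorem~\ref{bfz} the graph $\Gamma_{\vec{i}_0}$ has $\dim\GL_n^{u,v}+1$ faces, of which $2n$ are boundary faces, so $\dim\mathcal{X}_{|\vec{i}_0|}=\dim\GL_n^{u,v}+1-2n$. On the other hand, for generic $x\in\GL_n^{u,v}$ the stabilizer under left-and-right multiplication by $H\times H$ is exactly the centre of $\GL_n$ --- equivalently, the only diagonal matrices $h$ with $xhx^{-1}$ diagonal are the scalars --- so a generic orbit has dimension $2n-1$ and $\dim(H\backslash\GL_n^{u,v}/H)=\dim\GL_n^{u,v}-(2n-1)$, the same number. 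Granting this, the rest is formal: $\chi\circ\psi$ is dominant, hence so is $\chi$; $\chi\circ\psi$ is generically injective, hence so is $\psi$; and the closure of the image of $\psi$ is carried by $\chi$ onto a dense subset of $H\backslash\GL_n^{u,v}/H$, so it has dimension at least $\dim\mathcal{X}_{|\vec{i}_0|}$, forcing $\psi$ to be dominant. A dominant, generically injective rational map between irreducible varieties of equal dimension is a birational equivalence, so $\psi$ is one, and then $\chi=(\chi\circ\psi)\circ\psi^{-1}$ is a composition of birational equivalences.

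The only step carrying real content is the dimension count, and within it the single non-formal point is the computation of the generic $H\times H$-stabilizer, which is a short linear-algebra exercise; I do not anticipate any further obstacle. The identification $t=\iota\circ(\chi\circ\psi)$ is exactly Subsection~\ref{formula} together with the commutative diagram relating $\iota$, $i_\mathcal{X}$ and $\chi$ displayed just above, and the biregularity and involutivity of $t$ are quoted from \cite{FZ}.
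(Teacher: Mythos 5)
Your argument follows the paper's own proof in all essentials: both factor the biregular involution $t$ through $\mathcal{X}_{|\vec{i}_0|}$ and invoke the dimension equality $\dim\mathcal{X}_{|\vec{i}_0|}=\dim(H\backslash\GL_n^{u,v}/H)$ to conclude that both factors are birational. The paper writes the factorization as $t=(\chi_{\vec{i}^\circ}\circ i_\mathcal{X})\circ\psi_\vec{i}$ while you write $\chi\circ\psi=\iota^{-1}\circ t$; these are equivalent via the commutative square for $i_\mathcal{X}$, so the route is the same. The one genuine addition is your justification of the dimension equality, which the paper merely asserts.

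A small caveat on that added justification: the claim that the generic $H\times H$-stabilizer in $\GL_n^{u,v}$ is exactly the centre is correct only when every simple reflection $s_i$ appears in some reduced word of $u$ or $v$. If $k>0$ of them appear in neither, the generic stabilizer has dimension $1+k$ (already for $u=v=e$ one has $\GL_n^{e,e}=H$, $k=n-1$, and the stabilizer is the full antidiagonal $\{(h,h^{-1}):h\in H\}$, so your formula would give the absurd $\dim(H\backslash H/H)=1-n$). Correspondingly the count ``$2n$ boundary faces'' that you quote from the text is an overcount by the same $k$, since the left and right boundary faces of a spacing with no vertical edge coincide. The two errors cancel and the dimension equality does hold in full generality, but if you want your justification to be airtight you should count the faces directly as $n+l(u)+l(v)+1$, the boundary faces as $2n-k$, and match the stabilizer dimension $1+k$ via the connectivity of the bipartite support graph of a generic $x\in\GL_n^{u,v}$.
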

\begin{proof} It suffices to show that $\psi_\vec{i}$ and $\chi_\vec{i}$ are birational equivalences for any reduced word $\vec{i}$ of $(u,v)$. Note that the twist map $t$ is a biregular isomorphic involution that factors through $\mathcal{X}_\vec{i}$ as $(\chi_{\vec{i}^\circ}\circ i_\mathcal{X})\circ \psi_\vec{i}$. Since both $H\backslash \GL_n^{u,v}/H$ and $\mathcal{X}_\vec{i}$ are algebraic varieties of the same dimension, it follows that both $\chi_{\vec{i}^\circ}\circ i_\mathcal{X}$ and $\psi_\vec{i}$ are birational equivalences. But obviously $i_\mathcal{X}$ is also a birational equivalence; thus $\chi_{\vec{i}^\circ}$ is also a birational equivalence, and so is $\chi_\vec{i}$.
\end{proof}

\subsection{Positivity of \texorpdfstring{$\psi\circ \chi$}{}}\label{positive} After showing that both $\chi$ and $\psi$ are birational equivalences, it now makes sense to ask the question that for a (equivalently any) reduced word $\vec{i}$ of a pair of Weyl group elements $(u,v)$, whether the rational map
\[
\psi_\vec{i}\circ \chi_\vec{i}:\mathcal{X}_\vec{i}\dashrightarrow \mathcal{X}_\vec{i}
\]
is positive (It had better be positive, for otherwise we cannot tropicalize it, not to mention proving that it is the Donaldson-Thomas transformation). In this subsection we will show that the answer to this question is positive.

The key to prove this is to lift everything to the top row in Diagram \eqref{chain}, and show that 
\[
\tilde{p}\circ \tilde{\psi}_\vec{i}\circ \tilde{\chi}_\vec{i}:\mathcal{X}_{\tilde{\vec{i}}}\dashrightarrow\mathcal{X}_{\tilde{\vec{i}}}
\]
is positive. Obviously $\tilde{p}$ is a positive map. Thus the problem reduces to showing that
\[
\tilde{\psi}_\vec{i}\circ \tilde{\chi}_\vec{i}:\mathcal{X}_{\tilde{\vec{i}}}\dashrightarrow \mathcal{A}_{\tilde{\vec{i}}}
\]
is positive. In fact, we can prove something even stronger.

\begin{prop} Let $\vec{i}$ be a reduced word of a pair of Weyl group elements $(u,v)$ and let $\Gamma$ be the bipartite graph associated to $\vec{i}$. Then
\[
\Delta^{I,J}\left(\tilde{\chi}_\vec{i}(X_f)\right)=\sum_{\{\gamma_i\}:I\rightarrow J}\prod_i\prod_{f\in \hat{\gamma_i}} X_f,
\]
where $\{\gamma_i\}:I\rightarrow J$ denotes the set of pairwise disjoint paths going from the labeling set $I$ of horizontal lines on the left to the labeling set $J$ of horizontal lines on the right in the perfect orientation of $\Gamma$ (both labeling sets count from top to bottom). In particular, this proposition shows that any minor $\Delta^{I,J}\left(\tilde{\chi}_\vec{i}(X_f)\right)$ is a polynomial with positive integral coefficients in terms of the face variables $X_f$.
\end{prop}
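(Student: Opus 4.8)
The plan is to deduce the formula from the Lindström--Gessel--Viennot lemma applied to the perfect orientation of $\Gamma$. The first step is to record the entrywise ``boundary measurement'' description of the matrix $g:=\tilde{\chi}_\vec{i}(X_f)$, namely
\[
g_{ij}=\sum_{\gamma:i\to j}\prod_{f\in\hat{\gamma}}X_f,
\]
where the sum runs over all directed paths in the perfect orientation of $\Gamma$ from the $i$-th source on the left boundary to the $j$-th sink on the right boundary. This runs exactly as the argument in the proof of Corollary \ref{3.5}, except that one now builds up the \emph{entire} product $\tilde{\chi}_\vec{i}$ column by column, including the diagonal factors $h^i(X_f)$: each elementary factor $h^i(X_f)$ or $e_{\pm i}$ is the transfer matrix of one column of $\Gamma$, appending $e_{\pm i}$ on the right transforms the boundary covectors precisely as computed in the proof of Corollary \ref{3.5}, and appending $h^i(X_f)$ merely rescales; the identity $(AB)_{ij}=\sum_k A_{ik}B_{kj}$ for a product of transfer matrices then says exactly that a path through the concatenated columns breaks as a path to some intermediate vertex $k$ followed by a path from $k$, with the face weight $X_f$ contributed to precisely those paths that pass below the face $f$, in agreement with the convention of Proposition \ref{connection}. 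An induction on the number of columns yields the displayed formula; comparison with \eqref{dt} and Proposition \ref{connection} shows that $\Delta^{I,J}\bigl(\tilde{\chi}_\vec{i}(X_f)\bigr)$ is exactly the $I\times J$ minor of this path-sum matrix.

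The second step is the observation that the perfect orientation makes $\Gamma$ into a planar acyclic directed network: all horizontal edges point to the right while vertical edges never change the horizontal coordinate, so no directed cycle can occur, and by construction the $n$ sources lie on the left boundary and the $n$ sinks on the right boundary, each listed from top to bottom. Granting $|I|=|J|$ (otherwise both sides vanish, by the convention $\Delta^{I,J}=0$ and the non-existence of disjoint families of unequal sizes), the Lindström--Gessel--Viennot lemma expresses the minor $\Delta^{I,J}(g)$ as the signed sum over families of pairwise vertex-disjoint paths joining $I$ to $J$, the sign being that of the induced permutation. Since the network is planar with $I$ contained in the left boundary and $J$ in the right boundary in compatible top-to-bottom order, any vertex-disjoint family is automatically non-crossing and hence realizes the unique order-preserving bijection $I\to J$; all signs are therefore $+1$, and we obtain
\[
\Delta^{I,J}\bigl(\tilde{\chi}_\vec{i}(X_f)\bigr)=\sum_{\{\gamma_i\}:I\to J}\prod_i\prod_{f\in\hat{\gamma_i}}X_f,
\]
with the empty family of paths accounting for the base case $\Delta^{\emptyset,\emptyset}=1$. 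The positivity statement is then immediate, since the right-hand side is visibly a polynomial in the $X_f$ whose coefficients count the families $\{\gamma_i\}$ producing each monomial and are thus non-negative integers.

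I expect the main obstacle to be the first step: pinning down the face-weight bookkeeping so that the transfer matrix of each elementary factor assigns the weight $X_f$ to precisely the paths passing below $f$, and checking that this ``below'' convention is the same one ($\hat{\gamma}$ = faces lying below $\gamma$) used in Proposition \ref{connection}, including the behaviour at the extra white vertices at the far left and far right. Once the entry formula is in place, the remainder is a direct application of the Lindström--Gessel--Viennot lemma combined with the planarity and acyclicity of $\Gamma$ and the matching of the source/sink ordering with the row/column ordering of the minor; this requires no new idea.
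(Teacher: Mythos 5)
Your proposal is correct and follows essentially the same route as the paper: the paper also first establishes the entrywise path-sum formula $x_{ij}=\sum_{\gamma:i\to j}\prod_{f\in\hat{\gamma}}X_f$ by the transfer-matrix argument of Proposition \ref{connection} and Corollary \ref{3.5}, and then proves positivity of the minors by the tail-switching cancellation for intersecting path families together with the planarity (Jordan curve) argument for the ordering --- which is precisely the proof of the Lindstr\"{o}m--Gessel--Viennot lemma that you invoke as a black box. The only cosmetic difference is that the paper reproves the LGV involution inline (crediting Postnikov) rather than citing the lemma by name.
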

\begin{proof} This proof is credit to Postnikov \cite{Pos}. Let $x=\tilde{\chi}_\vec{i}(X_f)$. Again fix an $n$-dimensional complex vector space $V$ and a basis $\{a_i\}$; let $\{\alpha_i\}$ be dual basis of $\{a_i\}$ and let $\{b_i\}:=\{a_i\}.x$. Then by an argument similar to the proofs of Proposition \ref{connection} and Corollary \ref{3.5} we know that each $b_i$ can be written as the following linear combination.
\[
b_j=\sum_{i=1}^na_i\left(\sum_{\gamma:i\rightarrow j}\prod_{f\in \hat{\gamma}} X_f\right).
\]
Since $x_{ij}=\inprod{\alpha_i}{b_j}$, it follows that the minors of $x$ are in fact polynomials in the face variables $X_f$'s with integral coefficients.

Now it remains to show that the coefficients in such polynomials are positive. Let $\Phi(X_f)$ be one such polynomial. Note that each term in $\Phi(X_f)$ corresponds to a family of $|I|$ number of paths in the perfect orientation of $\Gamma$ going from $I$ to $J$. We claim that if the family of paths are not pairwise disjoint, then there is another term in $\Phi_I(X_f)$ to cancel it out. Suppose $\gamma$ and $\eta$ are two intersecting paths in one such family (the picture looks as if they are not intersecting but they in fact are sharing some edges together). Then we can just switch the beginning parts of $\gamma$ and $\eta$ to obtain $\gamma'$ and $\eta'$ and keep everything else in the family unchanged; the resulting family of paths will give another term in $\Phi(X_f)$
\[
\tikz{
\draw [ultra thick, ->] (0,2) -- (1,1.1) -- (3,1.1) -- (4,2);
\draw [ultra thick, ->] (0,0) -- (1,0.9) -- (3,0.9) --(4,0);
\node at (4,2) [above right] {$\gamma$};
\node at (4,0) [below right] {$\eta$};
}
\quad \quad \quad \quad
\tikz{
\draw [ultra thick, ->] (0,2) -- (1,1.1) -- (3,0.9) -- (4,0);
\draw [ultra thick, ->] (0,0) -- (1,0.9) -- (3,1.1) --(4,2);
\node at (4,2) [above right] {$\eta'$};
\node at (4,0) [below right] {$\gamma'$};
}
\]
Note that the faces that are dominated for the two families of paths are the same; but since we switches the sources of $\gamma$ and $\eta$, the corresponding term in $\Phi(X_f)$ will differ by a minus sign (due to a transposition of rows), and hence these two terms will cancel out. 

Thus the only non-vanishing terms in $\Phi(X_f)$ come from families of pairwise disjoint paths in the perfect orientation of $\Gamma$. Since any such family preserves the ordering on the labeling sets (by Jordan curve theorem), it follows that the coefficient in front of the product of dominated face variables is 1, which completes the proof.
\end{proof}

\subsection{Proof of \texorpdfstring{$\psi\circ\chi$}{} being a cluster transformation}\label{proof1} Now we are ready to prove the remaining part of Proposition \ref{main'}, namely to show that $\psi\circ\chi$ is a cluster Donaldson-Thomas transformation. By Theorem \ref{gs} and Proposition \ref{lem0}, it suffices to show that the following two things:
\begin{enumerate} 
\item[(i).] $\psi\circ \chi$ is a cluster transformation;
\item[(ii).] for some (equivalently any) reduced word $\vec{i}$ of $(u,v)$,
\[
\deg_{X_f}(\psi_\vec{i}\circ\chi_\vec{i})^*(X_g)=-\delta_{fg}.
\]
\end{enumerate}

We will hence break the proof into two parts, and this subsection will be devoted to prove (i). 

Recall from Subsection \ref{flag} that the map $\chi\circ \psi$ can be identified with the map $\eta:[B_1,B_2,B_3,B_4]\mapsto [B_3^*,B_4^*,B_5^*,B_6^*]$ on $\conf^{u,v}(\mathcal{B})$ where the six Borel subgroups can be fit in the following hexagon diagram.
\[
\xymatrix{ & B_6\ar[r]^{u^c} \ar@{-}[drr] & B_1 \ar[dr]^u \ar@{-}[dll] & \\
B_3 \ar[ur]^{u^*} \ar[dr]_{v^*} \ar@{-}[drr] & & &  B_4 \ar@{-}[dll] \\
& B_2 \ar[r]_{v^c} & B_5 \ar[ur]_v &}
\]

The key to show that $\psi\circ \chi$ is a cluster transformation is to break $\eta$ down to a composition of a series of small ``clockwise tilting'' on the square diagram. To be more precise, fix a reduced word $(i(1),\dots, i(m)$ for $u$ and another reduced word $(j(1),\dots, j(l))$ for $v$. Then the juxtaposition 
\[
(-i(1),\dots, -i(m),j(1),\dots, j(l))
\]
is a reduced word for the pair of Weyl group elements $(u,v)$. From the way the reduced word $\vec{i}$ is structured, we see that if $x$ lies in the image of $\tilde{\chi}_\vec{i}$, then $x$ can be written as a product $x_-x_+$ where $x_\pm\in B_\pm$. Fix a choice of such factorization $x=x_-x_+$. Then we know that the point $[x_-^{-1}B_+,x_+B_-,B_-,B_+]$ in $\conf^{u,v}(\mathcal{B})$ corresponds to the equivalence class $H\backslash x/H$ in $H\backslash \GL_n^{u,v}/H$. We can represent such a point by the square diagram below.
\[
\xymatrix{x_-^{-1}B_+ \ar[r]^u \ar@{-}[d] & B_+ \ar@{-}[d] \\
B_- \ar[r]_{v^*} & x_+B_-}
\]

We initiate a sequence of tilting on the edge $\xymatrix{x_-^{-1}B_+\ar@{-}[r] & B_-}$ with respect to this reduced word of $u$. First set $B_u^{(0)}:=x_-^{-1}B_+$ and $B_u^{(m)}:=B_+$; by using Proposition \ref{2.8} we can find a sequence of Borel subgroups $\left(B_u^{(k)}\right)$ such that for $1\leq k\leq m$,
\[
\xymatrix{B_u^{(k-1)}\ar[r]^{s_{i(k)}} & B_u^{(k)}}.
\]
Next set $B_{u^*}^{(0)}:=B_-$ and again use Proposition \ref{2.8} to find a sequence of Borel subgroups $\left(B_{u^*}^{(k)}\right)$ such that for $1\leq k\leq m$,
\[
\xymatrix{B_{u^*}^{(k-1)}\ar[r]^{s_{i(k)}^*} & B_u^{(k)}},
\]
and 
\[
\xymatrix{B_{u^*}^{(m)}\ar[r]^{u^c} & x_-^{-1}B_+}.
\]
Since $s_{i(k+1)}^*\dots s_{i(m)}^*u^c s_{i(1)}\dots s_{i(k)}=w_0$ and $m+l(u^c)=l(w_0)$, it follows from Proposition \ref{2.8} again that for all $0\leq k\leq m$,
\[
\xymatrix{B_{u}^{(k)}\ar@{-}[r] & B_{u^*}^{(k)}}.
\]
Thus we can view these two sequences as a sequence of tilting of the edge $\xymatrix{x_-^{-1}B_+\ar@{-}[r] & B_-}$. This can be seen more intuitively on the original square diagram.
\[
\tikz{
\node (u0) at +(135:4) [] {$x_-^{-1}B_+$};
\node (u1) at +(115:4) [] {$B_u^{(1)}$};
\node (u2) at +(95:4) [] {$B_u^{(2)}$};
\node (um) at +(45:4) [] {$B_+$};
\node (u*0) at +(-135:4) [] {$B_-$};
\node (u*1) at +(-155:4) [] {$B_{u^*}^{(1)}$};
\node (u*2) at +(-175:4) [] {$B_{u^*}^{(2)}$};
\node (u*m) at +(165:4) [] {$B_{u^*}^{(m)}$};
\node (v) at +(-45:4) [] {$x_+B_-$};
\draw [->] (u0) -- (u1) node [midway, above] {$s_{i(1)}$};
\draw [->] (u1) -- (u2) node [midway, above] {$s_{i(2)}$};
\draw [->, dashed] (u2) -- (um);
\draw [->] (u*0) -- (u*1) node [midway, left] {$s_{i(1)}^*$};
\draw [->] (u*1) -- (u*2) node [midway, left] {$s_{i(2)}^*$};
\draw [->] (u*m) -- (u0) node [midway, left] {$u^c$};
\draw [->, dashed] (u*2) -- (u*m);
\draw [->] (u*0) -- (v) node [midway, below] {$v^*$};
\draw (um) -- (v);
\draw (u0) -- (u*0);
\draw (u0) -- (um) node [midway, below] {$u$};
\draw (u*1) -- (u1);
\draw (u*2) -- (u2);
\draw (u*m) -- (um);
}
\]
We can do the same construction for $v$, by finding two sequences $\left(B_{v^*}^{(k)}\right)_{k=1}^l$ and $\left(B_v^{(k)}\right)_{k=1}^l$ to fit into the square diagram in an analogous way. To save time, we will just draw the resulting tilting diagram.
\[
\tikz{
\node (u0) at +(135:4) [] {$x_-^{-1}B_+$};
\node (u1) at +(115:4) [] {$B_u^{(1)}$};
\node (u2) at +(95:4) [] {$B_u^{(2)}$};
\node (um) at +(45:4) [] {$B_+$};
\node (u*0) at +(-135:4) [] {$B_-$};
\node (u*1) at +(-155:4) [] {$B_{u^*}^{(1)}$};
\node (u*2) at +(-175:4) [] {$B_{u^*}^{(2)}$};
\node (u*m) at +(165:4) [] {$B_{u^*}^{(m)}$};
\node (v*l) at +(-45:4) [] {$x_+B_-$};
\node (v*1) at (-115:4) [] {$B_{v^*}^{(1)}$};
\node (v*2) at (-95:4) [] {$B_{v^*}^{(2)}$};
\node (v0) at (-15:4) [] {$B_v^{(0)}$};
\node (v1) at (5:4) [] {$B_v^{(1)}$};
\node (v2) at (25:4) [] {$B_v^{(2)}$};
\draw [->] (u0) -- (u1) node [midway, above] {$s_{i(1)}$};
\draw [->] (u1) -- (u2) node [midway, above] {$s_{i(2)}$};
\draw [->, dashed] (u2) -- (um);
\draw [->] (u*0) -- (u*1) node [midway, left] {$s_{i(1)}^*$};
\draw [->] (u*1) -- (u*2) node [midway, left] {$s_{i(2)}^*$};
\draw [->] (u*m) -- (u0) node [midway, left] {$u^c$};
\draw [->, dashed] (u*2) -- (u*m);
\draw [->] (u*0) -- (v*l) node [midway, below] {$v^*$};
\draw (um) -- (v*l);
\draw (u0) -- (u*0);
\draw (u0) -- (um) node [midway, below] {$u$};
\draw (u*1) -- (u1);
\draw (u*2) -- (u2);
\draw (u*m) -- (um);
\draw [->] (u*0) -- (v*1) node [midway, below] {$s_{j(1)}^*$};
\draw [->] (v*1) -- (v*2) node [midway, below] {$s_{j(2)}^*$};
\draw [dashed, ->] (v*2) -- (v*l);
\draw [->] (v*l) -- (v0) node [midway, right] {$v^c$};
\draw [->] (v0) -- (v1) node [midway, right] {$s_{j(1)}$};
\draw [->] (v1) -- (v2) node [midway, right] {$s_{j(2)}$};
\draw [dashed, ->] (v2) -- (um);
\draw (u*0) -- (v0);
\draw (v*1) -- (v1);
\draw (v*2) -- (v2);
}
\]
One can see that we are tilting the right vertical edge $\xymatrix{B_+\ar@{-}[r] & x_+B_-}$ clockwisely a bit at a time, going from index $l$, $l-1$, and so on, till we get to $\xymatrix{B_v^{(0)}\ar@{-}[r]& B_-}$. Note that by the time we finish both tilting sequences, we can apply $*$ to the final square diagram and obtain $\eta[x_-^{-1}B_+,x_+B_-,B_-,B_+]=\left[B_-^*, B_+^*,\left(B_{v}^{(0)}\right)^*, \left(B_{u^*}^{(m)}\right)^*\right]$.
\[
\xymatrix{B_-^* \ar[r]^u \ar@{-}[d] & \left(B_{u^*}^{(m)}\right)^* \ar@{-}[d]\\
\left(B_v^{(0)}\right)^* \ar[r]_{v^*} &B_+^*}
\]

Our next mission is to figure out how to realize such a tilting in terms of operations on the element $x$ in $\GL_n^{u,v}$. By viewing $x$ as a transformation on pairs of opposite Borel subgroups, we can break down the square diagram of the quadruple $[x_-^{-1}B_+,x_+B_-,B_-,B_+]$ into a two-step process: first taking the opposite pair $\xymatrix{x_-^{-1}B_+\ar@{-}[r] & B_-}$ to the opposite pair $\xymatrix{B_+ \ar@{-}[r]& B_-}$ and then to the opposite pair $\xymatrix{B_+ \ar@{-}[r]& x_+B_-}$. We can pictorially represent it as below.
\[
\tikz{
\node (lt) at (0,1) [] {$x_-^{-1}B_+$};
\node (lb) at (0,-1) [] {$B_-$};
\node (mt) at (3,1) [] {$B_+$};
\node (mb) at (3,-1) [] {$B_-$};
\node (rt) at (6,1) [] {$B_+$};
\node (rb) at (6,-1) [] {$x_+B_-$};
\draw (lt) -- (lb);
\draw (mt) -- (mb);
\draw (rt) -- (rb);
\draw [->] (lt) -- (mt) node [midway, above] {$u$};
\draw [->] (mt) -- (rt) node [midway, above] {$e$};
\draw [->] (lb) -- (mb) node [midway, below] {$e$};
\draw [->] (mb) -- (rb) node [midway, below] {$v$};
\draw (0,0.5) -- (6,0.5);
\draw (0,-0.5) -- (6,-0.5);
\draw (0,0) -- (6,0);
\draw[fill=white] (1.5,0) circle [radius=0.75];
\draw[fill=white] (4.5,0) circle [radius=0.75];
\node at (1.5,0) [] {$x_-$};
\node at (4.5,0) [] {$x_+$};
}
\]
In the diagram above, note that the middle opposite pair $\xymatrix{B_+\ar@{-}[r] & B-}$ corresponds to the diagonal in the tilting diagram that separates the two tilting sequence, so we can expect that the tilting sequence for $u$ will take place on the left half of the diagram above, whereas the tilting sequence for $v$ will take place on the right half. Let's consider the tilting sequence for $u$ first.

\begin{prop}\label{u tilt} Based on the representative $(x_-^{-1}B_+,B_+,B_-,x_+B_-)$, the following identity holds for $0\leq k\leq m$:
\[
\tikz{
\node (a) at (0,1) [] {$B_u^{(k)}$};
\node (b) at (0,-1) [] {$B_{u^*}^{(k)}$};
\node (c) at (5,1) [] {$\left(e_{i(k)}e_{-i(k)}^{-1}\dots e_{i(1)}e_{-i(1)}^{-1}x_-\right)^{-1}B_+$};
\node (d) at (5,-1) [] {$\left(e_{i(k)}e_{-i(k)}^{-1}\dots e_{i(1)}e_{-i(1)}^{-1}x_-\right)^{-1}B_-$};
\draw (a) -- (b); 
\draw (c) -- (d); 
\node at (1,0) [] {$=$};
}.
\]
\end{prop}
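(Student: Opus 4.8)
The plan is to prove the identity by induction on $k$, realizing the $k$-th tilting step on the $\GL_n$ side as left multiplication by a single factor built from $e_{\pm i(k)}$. Throughout write $g_k:=e_{i(k)}e_{-i(k)}^{-1}\cdots e_{i(1)}e_{-i(1)}^{-1}x_-$, so that $g_0=x_-$ and $g_k=e_{i(k)}e_{-i(k)}^{-1}g_{k-1}$; the target identity is then $(B_u^{(k)},B_{u^*}^{(k)})=(g_k^{-1}B_+,g_k^{-1}B_-)$.

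The base case $k=0$ is immediate: the right-hand pair is $(x_-^{-1}B_+,x_-^{-1}B_-)$, and since $x_-\in B_-$ one has $x_-^{-1}B_-=B_-$, so it agrees with $(B_u^{(0)},B_{u^*}^{(0)})=(x_-^{-1}B_+,B_-)$. For the inductive step, the defining relation $\overline{s}_i=e_i^{-1}e_{-i}e_i^{-1}$ rearranges into the two identities $e_ie_{-i}^{-1}=\overline{s}_i^{-1}e_i^{-1}$ and $e_i\overline{s}_i=e_{-i}e_i^{-1}$; the first gives $g_k^{-1}=g_{k-1}^{-1}e_{i(k)}\overline{s}_{i(k)}$. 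Granting the inductive hypothesis $(B_u^{(k-1)},B_{u^*}^{(k-1)})=(g_{k-1}^{-1}B_+,g_{k-1}^{-1}B_-)$, I would check that $(g_k^{-1}B_+,g_k^{-1}B_-)$ is at the correct relative position: by $\GL_n$-equivariance of $d_\pm$, $d_+(B_u^{(k-1)},g_k^{-1}B_+)$ is the $B_+$-Bruhat class of $e_{i(k)}\overline{s}_{i(k)}$, which is $s_{i(k)}$ since $e_{i(k)}\in B_+$ and $\overline{s}_{i(k)}$ lifts $s_{i(k)}$; dually $d_-(B_{u^*}^{(k-1)},g_k^{-1}B_-)$ is the $B_-$-Bruhat class of $e_{i(k)}\overline{s}_{i(k)}=e_{-i(k)}e_{i(k)}^{-1}$, which is $s_{i(k)}$ because $e_{-i(k)}\in B_-$ and $e_{i(k)}^{-1}\in B_-s_{i(k)}B_-$ (recall $e_{\pm i}\in B_\pm\cap B_\mp s_iB_\mp$). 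The pair $(g_k^{-1}B_+,g_k^{-1}B_-)$ is manifestly opposite by Proposition \ref{opposite flag}, so the candidate chain has the prescribed consecutive steps $s_{i(k)}$ and $s_{i(k)}^*$.

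To turn these relative-position checks into the claimed equalities of Borel subgroups, I would invoke the uniqueness clause of Proposition \ref{2.8}: the sequences $(B_u^{(k)})$ and $(B_{u^*}^{(k)})$ are by construction the unique chains with the prescribed steps and the prescribed endpoints ($B_u^{(0)}=x_-^{-1}B_+$, $B_u^{(m)}=B_+$; and $B_{u^*}^{(0)}=B_-$ with the terminal arrow $B_{u^*}^{(m)}\to x_-^{-1}B_+$ labelled $u^c$). The candidate chain already has the correct initial data (base case) and the correct steps (previous paragraph), so it remains only to verify the terminal conditions, namely $g_m\in B_+$ (giving $g_m^{-1}B_+=B_+$) and $d_+(g_m^{-1}B_-,x_-^{-1}B_+)=u^c=w_0u^{-1}$. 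Both reduce to identifying the $B_+$-Bruhat class of $g_m=e_{i(m)}e_{-i(m)}^{-1}\cdots e_{i(1)}e_{-i(1)}^{-1}x_-$, for which the relevant facts are that $x_-\in B_+uB_+$ — this is exactly the top edge $d_+(x_-^{-1}B_+,B_+)=u$ of the square diagram — together with $(e_{i(r)}e_{-i(r)}^{-1})^{-1}=e_{i(r)}\overline{s}_{i(r)}\in B_+s_{i(r)}B_+$ and the fact that $(i(1),\dots,i(m))$ is reduced, so that the relevant products of Bruhat cells telescope.

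I expect this final point to be the main obstacle: one has to track the $B_+$-Bruhat class through $g_m$ (equivalently, through each $g_k$) and rule out the a priori possible landings in the longer of the two Bruhat cells appearing in each non-length-additive product $(B_+s_{i(r)}B_+)(B_+wB_+)$, which would break the telescoping. Once that bookkeeping is settled, the rest is formal — the $\SL_2$-triple rewritings of $e_ie_{-i}^{-1}$, the equivariance of $d_\pm$, and the uniqueness in Proposition \ref{2.8}.
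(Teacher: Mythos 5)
Your skeleton matches the paper's proof exactly: identify $g_k=e_{i(k)}e_{-i(k)}^{-1}\cdots e_{i(1)}e_{-i(1)}^{-1}x_-$, check $g_{k-1}g_k^{-1}=e_{-i(k)}e_{i(k)}^{-1}$ realizes the step $s_{i(k)}$ in both the $d_+$- and $d_-$-senses, verify the base case $g_0^{-1}B_-=x_-^{-1}B_-=B_-$, and then appeal to uniqueness in Proposition \ref{2.8} once the terminal conditions are met. All of that is correct, and your $\SL_2$ rewriting $e_ie_{-i}^{-1}=\overline{s}_i^{-1}e_i^{-1}$, $e_{-i}e_i^{-1}=e_i\overline{s}_i$ is the right local bookkeeping.

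However, the terminal conditions -- namely $g_m\in B_+$ and the edge $d_+(g_m^{-1}B_-,x_-^{-1}B_+)=u^c$ -- are not proven; you explicitly flag them as the ``main obstacle'' and defer them to unstated bookkeeping, so the proposal has a genuine gap. These are precisely the nontrivial parts. Your suggested route of telescoping Bruhat products cannot work naively, because $(B_+u^{-1}B_+)(B_+uB_+)$ is a non-length-additive product that spreads over all $B_+wB_+$ with $w\leq u^{-1}u$ in Bruhat order and does not single out the identity cell; some additional structure has to be used to pin down the landing. The paper supplies two separate arguments: for $g_m\in B_+$ it uses the bipartite graph, observing that left-multiplying $x_-$ by $e_{i(k)}e_{-i(k)}^{-1}$ flips the leftmost up-to-down vertical edge to a down-to-up one which can then be slid to the right via the moves of Proposition \ref{4.1}, leaving a graph with no $e_{-i}$-type edges at the end; and for the $u^c$ condition it reduces to $\overline{u}e_{i(m)}e_{-i(m)}^{-1}\cdots e_{i(1)}e_{-i(1)}^{-1}\in B_+$, proven by induction on $m$ using the root-system fact that $s_{i(1)}\cdots s_{i(m-1)}$ sends the simple root $\alpha_{i(m)}$ to a positive root (since the word is reduced), so that $\overline{s_{i(1)}\cdots s_{i(m-1)}}\,e_{i(m)}^{-1}=b\,\overline{s_{i(1)}\cdots s_{i(m-1)}}$ for some $b\in B_+$. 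To complete your proof you would need to supply one of these (or an equivalent) arguments for each terminal condition.
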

\begin{proof} We just need to verify the defining conditions for $B_u^{(k)}$ and $B_{u^*}^{(k)}$, and the key facts are
\[
e_{i(k)}\in B_+\cap B_-s_{i(k)}B_- \quad \quad \text{and} \quad \quad e_{-i(k)}\in B_+s_{i(k)}B_+\cap B_-.
\]

Let's first look at $B_u^{(k)}$. Obviously $B_u^{(0)}=x_-^{-1}B_+$, and since
\[
\left(e_{i(k-1)}e_{-i(k-1)}^{-1}\dots e_{i(1)}e_{-i(1)}^{-1}x_-\right)\left(e_{i(k)}e_{-i(k)}^{-1}\dots e_{i(1)}e_{-i(1)}^{-1}x_-\right)^{-1}=e_{-i(k)}e_{i(k)}^{-1}\in B_+s_{i(k)}B_+,
\]
we know that $\xymatrix{\left(e_{i(k-1)}e_{-i(k-1)}^{-1}\dots e_{i(1)}e_{-i(1)}^{-1}x_-\right)^{-1}B_+ \ar[r]^{s_{i(k)}} & \left(e_{i(k)}e_{-i(k)}^{-1}\dots e_{i(1)}e_{-i(1)}^{-1}x_-\right)^{-1}B_+}$. Thus we only need to show that $\left(e_{i(m)}e_{-i(m)}^{-1}\dots e_{i(1)}e_{-i(1)}^{-1}x_-\right)^{-1}B_+=B_+$, which is equivalent to showing that $e_{i(m)}e_{-i(m)}^{-1}\dots e_{i(1)}e_{-i(1)}^{-1}x_-$ is an element of $B_+$. But if we look at the left half of the bipartite graph $\Gamma_\vec{i}$ (which is a bipartite graph for $x_-$), we see that multiplying $e_{i(1)}e_{-i(1)}^{-1}$ on the left of $x_-$ turns the left most vertical edge from $\tikz[baseline=-0.5ex]{\draw[ultra thick] (0,0.5) -- (0,-0.5); \draw[fill=black] (0,0.5) circle [radius=0.2]; \draw[fill=white] (0,-0.5) circle [radius=0.2];}$ to $\tikz[baseline=-0.5ex]{\draw[ultra thick] (0,0.5) -- (0,-0.5); \draw[fill=white] (0,0.5) circle [radius=0.2]; \draw[fill=black] (0,-0.5) circle [radius=0.2];}$; then we can move this edge to the right of $x_-$ using the two moves in Proposition \ref{4.1}; similar arguments apply to $e_{i(2)}e_{-i(2)}^{-1}$ and so on. Thus at then end we will obtain a bipartite graph with no vertical edge of the form $\tikz[baseline=-0.5ex]{\draw[ultra thick] (0,0.5) -- (0,-0.5); \draw[fill=black] (0,0.5) circle [radius=0.2]; \draw[fill=white] (0,-0.5) circle [radius=0.2];}$, and hence $e_{i(m)}e_{-i(m)}^{-1}\dots e_{i(1)}e_{-i(1)}^{-1}x_-$ is an element of $B_+$.

As for $B_{u^*}^{(k)}$, we see that $B_{u^*}^{(0)}=x_-^{-1}B_-=B_-$, and since
\[
\left(e_{i(k-1)}e_{-i(k-1)}^{-1}\dots e_{i(1)}e_{-i(1)}^{-1}x_-\right)\left(e_{i(k)}e_{-i(k)}^{-1}\dots e_{i(1)}e_{-i(1)}^{-1}x_-\right)^{-1}=e_{-i(k)}e_{i(k)}^{-1}\in B_-s_{i(k)}B_-,
\]
we know that $\xymatrix{\left(e_{i(k-1)}e_{-i(k-1)}^{-1}\dots e_{i(1)}e_{-i(1)}^{-1}x_-\right)^{-1}B_- \ar[r]^{s_{i(k)}^*} & \left(e_{i(k)}e_{-i(k)}^{-1}\dots e_{i(1)}e_{-i(1)}^{-1}x_-\right)^{-1}B_-}$. Thus we only need to show that
\[
\xymatrix{\left(e_{i(m)}e_{-i(m)}^{-1}\dots e_{i(1)}e_{-i(1)}^{-1}x_-\right)^{-1}B_-\ar[r]^(0.75){u^c} & x_-^{-1}B_+}.
\]
But this is equivalent to showing that
\[
\overline{w}_0e_{i(m)}e_{-i(m)}^{-1}\dots e_{i(1)}e_{-i(1)}^{-1}=\overline{u^c}\overline{u}e_{i(m)}e_{-i(m)}^{-1}\dots e_{i(1)}e_{-i(1)}^{-1}\in B_+u^cB_+,
\]
which is also equivalent to showing that
\[
\overline{u}e_{i(m)}e_{-i(m)}^{-1}\dots e_{i(1)}e_{-i(1)}^{-1}\in B_+.
\]
To show this, we recall that $\overline{s}_i:=e_i^{-1}e_{-i}e_i^{-1}$; thus
\[
\overline{u}e_{i(m)}e_{-i(m)}^{-1}\dots e_{i(1)}e_{-i(1)}^{-1}=\overline{s_{i(1)}\dots s_{i(m-1)}} e_{i(m)}^{-1}e_{i(m-1)}e_{-i(m-1)}^{-1}\dots e_{i(1)}e_{-i(1)}^{-1}.
\]
But then since $(i(1),\dots, i(m))$ is a reduced word of $u$, $s_{i(1)}\dots s_{i(m-1)}$ maps the simple root $\alpha_{i(m)}$ to a positive root, which implies that 
\[
\overline{s_{i(1)}\dots s_{i(m-1)}} e_{i(m)}^{-1}=b\overline{s_{i(1)}\dots s_{i(m-1)}}
\]
for some $b\in B_+$. The proof is then finished by induction on $m$.
\end{proof}

By a completely analogous proof one can also show the following proposition.

\begin{prop} Based on the representative $(x_-^{-1}B_+,B_+,B_-,x_+B_-)$, the following identity holds for $0\leq k\leq l$:
\[
\tikz{
\node (a) at (0,1) [] {$B_v^{(k)}$};
\node (b) at (0,-1) [] {$B_{v^*}^{(k)}$};
\node (c) at (5,1) [] {$x_+e_{j(l)}^{-1}e_{-j(l)}\dots e_{j(k+1)}^{-1}e_{-j(k+1)}B_+$};
\node (d) at (5,-1) [] {$x_+e_{j(l)}^{-1}e_{-j(l)}\dots e_{j(k+1)}^{-1}e_{-j(k+1)}B_-$};
\draw (a) -- (b); 
\draw (c) -- (d); 
\node at (1,0) [] {$=$};
}.
\]
\end{prop}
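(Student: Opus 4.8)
The plan is to run, line for line, the mirror image of the proof of Proposition \ref{u tilt}, interchanging the roles of left and right multiplication and of the two halves of $\Gamma_{\vec i}$. Write $Q_k:=e_{j(l)}^{-1}e_{-j(l)}e_{j(l-1)}^{-1}e_{-j(l-1)}\cdots e_{j(k)}^{-1}e_{-j(k)}$, with $Q_{l+1}$ the identity, so that the identities to be proved become $B_v^{(k)}=x_+Q_{k+1}B_+$ and $B_{v^*}^{(k)}=x_+Q_{k+1}B_-$ for $0\le k\le l$. As in Proposition \ref{u tilt}, I would simply check that the Borel subgroups on the right-hand sides satisfy all the incidence relations that define the sequences $\left(B_v^{(k)}\right)$ and $\left(B_{v^*}^{(k)}\right)$ in the $v$-tilting diagram.

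First I would clear away the easy relations. Since $x_+\in B_+$, the case $k=l$ gives $B_v^{(l)}=x_+B_+=B_+$ and $B_{v^*}^{(l)}=x_+B_-$, as required. For every $k$ the pair $x_+Q_{k+1}B_+$ and $x_+Q_{k+1}B_-$ is a pair of opposite Borel subgroups by Proposition \ref{opposite flag}(3) (take $g=x_+Q_{k+1}$), so all the horizontal $w_0$-edges $\xymatrix{B_{v^*}^{(k)}\ar@{-}[r]&B_v^{(k)}}$ hold automatically. The successive relations $\xymatrix{B_v^{(k-1)}\ar[r]^{s_{j(k)}}&B_v^{(k)}}$ and $\xymatrix{B_{v^*}^{(k-1)}\ar[r]^{s_{j(k)}^*}&B_{v^*}^{(k)}}$, after cancelling the common left factor $x_+Q_{k+1}$, reduce to $e_{-j(k)}^{-1}e_{j(k)}\in B_+s_{j(k)}B_+$ and $e_{-j(k)}^{-1}e_{j(k)}\in B_-s_{j(k)}B_-$ respectively, which follow from $e_{j(k)}\in B_+\cap B_-s_{j(k)}B_-$ and $e_{-j(k)}\in B_+s_{j(k)}B_+\cap B_-$ exactly as in the proof of Proposition \ref{u tilt}.

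The substance, as on the $u$-side, lies in the two endpoint relations. To get $B_{v^*}^{(0)}=B_-$ I must show $x_+Q_1\in B_-$: the right half of $\Gamma_{\vec i}$ is a bipartite graph for $x_+$ all of whose vertical edges are white-over-black (these are the factors $e_{j(\bullet)}$), and right multiplication by $e_{j(l)}^{-1}e_{-j(l)}$ turns the rightmost of these into a black-over-white edge, which can then be slid leftward across the graph using the two moves of Proposition \ref{4.1}; repeating this for $e_{j(l-1)}^{-1}e_{-j(l-1)},\dots$ eventually yields a bipartite graph with no white-over-black edge, hence an element of $B_-$. For the remaining relation $\xymatrix{B_{v^*}^{(l)}\ar[r]^{v^c}&B_v^{(0)}}$, i.e. $d_+\!\left(x_+B_-,\,x_+Q_1B_+\right)=v^c$, I would pass to $B_+$-cosets (writing the Borel $B_-$ as $\overline{w}_0B_+$), use $w_0=v^cv$ together with the length additivity $l(w_0)=l(v^c)+l(v)$ to rewrite the condition, and reduce it to a membership statement asserting that a suitable product of the $e_{j(\bullet)}^{\pm1}$ together with a lift of $v$ lies in $B_+$; this is then proved by induction on $l$ exactly as in the final paragraph of the proof of Proposition \ref{u tilt}, peeling off the last simple reflection via $\overline{s}_{j}=e_{j}^{-1}e_{-j}e_{j}^{-1}$ and using that a reduced word sends the corresponding simple root to a positive root, so that conjugating the remaining unipotent factor by the lift of the shorter Weyl group element stays inside $B_+$.

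The main obstacle I anticipate is purely bookkeeping: since $\overline{w^{-1}}\ne\overline{w}^{-1}$ in general, one must be disciplined about which lift of which Weyl group element occurs at each splitting — in particular in relating $x\overline{v^{-1}}$ to the product $Q_1$ and to $\overline{v}$ — and the identity $l(w_0)=l(v^c)+l(v)$ has to be invoked at precisely the right moments. Once the lifts are pinned down, every step is literally the mirror of a step already carried out for $u$, so no genuinely new idea is needed.
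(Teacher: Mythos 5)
Your proposal is exactly the approach the paper invokes when it says ``by a completely analogous proof,'' namely the left/right and $B_+ / B_-$ mirror of Proposition~\ref{u tilt}, and the overall structure is correct: the easy cases ($k=l$, opposite pairs, the successive $s_{j(k)}$-arrows), the membership $x_+ Q_1\in B_-$ via bipartite-graph moves, and the $v^c$-arrow reduced to a conjugation-by-roots induction.

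One bookkeeping slip, of precisely the kind you flagged: in the $v^c$-arrow step you say the reduced membership statement lands in $B_+$, but the mirror of ``$\overline{u}\,e_{i(m)}e_{-i(m)}^{-1}\cdots e_{i(1)}e_{-i(1)}^{-1}\in B_+$'' must land in $B_-$. Concretely, the clean mirror of the identity $\overline{s}_{i}\,e_{i}e_{-i}^{-1}=e_{i}^{-1}$ used on the $u$-side is
\[
e_{j}^{-1}e_{-j}\,\overline{s}_{j}^{-1}=e_{-j}^{-1},
\]
so that peeling from the right reduces the condition $\overline{w}_0^{-1}Q_1\in B_+v^cB_+$ to the claim $Q_1\,\overline{v^{-1}}^{\,-1}\in B_-$ (equivalently $Q_1\overline{v}\in B_-$ up to an $H$-factor). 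The inductive step then conjugates $e_{-j(k)}^{-1}$, whose root $-\alpha_{j(k)}$ is sent to another negative root by $s_{j(l)}\cdots s_{j(k+1)}$ because $(j(l),\dots,j(1))$ is a reduced word of $v^{-1}$, so the conjugate stays in $U_-\subset B_-$, not $B_+$. And to pass from $Q_1\overline{v^{-1}}^{\,-1}\in B_-$ back to $\overline{w}_0^{-1}Q_1\in B_+v^cB_+$, one uses $\overline{w}_0^{-1}b_-=b_+\overline{w}_0^{-1}$ together with $\overline{w}_0^{-1}\overline{v^{-1}}\in B_+v^cB_+$ (since $w_0v^{-1}=v^c$ with $l(w_0v^{-1})=l(w_0)-l(v)$, peeled off one simple reflection at a time). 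These are all the corrections one has to make when literally mirroring the lifts, as you anticipated; once that is done the proof goes through.
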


Our last two propositions show that in order to reflect the two tilting sequences in terms of $x$, all we need to do is to multiply $e_{i(m)}e_{-i(m)}^{-1}\dots e_{i(1)}e_{-i(1)}^{-1}$ on the left and $e_{j(l)}^{-1}e_{-j(l)}\dots e_{j(1)}^{-1}e_{-j(1)}$ on the right. With these two results in our pockets, we are ready to prove our last proposition.

\begin{prop}\label{3.18} $\psi\circ \chi$ is a cluster transformation.
\end{prop}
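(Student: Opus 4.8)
The plan is to read off the map $\chi\circ\psi=\eta$ in cluster coordinates and recognize it as a composition of cluster mutations and a cluster isomorphism. Since $\psi$ is a birational equivalence (Subsection \ref{birational}), it suffices to show that $\psi\circ\chi=\psi\circ\eta\circ\psi^{-1}$, regarded as a birational automorphism of $\mathcal{X}_{|\vec{i}_0|}$, is such a composition. First I would fix once and for all the reduced word $\vec{i}_0=(-i(1),\dots,-i(m),j(1),\dots,j(l))$ from the tilting discussion, write $x=\tilde\chi_{\vec{i}_0}(X_f)=x_-x_+$ for the induced factorization, and recall that by Proposition \ref{u tilt} and its $v$-analogue the tilting realization of $\eta$ sends $x$ to $\big(E_-\,x\,E_+\big)^t$, up to left and right multiplication by diagonal matrices that are killed by the double quotient, where $E_-=e_{i(m)}e_{-i(m)}^{-1}\cdots e_{i(1)}e_{-i(1)}^{-1}$ and $E_+=e_{j(l)}^{-1}e_{-j(l)}\cdots e_{j(1)}^{-1}e_{-j(1)}$.

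The heart of the argument will be to compute $E_-\,x\,E_+$ directly in the amalgamation picture. Working from the inside out and using elementary conjugation identities such as $e_{-i}^{-1}h^i(a)e_{-i}=h^i(a)\,e_{-i}(1-a)$, together with the normalizations $e_{\pm i}(t)=h^i(t^{\mp1})e_{\pm i}h^i(t^{\pm1})$, I would show that each passage of a factor $e_{\pm i(k)}^{\pm1}$ through the bipartite graph of $x$ is an instance of one of the two moves of Proposition \ref{4.1} --- sliding a vertical edge across a spacing, or a braid move when two edges meet in adjacent spacings --- and that, when a collision occurs, the resulting local change of face variables is exactly the cluster $\mathcal{X}$-mutation formula recorded in Subsection \ref{cluster}. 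Carried out for all of $E_-$ and then all of $E_+$, this expresses $E_-\,x\,E_+$ as $\tilde\chi_{\vec{j}}$ evaluated at a point obtained from $(X_f)$ by a finite sequence of cluster $\mathcal{X}$-mutations, for an explicit word $\vec{j}$.

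It then remains to deal with the outer transpose $(\,\cdot\,)^t$ and with the relabeling that carries the terminal square diagram back into $\conf^{u,v}(\mathcal{B})\cong H\backslash\GL_n^{u,v}/H$. Transposition interchanges the generators $e_i\leftrightarrow e_{-i}$ and reverses the order of the amalgamation product, and the conjugation by $\overline{w}_0$ hidden in the involution $*$ acts on the combinatorics by $s_i\mapsto s_{n-i}$; together these should identify $\tilde\chi_{\vec{j}}(\,\cdot\,)^t$, after the double quotient, with $\tilde\chi_{\vec{i}_0^{\,\sharp}}$ for a reduced word $\vec{i}_0^{\,\sharp}$ of the same pair $(u,v)$, and this identification of seeds is a seed isomorphism in the sense of Subsection \ref{cluster}. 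Combining the mutations from the previous step with the induced cluster isomorphism exhibits $\psi\circ\chi$ as a cluster transformation, which is what we want.

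I expect the main obstacle to be precisely the bookkeeping in the second paragraph: one has to verify that pushing each $e_{\pm i(k)}^{\pm1}$ through the graph never leaves the reduced-word regime --- so that every collision is genuinely governed by the $\mathcal{X}$-mutation identities of Subsection \ref{cluster} and no spurious degeneration of a minor occurs --- and that all the diagonal factors spawned by the normalizations are absorbed by $H\backslash(\,\cdot\,)/H$, so that the whole process stays inside the cluster $\mathcal{X}$-variety. Matching the terminal word $\vec{i}_0^{\,\sharp}$ with $\vec{i}_0$ itself, rather than with some merely mutation-equivalent seed, is the final delicate point.
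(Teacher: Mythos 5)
Your proposal takes essentially the same route as the paper: identify $\psi\circ\chi$ with the tilting formula $(E_-\,x\,E_+)^t$ coming from Proposition \ref{u tilt} and its $v$-analogue, pass each $e$-factor through the bipartite graph via the two moves of Proposition \ref{4.1}, which the paper has already matched with cluster $\mathcal{X}$-mutations, and treat the outer transpose as a flip of the graph inducing a seed isomorphism. The one point the paper makes explicit that you leave implicit is that toggling the colour of the boundary vertical edge at each end of the word is invisible to the boundary-removed quiver $\vec{i}$, which, together with the double quotient absorbing the stray diagonal factors, is what disposes of the bookkeeping concern you raise.
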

\begin{proof} Suppose $H\backslash x/H=\chi_\vec{i}(X_f)$ and consider the representative $[x_-^{-1}B_+,x_+B_-,B_-,B_+]$ corresponding to $H\backslash x/H$. We learned from Proposition \ref{eta} that $\chi\circ \psi$ on $H\backslash \GL_n^{u,v}/H$ is the same as the map $\eta$ on $\conf^{u,v}(\mathcal{B})$, and hence $\chi \circ \psi (H\backslash x/H)$ will correspond to the equivalence class of the configuration
\begin{align*}
& \eta\left(\tikz[baseline=-0.5ex]{
\node (a) at (0,1) [] {$x_-^{-1}B_+$};
\node (b) at (0,-1) [] {$B_-$};
\node (c) at (2,1) [] {$B_+$};
\node (d) at (2,-1) [] {$x_+B_-$};
\draw [->] (a) -- (c) node [midway, above] {$u$};
\draw [->] (b) -- (d) node [midway, below] {$v^*$};
\draw (a) -- (b);
\draw (c) -- (d);
}\right)\\
=& \tikz[baseline=-0.5ex]{
\node (a) at (0,1) [] {$B_-$};
\node (b) at (0,-1) [] {$\left(x_+e_{j(l)}^{-1}e_{-j(l)}\dots e_{j(1)}^{-1}e_{-j(1)}\right)^*B_+$};
\node (c) at (5,1) [] {$\left(\left(e_{i(m)}e_{-i(m)}^{-1}\dots e_{i(1)}e_{-i(1)}^{-1}x_-\right)^{-1}\right)^*B_-$};
\node (d) at (5,-1) [] {$B_+$};
\draw [->] (a) -- (c) node [midway, above] {$u$};
\draw [->] (b) -- (d) node [midway, below] {$v^*$};
\draw (a) -- (b);
\draw (c) -- (d);
}\\
=&\tikz[baseline=-0.5ex]{
\node (a) at (0,1) [] {$\left(x_+e_{j(l)}^{-1}e_{-j(l)}\dots e_{j(1)}^{-1}e_{-j(1)}\right)^*\overline{w}^{-1}_0B_+$};
\node (b) at (0,-1) [] {$\left(x_+e_{j(l)}^{-1}e_{-j(l)}\dots e_{j(1)}^{-1}e_{-j(1)}\right)^*\overline{w}^{-1}_0B_-$};
\node (c) at (7,1) [] {$\left(\left(e_{i(m)}e_{-i(m)}^{-1}\dots e_{i(1)}e_{-i(1)}^{-1}x_-\right)^{-1}\right)^*\overline{w}^{-1}_0B_+$};
\node (d) at (7,-1) [] {$\left(\left(e_{i(m)}e_{-i(m)}^{-1}\dots e_{i(1)}e_{-i(1)}^{-1}x_-\right)^{-1}\right)^*\overline{w}^{-1}_0B_-$};
\draw [->] (a) -- (c) node [midway, above] {$u$};
\draw [->] (b) -- (d) node [midway, below] {$v^*$};
\draw (a) -- (b);
\draw (c) -- (d);
}.
\end{align*}
Note that the last result corresponds to the element
\[
H\backslash\left(e_{i(m)}e_{-i(m)}^{-1}\dots e_{i(1)}e_{-i(1)}^{-1}xe_{j(l)}^{-1}e_{-j(l)}\dots e_{j(1)}^{-1}e_{-j(1)}\right)^t/H
\]
in $H\backslash \GL_n^{u,v}/H$. If we look at the bipartite graph $\Gamma_\vec{i}$, as we have argued in the proof of Proposition \ref{u tilt}, each time when we multiply $e_{i(k)}e_{-i(k)}^{-1}$ on the left, all we is doing is change the left most vertical edge from $\tikz[baseline=-0.5ex]{\draw[ultra thick] (0,0.5) -- (0,-0.5); \draw[fill=black] (0,0.5) circle [radius=0.2]; \draw[fill=white] (0,-0.5) circle [radius=0.2];}$ to $\tikz[baseline=-0.5ex]{\draw[ultra thick] (0,0.5) -- (0,-0.5); \draw[fill=white] (0,0.5) circle [radius=0.2]; \draw[fill=black] (0,-0.5) circle [radius=0.2];}$ and then move it to the middle (after all the vertical edges of them form $\tikz[baseline=-0.5ex]{\draw[ultra thick] (0,0.5) -- (0,-0.5); \draw[fill=black] (0,0.5) circle [radius=0.2]; \draw[fill=white] (0,-0.5) circle [radius=0.2];}$). Since $H\backslash \GL_n^{u,v}/H$ corresponds to the boundary-removed quiver $\vec{i}$, changing the leftmost vertical edge from $\tikz[baseline=-0.5ex]{\draw[ultra thick] (0,0.5) -- (0,-0.5); \draw[fill=black] (0,0.5) circle [radius=0.2]; \draw[fill=white] (0,-0.5) circle [radius=0.2];}$ to $\tikz[baseline=-0.5ex]{\draw[ultra thick] (0,0.5) -- (0,-0.5); \draw[fill=white] (0,0.5) circle [radius=0.2]; \draw[fill=black] (0,-0.5) circle [radius=0.2];}$ does nothing to the quiver, and moving it to the middle is just a sequence of quiver mutation, which gives rise to a corresponding sequence of cluster mutation on the cluster variety $\mathcal{X}_{|\vec{i}|}$. Similar argument also applies to each time we multiply $e_{j(k)}^{-1}e_{-j(k)}$ on the right. Lastly taking transposition flips the bipartite graph over a horizontal line and simultaneously switches the color of all vertices without changing the face variables. Thus transposition gives rise to an quiver isomorphism which in turn produces a cluster isomorphism on $\mathcal{X}_{|\vec{i}|}$. Combining these observations we see that $\psi\circ \chi$ is a composition of cluster mutations and cluster isomorphisms, which is by definition a cluster transformation. 
\end{proof}

\subsection{Proof of \texorpdfstring{$\psi\circ\chi$}{} being a cluster Donaldson-Thomas Transformation}\label{proof2}

In this subsection we will finish part (ii) of our proof, which is to show that for some reduced word $\vec{i}$ of $(u,v)$, 
\[
\deg_{X_f}\left(\psi_\vec{i}\circ \chi_\vec{i}\right)^*(X_g)=-\delta_{fg}.
\]
To do so, we need to choose a easy reduced word $\vec{i}$ of $(u,v)$ to work with; the one that we choose for the proof is called ``greedy reduced word'', which we introduce now.

Recall that the Coxeter generating set $S$ gives an identification between the Weyl group $W$ of $\GL_n$ and the symmetric group $S_n$, and the Coxeter generators $s_i$ are precisely the adjacent transpositions switching $i$ and $i+1$. 

\begin{defn} The \textit{greedy reduced word} $\vec{i}_w$ of $w$ is constructed via the following recursive procedure.
\begin{enumerate}
    \item The greedy reduced word of the identity element $e\in S_1$ is the empty word.
    \item Suppose $n>1$. Note that the sequence $s_{w^{-1}(n)} s_{w^{-1}(n)+1}\dots s_{n-1}$ moves $w^{-1}(n)$ to $n$. Thus $n$ is a fixed point of 
    \[
    w':=\left(s_{w^{-1}(n)}s_{w^{-1}(n)+1}\dots s_{n-1}\right)^{-1}w,
    \] 
    and hence we can view $w'$ as an element of $S_{n-1}$.
    \item Find the greedy reduced word $\vec{i}_{w'}$ of $w'$; then the greedy reduced word of $w$ is 
    \[
    \left(w^{-1}(n), w^{-1}(n)+1,\dots, n-1\right)\#\vec{i}_{w'},
    \]
    where the symbol $\#$ denotes juxtaposition of sequences.
\end{enumerate}
For a pair of Weyl group elements, let $\vec{i}_u$ and $\vec{i}_{v^{-1}}$ be the greedy reduced words of $u$ and $v^{-1}$ respectively; we then define the \textit{greedy reduced word} of $(u,v)$ to be
\[
\vec{i}_{(u,v)}=\left(-\vec{i}_u\right)\#\vec{i}^\circ_{v^{-1}},
\]
where the minus sign is due to the convention that the negative letters in a reduced word for $(u,v)$ should correspond a reduced word of $u$; notice that $\vec{i}^\circ_{v^{-1}}$ is the reverse sequence of $\vec{i}_{v^{-1}}$ and hence is a reduced word of $v$.
\end{defn}

Before we proceed, we should prove the following statement.

\begin{prop} The greedy reduced word $\vec{i}_w$ is indeed a reduced word of $w$.
\end{prop}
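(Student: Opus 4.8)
The plan is to argue by induction on $n$, i.e. on the rank of $W\cong S_n$. The base case $n=1$ is immediate: $\vec{i}_e$ is the empty word, which is the unique reduced word of the identity element. For the inductive step fix $w\in S_n$ with $n>1$, set $k:=w^{-1}(n)$, and let $\sigma:=s_ks_{k+1}\cdots s_{n-1}$, so that by construction $w':=\sigma^{-1}w$ fixes $n$ — hence is viewed as an element of $S_{n-1}$ — and $\vec{i}_w=(k,k+1,\dots,n-1)\#\vec{i}_{w'}$. By the inductive hypothesis $\vec{i}_{w'}$ is a reduced word of $w'$, so the word $\vec{i}_w$ spells out $\sigma\cdot w'=w$; it therefore remains only to check that $\vec{i}_w$ is reduced, i.e. that $l(w)=(n-k)+l(w')$.

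First I would record that $(k,k+1,\dots,n-1)$ is itself a reduced word, of length $n-k$: the one-line notation of $\sigma$ is $(1,2,\dots,k-1,n,k,k+1,\dots,n-1)$, and its only inversions are the $n-k$ pairs formed by the maximal entry $n$ sitting in position $k$ together with each later position, so $l(\sigma)=n-k$.

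The heart of the argument is the length-additivity identity $l(w)=(n-k)+l(w')$, which I would establish by a direct count of inversions. In one-line notation $w=(w(1),\dots,w(k-1),n,w(k+1),\dots,w(n))$; because $n$ is the largest value, the inversions of $w$ that involve position $k$ are exactly the pairs $(k,j)$ with $k<j\le n$, and there are $n-k$ of them. One checks (using the left-to-right, or ``word'', convention for products, so that $\vec{i}_w$ indeed evaluates to $\sigma\cdot w'$ rather than $w'\cdot\sigma$) that $w'=\sigma^{-1}w$ is precisely the permutation obtained from $w$ by deleting the entry $n$ from position $k$ and re-inserting it in position $n$. This operation destroys exactly the $n-k$ inversions counted above, creates no new ones — a copy of $n$ placed in the last position can never be the larger member of an inverted pair — and preserves every other inversion, whence $l(w')=l(w)-(n-k)$. (This is a special case of the standard fact that $\sigma$ is the minimum-length representative of its coset modulo the parabolic subgroup $\langle s_1,\dots,s_{n-2}\rangle$ stabilizing $n$; in type $A$ the inversion count is self-contained, so I would simply give it.)

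Combining these observations, $\vec{i}_w$ has length $(n-k)+l(w')=l(w)$ and represents $w$, hence is a reduced word, completing the induction. I do not expect a genuine obstacle here: the only delicate points are the routine bookkeeping in the inversion count and keeping the composition convention consistent, so that the recursive concatenation in the definition of $\vec{i}_w$ really does match the factorization $w=\sigma\cdot w'$.
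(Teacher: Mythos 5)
Your proof is correct, and it follows the same inductive skeleton as the paper's: both reduce the claim to the length-additivity identity $l(w)=(n-k)+l(w')$ with $k=w^{-1}(n)$. The difference is in how that identity is justified. The paper appeals to the ``fact'' that $l(w)=\sum_{i=1}^n\max(w(i)-i,0)$, but this is not the length function: for the longest element $(3,2,1)$ of $S_3$ it gives $2$ rather than $3$ (what the paper presumably intends is the Lehmer-code identity $l(w)=\sum_i\#\{j>i: w(j)<w(i)\}$, whose $k$-th term is exactly $n-k$). Your direct inversion count --- the $n-k$ inversions of $w$ through position $k$ are precisely the ones destroyed when the entry $n$ is deleted from position $k$ and re-inserted at the end, and no new inversions are created --- is the correct replacement for that step, so your argument is if anything tighter than the one in the paper. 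Your explicit attention to the left-to-right composition convention (so that $\vec{i}_w$ really spells $\sigma\cdot w'$) is also warranted, since the recursion only closes under that convention.
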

\begin{proof} Here we use the following fact about symmetric group: the length of an element $w\in S_n$ is equal to the $\sum_{i=1}^n \max\left(w(i)-i,0\right)$. Note that in $\left(w^{-1}(n),w^{-1}(n)+1,\dots, n-1\right)$ there are precisely $n-w^{-1}(n)$ letters. Thus by induction the sequence $\left(w^{-1}(n),w^{-1}(n)+1, \dots, n-1\right)\#\vec{i}_{w'}$ is of length $\sum_{i=1}^n\max\left(w(i)-i,0\right)=l(w)$. 
\end{proof}

\begin{exmp}\label{3.17} In the case of $\GL_3$, the greedy reduced word for the pair of Weyl group elements $(w_0,w_0)$ is $(-1,-2,-1,1,2,1)$, and the associated bipartite graph is the following.
\[
\tikz{
\draw [ultra thick] (-1,0) -- (8,0);
\draw [ultra thick] (-1,1) -- (8,1);
\draw [ultra thick] (-1,2) -- (8,2);
\draw [ultra thick] (1,1) -- (1,2);
\draw [ultra thick] (2,0) -- (2,1);
\draw [ultra thick] (3,1) -- (3,2);
\draw [ultra thick] (4,1) -- (4,2);
\draw [ultra thick] (5,0) -- (5,1);
\draw [ultra thick] (6,1) -- (6,2);
\draw [fill=black] (1,2) circle [radius=0.2];
\draw [fill=black] (2,1) circle [radius=0.2];
\draw [fill=black] (3,2) circle [radius=0.2];
\draw [fill=black] (4,1) circle [radius=0.2];
\draw [fill=black] (5,0) circle [radius=0.2];
\draw [fill=black] (6,1) circle [radius=0.2];
\draw [fill=black] (5,2) circle [radius=0.2];
\draw [fill=white] (0,2) circle [radius=0.2];
\draw [fill=white] (1,1) circle [radius=0.2];
\draw [fill=white] (2,0) circle [radius=0.2];
\draw [fill=white] (3,1) circle [radius=0.2];
\draw [fill=white] (4,2) circle [radius=0.2];
\draw [fill=white] (5,1) circle [radius=0.2];
\draw [fill=white] (6,2) circle [radius=0.2];
\draw [fill=white] (2,2) circle [radius=0.2];
\draw [fill=white] (6,0) circle [radius=0.2];
\draw [fill=white] (7,1) circle [radius=0.2];
}
\]
\end{exmp}

The reason to introduce the greedy reduced word $\vec{i}_{(u,v)}$ is because of the following important property.

\begin{prop}\label{greedy} Let $\Gamma_{\vec{i}_{(u,v)}}$ be the bipartite graph associated to the greedy reduced word $\vec{i}_{(u,v)}$. If $A_f=\Delta^{I,J}$, then there is a unique family of pairwise disjoint paths going from the external edges on the left labeled by elements of $I$ to the external edges on the right labeled by elements of $J$ in the perfect orientation of $\Gamma_{\vec{i}_{(u,v)}}$, which corresponds to a term in the polynomial $\Delta^{I,J}\left(\tilde{\chi}_{\vec{i}_{(u,v)}}(X_g)\right)$ that maximizes the degree of every face variable $X_g$ simultaneously.
\end{prop}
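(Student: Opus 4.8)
The plan is to start from the path-model formula for minors established in the previous proposition: with $\Gamma:=\Gamma_{\vec{i}_{(u,v)}}$ and $A_f=\Delta^{I,J}$,
\[
\Delta^{I,J}\bigl(\tilde{\chi}_{\vec{i}_{(u,v)}}(X_g)\bigr)=\sum_{\{\gamma_i\}:I\to J}\ \prod_i\prod_{g\in\hat{\gamma_i}}X_g ,
\]
the sum running over families of pairwise disjoint paths in the perfect orientation of $\Gamma$. Since every coefficient on the right is $1$, proving the statement amounts to producing, among all such families, a \emph{single} family whose monomial is divisible by that of every other family, and checking that this family is the only one with that property.

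The first step is a reduction to one canonical family. Because a family of $|I|$ disjoint paths in a planar graph joining the ordered source set $I$ to the ordered sink set $J$ is, by the Jordan curve theorem (exactly as in the positivity proof), weakly ordered from top to bottom — the $i$-th path $\gamma_i$ joining the $i$-th elements of $I$ and $J$ — the region below $\gamma_{i+1}$ is contained in the region below $\gamma_i$. Hence for a face $g$ the set $\{i:g\in\hat{\gamma_i}\}$ is an initial segment of $\{1,\dots,|I|\}$, so the exponent of $X_g$ in the monomial of $\{\gamma_i\}$ equals $\max\{i:g\text{ lies below }\gamma_i\}$, with the convention $\max\emptyset=0$. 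Maximizing this for every $g$ at once therefore means routing the paths as \emph{high} as possible. I would let $\{\gamma_i^\star\}$ be the lexicographically highest admissible family — choose $\gamma_1^\star$ to be the highest path from the top element of $I$ to the top element of $J$ that admits a completion to a family, then $\gamma_2^\star$ the highest such completion, and so on — which exists because $\Delta^{I,J}$ is a nonzero cluster variable on $\GL_n^{u,v}$ (Theorem \ref{bfz}), so the set of admissible families is nonempty. A standard uncrossing argument — if some $\gamma_i$ of another family rose above $\gamma_i^\star$, one could splice the two to raise $\gamma_i^\star$, contradicting its maximality — shows that $\gamma_i^\star$ lies weakly above $\gamma_i$ for every $i$ and every admissible family, hence that $\{\gamma_i^\star\}$ dominates every family in every face variable simultaneously. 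Uniqueness is then automatic: a family attaining all the maxima has, for each $i$, the same region below its $i$-th path as $\{\gamma_i^\star\}$, hence the same paths, since all faces of $\Gamma$ are disks and each path is the upper boundary of the region it cuts off.

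The remaining, and genuinely delicate, work is to make this effective for the greedy word and extract the explicit description of $\{\gamma_i^\star\}$ that the next subsection uses. Here I would induct on $n$ via the recursive definition $\vec{i}_{(u,v)}=(-\vec{i}_u)\#\vec{i}^{\circ}_{v^{-1}}$: peeling off the leading ascending run $\bigl(-u^{-1}(n),-(u^{-1}(n)+1),\dots,-(n-1)\bigr)$ of $\vec{i}_u$, and symmetrically the trailing run coming from $v^{-1}$, presents $\Gamma$ as the greedy graph of a smaller pair with one prescribed "staircase" of vertical edges grafted along the top line. I would then track how this staircase modifies the dominating sets $I(f),J(f)$ and how it constrains the highest family — the staircase forces the topmost path to make exactly one detour — so as to propagate both the explicit form of $\{\gamma_i^\star\}$ and its face-by-face dominance. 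The hard part will be precisely this bookkeeping, and in particular verifying that on the greedy graph the highest admissible family is \emph{forced}, so that no ties among maximal monomials can occur: this relies on the minimality of $\Gamma$ in Thurston's sense (no two zig-zag strands cross twice), which pins down each path once its endpoints and the "as high as possible" requirement are fixed. Everything else is a routine, if lengthy, induction.
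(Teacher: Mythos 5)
Your abstract route is genuinely different from the paper's. The paper \emph{explicitly constructs} the dominating family — start at $i_1$, travel right, go up at every $\tikz[baseline=-0.5ex]{\draw[ultra thick] (0,0.5) -- (0,-0.5); \draw[fill=black] (0,0.5) circle [radius=0.2]; \draw[fill=white] (0,-0.5) circle [radius=0.2];}$-edge until reaching the first line, cross over $f$, and descend symmetrically on the $\vec{i}_{v^{-1}}^\circ$ half — and verifies pairwise disjointness and maximality from the greedy word's ``move the complement of $I$ out of the way first'' structure. You instead define the lexicographically highest admissible family and argue dominance by an uncrossing/lattice argument. Your approach is more conceptual and would in fact establish existence and uniqueness for \emph{any} reduced word, whereas the paper's construction is tied to the greedy word; conversely, the paper's direct construction is exactly the explicit form that Subsection \ref{proof2} needs in order to read off $[\Delta]/[\Delta^{I,J}]$ face by face, which your argument leaves to a sketched induction.

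Two places where your write-up has genuine gaps. First, ``a standard uncrossing argument'' is doing serious work: you need that the set of admissible path systems is closed under taking componentwise upper envelopes (so that ``highest path admitting a completion'' is well-defined), that the envelope of two paths in this particular perfectly-oriented bipartite graph is again a directed path respecting the orientation (paths cross only at vertices, and the envelope can always switch branches there), and that replacing $\gamma_i^\star$ by the envelope of $\gamma_i^\star$ and $\gamma_i$ still sits below $\gamma_{i-1}^\star$ and above the existing completion $\gamma_{i+1}^\star,\dots$. None of these is hard, but they are not automatic, and ``splice to raise $\gamma_i^\star$'' glosses over an induction on $i$ that uses the nesting of the $\hat\gamma_i$ at each step. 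Second, and more substantively, your last paragraph honestly acknowledges that the explicit description of $\{\gamma_i^\star\}$ — the actual payload of the proposition for the degree computations that follow — is only sketched as ``routine, if lengthy.'' That is precisely the content the paper's proof supplies: the greedy word forces each $\gamma_m^\star$ to go up at every available $\tikz[baseline=-0.5ex]{\draw[ultra thick] (0,0.5) -- (0,-0.5); \draw[fill=black] (0,0.5) circle [radius=0.2]; \draw[fill=white] (0,-0.5) circle [radius=0.2];}$-edge on the left half (and down on the right half), and the paper's observation that ``elements of $I$ always decrease while keeping their ordering'' until the complement is moved out is exactly the combinatorial input that makes this family pairwise disjoint and visibly maximal. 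Without carrying out your induction, the proposal proves the existence/uniqueness statement but does not identify the family, so the downstream calculation of $\deg_{X_f}\mathrm{DT}^*(X_g)$ would still be out of reach.
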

\begin{proof} The statement is trivial for the top face and the bottom face of $\Gamma_{\vec{i}_{(u,v)}}$. Let $f$ be a face bound between the $k$th horizontal line and the $(k+1)$th horizontal line. Note that $f$ is bound between two vertical edges (or one if it is a boundary face). Let $\vec{i}_{<f}$ be the subsequence of $\vec{i}_{(u,v)}$ formed by the letters corresponding to the vertical edges of the form $\tikz[baseline=-0.5ex]{\draw[ultra thick] (0,0.5) -- (0,-0.5); \draw[fill=black] (0,0.5) circle [radius=0.2]; \draw[fill=white] (0,-0.5) circle [radius=0.2];}$ occurring to the left of $f$ (including its left edge) and let $\vec{i}_{>f}$ be the subsequence of $\vec{i}_{(u,v)}$ formed by the letters corresponding to the vertical edges of the form $\tikz[baseline=-0.5ex]{\draw[ultra thick] (0,0.5) -- (0,-0.5); \draw[fill=white] (0,0.5) circle [radius=0.2]; \draw[fill=black] (0,-0.5) circle [radius=0.2];}$ occurring to the right of $f$ (including its right edge). Let $u_{<f}$ be the Weyl group element associated to the reduced word $\vec{i}_{<f}$, and let $v_{>f}$ be the Weyl group element associated to the reduced word $\vec{i}_{>f}$. It is not hard to see from the definition of $A_f=\Delta^{I,J}$ that
\[
I=\left\{u_{<f}^{-1}(1),\dots, u_{<f}^{-1}(k)\right\} \quad \quad \text{and} \quad \quad J=\left\{v_{>f}(1),\dots, v_{>f}(k)\right\}.
\]

We now can describe the family of pairwise disjoint paths claimed in the statement. Index the elements in the sets $I$ and $J$ as $i_1,\dots, i_k$ and $j_1,\dots, j_k$ is ascending order. Since we need a family of pairwise disjoint paths going from $I$ to $J$, $i_1$ had better go to $j_1$, and $i_2$ had better go to $j_2$, and so on. Now starting with $i_1$, travel along the $i$th horizontal line until it hits the first vertical edge (must be of the form $\tikz[baseline=-0.5ex]{\draw[ultra thick] (0,0.5) -- (0,-0.5); \draw[fill=black] (0,0.5) circle [radius=0.2]; \draw[fill=white] (0,-0.5) circle [radius=0.2];}$ by the definition of greedy reduced word), and then go up, and then travel along the horizontal line until it hits another vertical edge (must of be of the form $\tikz[baseline=-0.5ex]{\draw[ultra thick] (0,0.5) -- (0,-0.5); \draw[fill=black] (0,0.5) circle [radius=0.2]; \draw[fill=white] (0,-0.5) circle [radius=0.2];}$ again), and then go up, and then repeat the same process until it arrives at the 1st horizontal line, and then travel along the 1st horizontal line until it goes over face $f$; repeat the same process for $i_2$ until it goes along the 2nd horizontal line over face $f$, and then repeat for $i_3$ and so on. The reason we can do this without the paths intersecting each other is because in the definition of the greedy reduced word, we have prioritized the process of moving $\{1,\dots, n\}\setminus I$ towards the end; up until all elements of $\{1,\dots, n\}\setminus I$ are moved to $\{k+1,\dots, n\}$, elements in $I$ always decreases (go up in the perfect orientation of $\Gamma_{\vec{i}_{(u,v)}}$) while keeping their ordering.

Analogously, since we have the greedy reduced word $\vec{i}_{v^{-1}}^\circ$ on the right half of $\Gamma_{\vec{i}_{(u,v)}}$, we can do the same process backward from right to left on $\Gamma_{\vec{i}_{(u,v)}}$ against the perfect orientation, starting from the set $J$ and end up at the horizontal lines from $1$ to $k$ over the face $f$. Now joint the paths from left and right correspondingly together and declare to go from left to right along the perfect orientation. Then at last we obtain our family of pairwise disjoint paths going from $I$ to $J$. Note that it is obvious by construction that no other family of pairwise disjoint paths going from $I$ to $J$ can possibly dominate more faces than the family constructed in this proof. Therefore this family of pairwise disjoint paths maximizes the degree of every face variable in the corresponding term in $\Delta^{I,J}\left(\tilde{\chi}_{\vec{i}_{(u,v)}}(X_g)\right)$ by construction.
\end{proof}

With the help of Proposition \ref{greedy} we can now compute 
\[
\deg_{X_f}\left(\psi_{\vec{i}_{(u,v)}}\circ \chi_{\vec{i}_{(u,v)}}\right)^*(X_g)=\deg_{X_f}\left(\tilde{p}_{\vec{i}_{(u,v)}}\circ \tilde{\psi}_{\vec{i}_{(u,v)}}\circ\tilde{\chi}_{\vec{i}_{(u,v)}}\right)^*(X_g)
\]
for non-boundary faces $f$ and $g$. 

First note from the definition of the greedy reduced word that, a non-boundary face $g$ can only be one of the following three cases (the dashed edges mean that they may or may not be present).
\[
\tikz[baseline=2ex]{
\draw[ultra thick] (0,0) -- (2,0);
\draw[ultra thick] (3,0) -- (5,0);
\draw[ultra thick] (0,1) -- (2,1);
\draw[ultra thick] (3,1) -- (5,1);
\draw[ultra thick] (0,2) -- (5,2);
\draw[ultra thick, densely dashed] (0,3) -- (5,3);
\draw[ultra thick] (1,1) -- (1,2);
\draw[ultra thick] (2,0) -- (2,1);
\draw[ultra thick] (3,0) -- (3,1);
\draw[ultra thick] (4,1) -- (4,2);
\draw[ultra thick, densely dashed] (2.5,2) -- (2.5,3);
\draw[fill=black] (1,2) circle [radius=0.2];
\draw[fill=black] (2,1) circle [radius=0.2];
\draw[fill=black] (3,1) circle [radius=0.2];
\draw[fill=black] (2.5,3) circle [radius=0.2];
\draw[fill=black] (4,2) circle [radius=0.2];
\draw[fill=white] (1,1) circle [radius=0.2];
\draw[fill=white] (2,0) circle [radius=0.2];
\draw[fill=white] (2.5,2) circle [radius=0.2];
\draw[fill=white] (4,1) circle [radius=0.2];
\draw[fill=white] (3,0) circle [radius=0.2];
\node at (2.5,1.5) [] {$g$};
\node at (2.5,1) [] {$\cdots$};
\node at (2.5,0) [] {$\cdots$};
} 
\quad \quad \quad \quad 
\tikz[baseline=2ex]{
\draw[ultra thick] (0,0) -- (2,0);
\draw[ultra thick] (3,0) -- (5,0);
\draw[ultra thick] (0,1) -- (2,1);
\draw[ultra thick] (3,1) -- (5,1);
\draw[ultra thick] (0,2) -- (5,2);
\draw[ultra thick, densely dashed] (0,3) -- (5,3);
\draw[ultra thick] (1,1) -- (1,2);
\draw[ultra thick] (2,0) -- (2,1);
\draw[ultra thick] (3,0) -- (3,1);
\draw[ultra thick] (4,1) -- (4,2);
\draw[ultra thick, densely dashed] (2.5,2) -- (2.5,3);
\draw[fill=white] (1,2) circle [radius=0.2];
\draw[fill=white] (2,1) circle [radius=0.2];
\draw[fill=white] (3,1) circle [radius=0.2];
\draw[fill=white] (2.5,3) circle [radius=0.2];
\draw[fill=white] (4,2) circle [radius=0.2];
\draw[fill=black] (1,1) circle [radius=0.2];
\draw[fill=black] (2,0) circle [radius=0.2];
\draw[fill=black] (2.5,2) circle [radius=0.2];
\draw[fill=black] (4,1) circle [radius=0.2];
\draw[fill=black] (3,0) circle [radius=0.2];
\node at (2.5,1.5) [] {$g$};
\node at (2.5,1) [] {$\cdots$};
\node at (2.5,0) [] {$\cdots$};
}
\]
\vspace{1cm}
\[
\tikz[baseline=2ex]{
\draw[ultra thick, densely dashed] (0,0) -- (2,0);
\draw[ultra thick, densely dashed] (5,0) -- (7,0);
\draw[ultra thick] (0,1) -- (2,1);
\draw[ultra thick] (3,1) -- (4,1);
\draw[ultra thick] (5,1) -- (7,1);
\draw[ultra thick] (0,2) -- (7,2);
\draw[ultra thick, densely dashed] (0,3) -- (7,3);
\draw[ultra thick] (1,1) -- (1,2);
\draw[ultra thick, densely dashed] (2,0) -- (2,1);
\draw[ultra thick, densely dashed] (3,0) -- (3,1);
\draw[ultra thick, densely dashed] (4,0) -- (4,1);
\draw[ultra thick, densely dashed] (5,0) -- (5,1);
\draw[ultra thick] (6,1) -- (6,2);
\draw[ultra thick, densely dashed] (3,2) -- (3,3);
\draw[ultra thick, densely dashed] (4,2) -- (4,3);
\draw[fill=black] (1,2) circle [radius=0.2];
\draw[fill=black] (2,1) circle [radius=0.2];
\draw[fill=black] (3,1) circle [radius=0.2];
\draw[fill=black] (4,0) circle [radius=0.2];
\draw[fill=black] (5,0) circle [radius=0.2];
\draw[fill=black] (6,1) circle [radius=0.2];
\draw[fill=black] (3,3) circle [radius=0.2];
\draw[fill=black] (4,2) circle [radius=0.2];
\draw[fill=white] (1,1) circle [radius=0.2];
\draw[fill=white] (2,0) circle [radius=0.2];
\draw[fill=white] (3,0) circle [radius=0.2];
\draw[fill=white] (4,1) circle [radius=0.2];
\draw[fill=white] (5,1) circle [radius=0.2];
\draw[fill=white] (6,2) circle [radius=0.2];
\draw[fill=white] (3,2) circle [radius=0.2];
\draw[fill=white] (4,3) circle [radius=0.2];
\node at (3.5,1.5) [] {$g$};
\node at (2.5,1) [] {$\cdots$};
\node at (4.5,1) [] {$\cdots$};
\node at (2.5,0) [] {$\cdots$};
\node at (3.5,0) [] {$\cdots$};
\node at (4.5,0) [] {$\cdots$};
}
\]

What remains is to compute $\deg_{X_f}\left(\tilde{p}_{\vec{i}_{(u,v)}}\circ \tilde{\psi}_{\vec{i}_{(u,v)}}\circ\tilde{\chi}_{\vec{i}_{(u,v)}}\right)^*(X_g)$ for each case. To demonstrate, we will do the first case (upper left) and the last case (lower) with all dashed edges present. The cases with some of the dashed edges absent will be left as an exercise for the readers. The proof of the second case (upper right) is completely analogous to the first case (upper left) due to symmetry, and we will omit it as well. 

\begin{itemize}
\item \textbf{The First Case.} Let's draw the right-going zig-zag strands locally to help determine the dominating sets of each neighboring face of $g$, with the assumption that $g$ lies between the $k$th and the $(k+1)$th horizontal lines. Note that the left-going zig-zag strands travel along each horizontal line in this case can we will simply index them as $j_1, \dots, j_4$ from top to bottom without drawing them. For the dominating sets of each neighoring face of $g$, $I$ and $J$ are two fixed $(k-2)$-element sets. Since the faces in the omitted region (if there are any) have no arrows pointing towards $g$ or from $g$, we do not need to compute their dominating sets. Also please keep in mind that $i_4=i_5$ if there is only one black vertex on the lower edge of $g$.
\[
\tikz[baseline=2ex]{
\draw[ultra thick] (0,0) -- (3,0);
\draw[ultra thick] (7,0) -- (10,0);
\draw[ultra thick] (0,2) -- (3,2);
\draw[ultra thick] (7,2) -- (10,2);
\draw[ultra thick] (0,4) -- (10,4);
\draw[ultra thick] (0,6) -- (10,6);
\draw[ultra thick] (2,2) -- (2,4);
\draw[ultra thick] (3,0) -- (3,2);
\draw[ultra thick] (7,0) -- (7,2);
\draw[ultra thick] (8,2) -- (8,4);
\draw[ultra thick] (5,4) -- (5,6);
\draw[fill=black] (2,4) circle [radius=0.2];
\draw[fill=black] (3,2) circle [radius=0.2];
\draw[fill=black] (7,2) circle [radius=0.2];
\draw[fill=black] (5,6) circle [radius=0.2];
\draw[fill=black] (8,4) circle [radius=0.2];
\draw[fill=white] (2,2) circle [radius=0.2];
\draw[fill=white] (3,0) circle [radius=0.2];
\draw[fill=white] (5,4) circle [radius=0.2];
\draw[fill=white] (8,2) circle [radius=0.2];
\draw[fill=white] (7,0) circle [radius=0.2];
\node at (5,3) [] {$g$};
\node at (5,2) [] {$\cdots$};
\node at (5,0) [] {$\cdots$};
\draw[ultra thick, red, ->] (0,0.5) to (2.5,0.5) to [out=0, in=180] (4,2.5) to [out=0, in=90] (4.75,0);
\draw[ultra thick, red, ->] (6.5,0) to (8,3) to [out=60, in=180] (10,4.5);
\draw[ultra thick, red, ->] (0,2.5) to (1.5,2.5) to [out=0, in=-135] (3.5,4) to [out=45, in=180] (7.5, 6.5) to (10,6.5);
\draw[ultra thick, red, ->] (0,4.5) to (0.5,4.5) to [out=0, in=115] (2,2.6) to (3.5,0);
\draw[ultra thick, red, ->] (0,6.5) to (3,6.5) to [out=0, in=135] (5,5.5) to [out=-45, in=135] (8,3) to [out=-45,in=180] (10,2.5);
\node [red] at (-0.3,6.5) [] {$i_1$};
\node [red] at (-0.3,4.5) [] {$i_2$};
\node [red] at (-0.3,2.5) [] {$i_3$};
\node [red] at (-0.3,0.5) [] {$i_4$};
\node [red] at (6.5,-0.5) [] {$i_5$};
\node at (2,5) [] {$\Delta^{I\cup\{i_1\},J\cup\{j_1\}}$};
\node at (8,5) [] {$\Delta^{I\cup\{i_3\},J\cup\{j_1\}}$};
\node at (0,3) [] {$\Delta^{I\cup\{i_1,i_2\},J\cup\{j_1,j_2\}}$};
\node at (10,3) [] {$\Delta^{I\cup\{i_3,i_5\},J\cup\{j_1,j_2\}}$};
\node at (1,1) [] {$\Delta^{I\cup\{i_1,i_2,i_3\},J\cup\{j_1,j_2,j_3\}}$};
\node at (9,1) [] {$\Delta^{I\cup\{i_1,i_3,i_5\},J\cup\{j_1,j_2,j_3\}}$};
} 
\]

Proposition \ref{greedy} tells us that, for each minor $\Delta$ there is a family of pairwise disjoint paths in the perfect orientation of $\Gamma_{\vec{i}_{(u,v)}}$ that gives the term with highest degree for each variable $X_f$; for notation convenience we will denote this term as $[\Delta]$. Now what we need to do is just to take the quotient of these $[\Delta]$ according to the quiver $\vec{i}_{(u,v)}$ near the face $g$. Note that for all the neighboring faces of $g$, their corresponding minors are all divisible by $\left[\Delta^{I,J}\right]$ by Proposition \ref{greedy}. Thus we can substitute $[\Delta]$ by $[\Delta]/\left[\Delta^{I,J}\right]$. To better convey our idea, we will use schematic pictures with integer labeling: each integer tells what the degree of the corresponding face variable in $[\Delta]/\left[\Delta^{I,J}\right]$ is for a face in that region. The shaded face is the face $g$. 
\begin{align*}
\frac{\left[\Delta^{I\cup\{i_1\},J\cup\{j_1\}}\right]}{\left[\Delta^{I,J}\right]}=&\tikz[baseline=6ex, scale=0.6]{
\fill[fill=lightgray] (1.5,2) -- (4.5,2) -- (4.5,3) -- (1.5,3);
\draw[ultra thick] (0,3) -- (1,4) -- (5,4);
\draw[ultra thick] (0,2) -- (1,3) -- (5,3);
\draw[ultra thick] (0,1) -- (1,2) -- (5,2);
\draw[ultra thick] (0,0) -- (1,1) -- (2.5,1);
\draw[ultra thick] (2.5,0) -- (3.5,1) -- (5,1);
\draw[ultra thick] (1.5,2) -- (1.5,3);
\draw[ultra thick] (4.5,2) -- (4.5,3);
\draw[ultra thick] (3,3) -- (3,4);
\draw[ultra thick] (2,1) -- (2,2);
\draw[ultra thick] (4,1) -- (4,2);
\node at (-0.2,-0.2) [] {$i_4$};
\node at (-0.2,0.8) [] {$i_3$};
\node at (-0.2,1.8) [] {$i_2$};
\node at (-0.2,2.8) [] {$i_1$};
\node at (2.3,-0.2) [] {$i_5$};
\node at (3,4.5) [] {$0$};
\node at (2,3.5) [] {$1$};
\node at (4,3.5) [] {$1$};
\node at (1,2.5) [] {$1$};
\node at (3,2.5) [] {$1$};
\node at (5,2.5) [] {$1$};
\node at (1.5,1.5) [] {$1$};
\node at (3,1.5) [] {$1$};
\node at (4.5, 1.5) [] {$1$};
}& 
\frac{\left[\Delta^{I\cup\{i_3\},J\cup\{j_1\}}\right]}{\left[\Delta^{I,J}\right]}=&\tikz[baseline=6ex, scale=0.6]{
\fill[fill=lightgray] (1.5,2) -- (4.5,2) -- (4.5,3) -- (1.5,3);
\draw[ultra thick] (0,3) -- (1,4) -- (5,4);
\draw[ultra thick] (0,2) -- (1,3) -- (5,3);
\draw[ultra thick] (0,1) -- (1,2) -- (5,2);
\draw[ultra thick] (0,0) -- (1,1) -- (2.5,1);
\draw[ultra thick] (2.5,0) -- (3.5,1) -- (5,1);
\draw[ultra thick] (1.5,2) -- (1.5,3);
\draw[ultra thick] (4.5,2) -- (4.5,3);
\draw[ultra thick] (3,3) -- (3,4);
\draw[ultra thick] (2,1) -- (2,2);
\draw[ultra thick] (4,1) -- (4,2);
\node at (-0.2,-0.2) [] {$i_4$};
\node at (-0.2,0.8) [] {$i_3$};
\node at (-0.2,1.8) [] {$i_2$};
\node at (-0.2,2.8) [] {$i_1$};
\node at (2.3,-0.2) [] {$i_5$};
\node at (3,4.5) [] {$0$};
\node at (2,3.5) [] {$0$};
\node at (4,3.5) [] {$1$};
\node at (1,2.5) [] {$0$};
\node at (3,2.5) [] {$1$};
\node at (5,2.5) [] {$1$};
\node at (1.5,1.5) [] {$1$};
\node at (3,1.5) [] {$1$};
\node at (4.5, 1.5) [] {$1$};
}
\\
\frac{\left[\Delta^{I\cup\{i_3,i_5\},J\cup\{j_1,j_2\}}\right]}{\left[\Delta^{I,J}\right]}=&\tikz[baseline=6ex, scale=0.6]{
\fill[fill=lightgray] (1.5,2) -- (4.5,2) -- (4.5,3) -- (1.5,3);
\draw[ultra thick] (0,3) -- (1,4) -- (5,4);
\draw[ultra thick] (0,2) -- (1,3) -- (5,3);
\draw[ultra thick] (0,1) -- (1,2) -- (5,2);
\draw[ultra thick] (0,0) -- (1,1) -- (2.5,1);
\draw[ultra thick] (2.5,0) -- (3.5,1) -- (5,1);
\draw[ultra thick] (1.5,2) -- (1.5,3);
\draw[ultra thick] (4.5,2) -- (4.5,3);
\draw[ultra thick] (3,3) -- (3,4);
\draw[ultra thick] (2,1) -- (2,2);
\draw[ultra thick] (4,1) -- (4,2);
\node at (-0.2,-0.2) [] {$i_4$};
\node at (-0.2,0.8) [] {$i_3$};
\node at (-0.2,1.8) [] {$i_2$};
\node at (-0.2,2.8) [] {$i_1$};
\node at (2.3,-0.2) [] {$i_5$};
\node at (3,4.5) [] {$0$};
\node at (2,3.5) [] {$0$};
\node at (4,3.5) [] {$1$};
\node at (1,2.5) [] {$0$};
\node at (3,2.5) [] {$1$};
\node at (5,2.5) [] {$2$};
\node at (1.5,1.5) [] {$1$};
\node at (3,1.5) [] {$1$};
\node at (4.5, 1.5) [] {$2$};
}& 
\frac{\left[\Delta^{I\cup\{i_1,i_2\},J\cup\{j_1,j_2\}}\right]}{\left[\Delta^{I,J}\right]}=&\tikz[baseline=6ex, scale=0.6]{
\fill[fill=lightgray] (1.5,2) -- (4.5,2) -- (4.5,3) -- (1.5,3);
\draw[ultra thick] (0,3) -- (1,4) -- (5,4);
\draw[ultra thick] (0,2) -- (1,3) -- (5,3);
\draw[ultra thick] (0,1) -- (1,2) -- (5,2);
\draw[ultra thick] (0,0) -- (1,1) -- (2.5,1);
\draw[ultra thick] (2.5,0) -- (3.5,1) -- (5,1);
\draw[ultra thick] (1.5,2) -- (1.5,3);
\draw[ultra thick] (4.5,2) -- (4.5,3);
\draw[ultra thick] (3,3) -- (3,4);
\draw[ultra thick] (2,1) -- (2,2);
\draw[ultra thick] (4,1) -- (4,2);
\node at (-0.2,-0.2) [] {$i_4$};
\node at (-0.2,0.8) [] {$i_3$};
\node at (-0.2,1.8) [] {$i_2$};
\node at (-0.2,2.8) [] {$i_1$};
\node at (2.3,-0.2) [] {$i_5$};
\node at (3,4.5) [] {$0$};
\node at (2,3.5) [] {$1$};
\node at (4,3.5) [] {$1$};
\node at (1,2.5) [] {$2$};
\node at (3,2.5) [] {$2$};
\node at (5,2.5) [] {$2$};
\node at (1.5,1.5) [] {$2$};
\node at (3,1.5) [] {$2$};
\node at (4.5, 1.5) [] {$2$};
}
\\
\frac{\left[\Delta^{I\cup\{i_1,i_2,i_3\},J\cup\{j_1,j_2,j_3\}}\right]}{\left[\Delta^{I,J}\right]}=&\tikz[baseline=6ex, scale=0.6]{
\fill[fill=lightgray] (1.5,2) -- (4.5,2) -- (4.5,3) -- (1.5,3);
\draw[ultra thick] (0,3) -- (1,4) -- (5,4);
\draw[ultra thick] (0,2) -- (1,3) -- (5,3);
\draw[ultra thick] (0,1) -- (1,2) -- (5,2);
\draw[ultra thick] (0,0) -- (1,1) -- (2.5,1);
\draw[ultra thick] (2.5,0) -- (3.5,1) -- (5,1);
\draw[ultra thick] (1.5,2) -- (1.5,3);
\draw[ultra thick] (4.5,2) -- (4.5,3);
\draw[ultra thick] (3,3) -- (3,4);
\draw[ultra thick] (2,1) -- (2,2);
\draw[ultra thick] (4,1) -- (4,2);
\node at (-0.2,-0.2) [] {$i_4$};
\node at (-0.2,0.8) [] {$i_3$};
\node at (-0.2,1.8) [] {$i_2$};
\node at (-0.2,2.8) [] {$i_1$};
\node at (2.3,-0.2) [] {$i_5$};
\node at (3,4.5) [] {$0$};
\node at (2,3.5) [] {$1$};
\node at (4,3.5) [] {$1$};
\node at (1,2.5) [] {$2$};
\node at (3,2.5) [] {$2$};
\node at (5,2.5) [] {$2$};
\node at (1.5,1.5) [] {$3$};
\node at (3,1.5) [] {$3$};
\node at (4.5, 1.5) [] {$3$};
}& 
\frac{\left[\Delta^{I\cup\{i_1,i_3,i_5\},J\cup\{j_1,j_2,j_3\}}\right]}{\left[\Delta^{I,J}\right]}=&\tikz[baseline=6ex, scale=0.6]{
\fill[fill=lightgray] (1.5,2) -- (4.5,2) -- (4.5,3) -- (1.5,3);
\draw[ultra thick] (0,3) -- (1,4) -- (5,4);
\draw[ultra thick] (0,2) -- (1,3) -- (5,3);
\draw[ultra thick] (0,1) -- (1,2) -- (5,2);
\draw[ultra thick] (0,0) -- (1,1) -- (2.5,1);
\draw[ultra thick] (2.5,0) -- (3.5,1) -- (5,1);
\draw[ultra thick] (1.5,2) -- (1.5,3);
\draw[ultra thick] (4.5,2) -- (4.5,3);
\draw[ultra thick] (3,3) -- (3,4);
\draw[ultra thick] (2,1) -- (2,2);
\draw[ultra thick] (4,1) -- (4,2);
\node at (-0.2,-0.2) [] {$i_4$};
\node at (-0.2,0.8) [] {$i_3$};
\node at (-0.2,1.8) [] {$i_2$};
\node at (-0.2,2.8) [] {$i_1$};
\node at (2.3,-0.2) [] {$i_5$};
\node at (3,4.5) [] {$0$};
\node at (2,3.5) [] {$1$};
\node at (4,3.5) [] {$1$};
\node at (1,2.5) [] {$1$};
\node at (3,2.5) [] {$2$};
\node at (5,2.5) [] {$2$};
\node at (1.5,1.5) [] {$2$};
\node at (3,1.5) [] {$2$};
\node at (4.5, 1.5) [] {$3$};
}
\end{align*}
Now with these six schematic pictures, it is not hard to compute
\[
\frac{\left[\Delta^{I\cup\{i_1\},J\cup\{j_1\}}\right]\left[\Delta^{I\cup\{i_3,i_5\},J\cup\{j_1,j_2\}}\right]\left[\Delta^{I\cup\{i_1,i_2,i_3\},J\cup\{j_1,j_2,j_3\}}\right]}{\left[\Delta^{I\cup\{i_3\},J\cup\{j_1\}}\right]\left[\Delta^{I\cup\{i_1,i_2\},J\cup\{j_1,j_2\}}\right]\left[\Delta^{I\cup\{i_1,i_3,i_5\},J\cup\{j_1,j_2,j_3\}}\right]}=\tikz[baseline=6ex, scale=0.6]{
\fill[fill=lightgray] (1.5,2) -- (4.5,2) -- (4.5,3) -- (1.5,3);
\draw[ultra thick] (0,3) -- (1,4) -- (5,4);
\draw[ultra thick] (0,2) -- (1,3) -- (5,3);
\draw[ultra thick] (0,1) -- (1,2) -- (5,2);
\draw[ultra thick] (0,0) -- (1,1) -- (2.5,1);
\draw[ultra thick] (2.5,0) -- (3.5,1) -- (5,1);
\draw[ultra thick] (1.5,2) -- (1.5,3);
\draw[ultra thick] (4.5,2) -- (4.5,3);
\draw[ultra thick] (3,3) -- (3,4);
\draw[ultra thick] (2,1) -- (2,2);
\draw[ultra thick] (4,1) -- (4,2);
\node at (-0.2,-0.2) [] {$i_4$};
\node at (-0.2,0.8) [] {$i_3$};
\node at (-0.2,1.8) [] {$i_2$};
\node at (-0.2,2.8) [] {$i_1$};
\node at (2.3,-0.2) [] {$i_5$};
\node at (3,4.5) [] {$0$};
\node at (2,3.5) [] {$0$};
\node at (4,3.5) [] {$0$};
\node at (1,2.5) [] {$0$};
\node at (3,2.5) [] {$-1$};
\node at (5,2.5) [] {$0$};
\node at (1.5,1.5) [] {$0$};
\node at (3,1.5) [] {$0$};
\node at (4.5, 1.5) [] {$0$};
}=X_g^{-1},
\]
from which we can conclude that
\[
\deg_{X_f}\left(\tilde{p}_{\vec{i}_{(u,v)}}\circ \tilde{\psi}_{\vec{i}_{(u,v)}}\circ\tilde{\chi}_{\vec{i}_{(u,v)}}\right)^*(X_g)=-\delta_{fg}.
\]

\item \textbf{The Last Case.} The last case is slightly harder computation-wise; we will just follow the same guidelines we had in the first case. Let's first draw the zig-zag strands (now we have both left-going and right-going ones.
\[
\tikz[baseline=2ex]{
\draw[ultra thick] (0,0) -- (4,0);
\draw[ultra thick] (10,0) -- (14,0);
\draw[ultra thick] (0,2) -- (4,2);
\draw[ultra thick] (6,2) -- (8,2);
\draw[ultra thick] (10,2) -- (14,2);
\draw[ultra thick] (0,4) -- (14,4);
\draw[ultra thick] (0,6) -- (14,6);
\draw[ultra thick] (2,2) -- (2,4);
\draw[ultra thick] (4,0) -- (4,2);
\draw[ultra thick] (6,0) -- (6,2);
\draw[ultra thick] (8,0) -- (8,2);
\draw[ultra thick] (10,0) -- (10,2);
\draw[ultra thick] (12,2) -- (12,4);
\draw[ultra thick] (6,4) -- (6,6);
\draw[ultra thick] (8,4) -- (8,6);
\draw[fill=black] (2,4) circle [radius=0.2];
\draw[fill=black] (4,2) circle [radius=0.2];
\draw[fill=black] (6,2) circle [radius=0.2];
\draw[fill=black] (8,0) circle [radius=0.2];
\draw[fill=black] (10,0) circle [radius=0.2];
\draw[fill=black] (12,2) circle [radius=0.2];
\draw[fill=black] (6,6) circle [radius=0.2];
\draw[fill=black] (8,4) circle [radius=0.2];
\draw[fill=white] (2,2) circle [radius=0.2];
\draw[fill=white] (4,0) circle [radius=0.2];
\draw[fill=white] (6,0) circle [radius=0.2];
\draw[fill=white] (8,2) circle [radius=0.2];
\draw[fill=white] (10,2) circle [radius=0.2];
\draw[fill=white] (12,4) circle [radius=0.2];
\draw[fill=white] (6,4) circle [radius=0.2];
\draw[fill=white] (8,6) circle [radius=0.2];
\node at (7,3) [] {$g$};
\node at (5,2) [] {$\cdots$};
\node at (9,2) [] {$\cdots$};
\node at (5,0) [] {$\cdots$};
\node at (7,0) [] {$\cdots$};
\node at (9,0) [] {$\cdots$};
\draw[ultra thick, ->, red] (0,6.5) to (4,6.5) to [out=0, in=135] (5,6) to [out=-45,in=135] (7,4) to [out=-45,in=180] (8,3.5) to [out=0,in=-135] (9,4) to [out=45, in=180] (10,4.5) to (14,4.5);
\draw[ultra thick, ->, red] (0,4.5) to [out=0,in=135] (1,4) to [out=-45,in=135] (2,3) to (4.5,0.5);
\draw[ultra thick, ->, red] (0,2.5) to [out=0,in=-135] (2,3) to [out=45,in=-135] (3,4) to [out=45, in=180] (4,4.5) to (5,4.5) to [out=0,in=-135] (6,5) to [out=45,in=-135] (7,6) to [out=45,in=180] (8,6.5) to (14,6.5);
\draw[ultra thick, ->, red] (0,0.5) to (3,0.5) to [out=0,in=-135] (4,1) to (4.5,1.5);
\draw[ultra thick, ->, red] (5.5,0.5) to (7,2) to [out=45,in=180] (8,2.5) to (10,2.5) to [out=0, in=135] (11,2) to [out=-45,in=-135] (13,2) to [out=45,in=180] (14,2.5);
\draw[ultra thick, ->, red] (14,5.5) to (9,5.5) to [out=180,in=45] (8,5) to [out=-135,in=45] (7,4) to [out=-135,in=0] (6,3.5) to (4,3.5) to [out=180,in=-45] (3,4) to [out=135,in=45] (1,4) to [out=-135,in=0] (0,3.5);
\draw[ultra thick, ->, red] (14,3.5) to (13,3.5) to [out=180,in=45] (12,3) to (9.5,0.5);
\draw[ultra thick, ->, red] (14,1.5) to [out=180,in=-45] (13,2) to (12,3) to [out=135,in=0] (11,3.5) to (10,3.5) to [out=180, in=-45] (9,4) to (7,6) to [out=135,in=45] (5,6) to [out=-135,in=0] (4,5.5) to (0,5.5);
\draw[ultra thick, ->, red] (14,-0.5) to (12,-0.5) to [out=180, in=-45] (11,0) to (9.5,1.5);
\draw[ultra thick, ->, red] (8.5,0.5) to (7,2) to [out=135,in=0] (6,2.5) to (4,2.5) to [out=180,in=0] (2,1.5) to (0,1.5);
\node [red] at (-0.2,6.5) [] {$i_1$};
\node [red] at (-0.2,4.5) [] {$i_2$};
\node [red] at (-0.2,2.5) [] {$i_3$};
\node [red] at (-0.2,0.5) [] {$i_4$};
\node [red] at (5.3,0.5) [] {$i_5$};
\node [red] at (14.2,5.5) [] {$j_1$};
\node [red] at (14.2,3.5) [] {$j_2$};
\node [red] at (14.2,1.5) [] {$j_3$};
\node [red] at (14.2,-0.5) [] {$j_4$};
\node [red] at (8.7,0.5) [] {$j_5$};
\node at (3,5) [] {$\Delta^{I\cup\{i_1\},J\cup\{j_3\}}$};
\node at (7,5) [] {\contour{white}{$\Delta^{I\cup\{i_3\},J\cup\{j_3\}}$}};
\node at (11,5) [] {$\Delta^{I\cup\{i_3\},J\cup\{j_1\}}$};
\node at (0,3) [] {$\Delta^{I\cup\{i_1,i_2\},J\cup\{j_1,j_3\}}$};
\node at (14,3) [] {$\Delta^{I\cup\{i_1,i_3\},J\cup\{j_1,j_2\}}$};
\node at (2,1) [] {$\Delta^{I\cup\{i_1,i_2,i_3\},J\cup\{j_1,j_3,j_5\}}$};
\node at (7,1) [] {\contour{white}{$\Delta^{I\cup\{i_1,i_3,i_5\},J\cup\{j_1,j_3,j_5\}}$}};
\node at (12,1) [] {$\Delta^{I\cup\{i_1,i_3,i_5\},J\cup\{j_1,j_2,j_3\}}$};
}
\]
Then we need to compute $[\Delta]/\left[\Delta^{I,J}\right]$ for each neighboring face of $g$.
\begin{align*}
\frac{\left[\Delta^{I\cup\{i_1\},J\cup\{j_3\}}\right]}{\left[\Delta^{I,J}\right]}=&\tikz[baseline=5ex,scale=0.4]{
\fill[fill=lightgray] (1.5,2) -- (6.5,2) -- (6.5,3) -- (1.5,3);
\draw[ultra thick] (0,3) -- (1,4) -- (7,4) -- (8,3);
\draw[ultra thick] (0,2) -- (1,3) -- (7,3) -- (8,2);
\draw[ultra thick] (0,1) -- (1,2) -- (7,2) -- (8,1);
\draw[ultra thick] (0,0) -- (1,1) -- (2.5,1);
\draw[ultra thick] (2,0) -- (3,1) -- (3.75,1);
\draw[ultra thick] (4.25,1) -- (5,1) -- (6,0);
\draw[ultra thick] (5.5,1) -- (7,1) -- (8,0);
\draw[ultra thick] (3,3) -- (3,4);
\draw[ultra thick] (5,3) -- (5,4);
\draw[ultra thick] (1.5,2) -- (1.5,3);
\draw[ultra thick] (6.5,2) -- (6.5,3);
\draw[ultra thick] (2,1) -- (2,2);
\draw[ultra thick] (3.5,1) -- (3.5,2);
\draw[ultra thick] (4.5,1) -- (4.5,2);
\draw[ultra thick] (6,1) -- (6,2);
\node at (-0.2,2.8) [] {$i_1$};
\node at (-0.2,1.8) [] {$i_2$};
\node at (-0.2,0.8) [] {$i_3$};
\node at (-0.2,-0.2) [] {$i_4$};
\node at (1.8,-0.2) [] {$i_5$};
\node at (8.2,2.8) [] {$j_1$};
\node at (8.2,1.8) [] {$j_2$};
\node at (8.2,0.8) [] {$j_3$};
\node at (8.2,-0.2) [] {$j_4$};
\node at (6.2,-0.2) [] {$j_5$};
\node at (4,4.5) [] {$0$};
\node at (2,3.5) [] {$1$};
\node at (4,3.5) [] {$1$};
\node at (6,3.5) [] {$0$};
\node at (1,2.5) [] {$1$};
\node at (4,2.5) [] {$1$};
\node at (7,2.5) [] {$0$};
\node at (1.5,1.5) [] {$1$};
\node at (2.75,1.5) [] {$1$};
\node at (4,1.5) [] {$1$};
\node at (5.25,1.5) [] {$1$};
\node at (6.5,1.5) [] {$1$};
} & \frac{\left[\Delta^{I\cup\{i_3\},J\cup\{j_3\}}\right]}{\left[\Delta^{I,J}\right]}=&\tikz[baseline=5ex,scale=0.4]{
\fill[fill=lightgray] (1.5,2) -- (6.5,2) -- (6.5,3) -- (1.5,3);
\draw[ultra thick] (0,3) -- (1,4) -- (7,4) -- (8,3);
\draw[ultra thick] (0,2) -- (1,3) -- (7,3) -- (8,2);
\draw[ultra thick] (0,1) -- (1,2) -- (7,2) -- (8,1);
\draw[ultra thick] (0,0) -- (1,1) -- (2.5,1);
\draw[ultra thick] (2,0) -- (3,1) -- (3.75,1);
\draw[ultra thick] (4.25,1) -- (5,1) -- (6,0);
\draw[ultra thick] (5.5,1) -- (7,1) -- (8,0);
\draw[ultra thick] (3,3) -- (3,4);
\draw[ultra thick] (5,3) -- (5,4);
\draw[ultra thick] (1.5,2) -- (1.5,3);
\draw[ultra thick] (6.5,2) -- (6.5,3);
\draw[ultra thick] (2,1) -- (2,2);
\draw[ultra thick] (3.5,1) -- (3.5,2);
\draw[ultra thick] (4.5,1) -- (4.5,2);
\draw[ultra thick] (6,1) -- (6,2);
\node at (-0.2,2.8) [] {$i_1$};
\node at (-0.2,1.8) [] {$i_2$};
\node at (-0.2,0.8) [] {$i_3$};
\node at (-0.2,-0.2) [] {$i_4$};
\node at (1.8,-0.2) [] {$i_5$};
\node at (8.2,2.8) [] {$j_1$};
\node at (8.2,1.8) [] {$j_2$};
\node at (8.2,0.8) [] {$j_3$};
\node at (8.2,-0.2) [] {$j_4$};
\node at (6.2,-0.2) [] {$j_5$};
\node at (4,4.5) [] {$0$};
\node at (2,3.5) [] {$0$};
\node at (4,3.5) [] {$1$};
\node at (6,3.5) [] {$0$};
\node at (1,2.5) [] {$0$};
\node at (4,2.5) [] {$1$};
\node at (7,2.5) [] {$0$};
\node at (1.5,1.5) [] {$1$};
\node at (2.75,1.5) [] {$1$};
\node at (4,1.5) [] {$1$};
\node at (5.25,1.5) [] {$1$};
\node at (6.5,1.5) [] {$1$};
}
\\
\frac{\left[\Delta^{I\cup\{i_3\},J\cup\{j_1\}}\right]}{\left[\Delta^{I,J}\right]}=&\tikz[baseline=5ex,scale=0.4]{
\fill[fill=lightgray] (1.5,2) -- (6.5,2) -- (6.5,3) -- (1.5,3);
\draw[ultra thick] (0,3) -- (1,4) -- (7,4) -- (8,3);
\draw[ultra thick] (0,2) -- (1,3) -- (7,3) -- (8,2);
\draw[ultra thick] (0,1) -- (1,2) -- (7,2) -- (8,1);
\draw[ultra thick] (0,0) -- (1,1) -- (2.5,1);
\draw[ultra thick] (2,0) -- (3,1) -- (3.75,1);
\draw[ultra thick] (4.25,1) -- (5,1) -- (6,0);
\draw[ultra thick] (5.5,1) -- (7,1) -- (8,0);
\draw[ultra thick] (3,3) -- (3,4);
\draw[ultra thick] (5,3) -- (5,4);
\draw[ultra thick] (1.5,2) -- (1.5,3);
\draw[ultra thick] (6.5,2) -- (6.5,3);
\draw[ultra thick] (2,1) -- (2,2);
\draw[ultra thick] (3.5,1) -- (3.5,2);
\draw[ultra thick] (4.5,1) -- (4.5,2);
\draw[ultra thick] (6,1) -- (6,2);
\node at (-0.2,2.8) [] {$i_1$};
\node at (-0.2,1.8) [] {$i_2$};
\node at (-0.2,0.8) [] {$i_3$};
\node at (-0.2,-0.2) [] {$i_4$};
\node at (1.8,-0.2) [] {$i_5$};
\node at (8.2,2.8) [] {$j_1$};
\node at (8.2,1.8) [] {$j_2$};
\node at (8.2,0.8) [] {$j_3$};
\node at (8.2,-0.2) [] {$j_4$};
\node at (6.2,-0.2) [] {$j_5$};
\node at (4,4.5) [] {$0$};
\node at (2,3.5) [] {$0$};
\node at (4,3.5) [] {$1$};
\node at (6,3.5) [] {$1$};
\node at (1,2.5) [] {$0$};
\node at (4,2.5) [] {$1$};
\node at (7,2.5) [] {$1$};
\node at (1.5,1.5) [] {$1$};
\node at (2.75,1.5) [] {$1$};
\node at (4,1.5) [] {$1$};
\node at (5.25,1.5) [] {$1$};
\node at (6.5,1.5) [] {$1$};
} & \frac{\left[\Delta^{I\cup\{i_1,i_2\},J\cup\{j_1,j_3\}}\right]}{\left[\Delta^{I,J}\right]}=&\tikz[baseline=5ex,scale=0.4]{
\fill[fill=lightgray] (1.5,2) -- (6.5,2) -- (6.5,3) -- (1.5,3);
\draw[ultra thick] (0,3) -- (1,4) -- (7,4) -- (8,3);
\draw[ultra thick] (0,2) -- (1,3) -- (7,3) -- (8,2);
\draw[ultra thick] (0,1) -- (1,2) -- (7,2) -- (8,1);
\draw[ultra thick] (0,0) -- (1,1) -- (2.5,1);
\draw[ultra thick] (2,0) -- (3,1) -- (3.75,1);
\draw[ultra thick] (4.25,1) -- (5,1) -- (6,0);
\draw[ultra thick] (5.5,1) -- (7,1) -- (8,0);
\draw[ultra thick] (3,3) -- (3,4);
\draw[ultra thick] (5,3) -- (5,4);
\draw[ultra thick] (1.5,2) -- (1.5,3);
\draw[ultra thick] (6.5,2) -- (6.5,3);
\draw[ultra thick] (2,1) -- (2,2);
\draw[ultra thick] (3.5,1) -- (3.5,2);
\draw[ultra thick] (4.5,1) -- (4.5,2);
\draw[ultra thick] (6,1) -- (6,2);
\node at (-0.2,2.8) [] {$i_1$};
\node at (-0.2,1.8) [] {$i_2$};
\node at (-0.2,0.8) [] {$i_3$};
\node at (-0.2,-0.2) [] {$i_4$};
\node at (1.8,-0.2) [] {$i_5$};
\node at (8.2,2.8) [] {$j_1$};
\node at (8.2,1.8) [] {$j_2$};
\node at (8.2,0.8) [] {$j_3$};
\node at (8.2,-0.2) [] {$j_4$};
\node at (6.2,-0.2) [] {$j_5$};
\node at (4,4.5) [] {$0$};
\node at (2,3.5) [] {$1$};
\node at (4,3.5) [] {$1$};
\node at (6,3.5) [] {$1$};
\node at (1,2.5) [] {$2$};
\node at (4,2.5) [] {$2$};
\node at (7,2.5) [] {$1$};
\node at (1.5,1.5) [] {$2$};
\node at (2.75,1.5) [] {$2$};
\node at (4,1.5) [] {$2$};
\node at (5.25,1.5) [] {$2$};
\node at (6.5,1.5) [] {$2$};
}
\\
\frac{\left[\Delta^{I\cup\{i_1,i_2,i_3\},J\cup\{j_1,j_3,j_5\}}\right]}{\left[\Delta^{I,J}\right]}=&\tikz[baseline=5ex,scale=0.4]{
\fill[fill=lightgray] (1.5,2) -- (6.5,2) -- (6.5,3) -- (1.5,3);
\draw[ultra thick] (0,3) -- (1,4) -- (7,4) -- (8,3);
\draw[ultra thick] (0,2) -- (1,3) -- (7,3) -- (8,2);
\draw[ultra thick] (0,1) -- (1,2) -- (7,2) -- (8,1);
\draw[ultra thick] (0,0) -- (1,1) -- (2.5,1);
\draw[ultra thick] (2,0) -- (3,1) -- (3.75,1);
\draw[ultra thick] (4.25,1) -- (5,1) -- (6,0);
\draw[ultra thick] (5.5,1) -- (7,1) -- (8,0);
\draw[ultra thick] (3,3) -- (3,4);
\draw[ultra thick] (5,3) -- (5,4);
\draw[ultra thick] (1.5,2) -- (1.5,3);
\draw[ultra thick] (6.5,2) -- (6.5,3);
\draw[ultra thick] (2,1) -- (2,2);
\draw[ultra thick] (3.5,1) -- (3.5,2);
\draw[ultra thick] (4.5,1) -- (4.5,2);
\draw[ultra thick] (6,1) -- (6,2);
\node at (-0.2,2.8) [] {$i_1$};
\node at (-0.2,1.8) [] {$i_2$};
\node at (-0.2,0.8) [] {$i_3$};
\node at (-0.2,-0.2) [] {$i_4$};
\node at (1.8,-0.2) [] {$i_5$};
\node at (8.2,2.8) [] {$j_1$};
\node at (8.2,1.8) [] {$j_2$};
\node at (8.2,0.8) [] {$j_3$};
\node at (8.2,-0.2) [] {$j_4$};
\node at (6.2,-0.2) [] {$j_5$};
\node at (4,4.5) [] {$0$};
\node at (2,3.5) [] {$1$};
\node at (4,3.5) [] {$1$};
\node at (6,3.5) [] {$1$};
\node at (1,2.5) [] {$2$};
\node at (4,2.5) [] {$2$};
\node at (7,2.5) [] {$1$};
\node at (1.5,1.5) [] {$3$};
\node at (2.75,1.5) [] {$3$};
\node at (4,1.5) [] {$3$};
\node at (5.25,1.5) [] {$2$};
\node at (6.5,1.5) [] {$2$};
} & \frac{\left[\Delta^{I\cup\{i_1,i_3\},J\cup\{j_1,j_2\}}\right]}{\left[\Delta^{I,J}\right]}=&\tikz[baseline=5ex,scale=0.4]{
\fill[fill=lightgray] (1.5,2) -- (6.5,2) -- (6.5,3) -- (1.5,3);
\draw[ultra thick] (0,3) -- (1,4) -- (7,4) -- (8,3);
\draw[ultra thick] (0,2) -- (1,3) -- (7,3) -- (8,2);
\draw[ultra thick] (0,1) -- (1,2) -- (7,2) -- (8,1);
\draw[ultra thick] (0,0) -- (1,1) -- (2.5,1);
\draw[ultra thick] (2,0) -- (3,1) -- (3.75,1);
\draw[ultra thick] (4.25,1) -- (5,1) -- (6,0);
\draw[ultra thick] (5.5,1) -- (7,1) -- (8,0);
\draw[ultra thick] (3,3) -- (3,4);
\draw[ultra thick] (5,3) -- (5,4);
\draw[ultra thick] (1.5,2) -- (1.5,3);
\draw[ultra thick] (6.5,2) -- (6.5,3);
\draw[ultra thick] (2,1) -- (2,2);
\draw[ultra thick] (3.5,1) -- (3.5,2);
\draw[ultra thick] (4.5,1) -- (4.5,2);
\draw[ultra thick] (6,1) -- (6,2);
\node at (-0.2,2.8) [] {$i_1$};
\node at (-0.2,1.8) [] {$i_2$};
\node at (-0.2,0.8) [] {$i_3$};
\node at (-0.2,-0.2) [] {$i_4$};
\node at (1.8,-0.2) [] {$i_5$};
\node at (8.2,2.8) [] {$j_1$};
\node at (8.2,1.8) [] {$j_2$};
\node at (8.2,0.8) [] {$j_3$};
\node at (8.2,-0.2) [] {$j_4$};
\node at (6.2,-0.2) [] {$j_5$};
\node at (4,4.5) [] {$0$};
\node at (2,3.5) [] {$1$};
\node at (4,3.5) [] {$1$};
\node at (6,3.5) [] {$1$};
\node at (1,2.5) [] {$1$};
\node at (4,2.5) [] {$2$};
\node at (7,2.5) [] {$2$};
\node at (1.5,1.5) [] {$2$};
\node at (2.75,1.5) [] {$2$};
\node at (4,1.5) [] {$2$};
\node at (5.25,1.5) [] {$2$};
\node at (6.5,1.5) [] {$2$};
}
\\
\frac{\left[\Delta^{I\cup\{i_1,i_3,i_5\},J\cup\{j_1,j_2,j_3\}}\right]}{\left[\Delta^{I,J}\right]}=&\tikz[baseline=5ex,scale=0.4]{
\fill[fill=lightgray] (1.5,2) -- (6.5,2) -- (6.5,3) -- (1.5,3);
\draw[ultra thick] (0,3) -- (1,4) -- (7,4) -- (8,3);
\draw[ultra thick] (0,2) -- (1,3) -- (7,3) -- (8,2);
\draw[ultra thick] (0,1) -- (1,2) -- (7,2) -- (8,1);
\draw[ultra thick] (0,0) -- (1,1) -- (2.5,1);
\draw[ultra thick] (2,0) -- (3,1) -- (3.75,1);
\draw[ultra thick] (4.25,1) -- (5,1) -- (6,0);
\draw[ultra thick] (5.5,1) -- (7,1) -- (8,0);
\draw[ultra thick] (3,3) -- (3,4);
\draw[ultra thick] (5,3) -- (5,4);
\draw[ultra thick] (1.5,2) -- (1.5,3);
\draw[ultra thick] (6.5,2) -- (6.5,3);
\draw[ultra thick] (2,1) -- (2,2);
\draw[ultra thick] (3.5,1) -- (3.5,2);
\draw[ultra thick] (4.5,1) -- (4.5,2);
\draw[ultra thick] (6,1) -- (6,2);
\node at (-0.2,2.8) [] {$i_1$};
\node at (-0.2,1.8) [] {$i_2$};
\node at (-0.2,0.8) [] {$i_3$};
\node at (-0.2,-0.2) [] {$i_4$};
\node at (1.8,-0.2) [] {$i_5$};
\node at (8.2,2.8) [] {$j_1$};
\node at (8.2,1.8) [] {$j_2$};
\node at (8.2,0.8) [] {$j_3$};
\node at (8.2,-0.2) [] {$j_4$};
\node at (6.2,-0.2) [] {$j_5$};
\node at (4,4.5) [] {$0$};
\node at (2,3.5) [] {$1$};
\node at (4,3.5) [] {$1$};
\node at (6,3.5) [] {$1$};
\node at (1,2.5) [] {$1$};
\node at (4,2.5) [] {$2$};
\node at (7,2.5) [] {$2$};
\node at (1.5,1.5) [] {$2$};
\node at (2.75,1.5) [] {$2$};
\node at (4,1.5) [] {$3$};
\node at (5.25,1.5) [] {$3$};
\node at (6.5,1.5) [] {$3$};
} & \frac{\left[\Delta^{I\cup\{i_1,i_3,i_5\},J\cup\{j_1,j_3,j_5\}}\right]}{\left[\Delta^{I,J}\right]}=&\tikz[baseline=5ex,scale=0.4]{
\fill[fill=lightgray] (1.5,2) -- (6.5,2) -- (6.5,3) -- (1.5,3);
\draw[ultra thick] (0,3) -- (1,4) -- (7,4) -- (8,3);
\draw[ultra thick] (0,2) -- (1,3) -- (7,3) -- (8,2);
\draw[ultra thick] (0,1) -- (1,2) -- (7,2) -- (8,1);
\draw[ultra thick] (0,0) -- (1,1) -- (2.5,1);
\draw[ultra thick] (2,0) -- (3,1) -- (3.75,1);
\draw[ultra thick] (4.25,1) -- (5,1) -- (6,0);
\draw[ultra thick] (5.5,1) -- (7,1) -- (8,0);
\draw[ultra thick] (3,3) -- (3,4);
\draw[ultra thick] (5,3) -- (5,4);
\draw[ultra thick] (1.5,2) -- (1.5,3);
\draw[ultra thick] (6.5,2) -- (6.5,3);
\draw[ultra thick] (2,1) -- (2,2);
\draw[ultra thick] (3.5,1) -- (3.5,2);
\draw[ultra thick] (4.5,1) -- (4.5,2);
\draw[ultra thick] (6,1) -- (6,2);
\node at (-0.2,2.8) [] {$i_1$};
\node at (-0.2,1.8) [] {$i_2$};
\node at (-0.2,0.8) [] {$i_3$};
\node at (-0.2,-0.2) [] {$i_4$};
\node at (1.8,-0.2) [] {$i_5$};
\node at (8.2,2.8) [] {$j_1$};
\node at (8.2,1.8) [] {$j_2$};
\node at (8.2,0.8) [] {$j_3$};
\node at (8.2,-0.2) [] {$j_4$};
\node at (6.2,-0.2) [] {$j_5$};
\node at (4,4.5) [] {$0$};
\node at (2,3.5) [] {$1$};
\node at (4,3.5) [] {$1$};
\node at (6,3.5) [] {$1$};
\node at (1,2.5) [] {$1$};
\node at (4,2.5) [] {$2$};
\node at (7,2.5) [] {$1$};
\node at (1.5,1.5) [] {$2$};
\node at (2.75,1.5) [] {$2$};
\node at (4,1.5) [] {$3$};
\node at (5.25,1.5) [] {$2$};
\node at (6.5,1.5) [] {$2$};
}
\end{align*}
Combining all eight of them we conclude that
\begin{align*}
&\frac{\left[\Delta^{I\cup\{i_1\},J\cup\{j_3\}}\right]\left[\Delta^{I\cup\{i_3\},J\cup\{j_1\}}\right]\left[\Delta^{I\cup\{i_1,i_2,i_3\},J\cup\{j_1,j_3,j_5\}}\right]\left[\Delta^{I\cup\{i_1,i_3,i_5\},J\cup\{j_1,j_2,j_3\}}\right]}{\left[\Delta^{I\cup\{i_3\},J\cup\{j_3\}}\right]\left[\Delta^{I\cup\{i_1,i_2\},J\cup\{j_1,j_3\}}\right]\left[\Delta^{I\cup\{i_1,i_3\},J\cup\{j_1,j_2\}}\right]\left[\Delta^{I\cup\{i_1,i_3,i_5\},J\cup\{j_1,j_3,j_5\}}\right]}\\
=&\tikz[baseline=5ex,scale=0.4]{
\fill[fill=lightgray] (1.5,2) -- (6.5,2) -- (6.5,3) -- (1.5,3);
\draw[ultra thick] (0,3) -- (1,4) -- (7,4) -- (8,3);
\draw[ultra thick] (0,2) -- (1,3) -- (7,3) -- (8,2);
\draw[ultra thick] (0,1) -- (1,2) -- (7,2) -- (8,1);
\draw[ultra thick] (0,0) -- (1,1) -- (2.5,1);
\draw[ultra thick] (2,0) -- (3,1) -- (3.75,1);
\draw[ultra thick] (4.25,1) -- (5,1) -- (6,0);
\draw[ultra thick] (5.5,1) -- (7,1) -- (8,0);
\draw[ultra thick] (3,3) -- (3,4);
\draw[ultra thick] (5,3) -- (5,4);
\draw[ultra thick] (1.5,2) -- (1.5,3);
\draw[ultra thick] (6.5,2) -- (6.5,3);
\draw[ultra thick] (2,1) -- (2,2);
\draw[ultra thick] (3.5,1) -- (3.5,2);
\draw[ultra thick] (4.5,1) -- (4.5,2);
\draw[ultra thick] (6,1) -- (6,2);
\node at (-0.2,2.8) [] {$i_1$};
\node at (-0.2,1.8) [] {$i_2$};
\node at (-0.2,0.8) [] {$i_3$};
\node at (-0.2,-0.2) [] {$i_4$};
\node at (1.8,-0.2) [] {$i_5$};
\node at (8.2,2.8) [] {$j_1$};
\node at (8.2,1.8) [] {$j_2$};
\node at (8.2,0.8) [] {$j_3$};
\node at (8.2,-0.2) [] {$j_4$};
\node at (6.2,-0.2) [] {$j_5$};
\node at (4,4.5) [] {$0$};
\node at (2,3.5) [] {$0$};
\node at (4,3.5) [] {$0$};
\node at (6,3.5) [] {$0$};
\node at (1,2.5) [] {$0$};
\node at (4,2.5) [] {$-1$};
\node at (7,2.5) [] {$0$};
\node at (1.5,1.5) [] {$0$};
\node at (2.75,1.5) [] {$0$};
\node at (4,1.5) [] {$0$};
\node at (5.25,1.5) [] {$0$};
\node at (6.5,1.5) [] {$0$};
}\\
=&X_g^{-1},
\end{align*}
and hence
\[
\deg_{X_f}\left(\tilde{p}_{\vec{i}_{(u,v)}}\circ \tilde{\psi}_{\vec{i}_{(u,v)}}\circ\tilde{\chi}_{\vec{i}_{(u,v)}}\right)^*(X_g)=-\delta_{fg}.
\]
\end{itemize}

\vspace{12pt}

\noindent \textit{Proof of the remaining part of Proposition \ref{main'}.} We know from Proposition \ref{3.18} that $\psi\circ \chi$ is a cluster transformation, and we know from the computation above that $\deg_{X_f}\left(\psi_{\vec{i}_{(u,v)}}\circ \chi_{\vec{i}_{(u,v)}}\right)^*(X_g)=-\delta_{fg}$. Therefore by Proposition \ref{lem0} we can conclude that $\psi\circ \chi$ is a cluster Donaldson-Thomas transformation. \qed

\bibliographystyle{amsalpha-a}

\bibliography{biblio}

\end{document}